%% file: Entropy_new_version_no_comments.tex
\documentclass[12pt]{amsart}
\usepackage[top=4.2cm, bottom=4cm, left=2.4cm, right=2.4cm]{geometry}
\usepackage[utf8]{inputenc}
\usepackage[USenglish]{babel}
\usepackage[T1]{fontenc} 
\usepackage{mathrsfs}
\usepackage{mathtools}
\usepackage{amsmath}
\usepackage{amssymb,epsfig}
\usepackage{caption}
\usepackage{subcaption}
\usepackage{comment}
\usepackage{soul}
\usepackage[all]{xy}
\usepackage{bbm}
\usepackage{todonotes}
\usepackage{bbm}

\usepackage{amsthm}
\usepackage[absolute]{textpos}
\usepackage{mathtools,emptypage}

\usepackage[bookmarks=true]{hyperref}
\usepackage{xcolor}
\hypersetup{
	colorlinks,
	linkcolor={red!50!black},
	citecolor={blue!50!black},
	urlcolor={blue!80!black},
}

\usepackage{tipa} 
\usepackage{tikz}

\mathtoolsset{showonlyrefs}

\newtheorem{theorem}{Theorem}
\numberwithin{theorem}{section}

\newtheorem{lemma}[theorem]{Lemma}

\newtheorem*{claim*}{Claim}

\newtheorem{proposition}[theorem]{Proposition}

\newtheorem{remark}[theorem]{Remark}

\newtheorem{corollary}[theorem]{Corollary}

\newtheorem*{question*}{Question}

\newtheorem*{T1}{Theorem~\ref{theo_intro:freq_does_not_depend}}

\newtheorem*{T2}{Theorem~\ref{theo:intro_entropy_formula_branched_coverings}}

\newtheorem*{T3}{Theorem~\ref{theo_intro:asymptotic_formula}}

\newtheorem*{T4}{Theorem~\ref{theo_intro:asymptotic_BM_measures}}

\newtheorem*{C1}{Corollary~\ref{cor:GT_manifolds_estimate}}

\theoremstyle{remark}

\theoremstyle{definition}
\newtheorem{definition}[theorem]{Definition}

\newtheorem*{warning*}{Warning}
\newtheorem*{convention*}{Convention}
\newtheorem*{example*}{Example}
\newtheorem*{assumption*}{Assumption}

 \newcounter{mcomments}

 \newcounter{ccomments}

\newcounter{ncomments}

\newcommand{\R}{\mathbb{R}}

\newcommand{\Z}{\mathbb{Z}}

\newcommand{\Geod}{\textup{Geod}}
\newcommand{\LocGeod}{\textup{LocGeod}}
\newcommand{\Br}{\textup{Broken}_\textup{bc}(\Y,\Sigma)}

\renewcommand{\next}{\textup{\texttt{next}}}
\newcommand{\prev}{\textup{\texttt{prev}}}
\newcommand{\internal}{\textup{int}}
\newcommand{\external}{\textup{ext}}

\newcommand{\inj}{\textup{inj}^\perp(\Sigma)}
\newcommand{\brdeg}{\textup{br-deg}}
\newcommand{\freq}{\textup{\texttt{freq}}}

\renewcommand{\d}{{\mathrm d}}
\newcommand{\CATminus}{\mathrm{CAT(-1)}}

\newcommand{\N}{\mathbb{N}}
\newcommand{\sfd}{{\sf d}}                         

\newcommand{\restr}[1]{\lower3pt\hbox{$|_{#1}$}}
\newcommand{\limi}{\varliminf}
\newcommand{\lims}{\varlimsup}                      
\newcommand{\X}{{\rm X}}
\newcommand{\Y}{{\rm Y}}
\newcommand{\W}{{\rm W}}
\newcommand{\mm}{\mathfrak m}

\title[Entropy of branched coverings]{The entropy of Gromov-Thurston manifolds and branched coverings}

\author[Nicola Cavallucci]{Nicola Cavallucci}
\address{Département de mathématiques,
Université de Fribourg,
Ch. du musée 23
CH-1700 Fribourg,
Switzerland}
\email{n.cavallucci23@gmail.com} 

\author{Merlin Incerti-Medici}
\email{merlin.medici@pm.me}
\urladdr{https://www.merlinmedici.ch/}

\begin{document}

\begin{abstract}
    We develop a general theory for the dynamics of the geodesic flow of locally $\textup{CAT}(\kappa)$ branched coverings and we show that it does not depend on the specific covering but only on the base space, the branching set and the degree of the covering. We find an explicit formula for the entropy: it equals the topological pressure of the associated natural dynamical system on the space of broken geodesics with a natural geometric potential. We find the exact asymptotic of the entropy as the number of sheets of the branched covering goes to infinity, as well as asymptotic properties of the measures of maximal entropy.
\end{abstract}

\maketitle

\tableofcontents

\section{Introduction}

Gromov-Thurston manifolds were introduced in \cite{GromovThurston87} to give an example of a sequence of Riemannian manifolds with sectional curvature arbitrarily close to $-1$ that do not carry any hyperbolic metric. The construction starts with a closed, oriented, hyperbolic manifold $\Y$, together with an embedded, oriented, totally geodesic, codimension two submanifold $\Sigma$, which is the boundary of an embedded, oriented, totally geodesic, codimension one submanifold $N$. A pair $(\Y,\Sigma)$ as above is called a \emph{Gromov-Thurston pair}. Given $k\ge 2$, the degree $k$ Gromov-Thurston manifold is the cyclic branched covering of $\Y$ over $\Sigma$. Topologically, it is obtained by taking $k$ distinct copies of $\Y$, cutting them along $N$ in order to obtain two boundary components $N^\pm$ and gluing cyclically the positive boundary of a copy to the negative boundary of the next one. We refer to Section \ref{subsec:GT-manifolds} for more details. 

The cyclic cover carries a natural singular metric, corresponding to the length metric associated to the pull-back of the hyperbolic metric on $\Y \setminus \Sigma$. It is a locally CAT$(-1)$ metric. The resulting metric space will be denoted by $\textup{GT}^k(\Y,\Sigma)$. The Riemannian metrics of \cite{GromovThurston87} with sectional curvature close to $-1$ are obtained via a smoothing of the singular metric, under suitable assumptions on $\Y$ and $\Sigma$. Gromov-Thurston manifolds carry other interesting Riemannian metrics, notably Einstein ones, as proved in \cite{FinePremoselli2020,HamenstadtJackel2024}. However, in this paper we are interested in their canonical singular metric and on the properties of the corresponding geodesic flow. 

More generally, we are interested in the dynamical properties of the geodesic flow of the singular metric on branched coverings with curvature bounded above. More precisely, we consider the following setting.
\begin{assumption*}
    $(\Y,\sfd_\Y)$ is a compact, locally $\textup{CAT}(\kappa)$ space, $\Sigma\subseteq \Y$ is closed and locally convex and $\pi\colon \X \to \Y$ is a finite-degree branched covering with singular set $\Sigma$.
\end{assumption*}
The last condition means that $\pi$ is continuous and surjective with $\Sigma = \{y\in \Y\,:\, \#\pi^{-1}(y)=1\}$, that the restriction $\pi \colon \pi^{-1}(\Sigma) \to \Sigma$ is a homeomorphism and that the restriction $\pi\colon \X\setminus\pi^{-1}(\Sigma) \to \Y\setminus \Sigma$ is a covering. As explained in \cite{Allcock2000}, see also \cite{Gromov-HypGps} and Proposition \ref{prop:branched_covering_loc_CAT-1}, $\X$ carries a natural length metric $\sfd$ that is locally $\textup{CAT}(\kappa)$ as well. Moreover, if the covering $\pi$ has finite degree then $\X$ is compact. In the sequel, we will denote by $(\X,\Y,\Sigma,\pi)$ a compact, locally $\textup{CAT}(\kappa)$ branched covering as above. It has \emph{pure-degree} $k$ if the covering $\pi$ has degree $k$ on $\X\setminus \pi^{-1}(\Sigma)$. For more details we refer to Section \ref{sec:top_branched_coverings}. 

The most natural class of examples of branched coverings is provided by wedges. More precisely, given a compact, locally $\textup{CAT}(\kappa)$ space $\Y$, a closed, locally convex subset $\Sigma$ and a number $k\in \N$, the space $\vee^k(\Y,\Sigma)$ is obtained by gluing $k$ copies of $\Y$ along $\Sigma$. Both wedges and Gromov-Thurston manifolds $\textup{GT}^k(\Y,\Sigma)$ have pure-degree $k$. Wedges and Gromov-Thurston manifolds are clearly different; however our main results will show that, from a dynamical perspectives, they are essentially identical.

The geometries of the base space $\Y$ and the branched covering space $\X$ are closely related. The basic observation is that every local geodesic $\gamma$ of $\X$ can be canonically subdivided into a countable number of pieces $\gamma_j$ where each $\gamma_j$ is either entirely contained in $\Sigma$ or it is a local geodesic that meets $\Sigma$ only at the endpoints. The projection $\pi\circ \gamma_j$ of each piece is a local geodesic of $\Y$, due to the covering properties of $\pi$. However, the concatenation of two consecutive projected segments $\pi \circ \gamma_j$ and $\pi \circ \gamma_{j+1}$ does not necessarily yield a local geodesic of $\Y$. For this reason, in Section \ref{subsec:branched_coverings_definition} we introduce the space of $\Sigma$-broken local geodesic lines $\textup{Broken}(\Y,\Sigma)$. A $\Sigma$-broken local geodesic line in $\Y$ is simply a (possibly countable) concatenation of local geodesics of $\Y$ such that each piece is either entirely contained in $\Sigma$ or it meets $\Sigma$ only at the endpoints. The second pieces are the most interesting ones: in the sequel they will be called \emph{external}. 
Basic instructive examples, see for instance the ones in Section \ref{subsec:Examples}, illustrate that not every $\Sigma$-broken local geodesic line comes as the projection of some local geodesic line of $\X$. For this reason we introduce the space $\textup{Broken}(\Y,\Sigma;\X)$, which is simply the set of $\Sigma$-broken geodesics of $\Y$ that are projections of some local geodesic lines in $\X$.
Our first result says that this set does not depend on the branched covering: it is a property of the couple $(\Y,\Sigma)$.
\begin{theorem}
\label{theo_intro:broken_does_not_depend}
    Let $(\X,\Y,\Sigma,\pi)$, $(\X',\Y,\Sigma,\pi')$ be two compact, locally $\textup{CAT}(\kappa)$ branched coverings over the same couple $(\Y,\Sigma)$. Then $\textup{Broken}(\Y,\Sigma;\X) = \textup{Broken}(\Y,\Sigma;\X').$
\end{theorem}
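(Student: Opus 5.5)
Our plan is to give an intrinsic characterization of $\textup{Broken}(\Y,\Sigma;\X)$ purely in terms of $(\Y,\Sigma)$. Since the characterization will not mention $\X$, the equality for $\X$ and $\X'$ follows at once. The natural candidate is a local angle condition at the break points. Let $\beta$ be a $\Sigma$-broken local geodesic line in $\Y$, and let $t$ be a time where $\beta(t)\in\Sigma$ and at least one adjacent piece is external. We require that the incoming and outgoing directions of $\beta$ at $t$ lie at angle at least $\pi$ in the space of directions of $\Y$ at $\beta(t)$, after passing to the ``unfolded'' link. More precisely, the condition should be the following. There are directions $\sigma$ in the link of $\Sigma$ (directions tangent to $\Sigma$, or the set of directions pointing into $\Sigma$) that separate the two directions. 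In addition, the sum of the angles from the incoming direction to $\Sigma_{\beta(t)}\Sigma$ and from $\Sigma_{\beta(t)}\Sigma$ to the outgoing direction must be $\ge \pi$. I would denote this class $\textup{Broken}_\textup{bc}(\Y,\Sigma)$, which the paper's macro $\Br$ suggests is precisely the intended object.

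First, we show that the projection of a local geodesic of $\X$ satisfies this condition. Near a point $x\in\pi^{-1}(\Sigma)$, the space $\X$ is locally $\textup{CAT}(\kappa)$ (Proposition \ref{prop:branched_covering_loc_CAT-1}). So a concatenation is locally geodesic iff the Alexandrov angle at $x$ between the incoming and outgoing directions is $\ge\pi$. The link of $\X$ at $x$ is a branched covering of the link of $\Y$ at $\pi(x)$, branched over the link of $\Sigma$. The metric on the link of $\X$ is the induced length metric, so a path of length $<\pi$ between the two directions in $\X$'s link would pass through the branch locus or stay in one sheet. In either case it projects to the forbidden short path downstairs. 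Hence the angle at $x$ in $\X$ is computed as $\min$ over paths through $\Sigma$'s link, plus possibly the in-sheet distance. The key formula we aim for is
\[
\angle_x(\xi,\eta)=\min\bigl\{\,\angle_{\pi(x)}(\bar\xi,\bar\eta)\ \text{if same sheet},\ \inf_{\sigma\in \Sigma_{\pi(x)}\Sigma}\bigl(\angle(\bar\xi,\sigma)+\angle(\sigma,\bar\eta)\bigr)\bigr\}.
\]
Its second term is independent of the covering.

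Conversely, given $\beta$ satisfying the condition, we lift it piece by piece. External pieces meet $\Sigma$ only at endpoints, so they lift uniquely once a starting point and sheet are chosen. This uses the covering property on $\Y\setminus\Sigma$, plus the fact that $\pi^{-1}(\Sigma)\to\Sigma$ is a homeomorphism, which handles pieces in $\Sigma$. At each break point we choose the sheet of the next external piece to differ from the previous one's (local sheet at the branch point). This needs at least two local sheets, which holds because $\#\pi^{-1}(y)=1$ exactly on $\Sigma$, so nearby points have at least two preimages. With distinct sheets, the angle is given by the infimum term, which is $\ge\pi$ by hypothesis, so the lift is locally geodesic. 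Countable concatenation is handled by induction in both time directions.

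The main obstacle is the link formula above. It requires showing that the space of directions of $\X$ at a point of $\pi^{-1}(\Sigma)$ is the branched cover of that of $\Y$ with its length metric, so that distances between directions in different local sheets are realized through $\Sigma$'s link. Local convexity of $\Sigma$ should make the local picture a cone-like gluing (as in Allcock's construction). I would first prove it for the local model: a small ball around $\pi(x)$ with its lifts. I would also need care when "same sheet" is ambiguous because the local complement $B\setminus\Sigma$ may be disconnected; there I would choose sheets per component.
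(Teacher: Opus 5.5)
Your overall structure is the same as the paper's. You give an intrinsic criterion at each break point, prove necessity by projecting, and prove sufficiency by lifting piece by piece, choosing at a reflection a lift other than the ``straight'' one. The gap is that the whole argument rests on the link formula, which you state but do not prove. Moreover, the notion of ``sheet'' it depends on is not well defined as you set it up.

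Near $z\in\Sigma$ the covering $\pi^{-1}(B\setminus\Sigma)\to B\setminus\Sigma$ need not be trivial, even over a single component. For $\textup{GT}^k(\Y,\Sigma)$, the set $B\setminus\Sigma$ is connected and homotopic to $\mathbb{S}^1$, and its preimage is connected. So there are no local sheets at all, and choosing sheets per component does not help. What does make sense is a pairwise relation: if $[y,y']\cap\Sigma=\emptyset$ and $x\in\pi^{-1}(y)$, the lift of $[y,y']$ starting at $x$ ends at a distinguished point $x'_*\in\pi^{-1}(y')$.

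The statement you actually need is the paper's Lemma \ref{lemma:geodesics_cross_Sigma}. In a $\textup{CAT}(\kappa)$ ball centred on $\Sigma$ one has $\sfd(x,x'_*)=\sfd_\Y(y,y')$. For every other lift $x'$, the geodesic $[x,x']$ passes through $p=\textup{\texttt{mid}}_\Sigma(y,y')$, so $\sfd(x,x')=\sfd_\Y(y,p)+\sfd_\Y(p,y')$. This follows directly from the definition of $\sfd$ and the uniqueness of $\Sigma$-midpoints (Lemma \ref{lemma:unique_projection_CAT}).

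With this finite-scale lemma, links become unnecessary. Apply it to $y=\eta(t-s)$ and $y'=\eta(t+s)$ for one small $s$, and compare the length $2s$ of a lift of $\eta\restr{[t-s,t+s]}$ with the distance between its endpoints. This gives the paper's criterion (Theorem \ref{theo:characterization_broken_geodesics}): either $\eta_j\star\eta_{j+1}$ is a local geodesic of $\Y$, or both pieces are external and $\eta(t)=\textup{\texttt{mid}}_\Sigma(\eta(t-s),\eta(t+s))$. For sufficiency you then pick any lift different from $x'_*$.

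Your angle condition is in fact equivalent to the midpoint condition. This follows from the first variation formula together with convexity of $q\mapsto\sfd_\Y(y,q)+\sfd_\Y(q,y')$ on $\Sigma$, since for $a,b\in[0,\pi]$ one has $\cos a+\cos b\le 0$ if and only if $a+b\ge\pi$. A correct version of your link formula can be obtained from the lemma by letting $s\to 0$. It must be truncated at $\pi$, and ``same sheet'' must be replaced by the pairwise relation above. Proving it directly on spaces of directions would also force you to handle completions of the space of directions. It would also require treating the case where $\Sigma$ has no directions at $\eta(t)$ (e.g.\ $\Sigma$ locally a point), where your infimum is taken over the empty set.

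Separately, your condition only constrains break points adjacent to an external piece. Take a local geodesic $\sigma\subseteq\Sigma$ and concatenate $\sigma$ and $-\sigma$ back and forth. This lies in $\textup{Broken}(\Y,\Sigma)$ and satisfies your condition vacuously, yet its unique lift is not a local geodesic of $\X$. So you must also require, as in condition (a), that internal--internal junctions be local geodesics of $\Y$.
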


This special set of $\Sigma$-broken local geodesic line will be denoted by $\Br$, where the subscript stands for "branched covering". Indeed, a purely geometric characterization of those $\Sigma$-broken local geodesic lines $\eta$ that belong to $\Br$ is given in Theorem \ref{theo:characterization_broken_geodesics}. The condition can be checked locally at every concatenation time $t$ between two consecutive pieces $\eta_j,\eta_{j+1}$:
\begin{itemize}
    \item[(i)] either the concatenation $\eta_j\star \eta_{j+1}$ is a local geodesic in $\Y$,
    \item[(ii)] or the concatenation point is the unique point $z$ on $\Sigma$ that minimizes the quantity $\sfd(\eta(t-s), z) + \sfd(\eta(t+s),z)$, for $s$ small enough.
\end{itemize}
In the first case we say that the intersection is \emph{non-branching} if $\eta_{j+1} \subseteq \Sigma$ and \emph{extremal} otherwise. An intersection as in (ii) is called \emph{non-extremal}. Non-extremal geodesics should be thought as a light beam that meets $\Sigma$ and is specularly reflected.

Now, every external local geodesic of $\Y$ has more than one lift, due to branching. When we have the concatenation of two local geodesics $\sigma \star \sigma'$ in $\Y$ we would like to count, given a lift $\tilde{\sigma}$ of $\sigma$ to $\X$, how many lifts $\tilde{\sigma}'$ of $\sigma'$ to $\X$ form a local geodesic when concatenated with $\tilde{\sigma}$. For that, we introduce the \emph{geodesic continuation frequency} functions $\freq^\X_m\colon \Br \to \N$, for $m\ge 1$. Informally, given $\eta \in \Br$, the function $\freq_m^\X$ counts the number of times $t$ in $(-1, 0]$ for which there are exactly $m$ different ways to continue a lift of $\eta\vert_{(-1,t]}$ as a lift. For branched coverings of pure-degree $k$, like $k$-wedges and Gromov-Thurston manifolds, these functions are easier to understand: $\freq_1^\X$, $\freq_{k-1}^\X$ and $\freq_k^\X$ count the number of non-branching, non-extremal and extremal intersections in $(-1,0]$, respectively. In the general case one has to consider that different local geodesics may have a different number of lifts. Our second result shows that these functions actually depend only on the degree of the pure-degree branched covering.

\begin{theorem}
\label{theo_intro:freq_does_not_depend}
    Let $(\X,\Y,\Sigma,\pi)$, $(\X',\Y,\Sigma,\pi')$ be two compact, pure-degree $k$, locally $\textup{CAT}(\kappa)$ branched coverings over the same couple $(\Y,\Sigma)$. Then $\freq_m^\X = \freq_m^{\X'}$ for every $m\in \N$. Moreover, $\freq_m^\X \equiv 0$ for $m\in \{2,\ldots, k-2\}$.
\end{theorem}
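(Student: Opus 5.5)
We reduce to a local count at each concatenation time. Fix $\eta \in \Br$ and a time $t\in(-1,0]$ at which two consecutive pieces $\eta_j,\eta_{j+1}$ meet, with concatenation point $z = \eta(t)\in\Sigma$. By definition, $\freq_m^\X(\eta)$ counts the times $t$ for which a lift of $\eta\vert_{(-1,t]}$ admits exactly $m$ continuations as a lift of $\eta$ on $[t,t+\varepsilon)$. So it suffices to show two things. First, this number is independent of the chosen lift. Second, it equals $1$, $k-1$ or $k$ according to whether the intersection is non-branching, non-extremal or extremal, in the sense of the trichotomy (i)/(ii) of Theorem~\ref{theo:characterization_broken_geodesics}. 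That trichotomy is phrased purely in terms of $(\Y,\Sigma)$. Together with Theorem~\ref{theo_intro:broken_does_not_depend}, which identifies the domains of $\freq_m^\X$ and $\freq_m^{\X'}$, this gives $\freq_m^\X=\freq_m^{\X'}$, and it also shows that the values $2,\dots,k-2$ never occur.

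\textbf{Counting lifts of a piece.} Let $\tilde z = \pi^{-1}(z)$, which is a single point. If $\eta_{j+1}\subseteq\Sigma$, then $\eta_{j+1}$ has a unique lift, because $\pi$ restricted to $\pi^{-1}(\Sigma)$ is a homeomorphism. If $\eta_{j+1}$ is external, then its interior lies in $\Y\setminus\Sigma$. A small initial open subsegment is simply connected, so by pure degree $k$ it has exactly $k$ lifts. Each of these lifts extends continuously to $\tilde z$, since $\pi$ is a homeomorphism over $\Sigma$ and $\X$ is compact. We therefore get exactly $k$ lifts starting at $\tilde z$, and they are distinct because they lie in different sheets over $\Y\setminus\Sigma$.

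\textbf{When the concatenation is a local geodesic.} Given one of the $k$ candidate lifts $\tilde\sigma'$ and the fixed incoming lift $\tilde\sigma$, we must decide when $\tilde\sigma\star\tilde\sigma'$ is a local geodesic in $\X$. Local geodesicity is equivalent to the Alexandrov angle at $\tilde z$ being at least $\pi$. Equivalently, for small $s$ we need $\sfd(\tilde\sigma(-s),\tilde\sigma'(s)) = 2s$. The key geometric input, from the construction of the length metric (Proposition~\ref{prop:branched_covering_loc_CAT-1}), is a formula for distances between two points near $\tilde z$. If the points lie in the same local sheet, meaning they are joined through a neighbourhood of $\tilde z$ minus $\pi^{-1}(\Sigma)$ in a single sheet, then their distance equals the $\Y$-distance of their projections. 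If they lie in different local sheets, a geodesic between them must pass through $\pi^{-1}(\Sigma)$, so the distance is $\min_{w\in\Sigma}\bigl(\sfd(\pi\tilde x,w)+\sfd(w,\pi\tilde y)\bigr)$.

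\textbf{Case analysis.} We then split into cases.
\begin{enumerate}
\item Non-branching intersection: the continuation is unique, so the count is $1$.
\item Extremal intersection: $\eta_j\star\eta_{j+1}$ is a local geodesic in $\Y$ with $\eta_{j+1}$ external. In the same-sheet case the distance is $2s$ because $\sigma\star\sigma'$ is a geodesic downstairs. In every other sheet the distance is at least the distance in $\Y$, which is $2s$, and at most $2s$ through $z$. Hence all $k$ lifts work.
\item Non-extremal intersection, condition (ii): the path through $z$ is the unique minimizer over $\Sigma$, so every lift in a different sheet gives distance $2s$. In the same sheet the distance is $\sfd_\Y(\eta(t-s),\eta(t+s))<2s$, because $\eta$ is not a local geodesic downstairs. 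This gives exactly $k-1$ lifts.
\end{enumerate}
The case where the incoming piece lies in $\Sigma$ and the outgoing piece is external is treated by the same argument with the notion of sheet dropped: all $k$ outgoing lifts are admissible, or the count is $1$ if both pieces lie in $\Sigma$.

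\textbf{Main obstacle.} The hard step is making the notion of ``same local sheet'' rigorous in a general branched covering, where $\pi^{-1}(U)\setminus\pi^{-1}(\Sigma)$ for a small convex ball $U$ around $z$ may have several components over a non-simply-connected $U\setminus\Sigma$. Pure degree $k$ alone does not obviously force the local sheets to behave like those of a wedge. I would handle this by working with the germs of $\tilde\sigma$ and the $\tilde\sigma'$ instead of whole components. The aim is to show that exactly one lift $\tilde\sigma'$ can be joined to $\tilde\sigma$ by short paths avoiding $\pi^{-1}(\Sigma)$ that project close to $\sigma\star\sigma'$. To do this, use local convexity of $\Sigma$ and the $\mathrm{CAT}(\kappa)$ geodesic homotopy to push the $\Y$-geodesic from $\sigma(-s)$ to $\sigma'(s)$ off $\Sigma$ in case (iii), or to check that it stays off $\Sigma$ at all. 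Lift this homotopy starting from $\tilde\sigma$, which identifies that lift. For every other lift, any connecting path must cross $\pi^{-1}(\Sigma)$, by uniqueness of path lifting in the covering over $\Y\setminus\Sigma$.
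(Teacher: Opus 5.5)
Your reduction is the same as the paper's. Via Proposition~\ref{prop:gcn_depends_only_on_projection}, $\textup{gcn}^\X(\eta,t)$ equals $1$, $k$ or $k-1$ according to the type of $\eta(t)$, and that type depends only on $(\Y,\Sigma)$. Write $y=\eta(t-s)$, $y'=\eta(t+s)$, and let $x$ be the point of the incoming lift over $y$. Your non-branching and extremal cases are fine. In the extremal case the sandwich $2s=\sfd_\Y(y,y')\le\sfd(x,x')\le 2s$ holds for every preimage $x'$ of $y'$, so no sheets are needed.

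The gap is the step you yourself flag, and it is the only non-trivial input. In the non-extremal case you must show that $\sfd(x,x')=2s$ for all preimages $x'$ of $y'$ except one. Your proposed repair does not prove this.
\begin{itemize}
\item The claim that for every other lift ``any connecting path must cross $\pi^{-1}(\Sigma)$'' is false for paths.
\item Restricting to short paths close to $\sigma\star\sigma'$ does not help. In $\textup{GT}^k(\Y,\Sigma)$, perturb $\sigma\star\sigma'$ off $\Sigma$ near $z$ so that it winds a prescribed number of times around $\Sigma$. This gives paths in $\Y\setminus\Sigma$, uniformly close to $\sigma\star\sigma'$, of length arbitrarily close to $2s$, whose lifts from $x$ reach \emph{every} preimage of $y'$.
\item Uniqueness of path lifting only fixes a lift once the projected path is fixed. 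The geodesic homotopy you want to lift either crosses $\Sigma$ or starts at $\sigma\star\sigma'$, which passes through $z\in\Sigma$.
\end{itemize}
Only the strict quantitative bound (length $<2s$) singles out one preimage, and that bound is exactly what has to be shown.

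The missing idea is to control geodesics of $\X$ rather than paths; then no notion of local sheet is needed. This is Lemma~\ref{lemma:geodesics_cross_Sigma}(5), and it goes as follows.
\begin{itemize}
\item In the non-extremal case $[y,y']\cap\Sigma=\emptyset$. Otherwise $\min_{w\in\Sigma}(\sfd_\Y(y,w)+\sfd_\Y(w,y'))=\sfd_\Y(y,y')$. Then the minimizer $z$ lies on $[y,y']$, and $\eta|_{[t-s,t+s]}=[y,y']$ by uniqueness of geodesics in the \textup{CAT}$(\kappa)$ ball. This contradicts non-extremality.
\item Hence $[y,y']$ has a unique lift from $x$, ending at some $x'_*$, and $\sfd(x,x'_*)=\sfd_\Y(y,y')<2s$.
\item For $x'\neq x'_*$, take a geodesic $[x,x']$. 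It has length at most $2s$: lift $[y,z]$ from $x$ and take a lift of $[z,y']$ ending at $x'$, by Lemma~\ref{lemma:lifts_geodesics_coverings}(iii).
\item Suppose $[x,x']$ avoided $\pi^{-1}(\Sigma)$. Since $\pi$ is a local isometry there, its projection would be a local geodesic of length at most $2s$ from $y$ to $y'$ inside a \textup{CAT}$(\kappa)$ ball (for $s$ small), hence equal to $[y,y']$. Uniqueness of lifts would then force $x'=x'_*$.
\item So $[x,x']$ meets $\pi^{-1}(\Sigma)$ at some $q$. Since $\pi$ is $1$-Lipschitz, $\sfd(x,x')\ge\sfd_\Y(y,q)+\sfd_\Y(q,y')\ge\sfd_\Y(y,z)+\sfd_\Y(z,y')=2s$.
\end{itemize}
Hence exactly $k-1$ continuations are local geodesics, however the sheets of $\X$ are arranged around $\Sigma$. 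With this lemma in place, the rest of your argument goes through.
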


One of the main motivations for this paper came from finding a formula for the topological entropy of the geodesic flow of a branched covering. Given a metric space $(\X,\sfd)$, we denote by $\LocGeod(\X)$ the space of local geodesics $\gamma \colon \R \to \X$, together with the natural reparametrization flow $\Phi_t(\gamma)(\cdot) := \gamma(\cdot + t)$ for every $t\in \R$. The dynamical system $(\LocGeod(\X),\Phi_1)$ is called the geodesic flow of $\X$. If $\X$ is compact and locally $\textup{CAT}(\kappa)$ then $\LocGeod(\X)$ is compact, see Proposition \ref{prop:loc_geod_is_compact}. One of the most interesting invariants associated to the geodesic flow is its topological entropy, which is denoted by $h_\textup{top}(\LocGeod(\X),\Phi_1)$. When $\kappa \le -1$, it coincides with the critical exponent $h(\tilde{\X};\pi_1(\X))$ of the fundamental group of $\X$, see \cite{Ricks2021}. In this notation, $\tilde\X$ stands for the universal cover of $\X$. We refer to Section \ref{subsec:locally_CAT} for more details on the definitions.

As said before, we have a well-defined projection map $\Pi \colon \LocGeod(\X) \to \Br$ which is surjective (by definition), continuous and satisfies $\Pi\circ \Phi_1 = \Phi_1\circ \Pi$, where $\Phi_1$ is the time-one map of the reparametrization flow on $\Br$. We refer to Proposition \ref{prop:projection_geodesics_to_broken_geodesics} for more details.
Using Ledrappier-Walters' Theorem (see \cite{LedrappierWalters1977}) we express the topological entropy of the geodesic flow on $\X$ as a topological pressure on the dynamical system $(\Br,\Phi_1)$, with potential involving the functions $\freq_m^\X$.

\begin{theorem}
\label{theo:intro_entropy_formula_branched_coverings}
    Let $(\X,\Y,\Sigma,\pi)$ be a compact, locally $\textup{CAT}(\kappa)$ branched covering. Then
    $$h_\textup{top}(\LocGeod(\X),\Phi_1) = \mathscr{P}\left(\Br,\Phi_1,\sum_{m\in \N}\log(m)\freq_m^\X\right).$$
\end{theorem}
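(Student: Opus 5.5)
The plan is to apply the Ledrappier--Walters variational principle for factor maps (\cite{LedrappierWalters1977}) to the factor map $\Pi\colon (\LocGeod(\X),\Phi_1)\to(\Br,\Phi_1)$ of Proposition \ref{prop:projection_geodesics_to_broken_geodesics}. Ledrappier--Walters gives
$$h_\textup{top}(\LocGeod(\X),\Phi_1)=\sup_{\nu}\Big( h_\nu(\Br,\Phi_1)+\int h_\textup{top}(\Phi_1,\Pi^{-1}(\eta))\,d\nu(\eta)\Big),$$
where $\nu$ ranges over $\Phi_1$-invariant probability measures on $\Br$, and the fiber entropy is the relative quantity $\lim_n \frac1n \log$ of the minimal cardinality of $(n,\varepsilon)$-spanning sets of $\Pi^{-1}(\eta)$ (in its Bowen-type formulation). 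By the variational principle for pressure, the statement then reduces to showing that, for $\nu$-a.e.\ $\eta$, the fiber entropy equals $\int \sum_m \log(m)\freq_m^\X\,d\nu$. More precisely, I would show that the fiber growth along $\eta$ over time $n$ equals $\sum_{j=0}^{n-1}\sum_m\log(m)\freq_m^\X(\Phi_j\eta)$ up to sub-exponential errors, and then apply Birkhoff's ergodic theorem. Since the potential is a Birkhoff sum of $\freq$ over unit intervals $(-1,0]$, the integral of the cocycle is exactly $\int\sum_m\log(m)\freq_m^\X\,d\nu$.

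The core step is a combinatorial description of $\Pi^{-1}(\eta)$ restricted to a time window $[0,n]$. A lift of $\eta$ is determined by a choice of lift of one external piece, followed at each concatenation time by a choice among the admissible continuations. By definition of $\freq_m^\X$, at a time where there are exactly $m$ continuations we multiply the count by $m$. Hence the number of distinct lifts of $\eta|_{[0,n]}$ is comparable to $\prod_t m_t = \exp\big(\sum_{t\in(0,n]}\log m_t\big)$, up to a bounded factor $\le$ the degree coming from the initial choice. I must also check that distinct lifts over $[0,n]$ are $\varepsilon$-separated for the Bowen metric, and that lifts agreeing on $[-L,n+L]$ are $\varepsilon$-close. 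Separation uses that two different lifts of an external piece lie in different sheets over $\Y\setminus\Sigma$, so they diverge by an amount bounded below once the piece is long enough. Closeness uses that lifts agreeing on a large window are close in the compact-open topology.

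The hardest step is controlling short external pieces and pieces accumulating near $\Sigma$. Lifts of a very short external piece stay within $\varepsilon$ of each other, so the count of $\varepsilon$-separated lifts can undercount the combinatorial number. Conversely, a broken geodesic might have infinitely many concatenation times in a unit interval, making $\freq$ unbounded. For the second issue, I would first establish, presumably from the local CAT$(\kappa)$ geometry and local convexity of $\Sigma$, a uniform bound on the number of concatenation times per unit interval, with external pieces of length at least some $\delta>0$ except near degenerate configurations. For the first issue, I would handle the lower bound by letting $\varepsilon\to0$: for fixed $\eta$, only finitely many pieces in $[0,n]$ have length below any threshold, and a choice made at a short piece is detected by the next piece, which separates at a later time. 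This gives the lower bound on fiber entropy for every $\varepsilon$ small relative to the geometry. The upper bound follows from the combinatorial count together with compactness. Combining both bounds with Ledrappier--Walters yields the formula.
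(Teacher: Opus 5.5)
Your architecture is the paper's: Ledrappier--Walters for the factor map $\Pi$, then a pointwise computation of $h_\textup{top}(\Pi^{-1}(\eta),\Phi_1)$ from the lift count of Proposition~\ref{prop:counting_preimages_of_broken_geodesics}, then Birkhoff. The upper bound uses lifts agreeing on an enlarged window, and the lower bound uses separation of distinct lifts.

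The gap is in the step you flag as hardest. You leave open whether external pieces can be short or whether concatenation times can accumulate. Your proposed workaround for the lower bound also fails: a choice made on a short piece is \emph{not} detected by the next piece. Since $\pi$ is injective over $\Sigma$, all lifts of $\eta$ pass through the same point at every time $t$ with $\eta(t)\in\Sigma$. So two lifts that differ on one external piece rejoin at its endpoint and may coincide on every later piece; for example, after an extremal intersection every continuation is admissible. Separation therefore has to come from inside each piece. Sending $\varepsilon\to 0$ after $n\to\infty$ cannot discard short pieces that occur with positive frequency.

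The missing ingredient is that short external pieces do not exist. Set $\rho:=\min\{\rho_\textup{CAT}(\Y),D_\kappa\}$. By convexity of $t\mapsto \sfd_\Y(\sigma(t),\Sigma)$ along local geodesics (Lemma~\ref{lemma:uniform_distance_geodesics_to_singular_set}), every external piece with both endpoints on $\Sigma$ reaches distance $\rho/2$ from $\Sigma$. Hence its length is at least $\inj\ge\rho_\textup{CAT}(\Y)>0$ (Proposition~\ref{prop:normal_inj_radius_minimizing_curve}). This gives two things at once:
\begin{itemize}
\item[(a)] $\partial\eta^{-1}(\Sigma)$ is discrete and $\freq_m^\X\le\lceil 1/\inj\rceil$, so the potential is bounded. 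You also need it to be Borel, which is Lemma~\ref{lemma:frequency_function_Borel}.
\item[(b)] Distinct lifts of a point $y$ near $\Sigma$ satisfy $\sfd(x_1,x_2)=2\sfd_\Y(y,\Sigma)$ (Lemma~\ref{lemma:geodesics_cross_Sigma}). So distinct lifts of any external piece are exactly $\rho$ apart at some time of that piece (Corollary~\ref{cor:uniform_distance_segments}).
\end{itemize}
Consequently, elements of $\Pi^{-1}(\eta)$ with different restrictions to the window are separated in ${\sf D}^n$ at a fixed scale depending only on $\rho$. The lower bound on fiber entropy then holds with no limiting argument in $\varepsilon$.
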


The quantity on the right hand side is a topological pressure: more precisely it is defined as
\begin{equation}
    \label{eq:top_pressure_intro}
    \sup\left\{h_\mu + \int\sum_{m\in \N}\log(m)\freq_m^\X\,:\, \mu \in \mathcal{M}_1(\Br,\Phi_1)\right\},
\end{equation}
where $\mathcal{M}_1(\Br,\Phi_1)$ denotes the set of $\Phi_1$-invariant probability measures on the set $\Br$ and where $h_\mu$ is the Kolmogorov-Sinai entropy of the measure $\mu$. 
We refer to Section \ref{subsec:dynamical_systems_entropy_pressure} for more details. 

Theorem \ref{theo:intro_entropy_formula_branched_coverings} gives an explicit description of the topological entropy of the geodesic flow of a branched covering in terms of a topological pressure on $\Br$. Combining Theorem \ref{theo:intro_entropy_formula_branched_coverings} with Theorem \ref{theo_intro:broken_does_not_depend} we get
\begin{theorem}
\label{theo:intro_entropy_does_not_depend}
    Every two compact, locally $\textup{CAT}(\kappa)$, branched coverings  $(\X,\Y,\Sigma,\pi)$, $(\X',\Y,\Sigma,\pi')$ of pure degree $k$ satisfy
    $$h_\textup{top}(\LocGeod(\X),\Phi_1) = h_\textup{top}(\LocGeod(\X'),\Phi_1).$$
\end{theorem}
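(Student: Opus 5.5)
This follows by chaining the three preceding results, with no new geometric input. First I apply Theorem \ref{theo:intro_entropy_formula_branched_coverings} to each covering separately:
$$h_\textup{top}(\LocGeod(\X),\Phi_1) = \mathscr{P}\Bigl(\textup{Broken}(\Y,\Sigma;\X),\Phi_1,\sum_{m\in \N}\log(m)\freq_m^\X\Bigr),$$
and the analogous identity holds for $\X'$. Here the space $\Br$ appearing in that theorem is, by definition, the space $\textup{Broken}(\Y,\Sigma;\X)$ of projections of local geodesics of $\X$.

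Next I identify the two dynamical systems. By Theorem \ref{theo_intro:broken_does_not_depend}, $\textup{Broken}(\Y,\Sigma;\X)=\textup{Broken}(\Y,\Sigma;\X')$ as subsets of $\textup{Broken}(\Y,\Sigma)$. Both carry the restriction of the same topology and the same reparametrization flow $\Phi_1$. So the two compact dynamical systems coincide, their sets of invariant probability measures $\mathcal{M}_1(\Br,\Phi_1)$ coincide, and for each such $\mu$ the Kolmogorov--Sinai entropy $h_\mu$ is the same number.

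It remains to identify the potentials. Both coverings have pure degree $k$, so Theorem \ref{theo_intro:freq_does_not_depend} gives $\freq_m^\X=\freq_m^{\X'}$ pointwise on $\Br$ for every $m$. Hence the potentials $\sum_m \log(m)\freq_m^\X$ and $\sum_m\log(m)\freq_m^{\X'}$ are the same function. Since the pressure \eqref{eq:top_pressure_intro} is a supremum over the same set of measures of the same quantities, the two pressures agree, and therefore the two entropies agree.

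I do not expect any step to be a genuine obstacle. The one point to check is that the frequency functions in Theorem \ref{theo_intro:freq_does_not_depend} are defined on the common domain $\Br$, and that only finitely many $m$ contribute. Indeed $\freq_m\equiv 0$ for $m>k$, since a lift cannot have more than $k$ continuations. With finitely many nonzero terms, the potential is a finite sum with values in $[0,\infty]$, so the formal identification of the integrands poses no integrability issue.
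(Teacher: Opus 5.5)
Your argument is correct and is essentially the paper's proof. In Theorem \ref{theo:wedges_maximal_entropy} the paper also writes both entropies as pressures on the common space $\Br$ via Theorem \ref{theo:intro_entropy_formula_branched_coverings}, and compares the potentials via Proposition \ref{prop:gcn_depends_only_on_projection}, the same content as the Theorem \ref{theo_intro:freq_does_not_depend} you invoke. The only difference is that the paper phrases the comparison as a pointwise inequality $\sum_m\log(m)\freq_m^\X \le \sum_m\log(m)\freq_m^{\X'}$, which also yields a one-sided bound when only $\X'$ is assumed to have pure degree $\textup{deg}(\pi)$.
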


In particular, the topological entropy of the geodesic flow of $\textup{GT}^k(\Y,\Sigma)$ coincides with the topological entropy of the geodesic flow of the $k$-wedge $\vee^k(\Y,\Sigma)$, for every Gromov-Thurston pair $(\Y,\Sigma)$.

Now, to every couple $(\Y,\Sigma)$, where $(\Y,\sfd_\Y)$ is a compact, locally $\textup{CAT}(\kappa)$ space and $\Sigma \subseteq \Y$ is a closed, locally convex subset we can attach the following invariants:
\begin{itemize}
    \item[(i)] the space $\Br$.
    \item[(ii)] The functions $\freq_{\textup{non-br}},\freq_{\textup{non-ext}}, \freq_{\external}\colon \Br \to \N$ that count the number of non-branching, non-extremal and extremal intersections in $(-1,0]$, respectively. By what we said above, for every pure-degree $k$ branched covering $(\X,\Y,\Sigma,\pi)$ it holds that $\freq_1^\X = \freq_{\textup{non-br}}$, $\freq_{k-1}^\X = \freq_\textup{non-ext}$ and $\freq_{k}^\X = \freq_\external$.
    \item[(iii)] For every $k\ge 2$, the number $h_\textup{top}(\Y,\Sigma,k)$ which is the topological entropy of the geodesic flow of any compact, pure-degree $k$ branched covering $(\X,\Y,\Sigma,\pi)$: it is well defined by Theorem \ref{theo:intro_entropy_does_not_depend}.
\end{itemize}

The next natural question consists in understanding the asymptotic behavior of the sequence of numbers $\{h_\textup{top}(\Y,\Sigma,k)\}$ as $k$ goes to infinity. The new perspective provided by Theorems \ref{theo:intro_entropy_formula_branched_coverings} and \ref{theo_intro:broken_does_not_depend} gives the answer. Indeed, $h_\textup{top}(\Y,\Sigma,k)$ can be found by solving "different optimization problems" in the same space $\Br$. Moreover, the different optimization problems are of the form
$$\mathscr{P}\left(\Br,\Phi_1,\log(k-1)\freq_{\textup{non-ext}} + \log(k)\freq_\external\right).$$
For $k$ going to infinity, two things happen. From one side, the potential functions 
$$\log(k-1)\freq_{\textup{non-ext}} + \log(k)\freq_\external$$ become approximately of the form $\log(k)\freq$, where $\freq = \freq_{\textup{non-ext}} + \freq_\external$ counts the number of intersections that give rise to branching. From the other side, the potential functions dominate the term we want to maximize in the right hand side of \eqref{eq:top_pressure_intro}, provided that two things happen: the Kolmogorov-Sinai entropies are uniformly bounded, which means $h_\textup{top}(\Br,\Phi_1) < \infty$, and $\int \freq \,\d\mu > 0$ for at least one $\mu \in \mathcal{M}_1(\Br,\Phi_1)$.

Under these two assumptions, taking the limit for $k\to +\infty$ corresponds to the "low-temperature case" in the classical thermodynamic formalism, and the limit value tends towards what is called the ergodic optimization of the function $\freq$, see Section \ref{subsec:ergodic_optimization}. A geometric argument allows to compute this value in terms of the normal injectivity radius of $\Sigma$. This number is defined as $\inj := \inf\{\ell(\sigma)\,:\, \sigma \in \Sigma^\circlearrowleft\}$, where
$$\Sigma^\circlearrowleft := \{\sigma\colon I \to \Y \textup{ local geodesic }:\, \partial I = \sigma^{-1}(\Sigma)\},$$
i.e. is the set of local geodesics with exactly the endpoints in $\Sigma$.
\begin{theorem}
    \label{theo_intro:asymptotic_formula}
    Let $(\Y,\sfd_\Y)$ be a compact, locally $\textup{CAT}(\kappa)$ space and let $\Sigma \subseteq \Y$ be closed and locally convex. Assume that $h_\textup{top}(\Br,\Phi_1) < \infty$ and that $\Sigma^\circlearrowleft \neq \emptyset$. Then for every $k\ge 2$ it holds that
    $$\frac{\log(k-1)}{\inj}\le h_\textup{top}(\Y,\Sigma,k) \le \frac{\log(k)}{\inj} + h_\textup{top}(\Br,\Phi_1),$$
    hence
    $$\lim_{k\to +\infty} \frac{h_\textup{top}(\Y,\Sigma,k)}{\log(k)} = \frac{1}{\inj}.$$
    If $\kappa=-1$, then $h_\textup{top}(\Br,\Phi_1) \le  h(\tilde{\Y};\pi_1(\Y)) + \log(2)/\inj$ (we recall that $h(\tilde{\Y};\pi_1(\Y))$ denotes the critical exponent of the group $\pi_1(\Y)$ acting on the universal cover $\tilde\Y$ of $\Y$).
\end{theorem}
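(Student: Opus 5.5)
We argue directly from the variational formula in Theorem \ref{theo:intro_entropy_formula_branched_coverings}. By Theorem \ref{theo:intro_entropy_does_not_depend} we may work with any pure-degree $k$ branched covering; by Theorem \ref{theo_intro:freq_does_not_depend} the potential is $\varphi_k := \log(k-1)\freq_{\textup{non-ext}} + \log(k)\freq_\external$. Hence
$$h_\textup{top}(\Y,\Sigma,k) = \sup_{\mu}\Big\{h_\mu + \int \varphi_k\,\d\mu\Big\}.$$
The two-sided bound then reduces to two estimates on the ergodic optimization of $\freq = \freq_{\textup{non-ext}}+\freq_\external$:
$$\sup_\mu \int \freq\,\d\mu = \frac{1}{\inj}.$$
Assuming this, the upper bound follows from $\varphi_k\le \log(k)\freq$ and $h_\mu\le h_\textup{top}(\Br,\Phi_1)$. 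For the lower bound, we drop $h_\mu\ge 0$ and use $\varphi_k \ge \log(k-1)\freq$. We then choose $\mu$ nearly attaining the supremum, or exactly attaining it if a minimizer exists. Dividing by $\log k$ and letting $k\to\infty$ gives the limit, since $h_\textup{top}(\Br,\Phi_1)<\infty$.

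\emph{Upper bound $\int\freq\,\d\mu\le 1/\inj$.} Along any $\eta\in\Br$, two consecutive branching intersections are joined by a broken piece that contains at least one external piece. An external piece lies in $\Sigma^\circlearrowleft$, so its length is at least $\inj$. More precisely, each branching intersection is an endpoint of an external piece: an extremal or non-extremal concatenation involves an external segment, since non-branching concatenations are exactly those continuing into $\Sigma$. Each external piece contributes at most two such endpoints, but consecutive intersections share pieces. The counting needs care here: I would show that the branching times along $\eta$ are separated by at least $\inj$. Between two branching times there is either an external piece, of length $\ge\inj$, or a segment inside $\Sigma$. A segment inside $\Sigma$ entered and exited via branching should not itself be counted twice; I expect one must check that the exit from $\Sigma$ is the branching point and the entry is non-branching, or else handle this case in the definition of $\freq$. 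Given the separation, $\freq(\Phi_j\eta)$ summed over $j<n$ is at most $n/\inj+1$. Birkhoff's theorem then gives $\int\freq\,\d\mu\le 1/\inj$.

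\emph{Lower bound.} By compactness of $\Y$ and Arzelà--Ascoli, $\inj$ is realized by some $\sigma\in\Sigma^\circlearrowleft$ of length $\ell=\inj>0$; positivity follows from local convexity and the local $\CATminus$-type geometry near $\Sigma$. I claim $\sigma$ meets $\Sigma$ perpendicularly in the sense of condition (ii) at both ends: by first variation, a shortest path from $\Sigma$ to $\Sigma$ minimizes $\sfd(\cdot,z)$. Then the periodic concatenation $\eta=\cdots\star\sigma\star\bar\sigma\star\sigma\star\cdots$, a reflected beam bouncing back and forth, has non-extremal intersections at each endpoint. By the characterization in Theorem \ref{theo:characterization_broken_geodesics}, $\eta\in\Br$. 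It is periodic of period $2\ell$ with two branching times per period. Its invariant orbit measure $\mu$ therefore gives $\int\freq\,\d\mu = 1/\ell$, and using $\log(k-1)$ as a lower bound for the weights yields $\log(k-1)/\inj$.

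\emph{Main obstacle and the case $\kappa=-1$.} The main obstacle is verifying the reflection condition (ii) for the minimizer: uniqueness of the minimizing point $z$ in a locally $\CATminus$/CAT$(\kappa)$ space with convex $\Sigma$. I would handle it by a comparison-triangle argument, using that $\sfd(\cdot,\Sigma)$ is convex locally. For the estimate when $\kappa=-1$, I would bound the growth of broken geodesic segments of length $T$. Each such segment lifts, via an unfolding into $\tilde\Y$, to a sequence of geodesic segments. Pieces are counted by orbit points, at most $e^{hT}$ up to subexponential factors. The choice at each of the at most $T/\inj$ intersections between reflection and straight continuation contributes a factor $2^{T/\inj}$.
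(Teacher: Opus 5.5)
Your proof of the two-sided bound and of the limit is correct and is the paper's argument. The paper also reduces, via Theorems \ref{theo:intro_entropy_formula_branched_coverings} and \ref{theo_intro:freq_does_not_depend}, to $\textup{Erg-Opt}(\freq)=1/\inj$ (Proposition \ref{prop:erg_opt_h}). It proves this with your two ingredients: branching times are separated by $\inj$, and the bouncing orbit along a minimizer of $\inj$ lies in $\Br$ by the angle condition of Proposition \ref{prop:normal_inj_radius_minimizing_curve}(ii). The point you left open follows directly from the definitions. A concatenation is non-branching exactly when the next piece is internal, so every branching time is the initial time of an external piece, which has length at least $\inj$.

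The sketch for $\kappa=-1$ has a genuine gap.
\begin{itemize}
\item A broken geodesic with reflections does not unfold to a single geodesic of $\tilde\Y$. Each external piece lifts to its own segment, so your count is a product over pieces. The multiplicative constant $C$ of the orbit-counting estimate, at the chosen scale, therefore enters once per piece, giving a factor of order $C^{T/\inj}$. That factor is exponential in $T$, not subexponential.
\item The choice at an intersection is not binary in general. For a codimension-two $\Sigma$, as in Gromov--Thurston pairs, condition (b) of Theorem \ref{theo:characterization_broken_geodesics} fixes only the tangential component of the outgoing direction. This leaves a whole circle of admissible normal directions.
\end{itemize}

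In fact the inequality cannot be proved as stated. Let $\Y$ be a circle of length $L$ with a segment of length $a$ attached at a point $p$, and let $\Sigma=\{q\}$ be the free end of the segment.
\begin{itemize}
\item This compact metric graph is locally $\CATminus$.
\item $\pi_1(\Y)\cong\Z$, so $h(\tilde\Y;\pi_1(\Y))=0$.
\item $\inj=L+2a$.
\end{itemize}
Yet $\Br$ contains every path that, at each passage through $p$ coming from the circle, does one of two things:
\begin{itemize}
\item it goes once more around the circle (time $L$); or
\item it runs to $q$, reflects, returns, and goes around the circle in either direction (time $L+2a$).
\end{itemize}
These paths form a suspension of a full $3$-shift whose entropy $h_*$ solves $e^{-h_*L}+2e^{-h_*(L+2a)}=1$. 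Hence $h_*>\log(2)/(L+2a)$. You can also get a geodesically complete counterexample: prolong the segment beyond $q$ by a long segment ending in a long loop, keeping $q$ at distance $a$ from $p$. Then $h(\tilde\Y;\pi_1(\Y))\to 0$ while the same subsystem survives.

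The paper's route for this clause is different, and only its first step is sound. It bounds $h_\textup{top}(\Br,\Phi_1)$ by $h(\tilde\X;\pi_1(\X))$ for $\X=\vee^2(\Y,\Sigma)$, which is valid. It then bounds the critical exponent of the tree of copies of $\tilde\Y$ by $h(\tilde\Y;\pi_1(\Y))+\log(2)/\inj$, which fails. For instance, take $\Y=\mathbb{S}^1$ of length $L$ and $\Sigma$ a point. Then $\vee^2(\Y,\Sigma)$ is a figure eight with critical exponent $\log(3)/L$, which exceeds $\log(2)/L$. What is actually available for $\kappa=-1$ is $h_\textup{top}(\Br,\Phi_1)\le h(\tilde\X;\pi_1(\X))$ with $\X=\vee^2(\Y,\Sigma)$, not the bound with $\log(2)/\inj$.
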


The authors are not aware of any examples where $h_\textup{top}(\Br,\Phi_1) = \infty$. The condition $\Sigma^\circlearrowleft \neq \emptyset$ is discussed in detail in Section \ref{subsec:locally_convex_subsets} and, when $\kappa = 0$, it is equivalent to the fact that the inclusion of the $\pi_1(\Sigma)$ into $\pi_1(\Y)$ is not surjective: it is a non-triviality condition.

In Theorem \ref{theo_intro:asymptotic_formula}, the asymptotic is independent of the curvature upper bound $\kappa$, suggesting that, asymptotically, the whole entropy contribution is concentrated along the singular set $\Sigma$ and arises from the gluing procedure. Moreover, we emphasize once again that this result is independent of the particular type of branched covering, so it applies equally to the wedges $\vee^k(\Y,\Sigma)$ and to Gromov-Thurston manifolds $\textup{GT}^k(\Y,\Sigma)$. In this last case, using Margulis type estimates we can express the additive constant of Theorem \ref{theo_intro:asymptotic_formula} in terms of the dimension and the diameter.

\begin{corollary}
\label{cor:GT_manifolds_estimate}
    Let $n\in \N$ and $D\ge 0$. Then there exists $E(n,D)\ge 0$ such that for every Gromov-Thurston pair $(\Y,\Sigma)$ with $\textup{dim}(\Y) = n$ and $\textup{Diam}(\Y) \le D$ it holds that
    $$\left\vert h_\textup{top}(\LocGeod(\textup{GT}^k(\Y,\Sigma)), \Phi_1) - \frac{\log(k)}{\inj}\right\vert \le  E(n,D).$$
\end{corollary}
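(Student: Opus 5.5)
The plan is to deduce the corollary from Theorem \ref{theo_intro:asymptotic_formula}, applied with $\kappa=-1$ and $\X = \textup{GT}^k(\Y,\Sigma)$, which is a compact, pure-degree $k$, locally $\CATminus$ branched covering of $(\Y,\Sigma)$. We need to check that its hypotheses hold. For a Gromov-Thurston pair, $\Sigma$ is a totally geodesic submanifold, hence closed and locally convex. Also $\Sigma^\circlearrowleft\neq\emptyset$: for instance, take a point of $\Y\setminus\Sigma$ and a closed geodesic through it that is not contained in $\Sigma$; such a geodesic exists because $\pi_1(\Sigma)\to\pi_1(\Y)$ is not surjective, since $\Sigma$ has codimension two. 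Any such geodesic meets $\Sigma$ (it must, otherwise no branching would occur, and we could use a segment orthogonal to $\Sigma$ otherwise), giving a segment with endpoints on $\Sigma$.

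Granting the hypotheses, Theorem \ref{theo_intro:asymptotic_formula} together with its $\kappa=-1$ addendum gives
\[
\frac{\log(k-1)}{\inj} \le h_\textup{top} \le \frac{\log k}{\inj} + h(\tilde\Y;\pi_1(\Y)) + \frac{\log 2}{\inj}.
\]
Since $\tilde\Y=\mathbb{H}^n$, the critical exponent is $h(\tilde\Y;\pi_1(\Y)) = n-1$. The lower bound differs from $\log(k)/\inj$ by $\log(k/(k-1))/\inj \le \log 2/\inj$. Hence
\[
\left\vert h_\textup{top} - \frac{\log k}{\inj}\right\vert \le n-1+\frac{\log 2}{\inj},
\]
and it remains to bound $\inj$ from below by a constant $c(n,D)>0$.

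The main obstacle is this lower bound on $\inj$. I would use the Margulis lemma together with the diameter bound. A closed hyperbolic $n$-manifold of diameter at most $D$ has injectivity radius bounded below by some $r_0(n,D)>0$. The reason is that its volume is bounded below by the Margulis constant, and a thin tube around a short geodesic would force the diameter to grow like the logarithm of the inverse of the length.

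Now let $\sigma\in\Sigma^\circlearrowleft$ have length $\ell<2r_0$. Both endpoints of $\sigma$ lie in the totally geodesic $\Sigma$, at distance $\ell$ from each other, so inside the ball of radius $r_0$ around one endpoint, which lifts isometrically to $\mathbb{H}^n$. The lift of $\Sigma$ through that endpoint is a totally geodesic hyperplane-piece. In the embedded ball, $\Sigma$ lifts to possibly several disjoint totally geodesic pieces. If both endpoints lie on the same piece, then $\sigma$ lies in it by convexity, contradicting that $\sigma$ meets $\Sigma$ only at its endpoints. So the endpoints lie on two distinct sheets of $\Sigma$ at distance $\ell$.

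To exclude this, I would use an embedded normal tubular neighbourhood of $\Sigma$ of radius bounded below in terms of $n$ and $D$. This is again a Margulis-type statement: $\Sigma$ is an embedded closed totally geodesic submanifold whose diameter is bounded in terms of $D$ and $n$ via a volume and packing argument. I expect this step to be the delicate one. Having it yields $\inj\ge c(n,D)$, and we can take $E(n,D)=n-1+\log 2/c(n,D)$.
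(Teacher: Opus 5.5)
Your reduction coincides with the paper's. Theorem~\ref{theo_intro:asymptotic_formula} with its $\kappa=-1$ addendum, together with $h(\tilde\Y;\pi_1(\Y))=n-1$, gives $\bigl|h_\textup{top}(\LocGeod(\textup{GT}^k(\Y,\Sigma)),\Phi_1)-\log(k)/\inj\bigr|\le n-1+\log(2)/\inj$. So everything rests on a bound $\inj\ge c(n,D)>0$. Your justification of $\Sigma^\circlearrowleft\neq\emptyset$ is garbled, since a closed geodesic need not meet $\Sigma$. Proposition~\ref{prop:normal_inj_radius_minimizing_curve}(iv) with $\kappa=0$ gives it directly, and if $\Sigma^\circlearrowleft=\emptyset$ the estimate is trivial by Remark~\ref{rmk:entropy_if_Sigma_trivial}.

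The genuine gap is the step you flagged, and it cannot be filled. There is no normal tubular neighbourhood of $\Sigma$ of radius bounded below in terms of $n$ and $D$; indeed there is no bound $\inj\ge c(n,D)$ at all.
\begin{itemize}
\item For $n=2$, fix a closed hyperbolic surface $\Y$, take points $p,q$ at small distance $\delta$, and let $N=[p,q]$. This is a Gromov-Thurston pair in the sense of Section~\ref{subsec:GT-manifolds}. Since $[p,q]\in\Sigma^\circlearrowleft$, we get $\inj=\delta\to0$ with $\Y$ fixed.
\item For $n=3$, take $\Sigma$ a long separating simple closed geodesic on a fixed embedded totally geodesic surface $V\subset\Y$, and $N\subset V$ the subsurface it bounds. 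An area count in $V$ forces two strands of $\Sigma$ within distance $O(\textup{Area}(V)/\ell(\Sigma))$. The short arc of $V$ between them contains an element of $\Sigma^\circlearrowleft$.
\end{itemize}
These are exactly your ``two distinct sheets'': neither the size of $\Sigma$ nor the separation of its sheets is controlled by $n$ and $D$.

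The paper obtains the lower bound from Proposition~\ref{prop:normal_inj_radius_minimizing_curve}(i), namely $\inj\ge\rho_\textup{CAT}(\Y)$, combined with Proposition~\ref{prop:lower_bound_CAT_radius}. This does not close your gap either. Part (i) rests on Lemma~\ref{lemma:uniform_distance_geodesics_to_singular_set}, whose proof uses convexity of $\Sigma\cap B(\sigma(t),\rho_\textup{CAT}(\Y))$, and local-to-global convexity requires connectedness. In the two-point example that set is $\{p,q\}$. The segment $\sigma=[p,q]$ stays within $\delta/2$ of $\Sigma$ without lying in it, so both the lemma and the inequality $\inj\ge\rho_\textup{CAT}(\Y)$ fail.

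Nor is this only a defect of the proof. Take $k=2$ in the two-point example and count local geodesic loops of $\textup{GT}^2(\Y,\{p,q\})$:
\begin{itemize}
\item non-extremal crossings contribute a factor $1$;
\item each extremal crossing is adjacent to an external piece of length at least about twice the injectivity radius of $\Y$;
\item runs of bounces along $[p,q]$ contribute only the choice of how many bounces occur.
\end{itemize}
This count gives entropy $O(\log(1/\delta))$, so $|h_\textup{top}-\log(2)/\inj|\to\infty$ as $\delta\to0$. What your argument, and the paper's, actually proves is the estimate with $E=n-1+\log(2)/s_0$ under the additional hypothesis $\inj\ge s_0$.
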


When the curvature bound satisfies $\kappa = -1$ and $\Y$ has finite topological dimension, there is a unique measure of maximal entropy for the dynamical system $(\LocGeod(\X), \Phi_1)$: it is called the Bowen-Margulis measure and it is denoted by $\mm_\textup{BM}^\X$. The Bowen-Margulis measures associated to two different normal branched coverings over the couple $(\Y,\Sigma)$ of pure degree $k$ project to the same measure $\mm_k$ on $(\Br,\Phi_1)$, see Theorem \ref{theo:wedges_maximal_entropy}. So, we have another sequence of invariants associated to a couple $(\Y,\Sigma)$ with $\Y$ compact and locally $\CATminus$ and $\Sigma \subseteq \Y$ closed and locally convex: the sequence of measures $\{\mm_k\} \subseteq \mathcal{M}_1(\Br,\Phi_1)$.

In Section \ref{subsec:dynamical_properties} we prove some dynamical properties of the measures $\mm_k$, and correspondingly of the Bowen-Margulis measures on every pure-degree branched covering. For instance we show that $\mm_k$-a.e. $\eta \in \Br$ has infinitely many non-extremal intersections, with reflection angle as close to zero as we want. Under minimal assumptions we show that $\mm_k$-a.e. $\eta \in \Br$ has arbitrarily long external subsegments. 

For the purpose of this introduction we focus only on the asymptotic properties of the sequence $\{\mm_k\}$. Here we denote by $\mm_\infty$ any of its sublimit with respect to the weak convergence. We denote by $\textup{Broken}_{\textup{bc}}^\perp$ the set of broken geodesics $\eta \in \Br$ with only external pieces, each of which of length exactly equal to $\inj$. In other words, $\eta$ is the concatenation of local geodesics leaving and coming back to $\Sigma$ in the shortest possible way.

\begin{theorem}
    \label{theo_intro:asymptotic_BM_measures}
    Let $(\Y,\sfd_\Y)$ be a compact, locally $\textup{CAT}(-1)$ space  and let $\Sigma \subseteq \Y$ be a closed, locally convex subset such that $\Sigma^\circlearrowleft \neq \emptyset$. Then $\mm_\infty$ is supported on $\textup{Broken}_{\textup{bc}}^\perp(\Y,\Sigma)$. 
\end{theorem}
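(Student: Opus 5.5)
The plan is to run the low-temperature limit argument behind Theorem~\ref{theo_intro:asymptotic_formula} at the level of measures, and then turn the resulting ergodic-optimization statement into a geometric one.

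\textbf{Step 1: $\mm_k$ almost maximizes $\int\freq$.} Since $\mm_k$ is the projection of the Bowen--Margulis measure, which has maximal entropy, it realizes the pressure in Theorem~\ref{theo:intro_entropy_formula_branched_coverings} with potential $\log(k-1)\freq_{\textup{non-ext}}+\log(k)\freq_\external$. Using $\log(k-1)\,\freq \le \log(k-1)\freq_{\textup{non-ext}}+\log(k)\freq_\external \le \log(k)\,\freq$ and the lower bound of Theorem~\ref{theo_intro:asymptotic_formula}, we get
$$\frac{\log(k-1)}{\inj}\le h_\textup{top}(\Y,\Sigma,k) = h_{\mm_k}+\int(\ldots)\,\d\mm_k \le h_\textup{top}(\Br,\Phi_1)+\log(k)\int\freq\,\d\mm_k.$$
The finiteness of $h_\textup{top}(\Br,\Phi_1)$ is supplied by the $\kappa=-1$ case of Theorem~\ref{theo_intro:asymptotic_formula}. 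Dividing by $\log k$ gives $\liminf_k\int\freq\,\d\mm_k\ge 1/\inj$.

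\textbf{Step 2: pass to the limit.} The function $\freq$ is not continuous: it counts intersection times in $(-1,0]$, which can jump at the endpoints. To handle this, I would replace it by the time-averaged version $\frac1T\int_0^T\freq\circ\Phi_s\,\d s$, or count intersections with a smoothed bump; by invariance this has the same integral against any invariant measure. I would then argue upper semicontinuity of $\mu\mapsto\int\freq\,\d\mu$. The reason is that in a limit of broken geodesics, intersection points can merge or become non-branching but cannot be created, because external pieces have length at least $\inj>0$. This yields $\int\freq\,\d\mm_\infty\ge 1/\inj$.

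The matching upper bound also holds: between two branching intersections there is an external piece of length at least $\inj$. Hence for every invariant $\mu$ we have $\int\freq\,\d\mu\le 1/\inj$, with equality exactly when $\mu$-a.e.\ $\eta$ has all pieces external and of length exactly $\inj$. This follows from Birkhoff: the frequency equals the reciprocal of the average length of the pieces, and any time spent in $\Sigma$ or in pieces longer than $\inj$ strictly lowers it. Therefore $\int\freq\,\d\mm_\infty=1/\inj$.

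\textbf{Step 3: identify the support.} Equality forces $\mm_\infty$-a.e.\ $\eta$ to have no subsegment inside $\Sigma$, no non-branching intersections, and every external piece of length exactly $\inj$. That is precisely $\textup{Broken}_{\textup{bc}}^\perp(\Y,\Sigma)$. To make this rigorous I would apply the ergodic theorem to the ergodic components of $\mm_\infty$, using the function ``length of the current piece''. I would also check that $\textup{Broken}_{\textup{bc}}^\perp$ is closed, which follows from compactness arguments for geodesics realizing $\inj$, so that the support statement is meaningful.

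The main obstacle I expect is the semicontinuity in Step 2. Degenerations are the danger: a sequence of broken geodesics may have intersections that limit to tangential contacts with $\Sigma$, or pieces in $\Sigma$ that shrink. One must show that $\freq$ cannot jump upward in the limit. I would first try to prove directly that the set of broken geodesics with at least $n$ branching intersections in a closed time window is closed in $\Br$, using the characterization in Theorem~\ref{theo:characterization_broken_geodesics}.
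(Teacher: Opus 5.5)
The fault is in Step 2; Step 1 and the counting in Step 3 are sound. Weak convergence $\mm_k\to\mm_\infty$ gives $\int\freq\,\d\mm_\infty\ge\limsup_k\int\freq\,\d\mm_k$ only if $\freq$, or a proxy with the same integrals, is upper semicontinuous. That means branching intersections of $\eta_j$ must not be \emph{lost} when $\eta_j\to\eta$. Your justification, that intersections cannot be \emph{created}, is the lower semicontinuity direction. It only yields the useless bound $\int\freq\,\d\mm_\infty\le\liminf_k\int\freq\,\d\mm_k$; your phrase ``cannot jump upward'' has the same orientation.

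The property you actually need is false. Let $\Y$ be a closed hyperbolic surface and $\Sigma$ a simple closed geodesic. Let $\eta_\theta\in\Br$ make a specular reflection at $\eta_\theta(0)=p\in\Sigma$ with grazing angle $\theta$, and be a local geodesic of $\Y$ elsewhere. As $\theta\to0$, $\eta_\theta$ converges uniformly on compact sets to the unit-speed parametrization of $\Sigma$, for which $\partial\eta^{-1}(\Sigma)=\emptyset$. So $\{\freq\ge1\}$ is not closed, and the closedness lemma you plan to prove is false, even with a closed time window. Time-smoothing removes only the endpoint discontinuity, not this one. Intersections can also be created: if $\Sigma$ is a point, geodesics missing it by $1/j$ converge to one through it.

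Upper semicontinuity at the level of measures would need a separate argument controlling the $\mm_k$-mass near such degenerations, and you give none. Without it you do not get $\int\freq\,\d\mm_\infty=1/\inj$, and Step 3 rests entirely on that equality.

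The missing idea is to run your Step 3 counting at finite $k$, where the integral is controlled, and to pass to the limit only through a quantity with the right semicontinuity. Set $U_\varepsilon=\{\ell_{\external}>(1+\varepsilon)\inj\}$ and suppose $\mm_k(U_\varepsilon)\ge c$. Birkhoff for the ergodic $\mm_k$ shows that a fraction at least $c/2$ of time is spent in external pieces longer than $(1+\varepsilon)\inj$. This forces $\int\freq\,\d\mm_k\le 1/\inj-\beta$, with $\beta=\beta(c,\varepsilon)>0$ independent of $k$. That contradicts your Step 1, so $\mm_k(U_\varepsilon)\to0$; this is Proposition \ref{prop:limit_volume_U_epsilon}.

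Next, $\ell_{\external}$ is lower semicontinuous, since staying off $\Sigma$ on a compact time interval is an open condition (Lemma \ref{lemma:next_prev_Borel}). Hence $U_\varepsilon$ is open, and the portmanteau theorem gives $\mm_\infty(U_\varepsilon)\le\liminf_k\mm_k(U_\varepsilon)=0$. Flow invariance then gives full measure to the closed set where every external piece has length exactly $\inj$. On that set every external piece is an orthogonal minimizer, so the grazing degeneration above cannot occur and $\ell_{\internal}$ is continuous. This lets you eliminate internal pieces the same way. That is the paper's route, and it never needs any semicontinuity of $\freq$.
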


In Section \ref{subsec:asymptotic_BM} we show that the dynamical system $(\textup{Broken}_{\textup{bc}}^\perp(\Y,\Sigma), \Phi_1)$ is dynamically simple, being essentially conjugated to a product of a subshift of finite type with the interval. We expect that the sequence $\{\mm_k\}$ is actually convergent and that $\mm_\infty$ is a measure of maximal entropy of $(\textup{Broken}_{\textup{bc}}^\perp(\Y,\Sigma), \Phi_1)$. We will discuss possible strategies for approaching this question and relative difficulties at the end of Section \ref{subsec:asymptotic_BM}.

\subsection{Comparison with the literature}
In \cite[Théorème 3.1.3]{Stocker19}, a non-optimal lower bound for $h_\textup{top}(\LocGeod(\textup{GT}^{2k}(\Y,\Sigma), \Phi_1))$ was obtained. Namely, he proved that for every $k\ge 1$ it holds that
\begin{equation}
    h_\textup{top}(\LocGeod(\textup{GT}^{2k}(\Y,\Sigma), \Phi_1)) \ge n-1 + C\log(k),
\end{equation}
where $C$ is a constant depending on $\textup{GT}^{2}(\Y,\Sigma)$.
Comparing with Theorem \ref{theo_intro:asymptotic_formula}, we notice that asymptotically, i.e. for $k$ very large, this lower bound is probably not optimal since the constant $C$ is quite involved and likely smaller than $\inj$. Our study has been influenced by some of the constructions in \cite{Stocker19}, but we performed a more careful analysis. There are no known results in the literature regarding any upper bound on the topological entropy of the geodesic flow of Gromov-Thurston manifolds.

Besides symmetric spaces and trees (\cite{Lim2008}), there are no explicit examples where the topological entropy of the geodesic flow of a locally $\textup{CAT}(0)$ space is computed. The closest case is \cite{LedrappierLim2010}, where for compact quotients of regular Euclidean or hyperbolic buildings it is expressed as a certain topological pressure over an apartment, having as potential a suitable geometric function. Our Theorem \ref{theo:intro_entropy_formula_branched_coverings} has a similar structure. Both are based on \cite{LedrappierWalters1977}.

\subsection{Structure of the paper}

The paper is organized as follows. In Section \ref{sec:preliminaries} we recall some preliminaries of metric geometry and ergodic theory. In Section \ref{subsec:locally_CAT}, we recall and prove some relevant facts about the geometry of $\mathrm{CAT}(\kappa)$ spaces, especially their geodesic flow and locally convex subsets. In Section \ref{sec:top_branched_coverings} we introduce branched coverings, the space of broken geodesics and the geodesic continuation frequency functions. Here we will also prove Theorems \ref{theo_intro:broken_does_not_depend} and \ref{theo_intro:freq_does_not_depend}. In Section \ref{sec:examples} we will provide examples of branched coverings, like wedges and Gromov-Thurston manifolds. Other construction will be briefly discussed at the end of the section. In Section \ref{sec:entropy_branched_covering} we compute the entropy of branched coverings proving Theorems \ref{theo:intro_entropy_formula_branched_coverings} and \ref{theo:intro_entropy_does_not_depend}.
In Section \ref{sec:asymptotic} we prove Theorem \ref{theo_intro:asymptotic_formula}. In Section \ref{sec:asymptotic_BM} we study the dynamical properties of the Bowen-Margulis measure and we prove Theorem \ref{theo_intro:asymptotic_BM_measures}.

\subsection{Acknowledgments}
The authors thank Corey Bregman for many interesting and helpful discussions in the early part of the project and Pierre Pansu for answering our questions about Gromov-Thurston manifolds. We also thank Gilles Courtois and Fr\'ed\'eric Paulin for independently pointing us towards Arnaud Stocker's PhD-thesis and Gérard Besson for providing us the manuscript.

\section{Preliminaries}
\label{sec:preliminaries}
Let $(\X, \sfd)$ be a metric space. Given a point $x \in \X$ and a real number $r\ge 0$, we denote by $B(x,r)$ the open ball of center $x$ and radius $r$. 

Let $B\subseteq \X$. A subset $S$ of $B$ is called {\em $r$-separated} if $\sfd(y,y') > r$ for all $y,y'\in S$, while it is called {\em $r$-dense} if for all $y \in B$ there exists $z\in S$ such that $\sfd(y, z) \le r$. We define $\textup{Pack}_\sfd(B,r) \in \N\cup \{\infty\}$ as the maximal cardinality of a $r$-separated subset of $B$ and $\textup{Cov}_\sfd(B,r) \in \N\cup \{\infty\}$ as the minimal cardinality of a $r$-dense subset. When the metric $\sfd$ is clear from the context, we just omit it. The two quantities above are classically related, more precisely:
	\begin{equation}
		\label{eq:Cov_Pack}
		\textup{Pack}_\sfd(B,2r) \le \textup{Cov}_\sfd(B,2r) \le \textup{Pack}_\sfd(B,r)
	\end{equation}
	for every $B\subseteq \X$ and every $r>0$.

\subsection{Curves}
\label{subsec:curves}

Given an interval $I\subseteq \R$, we denote by $\inf I, \min I, \sup I, \max I$ the infimum, minimum, supremum and maximum of $I$, respectively. We denote by $\textup{Int}(I)$ the interior of $I$.

Let $(\X,\sfd)$ be a metric space. A curve is a continuous map $\gamma\colon I \to \X$, where $I$ is an interval of $\R$. If $I$ contains its minimum then we define the \emph{starting point of $\gamma$} as $\alpha(\gamma) := \gamma(\min I)$. Similarly, if $I$ contains its maximum, we define the \emph{ending point of $\gamma$} as $\omega(\gamma) := \gamma(\max I)$. 
Given a curve $\gamma\colon I \to \X$ we define the curve $-\gamma\colon I\to \X$, $-\gamma(t) := \gamma(\sup I + \inf I - t)$, i.e. the curve $\gamma$ followed in the opposite direction. The length of a curve $\gamma$ is denoted by $\ell(\gamma)$.

\begin{definition}[Concatenation of curves]
\label{defin:concatenation_of_curves}    A subset $J\subseteq \Z$ is called an \emph{interval} if for every integers $j \le j'\le j''$ such that $j,j''\in J$ then $j' \in J$ as well. Let $J\subseteq \Z$ be an interval and let $\{\gamma_j\colon I_j \to \X\}_{j\in J}$ be a set of curves. If $\#J \ge 2$ we assume the following conditions.
\begin{itemize}
    \item[-] $I_j = [0,L_j]$ if $j\notin\{\inf J, \sup J\}$.
    \item[-] $I_{\inf J}$ is either as above or of the form $(-\infty,0]$. Similarly, $I_{\sup J}$ is either as above or of the form $[0,+\infty)$.
    \item[-] $\omega(\gamma_j) = \alpha(\gamma_{j+1})$ for every $j \in J \setminus \{\sup J\}$.
\end{itemize}
We set $T_0 = 0$, $T_j = \sum_{i=0}^{j-1} L_i$ if $j > 0$ and $T_j := -\sum_{i=j}^{-1} L_i$ if $j<0$.
We define the \emph{concatenation of the curves $\{\gamma_j\}_{j\in J}$} to be
$$\underset{j\in J}{\star}\gamma_j \colon [T_{\inf J}, T_{\sup J}] \to \X, \quad t\mapsto \gamma_j(t - T_j) \text{ for } t\in [T_j,T_{j+1}],$$
with the obvious extensions when $T_{\inf J} = -\infty$ or $T_{\sup J} = +\infty$.

\end{definition}


\subsection{Dynamical systems, entropy and pressure}
\label{subsec:dynamical_systems_entropy_pressure}

A dynamical system is a couple $(\W,T)$ where $\W$ is a topological space and $T\colon \W \to \W$ is a continuous map. For us, $\W$ will always be a compact, metrizable space. We denote by $\mathscr{B}$ the $\sigma$-algebra of Borel subsets of $\W$ and by $\mathcal{M}_1(\W,T)$ the set of probability measures on $(\W,\mathscr{B})$ that are $T$-invariant, i.e. such that $T_\#\mu = \mu$. A measure $\mu\in \mathcal{M}_1(\W,T)$ is ergodic if the $\mu$-measure of every Borel subset $A$ of $\W$ such that $T^{-1}A \subseteq A$ is either $0$ or $1$. The set of ergodic measures is denoted by $\mathcal{E}_1(\W,T)$. The measure-theoretic entropy of the dynamical system $(\W,T)$ is by definition:
	\begin{equation}
		\label{eq:defin_top_entropy_via_measures}
		h_\text{meas}(\W,T) := \sup_{\mu\in \mathcal{M}_1(\W,T)}h_\mu(\W,T),
	\end{equation}
	where $h_\mu(\W,T)$ is the Kolmogorov-Sinai entropy of the measure $\mu$. We refer to \cite[\S 4.4]{Walters2000} for the definition. When it is clear from the context, we will simply write $h_\mu$ in place of $h_\mu(\W,T)$. A measure realizing the supremum in \eqref{eq:defin_top_entropy_via_measures} is said to be \emph{of maximal entropy}.
    
	In \cite{Bow73}, R. Bowen introduced the \emph{topological entropy} of $(\W,T)$. It is defined as
	\begin{equation}
		\label{eq:defin-Bowen-entropy}
		h_\textup{top}(\W,T) =\lim_{r\to 0}\lims_{n \to +\infty}\frac{1}{n} \log \textup{Cov}_{\text{\texthtd}^n}(\W,r),
	\end{equation}
	where $\text{\texthtd}$ is any metric inducing the topology of $\W$ and where $\text{\texthtd}^n$ is the \emph{dynamical metric}
	\begin{equation}
		\label{eq:defin_dynamical_metric}
		\text{\texthtd}^n(y,y') := \max_{i=0,\ldots,n-1} \text{\texthtd}(T^iy,T^iy')
	\end{equation}
	and $\textup{Cov}_{\text{\texthtd}^n}(\W,r)$ denotes the minimal number of balls of radius $r$, with respect to the metric $\text{\texthtd}^n$, needed to cover $\Y$. This quantity does not depend on the choice of $\textup{\texthtd}$. For a subset $B\subseteq \Y$ which is not necessarily $T$-invariant we define 
\begin{equation}
    \label{eq:defin_top_entropy_fibers}
    h_\textup{top}(B,T) := \lim_{r\to 0} \lims_{n\to +\infty} \frac{1}{n}\log \textup{Cov}_{\text{\texthtd}^n}(B,r),
\end{equation}
where $\text{\texthtd}$ is any metric inducing the topology of $\W$.
The variational principle (\cite[Theorem 8.6]{Walters2000}) says the following.
	\begin{proposition}
		\label{prop:variational_principle}
		Let $(\W,T)$ be a compact, metrizable dynamical system. Then $h_\mu \le h_\textup{top}(\W,T)$ for every $\mu\in \mathcal{M}_1(\W,T)$. Moreover, $h_\textup{meas}(\W,T) = h_\textup{top}(\W,T)$. 
	\end{proposition}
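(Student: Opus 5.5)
This is the classical variational principle (Walters, Theorem 8.6), cited here rather than proved. The plan follows Misiurewicz's argument, in two halves: the inequality $h_\mu \le h_\textup{top}(\W,T)$ for each invariant $\mu$, and then equality of the suprema.

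\emph{Upper bound.} Fix $\mu\in\mathcal{M}_1(\W,T)$ and a finite Borel partition $\xi=\{A_1,\dots,A_N\}$.

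1. By regularity, choose compact sets $B_i\subseteq A_i$ with $\mu(A_i\setminus B_i)$ small. Set $B_0=\W\setminus\bigcup_i B_i$ and $\beta=\{B_0,\dots,B_N\}$. A standard estimate gives $H_\mu(\xi\mid\beta)\le 1$.

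2. The open sets $U_i=B_0\cup B_i$ cover $\W$, and each of them meets at most two elements of $\beta$.

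3. Let $r$ be below a Lebesgue number of this cover. Any $\text{\texthtd}^n$-ball of radius $r$ then meets at most $2^n$ elements of $\beta^n:=\bigvee_{i=0}^{n-1}T^{-i}\beta$.

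4. Hence $H_\mu(\beta^n)\le \log\big(2^n\,\textup{Cov}_{\text{\texthtd}^n}(\W,r)\big)$. Dividing by $n$ and using $h_\mu(T,\xi)\le h_\mu(T,\beta)+H_\mu(\xi\mid\beta)$, we get $h_\mu(T,\xi)\le h_\textup{top}+\log 2+1$.

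5. Apply this bound to $(\W,T^m)$, using $h_\mu(T^m)=m\,h_\mu(T)$ and $h_\textup{top}(T^m)=m\,h_\textup{top}(T)$. Dividing by $m$ and letting $m\to\infty$ removes the additive constant.

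\emph{Lower bound.} Fix $r>0$.

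1. For each $n$, choose a maximal $(n,r)$-separated set $E_n$ with respect to $\text{\texthtd}^n$; its cardinality is comparable to the covering numbers by \eqref{eq:Cov_Pack}.

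2. Let $\sigma_n$ be the uniform probability on $E_n$ and $\mu_n=\frac1n\sum_{i<n}T^i_\#\sigma_n$.

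3. By compactness, pass to a subsequence along which $\frac1n\log\# E_n$ tends to its $\limsup$ and $\mu_n\to\mu$ weakly. The limit $\mu$ is $T$-invariant.

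4. Choose a partition $\xi$ with pieces of diameter $<r$ and $\mu(\partial\xi)=0$. Then each element of $\xi^n$ contains at most one point of $E_n$, so $H_{\sigma_n}(\xi^n)=\log\#E_n$.

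5. Fix $q$ and split $\{0,\dots,n-1\}$ into blocks of length $q$ with varying offsets. Subadditivity of entropy then gives $\frac{q}{n}\log\#E_n\le H_{\mu_n}(\xi^q)+\frac{2q^2}{n}\log\#\xi$.

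6. Since $\mu(\partial\xi^q)=0$, weak convergence gives $H_{\mu_n}(\xi^q)\to H_\mu(\xi^q)$. Letting $n\to\infty$ and then $q\to\infty$ yields $h_\mu\ge\limsup_n\frac1n\log\#E_n$.

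7. Finally let $r\to0$ to conclude $h_\textup{meas}\ge h_\textup{top}$.

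The main obstacle is the combinatorial block decomposition in step 5 of the lower bound, together with arranging $\mu$-null partition boundaries so that entropy is continuous under weak convergence.
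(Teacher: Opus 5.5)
Your proposal is correct. It is Misiurewicz's proof of the variational principle as given in Walters, Theorem 8.6, which is what the paper cites here (it gives no proof of its own); the one ingredient you leave implicit is in step 5 of the lower bound, where besides subadditivity you also need concavity of $\nu\mapsto H_\nu(\xi^q)$ to pass from $\frac1n\sum_{p<n}H_{T^p_\#\sigma_n}(\xi^q)$ to $H_{\mu_n}(\xi^q)$.
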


    The map $\mathcal{M}_1(\W,T) \ni \mu \mapsto h_\mu \in [0,\infty]$ is not continuous in general, where on $\mathcal{M}_1(\W,T)$ we consider the topology of weak convergence. It is upper semicontinuous in case the compact dynamical system is $h$-expansive (see \cite{Bowen1972}), i.e. if there exists $\varepsilon > 0$ such that 
    $$\sup_{w\in \W} h_\textup{top}(\{w'\in \W\,:\, \textup{\texthtd}(T^nw,T^nw')\le \varepsilon \textup{ for every } n\in \N\} = 0.$$
    This property does not depend on the choice of the metric $\textup{\texthtd}$ inducing the topology of $\W$.
    \begin{proposition}[{\cite{Bowen1972}}]
    \label{prop:measure_entropy_upper_semicontinuous}
        Let $(\W,T)$ be a compact, metrizable, $h$-expansive dynamical system and let $\mu_n,\mu \in \mathcal{M}_1(\W,T)$ be such that $\mu_n \to \mu$. Then
        $$h_\mu \ge \lims_{n\to +\infty} h_{\mu_n}.$$
    \end{proposition}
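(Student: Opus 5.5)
The plan is to reduce upper semicontinuity of $\mu\mapsto h_\mu$ to the continuity of static entropies $H_\mu(\xi^N)$ of a single well-chosen finite partition $\xi$. This works once we know that, for an $h$-expansive system, every invariant measure satisfies $h_\mu = h_\mu(T,\xi)$ for every partition $\xi$ of small diameter. Fix $\varepsilon>0$ as in the definition of $h$-expansiveness. Write $\Gamma_\varepsilon(w) := \{w'\,:\, \textup{\texthtd}(T^nw,T^nw')\le \varepsilon \ \forall n\in\N\}$, so that $\sup_w h_\textup{top}(\Gamma_\varepsilon(w),T)=0$.

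\textbf{Step 1 (Bowen's estimate).} We show that for every finite Borel partition $\xi$ with atoms of diameter $<\varepsilon$ and every $\nu\in\mathcal{M}_1(\W,T)$,
$$h_\nu \le h_\nu(T,\xi) + \sup_{w} h_\textup{top}(\Gamma_\varepsilon(w),T) = h_\nu(T,\xi).$$
Take an arbitrary finite partition $\eta$ and use $h_\nu(T,\eta)\le h_\nu(T,\xi) + \limsup_n \frac1n H_\nu(\eta^n\mid \xi^n)$, where $\eta^n=\bigvee_{i<n}T^{-i}\eta$. Bound $H_\nu(\eta^n\mid\xi^n)$ by $\log$ of the maximal number of atoms of $\eta^n$ meeting a single atom $A$ of $\xi^n$. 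Each such $A$ lies in a finite-time set $\{w' : \textup{\texthtd}(T^iw,T^iw')\le\varepsilon,\ 0\le i<n\}$.

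The main obstacle is passing from these finite-time Bowen sets to the infinite-time sets $\Gamma_\varepsilon(w)$. Following Bowen, I would use a compactness argument. Given $\delta>0$ and a scale $r$, each $\Gamma_\varepsilon(w)$ is covered by few $(m,r)$-balls, say $e^{m\delta}$ of them for a fixed large $m$. By compactness, the same cover works for the finite-time sets $\{w':\textup{\texthtd}(T^iw,T^iw')\le\varepsilon,\ |i|\le M\}$ for some large $M$, uniformly in $w$. Chaining these covers along the orbit in blocks of length $m$ then gives at most $C\,e^{n(\delta+o(1))}$ sets of small $\textup{\texthtd}^n$-diameter covering each finite-time Bowen set. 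Comparing with $\eta^n$ (choosing $\eta$ adapted to the scale $r$) yields $\limsup\frac1n H_\nu(\eta^n\mid\xi^n)\le\delta$. Letting $\delta\to0$ and taking the sup over $\eta$ proves the claim.

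\textbf{Step 2 (choice of partition).} For the limit measure $\mu$, pick a finite partition $\xi$ with atoms of diameter $<\varepsilon$ and $\mu(\partial A)=0$ for each atom $A$. This is possible using balls $B(x,\rho)$ with $\mu(\partial B(x,\rho))=0$, which holds for all but countably many radii. By continuity of $T$, every atom of $\xi^N$ also has $\mu$-null boundary, since $\partial(T^{-i}A)\subseteq T^{-i}\partial A$ and $\mu$ is invariant. Hence weak convergence $\mu_n\to\mu$ gives $\mu_n(B)\to\mu(B)$ for each atom $B$ of $\xi^N$, and therefore $H_{\mu_n}(\xi^N)\to H_\mu(\xi^N)$ for every fixed $N$.

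\textbf{Step 3 (conclusion).} By Step 1 and subadditivity, $h_{\mu_n}=h_{\mu_n}(T,\xi)=\inf_N\frac1N H_{\mu_n}(\xi^N)\le \frac1N H_{\mu_n}(\xi^N)$ for every $N$. So $\limsup_n h_{\mu_n}\le \frac1N H_\mu(\xi^N)$ for every $N$. Taking the infimum over $N$ gives $\limsup_n h_{\mu_n}\le h_\mu(T,\xi)\le h_\mu$. Only Step 1 requires real work; Steps 2 and 3 are the standard argument for expansive systems.
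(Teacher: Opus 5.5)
Your architecture matches the result the paper cites (the paper gives no argument beyond \cite{Bowen1972}). Steps 2 and 3 are correct: once $h_\nu\le h_\nu(T,\xi)$ holds for every $\nu\in\mathcal{M}_1(\W,T)$ and every finite partition $\xi$ of diameter $<\varepsilon$, the $\mu$-null-boundary partition and $h_{\mu_n}\le\frac1N H_{\mu_n}(\xi^N)\to\frac1N H_\mu(\xi^N)$ finish the proof.

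The gap is at the end of Step 1, in ``comparing with $\eta^n$''. You bound $H_\nu(\eta^n\mid\xi^n)$ by the logarithm of the \emph{maximal} number of atoms of $\eta^n$ meeting an atom of $\xi^n$. You then tacitly assume that each of your $Ce^{n(\delta+o(1))}$ sets of small $\textup{\texthtd}^n$-diameter meets only $e^{o(n)}$ atoms of $\eta^n$. Nothing gives this. A set of $\textup{\texthtd}$-diameter $<r$ can meet several atoms of $\eta$, and it can do so at each of the $n$ times. No choice of $\eta$ ``adapted to the scale $r$'' rules this out in general: if $\W$ is connected, any finite partition into at least two sets has two atoms at distance $0$. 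What one gets directly is weaker. Replace $\eta=\{B_1,\dots,B_k\}$ by $\mathcal{C}=\{C_0,\dots,C_k\}$, where $C_j\subseteq B_j$ is compact, $C_0$ is the remainder, and $H_\nu(\eta\mid\mathcal{C})\le 1$. Take $r<\min_{1\le i<j\le k}\textup{dist}(C_i,C_j)$. Then each set of $\textup{\texthtd}^n$-diameter $<r$ meets at most $2^n$ atoms of $\mathcal{C}^n$, which gives $h_\nu(T,\eta)\le h_\nu(T,\xi)+a+\log 2+1$ for every $a>0$. The error $\log 2+1$ does not shrink as $r\to0$ or $\delta\to0$, so Step 1 as written does not reach $h_\nu\le h_\nu(T,\xi)$.

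The missing idea is to kill this additive constant by passing to iterates. Apply the same inequality to $T^m$, with the metric $\textup{\texthtd}^m$ and the partitions $\xi^m,\eta^m$. Then:
\begin{itemize}
\item $\xi^m$ still has $\textup{\texthtd}^m$-diameter $<\varepsilon$;
\item the infinite-time sets for $(T^m,\textup{\texthtd}^m)$ at scale $\varepsilon$ are again the $\Gamma_\varepsilon(w)$, with $h_\textup{top}(\Gamma_\varepsilon(w),T^m)=m\,h_\textup{top}(\Gamma_\varepsilon(w),T)=0$;
\item $h_\nu(T^m,\eta^m)=m\,h_\nu(T,\eta)$ and $h_\nu(T^m,\xi^m)=m\,h_\nu(T,\xi)$.
\end{itemize}
Dividing by $m$ leaves an error $(a+\log 2+1)/m\to0$.

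Alternatively, stay with $T$ but use the measure-weighted bound $H_\nu(\eta^n\mid\cdot)\le\sum_A\nu(A)\log N(A)$. Take $\eta$ with $\nu(\partial\eta)=0$, and let $U_r$ be the set of points whose $r$-ball is not contained in their own $\eta$-atom. Refine $\xi^n$ by the $n$-step itinerary with respect to $\{U_r,\W\setminus U_r\}$, and allow multiplicity $\#\eta$ only at times spent in $U_r$. The resulting error $H_\nu(\{U_r,\W\setminus U_r\})+\nu(U_r)\log\#\eta$ tends to $0$ as $r\to0$, and partitions with $\nu$-null boundaries and small diameter compute $h_\nu$.

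Two smaller points in your covering lemma:
\begin{itemize}
\item $T$ need not be invertible and $\Gamma_\varepsilon(w)$ uses forward times, so the compactness step should use the forward sets $\{w':\textup{\texthtd}(T^iw,T^iw')\le\varepsilon+\gamma,\ 0\le i\le M\}$ rather than $|i|\le M$.
\item The block length produced by compactness depends on the base point, so the chaining must use blocks of bounded but variable length rather than one fixed $m$.
\end{itemize}
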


    Let $(\W,T)$, $({\rm Z},S)$ be two dynamical systems. We say that $({\rm Z},S)$ is a factor of $(\W,T)$ if there exists a continuous, surjective map $\pi\colon \W \to {\rm Z}$ such that $\pi \circ T = S \circ \pi$. We say that $(\W,T)$ and $({\rm Z},S)$ are conjugated if there is a map $\pi$ as above which is a homeomorphism. The next result is a straightforward application of the definitions. 

    \begin{lemma}
\label{lemma:pushforward_is_invariant}
    Let $(\W,T), ({\rm Z},S)$ be two compact dynamical systems and let $\pi \colon \W \to {\rm Z}$ be a continuous, surjective map such that $\pi \circ T = S \circ \pi$. If $\mu \in \mathcal{M}_1(\W,T)$ then $\pi_\#\mu \in \mathcal{M}_1({\rm Z},S)$.
\end{lemma}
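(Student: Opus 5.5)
The plan is to verify the two defining properties of $\mathcal{M}_1({\rm Z},S)$ for $\pi_\#\mu$: that it is a Borel probability measure on ${\rm Z}$, and that it is $S$-invariant.

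\emph{Probability measure.} Since $\pi$ is continuous, the preimage of every Borel set of ${\rm Z}$ is a Borel set of $\W$. Hence $\pi_\#\mu(A) := \mu(\pi^{-1}A)$ is a well-defined measure on the Borel $\sigma$-algebra of ${\rm Z}$. It has total mass $\mu(\pi^{-1}{\rm Z}) = \mu(\W) = 1$.

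\emph{Invariance.} I would rewrite the semiconjugacy relation $\pi\circ T = S\circ\pi$ at the level of preimages. For every Borel $A\subseteq {\rm Z}$,
$$T^{-1}(\pi^{-1}A) = (\pi\circ T)^{-1}A = (S\circ \pi)^{-1}A = \pi^{-1}(S^{-1}A).$$
Using the $T$-invariance of $\mu$ in the second equality, this gives
$$S_\#(\pi_\#\mu)(A) = \mu(\pi^{-1}S^{-1}A) = \mu(T^{-1}\pi^{-1}A) = \mu(\pi^{-1}A) = \pi_\#\mu(A).$$
Equivalently, $S_\#\pi_\#\mu = (S\circ\pi)_\#\mu = (\pi\circ T)_\#\mu = \pi_\#T_\#\mu = \pi_\#\mu$.

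There is no real obstacle. The only point needing care is measurability of $\pi$, which follows from continuity. Surjectivity and compactness are not needed for this statement.
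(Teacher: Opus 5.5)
Your proof is correct and is exactly the routine verification the paper has in mind when it calls this ``a straightforward application of the definitions'' (the paper gives no written proof). One trivial slip: in your displayed chain the $T$-invariance of $\mu$ is used in the third equality, while the second equality is the semiconjugacy step.
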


The main result of \cite{LedrappierWalters1977} will be fundamental for our theorems.

\begin{proposition}[{\cite{LedrappierWalters1977}}]
\label{prop:Ledrappier_entropy_fibers}
    Let $(\W,T), ({\rm Z},S)$ be two compact dynamical systems and let $\pi \colon \Y \to {\rm Z}$ be a continuous, surjective map such that $\pi \circ T = S \circ \pi$. Then, for every $\mu \in \mathcal{M}_1({\rm Z},S)$ we have that
    $$\sup_{\substack{\nu \in \mathcal{M}_1(\W,T) \\ \pi_\#\nu = \mu}} h_\nu(\W,T) = h_\mu({\rm Z},S)  + \int_{{\rm Z}}h_\textup{top}(\pi^{-1}(z),T)\,\d\mu(z).$$
\end{proposition}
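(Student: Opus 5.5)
This is the relativized variational principle of Ledrappier and Walters. We only quote it, but here is how we would prove it, following the scheme of Misiurewicz's proof of Proposition \ref{prop:variational_principle}, carried out fiberwise over $({\rm Z},S)$. Write $\mathscr{B}_{\rm Z}$ for the Borel $\sigma$-algebra of ${\rm Z}$. First we reduce to ergodic $\mu$. Both sides of the formula are affine in $\mu$: for the left-hand side, lifts of the ergodic components of $\mu$ can be glued together measurably. So it suffices to treat $\mu\in\mathcal{E}_1({\rm Z},S)$.

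\emph{Upper bound.} Fix $\nu\in\mathcal{M}_1(\W,T)$ with $\pi_\#\nu=\mu$. By the Abramov--Rokhlin formula,
$$h_\nu(\W,T)=h_\mu({\rm Z},S)+h_\nu\bigl(T\mid \pi^{-1}\mathscr{B}_{\rm Z}\bigr).$$
It remains to bound the conditional entropy by $\int h_\textup{top}(\pi^{-1}(z),T)\,\d\mu(z)$.
\begin{itemize}
\item Take a finite partition $\xi$ of $\W$ with small diameter and $\nu$-null boundaries.
\item Disintegrate $\nu=\int\nu_z\,\d\mu(z)$. The quantity $H_{\nu_z}(\xi^n)$ is at most the logarithm of the number of atoms of $\xi^n$ meeting $\pi^{-1}(z)$.
\item That number is controlled by $\textup{Cov}_{\text{\texthtd}^n}(\pi^{-1}(z),r)$, up to a multiplicative factor $C^{n/\text{const}}$ coming from the Lebesgue number of $\xi$. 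This factor is absorbed by taking $\xi$ fine and then refining.
\end{itemize}
Dividing by $n$, letting $n\to\infty$ and applying Fatou's lemma gives the bound. Here we need $z\mapsto \frac1n\log\textup{Cov}_{\text{\texthtd}^n}(\pi^{-1}(z),r)$ to be measurable (it is upper semicontinuous in $z$) and uniformly bounded, which holds since $\W$ is compact.

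\emph{Lower bound.} We construct a good lift. Fix $r>0$. For each $n$, choose in a measurable way, for $\mu$-a.e.\ $z$, a maximal $(n,r)$-separated set $E_n(z)\subseteq\pi^{-1}(z)$; a measurable selection theorem applies because fibers vary upper semicontinuously in the Hausdorff topology. Then:
\begin{itemize}
\item Define $\sigma_n:=\int \frac{1}{\#E_n(z)}\sum_{w\in E_n(z)}\delta_w\,\d\mu(z)$ and the averages $\nu_n:=\frac1n\sum_{i<n}T^i_\#\sigma_n$.
\item Since $\pi_\#\sigma_n=\mu$ and $\mu$ is $S$-invariant, every weak limit $\nu$ of $\nu_n$ is $T$-invariant and satisfies $\pi_\#\nu=\mu$.
\item Repeat Misiurewicz's computation with conditional entropies $H_{\sigma_n}(\xi^n\mid\pi^{-1}\mathscr{B}_{\rm Z})$, which are at least $\int\log\#E_n(z)\,\d\mu$ because distinct points of $E_n(z)$ lie in distinct atoms of $\xi^n$.
\end{itemize}
This yields $h_\nu(T\mid\pi^{-1}\mathscr{B}_{\rm Z})\ge\limsup_n\frac1n\int\log\textup{Cov}_{\text{\texthtd}^n}(\pi^{-1}(z),r)\,\d\mu$. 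Then let $r\to0$, choosing a diagonal sequence of limit measures and using a Fatou-type argument to pass the $\limsup$ inside the integral.

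\emph{Main obstacle.} The delicate step is this last interchange. For each individual $z$ we only control $\limsup_n$, and different $z$ may realise it along different subsequences. The remedy we would try first is the classical one. Replace fiber entropy by the $S$-invariant function $z\mapsto h_\textup{top}(\pi^{-1}(z),T)$, which is constant $\mu$-a.e.\ by ergodicity. Then use the subadditivity of $n\mapsto\log\textup{Cov}_{\text{\texthtd}^n}(\pi^{-1}(z),r)$ along $S$-orbits, via a Kingman-type argument, to upgrade $\limsup$ to a limit holding for $\mu$-a.e.\ $z$. Once that limit exists almost everywhere, the interchange follows from dominated convergence, since the integrands are uniformly bounded.
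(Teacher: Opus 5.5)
The paper gives no proof of this proposition; it only cites \cite{LedrappierWalters1977}, so there is no in-text argument to compare against. Your architecture is the standard one: Abramov--Rokhlin plus a fiberwise count for the upper bound, and Misiurewicz-type lifts plus a subadditive ergodic theorem for the lower bound. However, two steps fail as you wrote them.

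\emph{Upper bound.} A partition has no Lebesgue number. The atoms of $\xi^n$ are not open, so no Lebesgue-number argument bounds the number of atoms of $\xi^n$ meeting $\pi^{-1}(z)$ by $\textup{Cov}_{\text{\texthtd}^n}(\pi^{-1}(z),r)$ times a controlled factor, and making $\xi$ finer does not produce one.

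What works is Misiurewicz's device. Choose compact $B_i\subseteq A_i$ with $H_\nu(\xi\mid\beta)\le 1$, where $\beta=\{B_0,B_1,\ldots,B_k\}$ and $B_0:=\W\setminus\bigcup_{i\ge 1}B_i$. The sets $B_0\cup B_i$ form an open cover $\mathcal{U}$. Each element of $\mathcal{U}^n:=\bigvee_{i<n}T^{-i}\mathcal{U}$ meets at most $2^n$ atoms of $\beta^n$. Each $\text{\texthtd}^n$-ball of radius $\lambda/3$, where $\lambda$ is a Lebesgue number of $\mathcal{U}$, lies in one element of $\mathcal{U}^n$. With reverse Fatou this gives $h_\nu(T\mid\pi^{-1}\mathscr{B}_{\rm Z})\le\log 2+1+\int h_\textup{top}(\pi^{-1}(z),T)\,\d\mu(z)$.

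The constant does not depend on $\xi$, so refining does not remove it. You must apply the estimate to $T^N$, use $h_\nu(T^N\mid\pi^{-1}\mathscr{B}_{\rm Z})=N\,h_\nu(T\mid\pi^{-1}\mathscr{B}_{\rm Z})$ and $h_\textup{top}(\pi^{-1}(z),T^N)\le N\,h_\textup{top}(\pi^{-1}(z),T)$, and then divide by $N$.

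\emph{Lower bound.} You locate the obstacle correctly, but $n\mapsto\log\textup{Cov}_{\text{\texthtd}^n}(\pi^{-1}(z),r)$ is not subadditive along $S$-orbits. At a fixed radius you only get $\textup{Cov}_{\text{\texthtd}^{n+m}}(\pi^{-1}(z),r)\le\textup{Cov}_{\text{\texthtd}^{n}}(\pi^{-1}(z),r/2)\,\textup{Cov}_{\text{\texthtd}^{m}}(\pi^{-1}(S^nz),r/2)$. A loss of scale at every step is not an error Kingman's theorem can absorb. Also, with centers required to lie in the fiber, this function need not be upper semicontinuous in $z$.

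Use instead $a_n(z):=\log N(\mathcal{U}^n,\pi^{-1}(z))$ for a finite open cover $\mathcal{U}$, where $N$ is the least number of elements of $\mathcal{U}^n$ covering the fiber. Since $T^n(\pi^{-1}(z))\subseteq\pi^{-1}(S^nz)$, you get exactly $a_{n+m}(z)\le a_n(z)+a_m(S^nz)$. Moreover $a_n$ is upper semicontinuous and $0\le a_n\le n\log\#\mathcal{U}$.

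Kingman's theorem needs no ergodicity, so you can drop the reduction to ergodic $\mu$, whose measurable gluing of lifts you did not justify. It gives $\frac1n a_n\to\bar a_{\mathcal U}$ $\mu$-a.e.\ and $\frac1n\int a_n\,\d\mu\to\int\bar a_{\mathcal U}\,\d\mu$. Since $N(\mathcal{U}^n,K)\le\textup{Cov}_{\text{\texthtd}^n}(K,\lambda/3)\le\#E$ for every maximal $(n,\lambda/3)$-separated $E\subseteq K$, your construction with $r=\lambda/3$ yields a lift $\nu_{\mathcal U}$ with $h_{\nu_{\mathcal U}}(T\mid\pi^{-1}\mathscr{B}_{\rm Z})\ge\int\bar a_{\mathcal U}\,\d\mu$. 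Along a refining sequence of covers with mesh tending to $0$, $\bar a_{\mathcal U_j}\uparrow h_\textup{top}(\pi^{-1}(\cdot),T)$ a.e., and monotone convergence concludes.

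Do not take a weak limit of the lifts as $r\to0$. That would need upper semicontinuity of $\nu\mapsto h_\nu$, which is not available here, and the supremum in the statement makes it unnecessary.

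In the conditional Misiurewicz computation, also state three facts you rely on:
\begin{enumerate}
\item $T^{-l}\pi^{-1}\mathscr{B}_{\rm Z}\subseteq\pi^{-1}\mathscr{B}_{\rm Z}$, which lets you shift the conditioning.
\item $\sigma\mapsto H_\sigma(\xi^q\mid\pi^{-1}\mathscr{B}_{\rm Z})$ is concave.
\item This map is upper semicontinuous at the limit measure among measures projecting to $\mu$. Write it as an infimum over finite partitions $\gamma$ of ${\rm Z}$ with $\mu(\partial\gamma)=0$.
\end{enumerate}
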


Observe that part of the statement consists in saying that the supremum in the left-hand side is over a non-empty set.
The definition of $h_\textup{top}(\pi^{-1}(z),T)$ is
as in \eqref{eq:defin_top_entropy_fibers}.

The notion of topological entropy can be generalized to the one of topological pressure by adding a scalar potential. Let $(\W,T)$ be a dynamical system and let $f\colon \W \to [0,+\infty)$ be a measurable function. The \emph{topological pressure} of the dynamical system $(\W,T)$ with respect to $f$ is the quantity
\begin{equation}
    \label{eq:defin_top_pressure}
    \mathscr{P}(\W,T,f) := \sup_{\mu \in \mathcal{M}_1(\W,T)} \left( h_\mu + \int f \,\d\mu \right).
\end{equation}
When $f \equiv 0$ then we obtain the quantity defined in \eqref{eq:defin_top_entropy_via_measures}. 



\subsection{Ergodic optimization}
\label{subsec:ergodic_optimization}
Given a dynamical system $(\W,T)$, a function $f\colon \W \to \R$ and an integer $n\in \N$, the function $A_n(f) := \frac{1}{n}\sum_{i=0}^{n-1} (f\circ T^i)$ is called \emph{Birkhoff $n$-average} of $f$. We recall the statement of the Birkhoff's ergodic theorem. 

\begin{theorem}
\label{theo:Birkhoff}
    Let $(\W,T)$ be a dynamical system, let $\mu\in \mathcal{M}_1(\W,T)$ and let $f\in L^1(\mu)$. Then the limit 
    $$\lim_{n\to +\infty}A_n(f)(w) = \lim_{n\to +\infty}\frac{1}{n}\sum_{i=0}^{n-1} (f\circ T^i)(w) =: \bar{f}(w)$$
    exists for $\mu$-a.e. $w\in \W$. Moreover, $\bar{f}$ is $T$-invariant and $\int f \,\d \mu = \int \bar{f}\,\d\mu$. If $\mu$ is ergodic then $\bar{f}$ is $\mu$-a.e. constant.
\end{theorem}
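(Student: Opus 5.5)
This is the classical Birkhoff ergodic theorem, recalled here only for later use, so rather than giving a new argument I would cite a standard reference, e.g. \cite[Theorem 1.14]{Walters2000}, after checking that the hypotheses match: $\mu$ is $T$-invariant and $f\in L^1(\mu)$. For completeness, the self-contained route I would follow is the Garsia proof via the maximal ergodic theorem.

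\textbf{Step 1 (maximal inequality).} For $g\in L^1(\mu)$, set $S_n g:=\sum_{i=0}^{n-1} g\circ T^i$ and $M_N g:=\max_{0\le n\le N} S_n g$, with $S_0g=0$. I would show $\int_{\{M_N g>0\}} g\,\d\mu\ge 0$. The key pointwise identity is that on $\{M_Ng>0\}$ one has
$$M_N g \le g + (M_N g)\circ T.$$
Integrating this over $\{M_Ng>0\}$, and using invariance of $\mu$ together with $M_N g\ge 0$, gives the claim. Letting $N\to\infty$ yields the same inequality on $\{\sup_n S_n g>0\}$.

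\textbf{Step 2 (existence of the limit).} Put $\bar f^+:=\limsup A_n(f)$ and $\bar f^-:=\liminf A_n(f)$. Both are $T$-invariant, since
$$A_n(f)\circ T=\tfrac{n+1}{n}A_{n+1}(f)-\tfrac{f}{n},$$
and $f/n\to 0$ almost everywhere because $f$ is a.e. finite. For rationals $a<b$, let $E_{a,b}:=\{\bar f^-<a<b<\bar f^+\}$, which is an invariant set. Applying Step 1 to $(f-b)\mathbbm 1_{E_{a,b}}$ and to $(a-f)\mathbbm 1_{E_{a,b}}$ gives $\int_{E_{a,b}}f\,\d\mu\ge b\,\mu(E_{a,b})$ and $\int_{E_{a,b}}f\,\d\mu\le a\,\mu(E_{a,b})$. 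Since $a<b$, this forces $\mu(E_{a,b})=0$. Taking the countable union over rational pairs shows $\bar f^+=\bar f^-$ almost everywhere, so the limit $\bar f$ exists a.e. and is $T$-invariant.

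\textbf{Step 3 (integral identity and ergodic case).} By invariance, $\|A_n(f)\|_{L^1}\le\|f\|_{L^1}$, so Fatou's lemma gives $\bar f\in L^1(\mu)$. For the identity $\int f\,\d\mu=\int\bar f\,\d\mu$ I would argue in two steps:
\begin{itemize}
\item[(i)] For bounded $f$, the averages $A_n(f)$ are uniformly bounded and $\int A_n(f)\,\d\mu=\int f\,\d\mu$, so dominated convergence gives the identity.
\item[(ii)] For general $f$, approximate by bounded functions $f_k$ in $L^1$. Then $\|\bar f-\bar f_k\|_{L^1}\le\|f-f_k\|_{L^1}$ by Fatou, and the identity passes to the limit.
\end{itemize}
This approximation is the only mildly delicate point. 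Finally, if $\mu$ is ergodic, each level set $\{\bar f>c\}$ is invariant and hence has measure $0$ or $1$, so $\bar f$ is a.e. constant. The main obstacle is the maximal inequality of Step 1; everything else follows routinely from it.
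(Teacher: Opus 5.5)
Your proposal is correct. The paper gives no proof of this statement and simply recalls Birkhoff's ergodic theorem as a classical fact, so citing a standard source such as \cite{Walters2000} is exactly what it does, and your Garsia-type argument via the maximal ergodic inequality is a sound self-contained proof. The one detail to tighten is to fix a representative of $f$ that is finite everywhere, so that $\limsup_n A_n(f)$ is strictly $T$-invariant and its level sets satisfy the paper's definition of ergodicity (sets $A$ with $T^{-1}A\subseteq A$) directly, not only up to null sets.
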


Let $(\W,T)$ be a dynamical system and let $f\colon \Y \to \R$ be a bounded function, so that $f\in L^1(\mu)$ for every $\mu \in \mathcal{M}_1(\W,T)$. The ergodic optimization problem consists in finding the following supremum:
$$\textup{Erg-Opt}(f) := \sup_{\mu \in \mathcal{M}_1(\W,T)} \int f \,\d\mu = \sup_{\mu \in \mathcal{E}_1(\W,T)} \int f \,\d\mu.$$
The number $\textup{Erg-Opt}(f)$ is called the \emph{maximum ergodic average} of $f$. When the function $f$ is upper semicontinuous then there exists at least one measure realizing the supremum (\cite[Proposition 2.4]{Jenkinson2006}) in the definition of $\textup{Erg-Opt}(f)$, and 
$$\textup{Erg-Opt}(f) = \sup_{w\in \W} \lims_{n\to \infty} A_n(f)(w),$$
see \cite[Proposition 2.2]{Jenkinson2006}. Observe that the right-hand side is a purely topological object. The functions we will consider in this paper are typically not upper semicontinuous. In this generality, we recover at least two obvious inequalities. One of them is expressed in terms of periodic orbits. A point $w\in \W$ is said to be \emph{periodic} if there exists $n\in \N$ such that $T^n w = w$. The minimal $n\in \N$ with this property is called the \emph{period} of $w$ and it is denoted by $\textup{per}(w)$. Therefore we have $\bigcup_{n\in \N} T^n w = \{w,Tw,\ldots,T^{\textup{per}(w)-1}w\}$. To every periodic point $w$ we associate the measure 
$$\mu_w := \frac{1}{\textup{per}(w)}\sum_{i=0}^{\textup{per}(w)-1} \delta_{T^iw},$$
where $\delta_{T^iw}$ is the Dirac measure at $T^iw$. By definition, $\mu_w\in \mathcal{M}_1(\W,T)$.

\begin{proposition}
\label{prop:ergodic_optimization_bounds}
    Let $(\W,T)$ be a dynamical system and let $f\colon \W \to \R$ be bounded. Then
    $$\sup_{w \textup{ periodic}}\lims_{n\to \infty} A_n(f)(w)\le \textup{Erg-Opt}(f) \le \sup_{w\in \W} \lims_{n\to \infty} A_n(f)(w).$$
\end{proposition}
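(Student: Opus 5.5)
The two inequalities are proved separately; both are elementary.

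\emph{Lower bound.} Fix a periodic point $w$ and consider the measure $\mu_w \in \mathcal{M}_1(\W,T)$. Set $p = \textup{per}(w)$. We have
$$\int f \,\d\mu_w = \frac{1}{p}\sum_{i=0}^{p-1} f(T^iw).$$
By periodicity, for $n = qp + r$ with $0 \le r < p$, the Birkhoff sum $\sum_{i=0}^{n-1} f(T^iw)$ equals $q\sum_{i=0}^{p-1} f(T^iw)$ plus at most $p$ extra terms. Each extra term is bounded by $\|f\|_\infty$. Dividing by $n$ and letting $n\to\infty$ shows that $A_n(f)(w)$ actually converges to $\int f\,\d\mu_w$. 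In particular $\lims_n A_n(f)(w) = \int f\,\d\mu_w \le \textup{Erg-Opt}(f)$. Taking the supremum over periodic $w$ gives the first inequality.

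\emph{Upper bound.} Use the equality $\textup{Erg-Opt}(f) = \sup_{\mu\in\mathcal{E}_1(\W,T)}\int f\,\d\mu$ recalled above, which comes from the ergodic decomposition. Fix $\mu$ ergodic. Since $f$ is bounded (and measurable, as implicitly assumed), $f\in L^1(\mu)$, so Theorem \ref{theo:Birkhoff} applies: $A_n(f)(w)\to \bar f(w)$ for $\mu$-a.e. $w$, and $\bar f$ is $\mu$-a.e. equal to the constant $\int f\,\d\mu$.

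Since $\mu(\W)=1$, the set of such good points is nonempty. Pick one of them, $w_0$. Then
$$\int f\,\d\mu = \lim_n A_n(f)(w_0) = \lims_n A_n(f)(w_0) \le \sup_{w\in\W}\lims_n A_n(f)(w).$$
Taking the supremum over ergodic $\mu$ gives the second inequality.

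If one prefers to avoid invoking the ergodic decomposition, there is an alternative for non-ergodic $\mu$. Since $\bar f \le \sup_w \lims_n A_n(f)(w)$ holds $\mu$-a.e., integrate and use $\int f\,\d\mu=\int\bar f\,\d\mu$ from Theorem \ref{theo:Birkhoff}.

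There is no real obstacle here. The only point needing a little care is the use of measurability and boundedness of $f$, which is what allows Birkhoff's theorem and the ergodic decomposition to be applied without any continuity assumption.
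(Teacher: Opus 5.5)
Your proof is correct and matches the paper's argument: the lower bound comes from the periodic-orbit measures $\mu_w$, where you also write out the convergence $A_n(f)(w)\to\int f\,\d\mu_w$ that the paper only calls straightforward, and the upper bound comes from applying Birkhoff's theorem to an ergodic measure at a generic point. Your alternative of integrating $\bar f$ handles non-ergodic $\mu$ directly, so it removes the reliance on the identity $\sup_{\mathcal{M}_1(\W,T)}\int f\,\d\mu=\sup_{\mathcal{E}_1(\W,T)}\int f\,\d\mu$ that the paper's argument tacitly uses.
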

\begin{proof}
    For every $w \in \W$ periodic, we have 
    $$\int f \,\d\mu_w = \frac{1}{\textup{per}(w)}\sum_{i=0}^{\textup{per}(w)-1}f(T^i(w)).$$
    It is straightforward to check that $\frac{1}{\textup{per}(w)}\sum_{i=0}^{\textup{per}(w)-1}f(T^i(w)) = \lims_{n\to \infty} \frac{1}{n} \sum_{i=0}^{n-1} (f\circ T^i)(w)$. This gives the first inequality. On the other hand, let $\mu \in \mathcal{E}_1(\W,T)$ be fixed. Then, for $\mu$-a.e. $w\in\Y$ we have that
    $$\int f\,\d\mu = \lim_{n\to \infty} A_n(f)(w)$$
    by Theorem \ref{theo:Birkhoff}. This gives the second inequality. 
\end{proof}

\section{Locally CAT$(\kappa)$ spaces}
\label{subsec:locally_CAT}

Let $(\Y,\sfd_\Y)$ be a metric space. A \emph{geodesic} of $(\Y,\sfd_\Y)$ is a curve $\gamma \colon [a,b] \to \Y$ which is an isometric embedding. The metric space $(\Y,\sfd_\Y)$ is said to be geodesic if for every couple of points $y,y'\in \Y$ there exists a geodesic with starting point $y$ and ending point $y'$. Such a geodesic will be occasionally denoted by $[y,y']$. A \emph{local geodesic} is a curve $\gamma \colon I \to \Y$, which is a local isometric embedding. If it is defined on $I=\R$, it is called a \emph{local geodesic line}. A metric space is \emph{geodesically complete} if every local geodesic $\gamma\colon [a,b] \to \Y$ can be extended to a local geodesic $\tilde\gamma\colon [a-\varepsilon,b+\varepsilon] \to \Y$ for some $\varepsilon > 0$.

The only connected, simply connected, $2$-dimensional Riemannian manifold of constant sectional curvature $\kappa\in \R$ will be denoted by $\mathbb{M}_\kappa^2$. We also define $D_\kappa := \pi/\sqrt{\kappa}$.
A metric space $(\Y,\sfd_\Y)$ is said to be CAT$(\kappa)$ if it is geodesic and every geodesic triangle of perimeter $<2D_\kappa$ is thinner than its comparison triangle in $\mathbb{M}_\kappa^2$ (see for instance \cite{BridsonHaefliger} for the basics of CAT$(\kappa)$-geometry).
As a consequence, every CAT$(\kappa)$-space is $D_\kappa$-{\em uniquely geodesic}, i.e. for every two points $y,y'$ with $\sfd(y,y')<D_\kappa$ there exists a unique geodesic with endpoints $y$ and $y'$. 
A metric space $(\Y,\sfd_\Y)$ is said to be \emph{locally \textup{CAT}$(\kappa)$} if every point has a neighbourhood $U$ such that $(U,\sfd)$ is CAT$(\kappa)$. Every locally CAT$(\kappa)$ space $(\Y,\sfd_\Y)$ admits a universal cover $\tilde{\Y}$ equipped with a metric $\tilde\sfd_\Y$ such that $(\tilde{\Y},\tilde{\sfd}_\Y)$ is CAT$(\kappa)$. 
Moreover, the fundamental group $\pi_1(\Y)$ acts naturally and faithfully by deck transformations and by isometries on $(\tilde{\Y}, \tilde{\sfd}_\Y)$. Hence $\pi_1(\Y)$ can be identified with a discrete subgroup $\Gamma$ of the group of isometries $\textup{Isom}(\tilde{\Y})$ of $(\tilde{\Y},\tilde\sfd_\Y)$ with the property that the quotient metric space $\Gamma \backslash \tilde{\Y}$ equipped with the quotient metric is isometric to $(\Y,\sfd_\Y)$.
The \emph{critical exponent} of the group $\Gamma \cong \pi_1(\Y)$ is
$$h(\tilde{\Y};\Gamma) := \lims_{R \to +\infty} \frac{1}{R}\log(\#\Gamma o \cap B(o,R)),$$
where the right-hand side does not depend on the choice of the basepoint $o \in \tilde{\Y}$. The limit in the right-hand side exists if $\kappa \le -1$ by \cite{Roblin2002}, see also \cite{Cavallucci:ErgodicLimitSet} for the Gromov-hyperbolic case. 
The $\textup{CAT}$-radius of a locally $\textup{CAT}(\kappa)$ space $(\Y,\sfd_\Y)$ is defined as
$$\rho_\textup{CAT}(\Y) := \sup\{r> 0\,:\, B(y,r) \textup{ is CAT}(\kappa)\textup{ for every }y\in\Y\}.$$
\begin{proposition}
\label{prop:lower_bound_CAT_radius}
    If $(\Y,\sfd_\Y)$ is a compact, locally $\textup{CAT}(\kappa)$ then $\rho_\textup{CAT}(\Y) > 0$. Moreover, for every $D,H \ge 0$ there exists $\rho_0 = \rho_0(D,H)$ such that if $(\Y,\sfd_\Y)$ is a compact, locally $\textup{CAT}(-1)$ space with $\textup{Diam}(\Y) \le  D$ and $h(\tilde{\Y};\pi_1(\Y)) \le H$ then either $\rho_\textup{CAT}(\Y) \ge \rho_0$ or $\pi_1(\Y)$ is virtually cyclic.
\end{proposition}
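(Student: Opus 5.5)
The plan has two parts, matching the two assertions.

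\emph{Positivity of the CAT-radius.} This is a compactness argument with a Lebesgue number. For every $y\in\Y$ the local CAT$(\kappa)$ assumption gives a radius $r_y>0$ such that $B(y,r_y)$ is CAT$(\kappa)$. The open cover $\{B(y,r_y/2)\}_{y\in \Y}$ has a finite subcover, say by the balls centred at $y_1,\ldots,y_N$. Set $r:=\min_i r_{y_i}/4$. Every $y\in\Y$ lies in some $B(y_i,r_{y_i}/2)$, so $B(y,r)$ is contained in the CAT$(\kappa)$ ball $B(y_i,r_{y_i})$.

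A subset of a CAT$(\kappa)$ space is not automatically CAT$(\kappa)$; one also needs convexity. So I would shrink $r$ once more: take $r<D_\kappa/2$ and $r$ small compared with $r_{y_i}$. Then small balls in the CAT$(\kappa)$ ball $B(y_i,r_{y_i})$ are convex, and geodesics between points of $B(y,r)$ stay inside $B(y_i,r_{y_i})$ and hence inside $B(y,r)$. This gives $\rho_\textup{CAT}(\Y)\ge r>0$.

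\emph{Uniform bound in the CAT$(-1)$ case.} Here I would argue by contradiction with Gromov--Hausdorff compactness and a Margulis-type lemma. Suppose there are compact locally CAT$(-1)$ spaces $\Y_n$ with $\textup{Diam}(\Y_n)\le D$, $h(\tilde\Y_n;\pi_1(\Y_n))\le H$, $\pi_1(\Y_n)$ not virtually cyclic, and $\rho_\textup{CAT}(\Y_n)\to 0$. The argument then runs through four steps.

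\textbf{Step 1.} Pass to the universal covers $\tilde\Y_n$, which are CAT$(-1)$, so their CAT-radius is infinite. The CAT-radius of $\Y_n$ is governed by the injectivity radius, i.e.\ by the systole: if $\rho_\textup{CAT}(\Y_n)$ is small, there is a point $y_n$ and a nontrivial element $g_n\in\Gamma_n=\pi_1(\Y_n)$ with $\tilde\sfd(\tilde y_n,g_n\tilde y_n)\le \varepsilon_n\to0$. Otherwise the projection $\tilde\Y_n\to\Y_n$ would be injective, hence isometric, on balls of a uniform radius, and those balls are CAT$(-1)$.

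\textbf{Step 2.} Invoke the Breuillard--Green--Tao/Margulis lemma for CAT$(0)$ or packed Gromov-hyperbolic spaces, as used by Cavallucci and Sambusetti. The bounds on diameter and entropy give a uniform packing bound at some fixed scale, for instance through the Bishop--Gromov-type estimates for entropy-bounded CAT spaces. Consequently there is $\varepsilon_0(D,H)$ such that the group generated by elements displacing some point by less than $\varepsilon_0$ is virtually nilpotent.

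\textbf{Step 3.} In CAT$(-1)$, virtually nilpotent discrete groups are virtually cyclic, or elliptic and finite; $\Gamma_n$ is torsion-free, so the elliptic case does not occur. So the almost-stabilizer of $\tilde y_n$ is virtually cyclic, generated by a hyperbolic element $g_n$ of translation length at most $\varepsilon_n$.

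\textbf{Step 4.} Bounded diameter lets us find $\Gamma_n$-generators of displacement at most $2D+1$ that do not commute with $g_n$, since $\Gamma_n$ is not virtually cyclic. A ping-pong or collar argument then produces free subsemigroups whose growth forces $h\to\infty$ as $\varepsilon_n\to 0$. Concretely, the tube around the axis of $g_n$ yields roughly $1/\varepsilon_n$ distinct orbit points in a ball of radius about $D$. Combined with a transverse element this gives $h\ge c\log(1/\varepsilon_n)/D$, contradicting $h\le H$.

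The main obstacle is this last quantitative step: converting a tiny systole plus non-virtual-cyclicity into unbounded critical exponent, uniformly in $n$. I would try to quote the Cavallucci--Sambusetti ``systole vs entropy'' estimate directly rather than rebuild the ping-pong.
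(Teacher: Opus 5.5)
Your overall plan matches the paper's proof. The first assertion is the standard Lebesgue-number argument, which the paper calls well-known. You carry it out correctly, including the needed convexity of balls of radius $<D_\kappa/2$ inside a CAT$(\kappa)$ ball. The second assertion is obtained by quoting a systole estimate; the paper cites \cite[Theorem 1.12]{BCGS2017} and \cite[Corollary 1.3]{CavallucciSambusetti2024}. Your Step 1 usefully spells out the bridge the paper leaves implicit. If every nontrivial $g\in\pi_1(\Y)$ moves every point of $\tilde\Y$ by at least $s$, then the covering map is an isometry from $B(\tilde x,s/4)$ onto $B(x,s/4)$. The former ball is convex in the CAT$(-1)$ space $\tilde\Y$, so $\rho_\textup{CAT}(\Y)\ge s/4$. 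Moreover $\pi_1(\Y)$ is torsion-free: finite isometry groups of the complete CAT$(0)$ space $\tilde\Y$ have fixed points, while deck transformations act freely. So a lower bound on the systole, i.e.\ on minimal translation lengths, is exactly what is needed.

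The step that fails is Step 2. Bounds on $\textup{Diam}(\Y)$ and $h(\tilde\Y;\pi_1(\Y))$ do \emph{not} yield a uniform packing bound at any fixed scale, because no geodesic completeness is assumed. For a counterexample, attach $N$ segments of length $1/2$ to a closed hyperbolic surface at one point $p$. The result is compact and locally CAT$(-1)$, and its fundamental group and critical exponent (equal to $1$) do not change. Its diameter is bounded independently of $N$, yet $\textup{Pack}(B(\tilde p,1),1/4)\ge N$.

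So you cannot feed Breuillard--Green--Tao or Margulis lemmas for packed spaces this way. The ``Gromov--Hausdorff compactness'' framing is unavailable for the same reason, since this class is not precompact; you never actually use it in Steps 1--4 anyway.

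What replaces Step 2 is the curvature-free Margulis lemma of \cite{BCGS2017}. Its constants depend only on the hyperbolicity constant (universal for CAT$(-1)$ spaces), an entropy bound and a codiameter bound, with no packing or lower curvature hypothesis. Its systole estimate for non-virtually-cyclic groups is what the paper invokes via \cite[Theorem 1.12]{BCGS2017}, and combined with your Step 1 it gives $\rho_0=s_0/4$.

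Your Steps 3--4 sketch that proof. The delicate point is the possibly long fellow-travelling between the axis of $g_n$ and its translate under the transverse element, so citing the result is the right call. Just make sure the statement you quote is phrased in terms of entropy and codiameter rather than packing.
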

\begin{proof}
    The first part of the statement is well-known. The second one follows by \cite[Theorem 1.12]{BCGS2017}, see also \cite[Corollary 1.3]{CavallucciSambusetti2024} for a more general statement.
\end{proof}

\subsection{Geodesic flow and Bowen-Margulis measure} \label{subsec:Geodesic.flow.Bowen.Margulis.measure}

Let $(\X,\sfd)$ be a compact metric space. The \emph{space of local geodesic lines} is $\LocGeod(\X) := \{\gamma \colon \R \to \X\,:\,\gamma \text{ is a local geodesic}\}$. It is endowed with the topology of uniform convergence on compact subsets of $\R$. By \cite[Lemma 3.3]{Cavallucci2024OtalPeigne}, the topology on $\LocGeod(\X)$ is metrizable via the metric
\begin{equation}
    \label{eq:defin_D_a}
    {\sf D}(\gamma,\gamma') := \sup_{s\in \R} \sfd(\gamma(s),\gamma'(s))e^{-\vert s \vert}.
\end{equation}

We recall that the space $\LocGeod(\X)$ is not necessarily compact, see \cite[Example 3.4]{Cavallucci2024OtalPeigne}. But it is compact if $(\X,\sfd)$ is compact and locally CAT$(\kappa)$. This happens because there exists $\varepsilon > 0$ such that every local geodesic of $\X$ of length less than $\varepsilon$ is a geodesic, by \cite[Proposition II.1.4]{BridsonHaefliger} and Proposition \ref{prop:lower_bound_CAT_radius}.
\begin{proposition}
\label{prop:loc_geod_is_compact}
    Let $(\X,\sfd)$ be a compact, locally \textup{CAT}$(\kappa)$ space. Then $\LocGeod(\X)$ is compact.
\end{proposition}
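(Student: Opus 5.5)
We plan to show that $\LocGeod(\X)$ is a closed subset of the space of $1$-Lipschitz curves $\R\to\X$ with the topology of uniform convergence on compact sets, and then invoke Arzelà–Ascoli. Every local geodesic is $1$-Lipschitz, since it is locally an isometric embedding and hence has speed one. Because $\X$ is compact, the family $\LocGeod(\X)$ is equicontinuous and pointwise relatively compact. By Arzelà–Ascoli, every sequence $\gamma_n\in\LocGeod(\X)$ has a subsequence converging uniformly on compact subsets of $\R$ to a $1$-Lipschitz curve $\gamma\colon\R\to\X$. Since the topology is metrizable by ${\sf D}$, sequential compactness suffices, so it remains to prove that $\gamma$ is a local geodesic.

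This closedness is the key step. By Proposition \ref{prop:lower_bound_CAT_radius}, $\rho:=\rho_\textup{CAT}(\X)>0$. We fix $\varepsilon>0$ with $\varepsilon<\min\{\rho, D_\kappa\}/2$. Every ball $B(x,\varepsilon')$ with $\varepsilon'<\rho$ is $\textup{CAT}(\kappa)$, and in a $\textup{CAT}(\kappa)$ space local geodesics of length $<D_\kappa$ are geodesics \cite[Proposition II.1.4]{BridsonHaefliger}. We want to conclude that any local geodesic $\sigma$ of $\X$ with $\ell(\sigma)\le\varepsilon$ is a geodesic. To see this, note that $\sigma$ stays in $B(\sigma(a),\varepsilon+\delta)$ for a small $\delta>0$, and this ball is $\textup{CAT}(\kappa)$ as long as $\varepsilon+\delta<\rho$. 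Inside this ball $\sigma$ is still a local geodesic for the restricted metric, because the restricted metric agrees with $\sfd$ locally. Hence $\sigma$ is a geodesic of the ball. Finally, $\sfd$-distances between points at distance $<\rho$ are realized inside the ball, since the ball is geodesic with the restricted metric, which is the metric $\sfd$ itself. So $\sigma$ is a geodesic of $\X$.

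Given this, for every $s<t$ with $t-s\le\varepsilon$ we have $\sfd(\gamma_n(s),\gamma_n(t))=t-s$ for all $n$, and passing to the limit gives $\sfd(\gamma(s),\gamma(t))=t-s$. Hence $\gamma$ restricted to any interval of length $\varepsilon$ is an isometric embedding, so $\gamma\in\LocGeod(\X)$, and compactness follows.

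The main obstacle is the uniform statement that short local geodesics are geodesics. This requires the positive CAT-radius from Proposition \ref{prop:lower_bound_CAT_radius}, together with the care that the balls are $\textup{CAT}(\kappa)$ with respect to the ambient metric, so that geodesics in the ball are geodesics of $\X$. Once this is in place, the rest is routine.
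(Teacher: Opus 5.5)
Your proof is correct and follows the same route as the paper. The paper's justification is a single sentence: by Proposition \ref{prop:lower_bound_CAT_radius} and \cite[Proposition II.1.4]{BridsonHaefliger} there is a uniform $\varepsilon>0$ such that local geodesics of length less than $\varepsilon$ are geodesics. It leaves implicit the Arzel\`a--Ascoli extraction and the closedness argument, and you have written out exactly those omitted steps.
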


The \emph{geodesic flow} on $\LocGeod(\X)$ is defined by $\Phi\colon \R \times \LocGeod(\X) \to \LocGeod(\X)$, $\Phi(t,\gamma)(\cdot) := \gamma(\cdot + t)$. We denote by $\Phi_t\colon \LocGeod(\X) \to \LocGeod(\X)$ the map $\Phi(t,\cdot)$. It is indeed a flow, i.e. $\Phi_0 = \textup{id}$ and $\Phi_{t+s} = \Phi_t \circ \Phi_s$ for every $t,s\in \R$. In the sequel we will consider the dynamical system $(\LocGeod(\X), \Phi_1)$ and we will call it the geodesic flow of $\X$.
For locally $\textup{CAT}(-1)$ spaces it is related to the critical exponent of the fundamental group of $\X$.

\begin{proposition}
\label{prop:top_ent_geod_flow_BM_measure}
    Let $(\X,\sfd)$ be a compact, locally \textup{CAT}$(-1)$ space and let $(\tilde{\X},\tilde{\sfd})$ be its universal cover. Then
    \begin{equation}
        \label{eq:h_top=critical_exponent}
        h_\textup{top}(\LocGeod(\X),\Phi_1) = h(\tilde{\X}, \pi_1(\X))
    \end{equation}
    and the dynamical system $(\LocGeod(\X),\Phi_1)$ is $h$-expansive. 
    Moreover there exists an ergodic, $\Phi_t$-invariant for every $t\in \R$, fully supported probability measure, called the Bowen-Margulis measure and denoted by $\mm_{\textup{BM}} \in \mathcal{E}_1(\LocGeod(\X),\Phi_1)$, such that
    $$h_\textup{top}(\LocGeod(\X),\Phi_1) = h_{\mm_{\textup{BM}}}.$$
    For every other measure $\nu \in \mathcal{M}_1(\LocGeod(\X), \Phi_1)$ it holds  that $h_{\nu} < h_{\mm_{\textup{BM}}}$.
\end{proposition}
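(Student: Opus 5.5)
This statement collects known results for compact locally $\textup{CAT}(-1)$ spaces, so the plan is to reduce each part to the literature rather than argue from scratch. The argument passes through the universal cover $(\tilde\X,\tilde\sfd)$, which is a proper $\textup{CAT}(-1)$ space on which $\Gamma=\pi_1(\X)$ acts properly and cocompactly by isometries.

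\textbf{The entropy equality \eqref{eq:h_top=critical_exponent}.}
I would lift the flow to $\tilde\X$. Local geodesic lines of $\X$ correspond to $\Gamma$-orbits of geodesic lines of $\tilde\X$, because local geodesics in a $\textup{CAT}(-1)$ space are geodesics. So $\LocGeod(\X)=\Gamma\backslash\Geod(\tilde\X)$, with the flow induced by reparametrization.

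The upper bound $h_\textup{top}\le h(\tilde\X;\Gamma)$ goes by covering. Fix a compact fundamental domain $K$ and a small $r$.
\begin{itemize}
    \item A $(\text{\texthtd}^n,r)$-separated set of geodesics is determined, up to bounded error, by the pair of points $(\gamma(0),\gamma(n))$.
    \item This uses the convexity of the distance function in $\textup{CAT}(-1)$ spaces together with the exponential weight in ${\sf D}$.
    \item Such pairs lie in $K\times \Gamma$-translates of $K$ within distance $n+C$, so their number is controlled by $\#\Gamma o\cap B(o,n+C)$ times a constant.
\end{itemize}

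The lower bound goes by packing. Choose points $go$ with $n-1\le d(o,go)\le n$; there are at least $e^{(h-\varepsilon)n}$ of them along a subsequence. Connect $o$ to each $go$ by a geodesic and extend it to a full line using cocompactness. The extension may need geodesic completeness; if not available, choose lines passing near both $o$ and $go$, which exist by cocompactness and the CAT$(-1)$ geometry. Distinct orbit points at mutual distance bigger than $2r$ produce $\text{\texthtd}^n$-separated lines.

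Alternatively, I would simply invoke \cite{Ricks2021}, as the introduction already does.

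\textbf{$h$-expansiveness.}
If two lines $\gamma,\gamma'$ stay $\varepsilon$-close for all forward times, lift them to $\tilde\X$ with $\varepsilon$ below the injectivity scale. The lifts are then forward asymptotic. Hence the set of such $\gamma'$ is contained in a bounded region of a weak stable set. For negative times nothing is required, so this set has zero topological entropy: the forward dynamics contracts it, by the $\textup{CAT}(-1)$ comparison $d(\gamma(t),\gamma'(t))\le Ce^{-t}$ for asymptotic geodesics after reparametrization shift. A covering count then shows that $\textup{Cov}_{\text{\texthtd}^n}$ of this set grows subexponentially.

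\textbf{The Bowen–Margulis measure.}
I would construct it via the Patterson–Sullivan density $\{\mu_x\}$ of dimension $h=h(\tilde\X;\Gamma)$ on $\partial\tilde\X$. Define on $\partial^2\tilde\X\times\R$
$$e^{h(\xi|\eta)_o}\,\d\mu_o(\xi)\,\d\mu_o(\eta)\,\d t.$$
This measure is $\Gamma$-invariant and flow-invariant, so it descends to a finite measure on $\LocGeod(\X)$ by cocompactness.

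One point needs care: for non-branching issues, a pair $(\xi,\eta)$ may be joined by several parallel lines in $\textup{CAT}(-1)$. In a $\textup{CAT}(-1)$ space the connecting geodesic is unique, so this is fine.
\begin{itemize}
    \item \emph{Full support}: $\mu_o$ is supported on the whole limit set, which is all of $\partial\tilde\X$ by cocompactness.
    \item \emph{Ergodicity}: the Hopf argument / Roblin's theorem \cite{Roblin2002} gives that $\Gamma$ is divergent and the measure is ergodic.
    \item \emph{Entropy $h$ and uniqueness}: I would cite Ricks \cite{Ricks2021}, which treats exactly compact (or rank-one) $\textup{CAT}(-1)$ quotients.
\end{itemize}
I expect uniqueness of the measure of maximal entropy to be the hardest step, since it requires a Knieper/Bowen-type argument. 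The finite-dimensionality mentioned in the introduction suggests the authors rely on such hypotheses there. So my plan for this step is to cite \cite{Ricks2021} rather than reprove it.
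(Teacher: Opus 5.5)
\textbf{Main gap: uniqueness of the measure of maximal entropy.} Your overall strategy, reducing everything to the literature via the universal cover and the Patterson--Sullivan construction, is the same as the paper's. But there is a genuine gap at exactly the step you single out as hardest.

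The results of \cite{Ricks2021} are proved for compact, \emph{geodesically complete}, locally $\textup{CAT}(0)$ spaces with a rank-one axis. They do not treat ``exactly compact $\textup{CAT}(-1)$ quotients''. The proposition makes no completeness assumption, and the spaces it is later applied to (arbitrary compact locally $\textup{CAT}(-1)$ branched coverings) need not be geodesically complete. So citing \cite{Ricks2021} for uniqueness is not justified as stated. The same applies to citing it, in your alternative, for $h_\textup{top}=h(\tilde\X;\pi_1(\X))$.

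The missing idea is to locate where completeness is used in Ricks's argument and remove it.
\begin{itemize}
\item The paper observes that the proof of \cite[Theorem 18]{Ricks2021}, which shows that $\mm_\textup{BM}$ has maximal entropy, does not use completeness.
\item In the uniqueness theorem \cite[Theorem 28]{Ricks2021}, completeness enters only through \cite[Corollary 21]{Ricks2021}. That corollary rests on \cite[Theorem 3.5]{Bowen1972}, which needs $\LocGeod(\X)$ to have finite topological dimension.
\item The paper then proves finite dimensionality directly. First, $\LocGeod(\X)$ is the quotient of $\Geod(\tilde\X)$ by the discrete action of $\pi_1(\X)$, so it has the same topological dimension. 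Second, the Hopf parametrization identifies $\Geod(\tilde\X)$ with $(\partial\tilde\X\times\partial\tilde\X\setminus\Delta)\times\R$. Third, a visual metric on $\partial\tilde\X$ has finite Hausdorff dimension by \cite{Cavallucci23}, hence finite topological dimension.
\end{itemize}
You noticed that finite dimensionality is relevant, but you chose to defer to \cite{Ricks2021} instead of establishing it. This argument is what your proposal lacks.

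\textbf{Smaller points.}
\begin{itemize}
\item For the entropy equality, the paper uses \cite[Theorem A and Corollary 2.20]{Cavallucci2024OtalPeigne}, which requires no completeness. Your covering/packing sketch is a viable alternative. However, its lower bound depends on full geodesic lines passing uniformly close to both $o$ and $go$. Without completeness this is a real lemma: uniform almost-extendability of geodesic segments for a cocompact, non-elementary action on a proper hyperbolic space. It is not an immediate consequence of cocompactness and needs proof.
\item Your $h$-expansivity sketch (forward-asymptotic lifts, exponential contraction in $\textup{CAT}(-1)$, a bounded time-shift parameter) is fine in outline and can replace \cite[Lemma 20]{Ricks2021}.
\item Ergodicity of the Bowen--Margulis measure comes from \cite{RoblinThesis}. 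It does not come from \cite{Roblin2002}, which concerns the orbit-counting function, i.e.\ that the limit defining the critical exponent exists.
\item The existence of an ergodic, fully supported invariant measure, and its uniqueness as a measure of maximal entropy, tacitly require $\pi_1(\X)$ to be non-elementary. For $\X$ a circle, $\LocGeod(\X)$ is two circles on which $\Phi_1$ acts by rotations. Every invariant measure there has zero entropy, and no fully supported invariant measure is ergodic. Neither your argument nor the paper's addresses this degenerate case.
\end{itemize}
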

\begin{proof}
    The equality \eqref{eq:h_top=critical_exponent} follows by \cite[Theorem A and Corollary 2.20]{ Cavallucci2024OtalPeigne}. The existence of the ergodic, $\Phi_t$-invariant, fully supported measure $\mm_{\textup{BM}}$ has been proved in \cite{RoblinThesis}. The fact that it is a measure of maximal entropy has been proved for instance in \cite[Theorem 18]{Ricks2021}, whose proof does not use the geodesic completeness assumption. The $h$-expansivity of the geodesic flow is \cite[Lemma 20]{Ricks2021}. The last part of the statement is \cite[Theorem 28]{Ricks2021}, where the geodesic completeness assumption therein is required only to guarantee that $\LocGeod(\X)$ has finite topological dimension. 
    Indeed it is used only through \cite[Corollary 21]{Ricks2021}, which is a consequence of \cite[Theorem 3.5]{Bowen1972} which requires $\LocGeod(\X)$ to have finite topological dimension. We now prove that it is always the case for compact, locally $\CATminus$ spaces. Indeed, $\LocGeod(\X)$ is isometric to the quotient of the space of geodesic lines $\Geod(\tilde{\X})$ of the universal cover $\tilde{\X}$ via the natural discrete action induced by $\pi_1(\X)$, see \cite[Corollary 3.7]{Cavallucci2024OtalPeigne}. Hence the topological dimension of $\LocGeod(\X)$ coincides with the one of $\Geod(\tilde\X)$ by \cite[Theorem 1.12.7]{Engelking1995}.
    By the Hopf parametrization (see \cite[page 12]{RoblinThesis}), $\Geod(\tilde\X)$ is homeomorphic to $(\partial \tilde\X \times \partial \tilde\X \setminus \Delta) \times \R$, where $\partial \tilde\X$ is the boundary at infinity of $\tilde\X$. By \cite[Theorems 1.1.2 and 1.5.16]{Engelking1995} the conclusion is true if $\partial\tilde\X$ has finite topological dimension. A visual metric (see \cite[page 7]{RoblinThesis}) on $\partial\tilde\X$ has finite Hausdorff dimension by \cite[Theorem 6.1 and Proposition 5.7]{Cavallucci23}, hence finite topological dimension.
\end{proof}

\begin{remark}
\label{rem:h_top=h_crit}
    For compact, locally $\textup{CAT}(0)$ spaces with Gromov-hyperbolic fundamental groups, \eqref{eq:h_top=critical_exponent} holds by \cite[Theorem A and Corollary 2.20]{Cavallucci2024OtalPeigne}. In general, the uniqueness of the measure of maximal entropy is false, see \cite[Example 1.1]{Cavallucci2024OtalPeigne}.    
    Beyond hyperbolicity, \eqref{eq:h_top=critical_exponent} is true
    for compact, geodesically complete, locally $\textup{CAT}(0)$ spaces  by \cite[Theorem A]{Ricks2021}. The uniqueness of the measure of maximal entropy holds for compact, geodesically complete, locally $\textup{CAT}(0)$ spaces with rank-one axis by \cite[Theorem B]{Ricks2021}.
\end{remark}

\subsection{Locally convex subsets}
\label{subsec:locally_convex_subsets}

A subset $\Sigma$ of a locally CAT$(\kappa)$ space $(\Y,\sfd_\Y)$ is said to be \emph{locally convex} if for every $y\in \Sigma$ there exists $0<r<D_\kappa$ such that $(B(y,r),\sfd)$ is CAT$(\kappa)$ and for every $y',y'' \in \Sigma \cap B(y,r)$, the only geodesic joining $y'$ to $y''$ is entirely contained in $\Sigma$. A closed, locally convex subset of a $\textup{CAT}(\kappa)$ space is convex (see for instance \cite[Theorem 1.1]{RamosCuevas2013}). Therefore the property above holds for $r = \rho_\textup{CAT}(\Y)$.

\begin{lemma}
    \label{lemma:uniform_distance_geodesics_to_singular_set}
    Let $(\Y,\sfd_\Y)$ be a compact, locally $\textup{CAT}(\kappa)$ space and let $\Sigma \subseteq \Y$ be closed and locally convex. Then every local geodesic $\sigma \colon I \to \Y$ such that $\alpha(\sigma), \omega(\sigma) \in \Sigma$ and $\sup_{t\in I} \sfd_\Y(\sigma(t),\Sigma) \le \rho_\textup{CAT}(\Y)/2$ satisfies $\sigma(I) \subseteq \Sigma$.
\end{lemma}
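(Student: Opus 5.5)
The plan is to show that the function $f(t) := \sfd_\Y(\sigma(t),\Sigma)$ is locally convex on $I$. Since $f\ge 0$ and $f$ vanishes at both endpoints of $I$, a convex function on the interval $I$ must then vanish identically. So $\sigma(t)\in\Sigma$ for every $t$, and we use that $\Sigma$ is closed so that distance zero means membership. Write $\rho := \rho_\textup{CAT}(\Y)$; by Proposition \ref{prop:lower_bound_CAT_radius} we have $\rho>0$. Continuity of $f$ is clear, and compactness of $\Sigma$ gives nearest points, so it suffices to prove convexity of $f$ on a small neighbourhood of each $t_0\in I$.

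\textbf{Localization.} Fix $t_0\in I$ and set $y_0:=\sigma(t_0)$. Then $B(y_0,\rho)$ is $\textup{CAT}(\kappa)$, and $\Sigma\cap B(y_0,\rho)$ is convex in it by local convexity, as recalled after the definition (closed locally convex subsets of $\textup{CAT}(\kappa)$ spaces are convex). Choose $\delta>0$ small, say with $\sigma([t_0-\delta,t_0+\delta])\subseteq B(y_0,\rho/8)$. Then $\sigma\vert_{[t_0-\delta,t_0+\delta]}$ is a local geodesic of length smaller than $\rho$ inside a $\textup{CAT}(\kappa)$ ball, hence a geodesic there (\cite[Proposition II.1.4]{BridsonHaefliger}). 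For $s$ in this interval we have $f(s)\le\rho/2$, so every nearest point of $\Sigma$ to $\sigma(s)$ lies at distance at most $\rho/2+\rho/8<\rho$ from $y_0$. Hence $f(s)=\sfd(\sigma(s),C)$ with $C:=\Sigma\cap \overline{B}(y_0,5\rho/8)$, which is a closed convex subset of the $\textup{CAT}(\kappa)$ ball $B(y_0,\rho)$. This convexity of $C$ also needs the usual convexity of small balls in $\textup{CAT}(\kappa)$ spaces, which holds since radii are below $D_\kappa/2$. We may shrink $\rho$ slightly if needed to ensure this.

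\textbf{Convexity of the distance to $C$.} For $\kappa\le 0$, the distance to a closed convex set is convex along geodesics by \cite[Proposition II.2.5]{BridsonHaefliger}, and this finishes the argument. For $\kappa>0$ I would reduce to a weaker statement that suffices. I would argue by contradiction: suppose $f\not\equiv 0$ and take a maximal open subinterval $(a,b)$ where $f>0$, so $f(a)=f(b)=0$. Let $t_0$ be a point where $f\vert_{[a,b]}$ attains its maximum. Near $t_0$, with $p$ the projection of $\sigma(t_0)$ onto $C$, the function $s\mapsto \sfd(\sigma(s),p)$ dominates $f$ and agrees with it at $t_0$. Comparison with $\mathbb{M}^2_\kappa$ shows that in small $\textup{CAT}(\kappa)$ balls (radius below $D_\kappa/2$) this function has no strict interior local maximum along a geodesic. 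Equivalently one can use that $d_C$ is convex on balls of radius $<D_\kappa/2$, the spherical analogue of \cite[II.2.5]{BridsonHaefliger}. Combined with the first-variation/projection property (the angle at $p$ between $[p,\sigma(t_0)]$ and $C$ is $\ge\pi/2$), this should force $f$ to be locally constant at its maximum. Propagating this along $[a,b]$ then contradicts $f(a)=0$.

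\textbf{Main obstacle.} The hard step is this positive-curvature case, namely establishing local convexity, or at least the absence of interior maxima, of $d_C$ in a $\textup{CAT}(\kappa)$ ball with $\kappa>0$. I would first try to cite or reprove the fact that $d_C$ is convex on $\{d_C<D_\kappa/2\}$ intersected with a ball of radius $<D_\kappa/2$. The bound $f\le \rho/2$ in the hypothesis is exactly what keeps all the relevant configurations inside one such ball.
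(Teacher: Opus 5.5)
\paragraph{The case $\kappa\le 0$ is fine.} Here your argument is the paper's proof. You localize to the ball $B(\sigma(t_0),\rho)$ and replace $\Sigma$ by a convex piece of it. Your closed ball $\overline{B}(y_0,5\rho/8)$ handles completeness a little more carefully than the paper's open ball. You then use convexity of the distance to a convex set. Finally, a nonnegative convex function vanishing at both endpoints vanishes identically.

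\paragraph{The case $\kappa>0$ cannot be filled, because the statement is false there.} Take the following data.
\begin{itemize}
\item $\Y=\mathbb{S}^2$, the unit round sphere. It is compact and locally $\textup{CAT}(1)$, with $\rho_\textup{CAT}(\Y)\ge\pi/2$ since open hemispheres are convex.
\item $\Sigma=\{z=0\}$, the equator. It is closed and locally convex: for $y\in\Sigma$, the geodesic in $B(y,\pi/2)$ between two points of $\Sigma$ is an arc of the equator.
\item $\sigma(t)=(\cos t,\sin t\cos\alpha,\sin t\sin\alpha)$ for $t\in[0,\pi]$, with $0<\alpha\le\pi/4$. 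This is a half great circle tilted by the angle $\alpha$.
\end{itemize}
The curve $\sigma$ is a geodesic and its endpoints $(\pm1,0,0)$ lie in $\Sigma$. Moreover $\sfd_\Y(\sigma(t),\Sigma)=\arcsin(\sin\alpha\sin t)\le\alpha\le\rho_\textup{CAT}(\Y)/2$. Yet $\sigma(\pi/2)\notin\Sigma$.

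\paragraph{Why your two tools fail.} First, the fact you planned to cite, convexity of $d_C$ on small balls as a ``spherical analogue'' of \cite[II.2.5]{BridsonHaefliger}, does not exist. On balls of radius $<D_\kappa/2$ only the distance to a \emph{point} is convex. In the example, write $f(t)=\sfd_\Y(\sigma(t),\Sigma)$. Differentiating $\sin f=\sin\alpha\sin t$ twice gives $\cos f\,f''=\sin f\,(f'^2-1)\le 0$. So $f$ is concave wherever it is positive, at arbitrarily small scales (take $\alpha$ small).

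Second, your fallback barrier argument points the wrong way. The function $s\mapsto\sfd_\Y(\sigma(s),p)$ is an \emph{upper} barrier for $f$ touching it at the maximum point $t_0$. But the constant $f(t_0)$ is such a barrier too, so it carries no information. Ruling out an interior maximum would need a lower barrier, which is exactly the convexity that fails. Indeed, $f$ above has a strict interior maximum at $t=\pi/2$.

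\paragraph{The paper has the same gap.} Its proof cites convexity of $s\mapsto\sfd_\Y(\sigma(s),\Sigma\cap B(\sigma(t),\rho))$ for every $\kappa$, which is only justified for $\kappa\le 0$. The lemma should be restricted to $\kappa\le 0$, where your proof is complete. For $\kappa>0$ a different hypothesis is needed, for instance $\ell(\sigma)<\min\{\rho_\textup{CAT}(\Y),D_\kappa\}$. That version follows directly from local convexity, since such a $\sigma$ is the unique geodesic between its endpoints inside a $\textup{CAT}(\kappa)$ ball.
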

\begin{proof}
    Let $\rho := \rho_\textup{CAT}(\Y)$ and $\sigma\colon I \to \Y$ as in the statement.
    Define $h\colon I \to [0,+\infty)$, $h(t) = \sfd_\Y(\sigma(t),\Sigma)$. Fix $t\in I$ and consider the $\textup{CAT}(\kappa)$ ball $B(\sigma(t),\rho)$. For every $s \in I$ such that $\vert t-s\vert < \rho/4$, we have the equality $\sfd_\Y(\sigma(s),\Sigma)= \sfd_\Y(\sigma(s), \Sigma \cap B(\sigma(t),\rho))$, by triangle inequality. Moreover, the function $s\mapsto \sfd_\Y(\sigma(s),\Sigma \cap B(\sigma(t),\rho))$ is convex, see \cite[Exercise II.2.6]{BridsonHaefliger}. This implies that $h$ is a locally convex function. A locally convex function on an interval is a convex function, hence $h$ is a non-negative convex function that assumes value zero on $\partial I$. This implies that $h\equiv 0$, hence $\sigma(t) \in \Sigma$ for every $t\in I$.
\end{proof}

    We define 
    \begin{equation}
        \label{eq:defin_coming_back_geodesics}
        \Sigma^\circlearrowleft := \{ \sigma\colon I \to \Y \textup{ non-trivial local geodesic with } \{\alpha(\sigma),\omega(\sigma)\} = \sigma^{-1}(\Sigma)\}.
    \end{equation}
    Here, by non-trivial we mean that $I$ is not a point. The \emph{normal injectivity radius} of $\Sigma$ is defined as
\begin{equation}
    \label{eq:defin_normal_inj_radius}
    \inj := \inf\{\ell(\gamma)\,:\, \gamma \in \Sigma^\circlearrowleft \}.
\end{equation}
We collect some properties of the normal injectivity radius. The notation $\angle_{x}(y,z)$ stands for the Alexandrov angle, see \cite[Definition I.1.12 and Chapter II.3]{BridsonHaefliger}.

\begin{proposition}
\label{prop:normal_inj_radius_minimizing_curve}
    Let $(\Y,\sfd_\Y)$ be a compact, locally $\textup{CAT}(\kappa)$ space and let $\Sigma \subseteq \Y$ be closed and locally convex. Then the following properties hold.
    \begin{itemize}
        \item[(i)] $\inj \ge \rho_\textup{CAT}(\Y)$ and if it is finite then the infimum in \eqref{eq:defin_normal_inj_radius} is indeed a minimum.
        \item[(ii)] Let $\sigma \colon [0,\ell ] \to \Y$ be a minimizing element for $\inj$. Then $\angle_{\sigma(0)}(\sigma(t), z) \ge \pi/2$ whenever $t<\rho_\textup{CAT}(\Y)$ and $z\in \Sigma \cap B(\sigma(0),\rho_\textup{CAT}(\Y))$ and $\angle_{\sigma(\ell)}(\sigma(\ell - t), z) \ge \pi/2$ for every $t<\rho_\textup{CAT}(\Y)$ and $z\in \Sigma \cap B(\sigma(\ell),\rho_\textup{CAT}(\Y))$.
        \item[(iii)] If $\kappa = 0$ then the following conditions are equivalent:
            \begin{itemize}
                \item[(a)] $\Sigma^\circlearrowleft = \emptyset$,
                \item[(b)] $\inj = \infty$,
                \item[(c)] $\Sigma$ is a deformation retract of $\Y$,
                \item[(d)] the map $i_*\colon \pi_1(\Sigma) \to \pi_1(\Y)$ induced by the inclusion $i\colon \Sigma \to \Y$ is surjective.
            \end{itemize}
        \item[(iv)] If $\kappa=0$, $\Y$ is geodesically complete, and $\Sigma \neq \Y$, then $\Sigma^\circlearrowleft \neq \emptyset$.
    \end{itemize}
\end{proposition}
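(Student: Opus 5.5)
We treat the four items in order; the common tools are Lemma \ref{lemma:uniform_distance_geodesics_to_singular_set}, Proposition \ref{prop:loc_geod_is_compact} and convexity of distance functions in the CAT$(\kappa)$ balls of radius $\rho := \rho_\textup{CAT}(\Y)$.

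\textbf{Item (i).} Let $\sigma\in\Sigma^\circlearrowleft$ with $\ell(\sigma)<\rho$. Both endpoints lie in $\Sigma$, and every point of $\sigma$ is within $\ell(\sigma)/2<\rho/2$ of an endpoint. Hence Lemma \ref{lemma:uniform_distance_geodesics_to_singular_set} forces $\sigma(I)\subseteq\Sigma$, contradicting the fact that $\sigma$ is non-trivial and meets $\Sigma$ only at its endpoints. So $\inj\ge\rho$. For attainment, assume $\inj<\infty$ and take a minimizing sequence $\sigma_n\colon[0,\ell_n]\to\Y$ with $\ell_n\to\inj$. By Arzel\`a--Ascoli (or Proposition \ref{prop:loc_geod_is_compact} applied to local geodesic segments), we extract a limit local geodesic $\sigma$ of length $\inj$, with endpoints in $\Sigma$ because $\Sigma$ is closed. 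The interior could a priori touch $\Sigma$. If it does at some time $t_0$, cut $\sigma$ there: at least one piece has length $\le\inj$. If a piece has interior points outside $\Sigma$, we pass to a maximal open subinterval of $\sigma^{-1}(\Y\setminus\Sigma)$; this is an element of $\Sigma^\circlearrowleft$ of length $\le\inj$, hence exactly $\inj$, which forces the subinterval to be everything. The remaining possibility is that $\sigma$ lies entirely in $\Sigma$. We exclude it because the $\sigma_n$ are uniformly close to $\sigma$, so for large $n$ they satisfy $\sup\sfd(\sigma_n,\Sigma)<\rho/2$, and Lemma \ref{lemma:uniform_distance_geodesics_to_singular_set} would then put $\sigma_n$ in $\Sigma$, a contradiction.

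\textbf{Item (ii).} This is a first variation argument. Suppose $\angle_{\sigma(0)}(\sigma(t),z)<\pi/2$ for some $t<\rho$ and some $z\in\Sigma\cap B(\sigma(0),\rho)$. Move the starting point along the geodesic $[\sigma(0),z]$, which lies in $\Sigma$ by convexity, to a point $z_s$, and replace the initial segment by $[z_s,\sigma(t)]$. By the CAT$(\kappa)$ first variation formula \cite[II.3]{BridsonHaefliger}, $\sfd(z_s,\sigma(t))<t$ for small $s$. The new curve is shorter, and a shortest curve among those from $\Sigma$ to $\sigma(\ell)$ in this homotopy class can be straightened into a local geodesic. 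Truncating at the last visit to $\Sigma$ then gives an element of $\Sigma^\circlearrowleft$ of length $<\inj$, a contradiction. The endpoint $\sigma(\ell)$ is handled symmetrically. The delicate part is this straightening step: I would minimize length over curves from $\Sigma$ to $\sigma(\ell)$ homotopic rel endpoints, with the starting point allowed to slide in $\Sigma$, working in the universal cover, where $\tilde\Sigma$-components are convex and a nearest-point projection is available.

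\textbf{Items (iii)--(iv).} For $\kappa=0$ we pass to the universal cover $\tilde\Y$, which is CAT$(0)$. There, each component of the preimage of $\Sigma$ is closed and convex. For (a)$\Leftrightarrow$(b): (a)$\Rightarrow$(b) is immediate since $\inf\emptyset=\infty$, and (b)$\Rightarrow$(a) holds because $\Sigma^\circlearrowleft$ contains only curves of finite length. For (c)$\Rightarrow$(d): a deformation retraction makes $i_*$ an isomorphism, in particular surjective. For (d)$\Rightarrow$(a): given $\sigma\in\Sigma^\circlearrowleft$, surjectivity of $i_*$ homotopes $\sigma$ rel endpoints into a path in $\Sigma$. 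Lifting, the endpoints of $\tilde\sigma$ lie in the same convex component $\tilde\Sigma_0$, so the geodesic $\tilde\sigma$ lies in $\tilde\Sigma_0$; this contradicts the definition of $\Sigma^\circlearrowleft$. For (a)$\Rightarrow$(c): if no local geodesic leaves $\Sigma$ and returns, then $\tilde\Sigma$ is connected. Otherwise the shortest geodesic between two components would project into $\Sigma^\circlearrowleft$, after truncating at the first return to $\Sigma$; finiteness of the infimum uses compactness of $\Y$ and the cocompactness of the group action. Hence $\tilde\Sigma$ is a single convex set. The CAT$(0)$ nearest-point projection together with the straight-line homotopy then gives a $\pi_1$-equivariant deformation retraction, which descends to $\Y$. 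For (iv): if $\Sigma^\circlearrowleft=\emptyset$, take $y\notin\Sigma$ and the geodesic from its nearest point in $\tilde\Sigma$ through $\tilde y$. By geodesic completeness, extend it as a geodesic ray. In CAT$(0)$ the distance to the convex set $\tilde\Sigma$ along this ray is convex with positive derivative, so it tends to infinity. But $\tilde\Sigma$ is $\Gamma$-invariant and $\Gamma$ acts cocompactly, so the distance from points of $\tilde\Y$ to $\tilde\Sigma$ is bounded. This contradiction shows $\Sigma^\circlearrowleft\neq\emptyset$.

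I expect the main obstacle to be item (ii), specifically turning the shortened curve into a genuine element of $\Sigma^\circlearrowleft$.
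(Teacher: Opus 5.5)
Your items (i) and (iii) follow the paper's proof closely. In (i) you use the maximal exterior subinterval of the limit curve, and the uniform-distance lemma applied to the $\sigma_n$ to exclude $\sigma\subseteq\Sigma$. In (iii) you use the $\textup{CAT}(0)$ universal cover, convexity of the components of $\tilde\Sigma$, and the nearest-point retraction. Items (ii) and (iv) take a different route.

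For (ii), the paper only invokes the angle property of nearest-point projections. It tacitly assumes that $\sigma(0)$ is the point of $\Sigma$ nearest to $\sigma(t)$, and your minimization is what actually justifies this. The one step you should spell out is why the shortest curve is not swallowed by $\Sigma$:
\begin{itemize}
\item the lift $\tilde\sigma$ is a geodesic whose interior avoids $\tilde\Sigma$, so by convexity its endpoints lie in different components $\tilde\Sigma_0\neq\tilde\Sigma_1$;
\item hence the segment from $\tilde\Sigma_0$ to $\tilde\sigma(\ell)$ must exit $\tilde\Sigma$;
\item an exterior subsegment of it projects into $\Sigma^\circlearrowleft$ with length $<\inj$.
\end{itemize}
Once this is said, the first-variation step is superfluous. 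Minimality forces $\tilde\sigma(0)$ to be the projection of $\tilde\sigma(\ell)$, hence of every $\tilde\sigma(t)$, onto $\tilde\Sigma_0$. Then (ii) is the standard projection inequality, which is all the paper's citation provides.

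For (iv), you replace the appeal to Caprace--Monod by a direct argument. The function $g(s)=\sfd(c(s),\tilde\Sigma)$ is convex, $1$-Lipschitz, and equals $s$ on $[0,\sfd(\tilde y,\tilde\Sigma)]$. Hence $g(s)=s$ for all $s\ge 0$, contradicting the bound $g\le\textup{Diam}(\Y)$ that comes from cocompactness. This is correct and more elementary, provided $\Sigma\neq\emptyset$.

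In (a)$\Rightarrow$(c) you need neither a shortest segment nor cocompactness. Any geodesic joining two components of $\tilde\Sigma$ must leave $\tilde\Sigma$, and an exterior subsegment already gives an element of $\Sigma^\circlearrowleft$.

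There are two limitations, both shared with the paper.

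\textbf{The case $\kappa>0$ in (ii).} Your argument for (ii) needs $\kappa\le 0$. For $\kappa>0$, local geodesics of length $\ge D_\kappa$ need not be geodesics, and the components of $\tilde\Sigma$ need not be convex in the required sense. For instance, let $\Sigma$ be a small closed cap of radius $r$ in the unit round sphere. The minimizer has length $2\pi-2r>\pi$, and both its endpoints lie in the connected set $\Sigma$. Your sliding homotopy class then contains a constant curve, so the minimization can never produce an element of $\Sigma^\circlearrowleft$. For $\kappa>0$ a genuinely local argument would be needed, and the paper's one-line citation does not supply one either.

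\textbf{Connectedness of $\Sigma$ in (d)$\Rightarrow$(a).} Homotoping $\sigma$ rel endpoints into $\Sigma$ requires $\sigma(0)$ and $\sigma(\ell)$ to lie in the same path component of $\Sigma$. The paper's claim ``(d) implies $\tilde\Sigma$ connected'' needs the same assumption. For disconnected $\Sigma$ the implication is false. In Example (2), with $\Y=[0,1]$ and $\Sigma=\{0,1\}$, the group $\pi_1(\Y)$ is trivial, so (d) holds, yet $\Sigma^\circlearrowleft\neq\emptyset$.
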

\begin{proof}
    We start by showing (i). Let $\rho:=\rho_\textup{CAT}(\Y)$. By Lemma \ref{lemma:uniform_distance_geodesics_to_singular_set}, $\inj\ge \rho$. Moreover, $\inj < \infty$ if and only if $\Sigma^\circlearrowleft \neq \emptyset$.
    Let $\sigma_n \colon [0,\ell_n] \to \Y$ be a sequence of non-trivial local geodesics with $\{\alpha(\sigma_n),\omega(\sigma_n)\} = \sigma_n^{-1}(\Sigma)$ and such that $\ell(\sigma_n) = \ell_n$ converges to $\inj$ as $n$ goes to infinity. By Ascoli-Arzelà's Theorem we can extract a limit curve $\sigma_\infty\colon [0,\ell_\infty]\to \Y$ which is a local geodesic by Proposition \ref{prop:loc_geod_is_compact} that satisfies $\alpha(\sigma_\infty),\omega(\sigma_\infty) \in \Sigma$, because $\Sigma$ is closed, and $\ell_\infty = \inj$. Moreover, $\ell_\infty = \ell(\sigma_\infty)$ because $\sigma_\infty$ is a local geodesic. By Lemma \ref{lemma:uniform_distance_geodesics_to_singular_set}, for every $n$ we can find $t_n \in [\rho/2,\ell_n-\rho/2]$ such that $\sfd_\Y(\sigma_n(t_n),\Sigma)\ge \rho/2$. Therefore there exists $t_\infty \in [\rho/2,\ell_\infty - \rho/2]$ such that $\sigma_\infty(t_\infty) \notin \Sigma$. Let $$t_* := \sup\{t>0\,:\, \sigma\restr{[0,t)} \subseteq \Sigma\} < \ell_\infty.$$ 
    Then, we can find $s_* > t_*$ such that $\sigma\restr{[t_*,s_*]}$ is one of the competitors for the infimum in \eqref{eq:defin_normal_inj_radius}. This gives $\inj \le s_*-t_* = \ell_\infty$, forcing $t_*=0$ and $s_* = \ell_\infty$, implying that $\{\alpha(\sigma_\infty), \omega(\sigma_\infty)\} = \sigma_\infty^{-1}(\Sigma)$. This shows that $\sigma_\infty$ realizes the minimum in \eqref{eq:defin_normal_inj_radius}, proving (i).

    The condition in (ii) follows directly by \cite[Exercise II.2.6]{BridsonHaefliger}.

    From now on we assume $\kappa = 0$. Let $p\colon\tilde\Y\to\Y$ be the universal cover of $\Y$, which is $\textup{CAT}(0)$, and $\tilde\Sigma = p^{-1}(\Sigma)$. Each connected component of $\tilde \Sigma$ is a convex subset of $\tilde \Y$. We have already seen the equivalence between (a) and (b). We observe that $\Sigma^\circlearrowleft = \emptyset$ if and only if $\tilde\Sigma$ is connected. Indeed, if $\tilde\Sigma$ is connected then the lift $\tilde\sigma$ of $\sigma \in \Sigma^\circlearrowleft$ is a geodesic segment in $\tilde\Y$ connecting two points of $\tilde\Sigma$. By connectedness and convexity, the image of $\tilde\sigma$ is included in $\tilde \Sigma$, hence the image of $\sigma$ is included in $\Sigma$, contradicting the definition of $\Sigma^\circlearrowleft$. If $\tilde\Sigma$ is disconnected, then by cocompactness of $\tilde\Y$ we can find two connected components of $\tilde\Sigma$ and a geodesic segment $\tilde\sigma$ joining the two and realizing their distance. Then $p\circ \tilde\sigma \in \Sigma^\circlearrowleft$, which is then non-empty. Now, if $\tilde\Sigma$ is connected, the projection map $\textup{pr}_{\tilde\Sigma} \colon \tilde\Y \to \tilde\Sigma$ is $1$-Lipschitz and $\pi_1(\Y)$-invariant. Moreover, the map $H\colon \tilde\Y \times [0,1] \to \tilde\Y$, $(y,t)\mapsto [y,\textup{pr}_{\tilde\Sigma}(y)](t)$ is a $\pi_1(\Y)$-equivariant homotopy equivalence, that descends to a deformation retraction of $\Y$ onto $\Sigma$. This shows that (a) implies (c). If $\Sigma$ is a deformation rectraction of $\Y$ then $i_*\colon\pi_1(\Sigma) \to \pi_1(\Y)$ is an isomorphism, hence (c) implies (d). Finally, if (d) holds then $\tilde\Sigma$ is connected, showing (a). 

    If we further assume that $\Y$ is geodesically complete and $\Sigma^\circlearrowleft=\emptyset$, then $\tilde{\Sigma}$ is a connected, $\pi_1(\Y)$-invariant, convex subset of $\tilde\Y$. By geodesic completeness, $\tilde{\Sigma} = \tilde\Y$ as follows by \cite[Lemma 3.13]{CapraceMono2009}, hence $\Sigma = \Y$. This proves (iv).  
\end{proof}

\begin{remark}
    Item (iii) is not true for $\kappa = 1$: a small ball $\Sigma$ into a round sphere $\Y$ satisfies $\Sigma^\circlearrowleft \neq \emptyset$ but $\Sigma$ is not a deformation retract of $\Y$. In many explicit examples one can check directly if $\Sigma^\circlearrowleft$ is empty or not.
\end{remark}

We end this section with a construction that will be useful later. 

\begin{lemma}
    \label{lemma:unique_projection_CAT}
    Let $(\Y,\sfd_\Y)$ be a compact $\textup{CAT}(\kappa)$ space of diameter $<D_\kappa/2$ and let $\Sigma \subseteq \Y$ be closed and convex. Let $y,y'\in \Y$ be two points such that $\#[y,y'] \cap \Sigma \le 1$. Then, there exists a unique point $z\in \Sigma$ which realizes the quantity $\inf_{z\in\Sigma}\sfd_\Y(y,z)+\sfd_\Y(z,y')$.
\end{lemma}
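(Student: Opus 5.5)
Existence comes from compactness and continuity, and uniqueness from strict convexity of the function
$$f(z) := \sfd_\Y(y,z) + \sfd_\Y(z,y')$$
along geodesics inside $\Sigma$.

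\emph{Existence.} $\Sigma$ is a closed subset of the compact space $\Y$, hence compact. The function $f$ is continuous, so it attains its infimum on $\Sigma$.

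\emph{Uniqueness.} Suppose $z_0 \neq z_1$ are two minimizers and let $c\colon[0,1]\to\Y$ be the geodesic joining them. Since the diameter is $<D_\kappa/2 < D_\kappa$, this geodesic is unique, and by convexity of $\Sigma$ it lies in $\Sigma$. Every geodesic triangle in $\Y$ has perimeter $<3D_\kappa/2 < 2D_\kappa$, so the $\textup{CAT}(\kappa)$ comparison applies with no restriction. Hence each of $t\mapsto \sfd_\Y(y,c(t))$ and $t\mapsto \sfd_\Y(y',c(t))$ is convex. For $\kappa\le 0$ this is \cite[Proposition II.2.2]{BridsonHaefliger}. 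For $\kappa>0$ it holds because all distances are $<D_\kappa/2$, the range where distance functions to points in $\mathbb{M}^2_\kappa$ are convex along geodesics. So $f\circ c$ is convex and equal to the minimum $m$ at both endpoints, hence $f\circ c \equiv m$. Each summand is convex and their sum is constant (affine), so both summands are affine along $c$.

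\emph{Ruling out affineness.} The key step is to show that $t\mapsto \sfd_\Y(y,c(t))$ can be affine only in a degenerate configuration.
\begin{itemize}
\item[-] If $\kappa<0$, distance to a point in $\mathbb{M}^2_\kappa$ is strictly convex along any geodesic not pointing through that point. By comparison, affineness forces $y$, $c(0)$, $c(1)$ to span a flat (degenerate) triangle, i.e. $y$ lies on the extension of $c$.
\item[-] If $\kappa>0$, the same strict convexity holds in the ball of radius $D_\kappa/2$.
\item[-] If $\kappa = 0$, affineness forces the comparison triangle to be degenerate or the configuration to be a flat strip.
\end{itemize}
I would handle all cases uniformly with the flat triangle lemma \cite[Proposition II.2.9]{BridsonHaefliger}. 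Consider the triangle with vertices $y, z_0, z_1$. If $y$ is not on the line through $z_0, z_1$, then the midpoint $c(1/2)$ satisfies $\sfd_\Y(y,c(1/2))$ strictly less than the average of the endpoint distances, in the model space and hence in $\Y$. The same holds for $y'$. This contradicts $f(c(1/2)) = m$.

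So in the degenerate case $y$ lies on a geodesic extension of $c$, and likewise $y'$. Then $y$, $y'$ and $c$ lie in a common geodesic configuration. Suppose $y$ and $y'$ lie on the same side of $c$, say beyond $z_0$. Then $f(z_0) < f(z_1)$, a contradiction. Otherwise $y$ and $y'$ lie on opposite sides, so $[y,y']$ passes through $z_0$ and $z_1$, i.e. $[y,y']$ contains the whole segment $c\subseteq\Sigma$. This gives $\#[y,y']\cap\Sigma \ge 2$, contradicting the hypothesis. Uniqueness of geodesics is used here to identify the concatenation $y \to z_0 \to z_1 \to y'$ with $[y,y']$ when $f(z_0) = \sfd_\Y(y,y')$.

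\emph{Main obstacle.} The hard step is the strictness argument: extracting from equality in the comparison inequality that the configuration is a geodesic one. This needs care for $\kappa=0$, where distance functions can be affine along geodesics without the point lying on them, e.g. in flat strips. There the argument should run via the midpoint: equality $f(c(1/2)) = m$ with both summands affine gives equality in the Euclidean comparison for both triangles. By the flat triangle lemma, the two triangles $(y,z_0,z_1)$ and $(y',z_0,z_1)$ then bound flat Euclidean triangles. In a Euclidean plane the sum of distances to two points is strictly convex along a segment unless the segment lies on the line through both points. Unfolding the two flat triangles along the common side $c$ shows that $y$, $y'$ and $c$ are collinear in this sense. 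The same geodesic-configuration conclusion as above then follows.
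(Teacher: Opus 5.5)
Your argument follows the paper's scheme: $f$ is convex along a segment $c=[z_0,z_1]\subseteq\Sigma$ of minimizers, a rigidity statement forces $y$ and $y'$ to be aligned with $c$, and this contradicts $\#[y,y']\cap\Sigma\le 1$. It is more careful than the written proof. The paper asserts that $\sfd_\Y(y,\cdot)$ is strictly convex along every geodesic avoiding $y$, which is false: in a metric tree, if $y$ hangs off an endpoint of $c$, then $t\mapsto\sfd_\Y(y,c(t))$ is affine. Your comparison-triangle argument, showing that the triangles $(y,z_0,z_1)$ and $(y',z_0,z_1)$ must be degenerate, is the correct form of that step. You also justify existence by compactness and rule out the same-side configuration, both of which the paper skips.

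Two points need repair.

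\emph{The opposite-side case.} You claim $[y,y']$ passes through $z_0$ and $z_1$ by appealing to $f(z_0)=\sfd_\Y(y,y')$, which you never prove. It holds for the following reason. Both $[y,z_0]\star c=[y,z_1]$ and $c\star[z_1,y']=[z_0,y']$ are geodesics. Hence $[y,z_0]\star c\star[z_1,y']$ is a local geodesic of length $\sfd_\Y(y,z_1)+\sfd_\Y(z_1,y')<D_\kappa$. By \cite[Proposition II.1.4]{BridsonHaefliger} it is therefore a geodesic, and so it equals $[y,y']$. The degenerate case also includes $y$ in the interior of $c$. That case is excluded because $t\mapsto\sfd_\Y(y,c(t))$ would then be a non-negative affine function vanishing at an interior point but not at the endpoints.

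\emph{The ``main obstacle'' paragraph.} Its premise is mistaken. In a flat strip, as anywhere in the Euclidean plane, the distance to a point is strictly convex along any segment whose supporting line misses that point. Affineness with $y\notin c$ does occur, for every $\kappa$ (trees being the typical example). But it occurs only when $y$ lies on a geodesic extension of $c$, which is exactly the degenerate configuration you already handle. So the case $\kappa=0$ is covered verbatim by your comparison argument. Drop the flat-triangle unfolding; it is not rigorous anyway, since two flat triangles sharing a side need not bound a flat region of $\Y$.
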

\begin{proof}
    The function $\sfd_\Y(y,\cdot)$ is convex when restricted to every geodesic, by \cite[Exercise II.2.6]{BridsonHaefliger}, and strictly convex when restricted to a geodesic that do not contain $y$. Similarly for $y'$. Therefore, the function $f\colon \Sigma \to \R$, $z\mapsto \sfd_\Y(y,z)+\sfd_\Y(z,y')$ is convex and therefore it admits a minimum. If the minimum is not unique, then we find a non-trivial geodesic $[z,z']\subseteq \Sigma$ realizing the minimum. The condition on the strict convexity of $\sfd_\Y(y,\cdot)$ implies that $y,z,z'$ belong to the same geodesic. Similarly, $y',z,z'$ belong to the same geodesic. Therefore the geodesic $[y,y']$ contains $[z,z']$, which is impossible by assumption.
\end{proof}

The unique point $z\in \Sigma$ above is called the \emph{$\Sigma$-midpoint} between $y$ and $y'$ and it is denoted by $\texttt{mid}_\Sigma(y,y')$.

\subsection{Examples}
\label{subsec:Examples}
The theory we are going to develop can be applied to every couple $(\Y,\Sigma)$ where $(\Y,\sfd_\Y)$ is a compact, locally $\textup{CAT}(\kappa)$ space and $\Sigma \subseteq \Y$ is closed and locally convex. This is a very wild generality, already when $\kappa = -1$. 
To familiarize the reader with the topic, we offer some concrete examples that will help them test all the concepts we are about to introduce and that already give interesting outputs.

\begin{itemize}
    \item[(1)] $\Y = \mathbb{S}^1$ and $\Sigma = \{y\}$ is a point.
    \item[(2)] $\Y = [0,1]$ and $\Sigma = \{0,1\}$.
    \item[(3)] $\Y$ is the graph with two vertices $x,y$, an edge $E$ connecting them, one loop around $x$ and one around $y$. Here $\Sigma$ can be taken to be either a point on $E$ (not necessarily the midpoint) or the whole $E$. Similarly, one can take $\Y$ to be any finite graph, even with weighted lengths on the edges, and $\Sigma$ is a union of vertices and edges.
    \item[(4)] $\Y$ is a compact hyperbolic surface with totally geodesic boundary $\Sigma$.
    \item[(5)] $\Y$ is a closed hyperbolic surface and $\Sigma$ is a periodic geodesic.
    \item[(6)] $\Y$ is a closed hyperbolic surface and $\Sigma$ is the disjoint union of a closed geodesic and a point.
\end{itemize}

\section{Branched coverings and broken geodesics}
\label{sec:top_branched_coverings}

This section is devoted to the study of branched coverings with upper curvature bounds.

\subsection{Branched coverings and broken geodesics}
\label{subsec:branched_coverings_definition}

A continuous, surjective map $\pi\colon \X\to\Y$ between topological spaces is a branched covering if the following two conditions are satisfied:
\begin{itemize}
    \item[(i)] $\pi\restr{\pi^{-1}(\Sigma)}\colon \pi^{-1}(\Sigma) \to \Sigma$ is a homeomorphism, where $\Sigma := \{y\in \Y\,:\, \#\pi^{-1}(y) = 1\}$;
    \item[(ii)] $\pi\restr{\X\setminus\pi^{-1}(\Sigma)}\colon \X \setminus\pi^{-1}(\Sigma) \to \Y\setminus\Sigma$ is a covering.
\end{itemize}
The set $\Sigma$ is called the \emph{singular set} and, with an abuse of notation, we identify $\pi^{-1}(\Sigma)$ with $\Sigma$.
If $\Y$ is locally connected, the map $\Y\setminus \Sigma \to \N\cup\{\infty\}$, $y\mapsto \#\pi^{-1}(y)$ is locally constant and therefore $\Sigma$ is closed. We say that the branched covering has \emph{finite degree} if $\sup_{y\in \Y} \#\pi^{-1}(y) < \infty$ and we denote the degree by $\textup{deg}(\pi)$. We say it has \emph{pure degree $k\in \N$} if $\#\pi^{-1}(y) = k$ for every $y\in \Y\setminus \Sigma$.
In particular, if $\Y\setminus \Sigma$ is connected and locally connected, then the branched covering has pure degree. By definition, $\#\pi^{-1}(y) \ge 2$ for every $y\in \Y\setminus \Sigma$. We will always assume that $\Y\setminus\Sigma \neq \emptyset$.

Suppose that $\Y$ is equipped with a length distance $\sfd_\Y$. We define
\begin{equation}
\label{eq:defin_metric_covering}
    \begin{aligned}
        \sfd(x,x') := &\inf \Big( \left\{ \sfd_\Y(\pi(x),\pi(x'')) + \sfd_\Y(\pi(x''),\pi(x'))\,:\, x'' \in \pi^{-1}(\Sigma) \right\} \\
        &\qquad \qquad \cup \left\{\ell(\gamma)\,:\, \gamma \text{ is a curve in } \Y\setminus\Sigma \text{ joining } \pi(x) \text{ to } \pi(x')\right\} \Big),
    \end{aligned}
\end{equation}
for $x,x'\in \X$. This defines a distance on $\X$, whose properties are collected in the next lemma.
\begin{lemma}
\label{lemma:basic_properties_covering}
    Let $(\Y,\sfd_\Y)$ be a length space and let $\pi\colon \X\to \Y$ be a branched covering with singular set $\Sigma$. Then $(\X,\sfd)$ is a length space, $\pi\colon (\X,\sfd) \to (\Y,\sfd_\Y)$ is $1$-Lipschitz, $\pi\restr{\X\setminus\pi^{-1}(\Sigma)}$ is a local isometry and $\pi\restr{\pi^{-1}(\Sigma)}$ is an isometry. Moreover, $\pi$ is a branched covering when $\X$ is equipped with the topology induced by $\sfd$. Finally, if $\Y$ is compact and $\pi$ has finite degree, then $\X$ is compact.
\end{lemma}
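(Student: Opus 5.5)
We have to prove Lemma \ref{lemma:basic_properties_covering}. We begin with the metric axioms for $\sfd$ as defined in \eqref{eq:defin_metric_covering}. There is a subtlety in the second set: it should be read as lengths of curves $\gamma$ in $\Y\setminus\Sigma$ whose lift starting at $x$ ends at $x'$. By the covering property (ii) such a lift exists and is unique, and this is what makes $\sfd$ distinguish distinct preimages of the same point. With this reading, symmetry is immediate: reverse the curve, or swap the roles of $x$ and $x'$ in the first set.

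\textbf{Metric axioms.} Positivity uses $\sfd(x,x')\ge \sfd_\Y(\pi(x),\pi(x'))$, which holds for both kinds of competitors by the triangle inequality in $\Y$. This also gives that $\pi$ is $1$-Lipschitz. If $\pi(x)=\pi(x')=y\notin\Sigma$ but $x\neq x'$, take a ball $B(y,r)$ evenly covered and disjoint from $\Sigma$. Any competitor then has length $\ge r$: a curve whose lift joins $x$ to $x'$ must leave $B(y,r)$, and a path through $\Sigma$ has length $\ge 2r$. For the triangle inequality, concatenate competitors. A concatenation of two curve-type competitors is a curve-type competitor. If either one passes through $\Sigma$, then so does the concatenation, and the bound $\sfd_\Y(\pi(x),z)+\sfd_\Y(z,\pi(x''))\le$ (length of the concatenated path in $\Y$) follows from the triangle inequality. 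We record the useful reformulation that $\sfd(x,x')$ is the infimum of lengths of paths in $\Y$ from $\pi(x)$ to $\pi(x')$ that either hit $\Sigma$ or avoid $\Sigma$ with the correct lift. This uses that $\sfd_\Y$ is a length metric, so that $\sfd_\Y(a,z)$ is approximated by path lengths.

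\textbf{Length structure, local isometry and isometry on $\Sigma$.} Every competitor path in $\Y$ lifts to a curve in $\X$. For the curve type this is the lift in the covering. For a path through $\Sigma$, lift the subpaths away from $\Sigma$ starting from $x$ and from $x'$, and pass through the unique preimage of the point of $\Sigma$. The $\sfd$-length of the lift is at most the $\sfd_\Y$-length of the path, because short subarcs are themselves competitors. Hence $(\X,\sfd)$ is a length space. On $\pi^{-1}(\Sigma)$ the first set already gives $\sfd(x,x')\le\sfd_\Y(\pi x,\pi x')$, taking $x''=x$, and combined with the $1$-Lipschitz bound this makes $\pi$ an isometry there. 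For $x\notin\pi^{-1}(\Sigma)$, pick $r$ with $B(\pi x,4r)$ evenly covered and missing $\Sigma$. For $x',x''$ in the sheet over $B(\pi x,r)$, geodesic-approximating paths stay in $B(\pi x,3r)$ and lift inside the same sheet, while paths through $\Sigma$ or changing sheet have length $\ge 2r$. This gives the local isometry.

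\textbf{Topology and compactness.} Next we check that $\pi$ is still a branched covering when $\X$ carries the $\sfd$-topology. The main point is to show that the $\sfd$-topology on $\X\setminus\pi^{-1}(\Sigma)$ agrees with the original one, and that restricted to $\pi^{-1}(\Sigma)$ it agrees with the topology transported from $\Sigma$. On the complement of the singular set this follows from the local isometry with the sheets. On $\pi^{-1}(\Sigma)$ it follows from the isometry. Continuity of $\pi$ follows from the Lipschitz bound. The map $\X\setminus\pi^{-1}(\Sigma)\to\Y\setminus\Sigma$ remains a covering because evenly covered balls lift isometrically onto sheets, and these sheets are $\sfd$-open.

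For compactness, given a sequence $x_n$, pass to a subsequence with $\pi(x_n)\to y$. If $y\in\Sigma$, then $\sfd(x_n,\pi^{-1}(y))\le 2\sfd_\Y(\pi x_n,y)\to 0$, using the first set with $x''=\pi^{-1}(y)$. If $y\notin\Sigma$, the finitely many sheets over a small ball around $y$ give a convergent subsequence. The main obstacle is the bookkeeping in the triangle inequality and in the local isometry step, namely controlling competitors that leave a small ball. This is handled uniformly by the lower bound: any competitor that exits $B(y,r)$ has length $\ge r$.
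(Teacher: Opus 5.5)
Your proposal is correct and follows the same route as the paper: the $1$-Lipschitz bound from the triangle inequality, the isometry on $\pi^{-1}(\Sigma)$ by taking $x''=x$, the local isometry via near-minimizing curves inside an evenly covered ball, and then the topology and compactness statements. You are more careful than the paper in several places: you make explicit the intended reading of the second competitor set (the lift from $x$ must end at $x'$), you check the metric axioms, you prove the length-space property by lifting paths, and you give the separation of sheets needed for the $\sfd$-topology. The one point you leave tacit is that $\Sigma$ is closed, which you use to pick a ball around $y\notin\Sigma$ missing $\Sigma$; the paper asserts this in its first line, and it is part of the standing assumptions in its definitions.
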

\begin{proof}
    A length space is locally connected, hence $\Sigma$ is closed.
    Let $x,x' \in \X$. Every curve $\gamma$ in $\Y\setminus\Sigma$ joining $\pi(x)$ to $\pi(x')$ satisfies $\ell(\gamma) \ge \sfd_\Y(\pi(x),\pi(x'))$. By triangular inequality and \eqref{eq:defin_metric_covering} we deduce that $\sfd(x,x')\ge \sfd_\Y(\pi(x),\pi(x'))$. This shows that $\pi\colon (\X,\sfd) \to (\Y,\sfd_\Y)$ is $1$-Lipschitz. Let us fix $y\in \Y\setminus \Sigma$ and a small ball $B=B(y,r)$ with the property that $r< \sfd(y,\Sigma)/2$ and that, denoting by $V$ one connected component of $\pi^{-1}(B)$, the map $\pi\restr{V}\colon V\to B$ is a homeomorphism. For every $x,x' \in V$ and every $\varepsilon > 0$ small enough we can find a curve $\gamma_\varepsilon \in \Y\setminus \Sigma$ joining $\pi(x)$ to $\pi(x')$ such that $\ell(\gamma_\varepsilon) \le \sfd_\Y(\pi(x),\pi(x')) + \varepsilon$. By \eqref{eq:defin_metric_covering}, $\sfd(x,x')\le \ell(\gamma_\varepsilon)$. This shows that $\pi$ is a local isometry on $\Y\setminus \Sigma$ and that $\pi \restr{\X\setminus \pi^{-1}(\Sigma)}$ is a covering with respect to the topology induced by $\sfd$. If $x,x' \in \pi^{-1}(\Sigma)$ then $\sfd(x,x') \le \sfd(\pi(x),\pi(x'))$ by \eqref{eq:defin_metric_covering}, showing that $\pi\restr{\pi^{-1}(\Sigma)}$ is an isometry, in particular a homeomorphism. Hence $\pi$ is a branched covering with respect to the topology induced by $\sfd$ on $\X$. The last statement  is obvious.
\end{proof}

\begin{definition}
    A compact geodesic branched covering $(\X,\Y,\Sigma,\pi)$ is the datum of a compact, geodesic space $(\Y,\sfd_\Y)$, a closed subset $\Sigma \subseteq \Y$, and a finite degree branched covering $\pi\colon \X \to \Y$ with singular set $\Sigma$. We will always equip $\X$ with the metric $\sfd$ of \eqref{eq:defin_metric_covering}.
\end{definition}

We introduce the space of $\Sigma$-broken local geodesics. They will play a key role in the description of the local geodesics of a branched covering.

\begin{definition}
\label{defin:broken_geodesics}
    Let $(\X,\sfd)$ be a metric space and let $\Sigma \subseteq \X$.  Let $J\subseteq \Z$ be an interval and let $\{\eta_j\}_{j\in J}$ be a family of local geodesics $\eta_j\colon I_j \to \X$ such that
    \begin{itemize}
        \item[(i)] they satisfy the conditions of Definition \ref{defin:concatenation_of_curves}, in particular $\omega(\eta_j) = \alpha(\eta_{j+1})$ for every $j\in J \setminus \{\sup J\}$;
        \item[(ii)] each $\eta_j$ is of one of the following types:
            \begin{itemize}
            \item either $\eta_j \subseteq \Sigma$, and we say that $\eta_j$ is \emph{internal},
            \item or $\eta_j^{-1}(\Sigma) = \partial I_j$, and we say that $\eta_j$ is \emph{external}.
        \end{itemize}
    \end{itemize}
    The associated $\Sigma$-broken local geodesic is the curve $\eta := \underset{j\in J}{\star}\eta_j$ defined in Definition \ref{defin:concatenation_of_curves}. The set of all $\Sigma$-broken local geodesics whose domain of definition is $\R$, together with their reparametrizations by time shifts, is denoted by $\textup{Broken}(\X,\Sigma)$. Its elements are called \emph{$\Sigma$-broken local geodesic lines}.
\end{definition}

\begin{remark}
    \label{rem:metric_on_broken_geodesics}
    The space $\textup{Broken}(\X,\Sigma)$ is metrizable. For instance, the same function ${\sf D}$ of \eqref{eq:defin_D_a} defines a distance on $\textup{Broken}(\X,\Sigma)$ inducing the compact-open topology. Moreover, the usual reparametrization flow $\Phi_t(\eta)(\cdot) := \eta(\cdot + t)$ for every $t\in \R$ defines a flow on $\textup{Broken}(\Y,\Sigma)$. 
\end{remark}

The next lemma describes every local geodesic of a compact, geodesic branched covering as a $\Sigma$-broken local geodesic.

\begin{lemma}
    \label{lemma:concatenation_of_geodesics_in_X}
    Let $(\X,\Y,\Sigma,\pi)$ be a compact, geodesic branched covering. Every local geodesic $\gamma$ of $\X$ can be seen in a unique way as a $\Sigma$-broken local geodesic $\gamma = \underset{j\in J}{\star}\gamma_j$ such that if $\gamma_j$ is internal then $\gamma_{j-1}$ and $\gamma_{j+1}$ are external. Moreover, $\pi\circ \gamma_j$ is a local geodesic in $\Y$ for every $j\in J$.
\end{lemma}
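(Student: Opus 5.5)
The approach is to decompose the domain of $\gamma$ according to the closed set $\gamma^{-1}(\Sigma)$, where $\Sigma$ is identified with $\pi^{-1}(\Sigma)$. Its complement in the domain $I$ is open, hence a countable disjoint union of open intervals, and the closures of these intervals will be the external pieces. The internal pieces will be the maximal non-degenerate intervals contained in $\gamma^{-1}(\Sigma)$. Uniqueness is then forced by the normalization that internal pieces are flanked by external ones: consecutive internal pieces would have to be merged. Since the definition of $\Sigma$-broken geodesic also forbids degenerate pieces, the real content is to show that these pieces exhaust $I$ and that they are locally finite in time.

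I would proceed in four steps.

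\textbf{Step 1 (projections of external pieces).} Let $J\subseteq I$ be a component of $I\setminus\gamma^{-1}(\Sigma)$. Then $\gamma(J)\subseteq \X\setminus\pi^{-1}(\Sigma)$, and $\pi$ is a local isometry there by Lemma \ref{lemma:basic_properties_covering}. Hence $\pi\circ\gamma\restr{J}$ is a local geodesic, and by continuity so is its restriction to the closure of $J$.

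\textbf{Step 2 (projections of internal pieces).} Suppose $[a,b]\subseteq\gamma^{-1}(\Sigma)$. Since $\pi\restr{\pi^{-1}(\Sigma)}$ is an isometry and $\gamma$ is locally isometric, $\pi\circ\gamma$ is locally isometric on $[a,b]$.

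\textbf{Step 3 (structure of $\gamma^{-1}(\Sigma)$).} I need to show that $\gamma^{-1}(\Sigma)$ is locally a finite union of points and intervals, and that external pieces have length bounded below. This is the main obstacle. Without extra structure, $\gamma^{-1}(\Sigma)$ could a priori be a Cantor-like set, or contain accumulating short external excursions. To rule this out, I would show that any external piece with both endpoints in $\Sigma$ has length at least some $\varepsilon>0$. Take $\varepsilon$ small enough that the relevant local geodesics in $\Y$ are geodesics in a ball around the base point. For an external piece $\gamma_j$ of length $<\varepsilon$ with endpoints $x,x'\in\Sigma$, the curve $\pi\circ\gamma_j$ has length $\sfd(x,x')=\sfd_\Y(\pi x,\pi x')$, using that $\pi\restr{\Sigma}$ is an isometry. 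So $\pi\circ\gamma_j$ is a geodesic between points of $\Sigma$ inside a ball where $\Sigma$ is convex, and by uniqueness it lies in $\Sigma$; this is a contradiction. This argument uses local convexity of $\Sigma$ and the local CAT hypotheses, which I expect the lemma to implicitly require (as in the paper's standing assumption). Otherwise I would argue via the first alternative in \eqref{eq:defin_metric_covering}. Once this lower bound holds, every bounded subinterval of $I$ contains only finitely many external pieces. The remaining part of $\gamma^{-1}(\Sigma)$ in such a subinterval is then a finite union of closed intervals, possibly degenerate, and points. The degenerate ones are exactly the junctions between consecutive external pieces.

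\textbf{Step 4 (assembly).} Order the pieces by time and index them by an interval of $\Z$, merging adjacent internal intervals; they are already maximal, so no merging is actually needed. This produces a concatenation in the sense of Definition \ref{defin:concatenation_of_curves}. Uniqueness follows because any admissible decomposition must have its external pieces equal to the closures of the components of $I\setminus\gamma^{-1}(\Sigma)$. Its internal pieces must then be the maximal intervals of $\gamma^{-1}(\Sigma)$, by the no-two-consecutive-internal condition.
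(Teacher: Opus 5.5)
Your Steps 1, 2 and 4 follow the paper's proof. The paper decomposes $I$ using the open set $\gamma^{-1}(\X\setminus\Sigma)$, calls the closures of its components external and the rest internal, and asserts that uniqueness is clear. It gets the projection claim from $\pi$ being a local isometry off $\Sigma$ and an isometry on $\Sigma$.

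The paper has no counterpart to your Step 3, and you are right that this is where the content lies. In the literal generality of the statement (a compact geodesic branched covering, with $\Sigma$ merely closed) the lemma is in fact false. Take $\Y=[0,1]$, let $\Sigma$ be the middle-thirds Cantor set, and let $\X$ be two copies of $[0,1]$ glued along $\Sigma$; this is a degree-two branched covering in the sense of Section \ref{subsec:branched_coverings_definition}. The path running along one copy is a geodesic of $\X$, by the definition of $\sfd$ and the $1$-Lipschitz property of $\pi$, and it meets $\Sigma$ in a Cantor set. It has no non-trivial internal pieces. Its external pieces are the closures of the gaps, and these are densely ordered in time, so no decomposition indexed by an interval of $\Z$ exists. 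Some extra hypothesis is therefore genuinely necessary, and your fallback of arguing ``via the first alternative in \eqref{eq:defin_metric_covering}'' cannot succeed.

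Under the locally $\textup{CAT}(\kappa)$, locally convex assumptions, your Step 3 is sound. A short external piece projects to a short geodesic of $\Y$ between two points of $\Sigma$, inside a ball where $\Sigma$ is convex, so it lies in $\Sigma$. This is the bound $\inj\ge\rho_\textup{CAT}(\Y)$ of Proposition \ref{prop:normal_inj_radius_minimizing_curve}. The paper only brings it in later, in Lemma \ref{lemma:gamma^-1_is_discrete}, whose proof itself quotes the present lemma; your ordering avoids that circularity.

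One small repair is needed in Step 1. In a general geodesic space, continuity alone does not make the extension of a local geodesic to the closed interval a local geodesic. It does hold here, for the following reason.
\begin{itemize}
\item For $z\in\Sigma$, the definition of $\sfd$ with $x''=z$ gives $\sfd(x,z)=\sfd_\Y(\pi(x),\pi(z))$.
\item So if $\gamma(a)\in\Sigma$ and $\gamma$ is isometric on $[a,a+\delta)$, then $\sfd_\Y(\pi\gamma(a),\pi\gamma(t))=t-a$ for $t\in[a,a+\delta)$.
\item The triangle inequality, together with the $1$-Lipschitz property of $\pi$, then makes $\pi\circ\gamma$ isometric on $[a,a+\delta)$.
\end{itemize}
The paper's proof passes over this endpoint issue in the same way.
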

\begin{proof}
    Given a local geodesic $\gamma \colon I \to \X$, the set $\gamma^{-1}(\X\setminus\Sigma)$ is open, hence it is a countable union of open subintervals of $I$. Every local geodesic segment corresponding to each of such intervals is external. The remaining corresponding local geodesic segments are internal. The uniqueness of such a decomposition is clear.
    If $\gamma_j$ is internal then $\pi\circ \gamma_j$ is a local geodesic because $\pi$ is an isometry on $\Sigma$ by Lemma \ref{lemma:basic_properties_covering}. If $\gamma_j$ is external then $\pi\circ\gamma_j$ is a local geodesic because $\pi$ is a local isometry on $\X\setminus \Sigma$ by Lemma \ref{lemma:basic_properties_covering}.
\end{proof}

The next result describes the number of possible lifts of the pieces appearing in Lemma \ref{lemma:concatenation_of_geodesics_in_X}. It represents the first step towards the computation of the entropy of a branched covering.
We recall that given a curve $\gamma \colon I \to \Y$ and a point $x\in \pi^{-1}(\alpha(\gamma))$, a lift of $\gamma$ starting at $x$ is a curve $\tilde{\gamma}\colon I \to \X$ such that $\pi\circ\tilde{\gamma} = \gamma$ and $\alpha(\tilde{\gamma}) = x$.

\begin{lemma}
\label{lemma:lifts_geodesics_coverings}
    Let $(\X,\Y,\Sigma,\pi)$ be a compact, geodesic branched covering. Let $\gamma \colon I \to \Y$ be a local geodesic and let $x\in \pi^{-1}(\alpha(\gamma))$. Then there exists a lift $\tilde{\gamma}$ of $\gamma$ starting at $x$, and every lift is a local geodesic. Moreover, the following properties hold.
    \begin{itemize}
        \item[(i)] If $\gamma \subseteq \Sigma$ then $\tilde{\gamma}$ is unique and $\tilde{\gamma}\subseteq \Sigma$.
        \item[(ii)] If $\gamma \cap \Sigma\subseteq \{\omega(\gamma)\}$ then $\tilde{\gamma}$ is unique and $\tilde{\gamma} \cap \Sigma \subseteq \{\omega(\tilde{\gamma})\}$.
        \item[(iii)] If $\alpha(\gamma) = \gamma\cap\Sigma$ then  there exist exactly $k:=\#\pi^{-1}(\omega(\gamma))$ lifts $\tilde{\gamma}_1,\ldots, \tilde{\gamma}_k$.
    \end{itemize}
\end{lemma}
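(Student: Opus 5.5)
The plan is to build lifts from the covering structure on $\Y\setminus\Sigma$ and the homeomorphism $\pi\restr{\pi^{-1}(\Sigma)}$, and then check the local geodesic property using Lemma \ref{lemma:basic_properties_covering}.

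\emph{Case (i), $\gamma\subseteq\Sigma$.} Since $\pi\restr{\pi^{-1}(\Sigma)}$ is an isometry onto $\Sigma$, the curve $\tilde\gamma:=(\pi\restr{\pi^{-1}(\Sigma)})^{-1}\circ\gamma$ is a lift. Any lift of $\gamma$ must take values in $\pi^{-1}(\Sigma)$, so it coincides with this one. Local distances are preserved by the isometry. Because $\pi$ is $1$-Lipschitz, $\sfd$-distances between points of $\tilde\gamma$ are at least the $\sfd_\Y$-distances. Hence $\tilde\gamma$ is a local geodesic.

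\emph{Case (ii), $\gamma\cap\Sigma\subseteq\{\omega(\gamma)\}$.} The restriction of $\gamma$ to $I\setminus\gamma^{-1}(\Sigma)$ is a curve in $\Y\setminus\Sigma$, and $x\notin\Sigma$. The unique path lifting property of the covering $\pi\restr{\X\setminus\pi^{-1}(\Sigma)}$ gives a unique lift starting at $x$. If $\omega(\gamma)\in\Sigma$, I extend this lift by the unique preimage of $\omega(\gamma)$. Continuity at the endpoint follows from $\sfd(\tilde\gamma(t),\pi^{-1}(\omega(\gamma)))\le\sfd_\Y(\gamma(t),\omega(\gamma))$. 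This inequality comes from the first family of competitors in \eqref{eq:defin_metric_covering}, taking $x''$ to be the singular preimage itself. Uniqueness holds because a lift is determined on the open part by path lifting, and at the endpoint by continuity.

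For the local geodesic property, at interior times the map $\pi$ is a local isometry on $\X\setminus\pi^{-1}(\Sigma)$. Near the endpoint I argue as follows. The inequality $\sfd\ge\sfd_\Y\circ\pi$ holds always, since $\pi$ is $1$-Lipschitz. Conversely, for $s<t$ near the endpoint, the piece $\gamma\restr{[s,t]}$ is a competitor in \eqref{eq:defin_metric_covering}: it is either a curve in $\Y\setminus\Sigma$, or it can be approximated by one, or, when $t$ is the endpoint, the $x''$-term handles it. This gives $\sfd(\tilde\gamma(s),\tilde\gamma(t))\le\ell(\gamma\restr{[s,t]})=\sfd_\Y(\gamma(s),\gamma(t))$. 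The same argument shows that \emph{any} lift of \emph{any} local geodesic is a local geodesic: lifts have the same length as their projections and can only shorten distances.

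\emph{Case (iii), $\gamma\cap\Sigma=\{\alpha(\gamma)\}$.} Fix a small $t_0>0$ and set $k:=\#\pi^{-1}(\omega(\gamma))$. Each point of $\pi^{-1}(\gamma(t_0))$ determines, by path lifting in the covering forwards and backwards, a lift on $I\setminus\{\min I\}$. Backwards, it converges to $x$, the unique preimage of $\alpha(\gamma)$, by the estimate used in case (ii). The fibre cardinality is locally constant along the connected set $\gamma(I\setminus\{\min I\})$, so these fibres have exactly $k$ points. Distinct starting points give distinct lifts, and every lift arises this way. This gives exactly $k$ lifts.

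Existence of a lift in general follows by decomposing $\gamma$ into internal and external pieces as in Lemma \ref{lemma:concatenation_of_geodesics_in_X}. One lifts the pieces successively using (i)–(iii), and uses closedness of $\Sigma$ to handle accumulation points of the decomposition.

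\emph{Main obstacle.} The delicate step is continuity and the local geodesic property at points where $\gamma$ meets $\Sigma$, especially for the general existence with countably many pieces accumulating. My approach is to define the lift on $\gamma^{-1}(\Sigma)$ via the inverse homeomorphism and on each complementary open interval via path lifting. Continuity then follows from $\sfd(\tilde\gamma(t),\pi^{-1}(\Sigma))\le\sfd_\Y(\gamma(t),\Sigma)$, using the $x''$-term of \eqref{eq:defin_metric_covering}.
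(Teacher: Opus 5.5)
Your proposal is correct and follows essentially the paper's route. Lifts come from path lifting in the covering off $\Sigma$ and from the inverse homeomorphism on $\Sigma$. Case (iii) is reduced to unique lifting away from $\alpha(\gamma)$: you base it at an interior time $t_0$, where the paper instead reverses $\gamma$ and applies (ii). The local geodesic property follows from $1$-Lipschitzness of $\pi$ together with an upper bound on $\sfd$ along the lift.

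Your explicit use of the $x''$-term, $\sfd(x,z)\le\sfd_\Y(\pi(x),\pi(z))$ for $z\in\pi^{-1}(\Sigma)$, gives a cleaner justification than the paper's ``by compactness and continuity''. It handles continuity and the $1$-Lipschitz bound at points of $\gamma^{-1}(\Sigma)$, including accumulation points. State it pointwise, $\sfd(\tilde\gamma(t),\tilde\gamma(t_*))\le\sfd_\Y(\gamma(t),\gamma(t_*))$ whenever $\gamma(t_*)\in\Sigma$, rather than as a distance to the set $\pi^{-1}(\Sigma)$. You can also drop the unneeded ``approximated by a curve in $\Y\setminus\Sigma$'' alternative, since the $x''$-term at an intermediate point of $\Sigma$ already covers that case.
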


\begin{proof}
    Let us first deal with local geodesics in cases (i), (ii) and (iii). The existence and the uniqueness in case (i) is trivial. In case (ii), for every $t<\sup I$ we have a unique lift of $\gamma\restr{I\setminus(t,\sup I]}$ as in the statement, because of the usual covering properties. By compactness and continuity we have a unique lift of $\gamma$. In both cases, the lift is a local geodesic by Lemma \ref{lemma:basic_properties_covering}.    
    In case (iii), let $\{x_1,\ldots,x_k\} = \pi^{-1}(\omega(\gamma))$ and consider the curve $-\gamma$. Apply (ii) to find the unique lifts $-\tilde{\gamma}_i$ in $\X$ of $-\gamma$ starting at $x_i$. The endpoints of all these curves are $x = \alpha(\gamma) \in \Sigma$. The curves $-(-\tilde{\gamma}_i)$, projects to $\gamma$ and starts at $x$. Any other curve with this property, once reversed, has to coincide with one of the $-\tilde{\gamma}_i$'s, by the uniqueness in (ii). By construction, the lift is a local geodesic by Lemma \ref{lemma:basic_properties_covering}.

    For the general case we proceed as follows.
    By Lemma \ref{lemma:concatenation_of_geodesics_in_X} applied to the trivial covering $\textup{id}\colon \Y \to \Y$, the local geodesic $\gamma$ can be decomposed, uniquely, as $\gamma = \underset{j\in J}{\star} \gamma_j$, where each $\gamma_j$ is a local geodesic which is either internal or external. If $\gamma_j$ is internal then it falls in case (i). If $\gamma_j$ is external there are two possibilities.
    In the first case, $\omega(\gamma_j) = \gamma_j\cap \Sigma$. Hence we are in case (ii). 
    In the second case we have that $\alpha(\gamma_j) \in \gamma_j\cap \Sigma$ and $\alpha(t)\notin \Sigma$ for every $t\in \textup{Int}(I_j)$. 
    Fix such a $t$: the restriction of $\gamma_j$ up to $t$ is a curve satisfying (iii), hence a lift exists. The restriction on the remaining interval is a curve satisfying (ii), so a lift exists again. Combining the lifts, we get a lift of $\gamma_j$. Actually, we get exactly $k:=\#\pi^{-1}(\gamma_j(t))$ lifts, for $t\in \textup{Int}(I_j)$.

    By construction, every lift $\tilde{\gamma}\colon I \to \X$ of $\gamma$ is a concatenation of local geodesics, hence it is $1$-Lipschitz. Therefore, to show that it is a local geodesic it is enough to show that $\sfd(\tilde{\gamma}(t),\tilde{\gamma}(s)) \ge \vert t - s \vert$ for $t,s$ sufficiently close. Indeed,
    $$\sfd(\tilde{\gamma}(t),\tilde{\gamma}(s)) \ge \sfd_\Y(\gamma(t),\gamma(s)) = \vert t - s \vert$$
    if $t,s$ are close enough because $\gamma$ is a local geodesic. Here, we used that $\pi$ is $1$-Lipschitz, by Lemma \ref{lemma:basic_properties_covering}.
\end{proof}

\begin{corollary}
\label{cor:covering_is_submetry}
    Let $(\X,\Y,\Sigma,\pi)$ be a compact, geodesic branched covering. Then  $\pi$ is a submetry, i.e. $\pi(B(x,r)) = B(\pi(x),r)$ for every $x\in \X$ and every $r>0$. Moreover, lifts of geodesics are geodesics. Finally, $\sfd(x,z) = \sfd_\Y(\pi(x),z)$ for every $x\in\X$ and $z\in \Sigma$.
\end{corollary}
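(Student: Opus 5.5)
The whole corollary rests on Lemma \ref{lemma:lifts_geodesics_coverings}: geodesics in $\Y$ lift to $\X$ from any prescribed starting point, and we combine this with the fact, from Lemma \ref{lemma:basic_properties_covering}, that $\pi$ is $1$-Lipschitz. I would prove the last claim first, then the submetry property, and finally the statement about lifts of geodesics.

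\emph{Distance to $\Sigma$.} Let $x\in\X$ and $z\in\Sigma$; identify $z$ with its unique preimage. Since $\pi$ is $1$-Lipschitz we have $\sfd(x,z)\ge \sfd_\Y(\pi(x),z)$. For the reverse inequality, $\Y$ is geodesic, so we can pick a geodesic $\gamma$ from $z$ to $\pi(x)$. By Lemma \ref{lemma:lifts_geodesics_coverings} it has a lift $\tilde\gamma$ starting at $z$. Every lift is obtained by concatenating lifts of the internal and external pieces, and in case (iii) there is exactly one lift for each preimage of the endpoint. So I expect to be able to choose the lift with $\omega(\tilde\gamma)=x$; more precisely, I would reverse $\gamma$ and lift $-\gamma$ starting at $x$, which is simpler. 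That reversed lift ends at the unique preimage $z$ and is a $1$-Lipschitz curve of length $\ell(\gamma)=\sfd_\Y(\pi(x),z)$. This gives $\sfd(x,z)\le \sfd_\Y(\pi(x),z)$.

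\emph{Submetry.} The inclusion $\pi(B(x,r))\subseteq B(\pi(x),r)$ follows from the $1$-Lipschitz property. Conversely, take $y\in B(\pi(x),r)$ and a geodesic $\gamma$ from $\pi(x)$ to $y$. By Lemma \ref{lemma:lifts_geodesics_coverings} it lifts to $\tilde\gamma$ starting at $x$. Every lift is a concatenation of local geodesics, hence $1$-Lipschitz, as observed at the end of the proof of that lemma. Therefore $\sfd(x,\omega(\tilde\gamma))\le \ell(\gamma)<r$ and $\pi(\omega(\tilde\gamma))=y$. This proves $\pi(B(x,r))=B(\pi(x),r)$.

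\emph{Lifts of geodesics.} Let $\gamma\colon[a,b]\to\Y$ be a geodesic and $\tilde\gamma$ any lift. Then $\tilde\gamma$ is $1$-Lipschitz, so $\sfd(\tilde\gamma(t),\tilde\gamma(s))\le|t-s|$. Conversely, the $1$-Lipschitz property of $\pi$ gives $\sfd(\tilde\gamma(t),\tilde\gamma(s))\ge \sfd_\Y(\gamma(t),\gamma(s))=|t-s|$. Hence $\tilde\gamma$ is an isometric embedding.

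The only delicate point is making sure that the lift in the submetry and distance arguments ends where we need it. Lifting from the prescribed starting point, and reversing the curve when the endpoint is the one prescribed, avoids this. The $1$-Lipschitz bound on lifts then comes directly from the construction in Lemma \ref{lemma:lifts_geodesics_coverings}.
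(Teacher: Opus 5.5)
Your proof is correct and follows the paper's argument. The submetry property and the fact that lifts of geodesics are geodesics both come from lifting a geodesic from the prescribed point via Lemma \ref{lemma:lifts_geodesics_coverings}, then squeezing distances between the $1$-Lipschitz bounds for $\pi$ and for the lift. The one variation is in the last claim: you lift $-\gamma$ from $x$ and let the uniqueness of $\pi^{-1}(z)$ fix the endpoint, whereas the paper lifts $\gamma$ from $z$ and selects the lift ending at $x$ through a piece of type (iii). Your version is slightly cleaner and also handles the case $\pi(x)\in\Sigma$ without separate comment.
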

\begin{proof}
    By Lemma \ref{lemma:basic_properties_covering} $\pi$ is $1$-Lipschitz, so $\pi(B(x,r)) \subseteq B(\pi(x),r)$. On the other hand, for every $y\in B(\pi(x),r)$, we can lift a geodesic $\gamma$ joining $\pi(x)$ to $y$ to a local geodesic $\tilde\gamma$ starting at $x$, by Lemma \ref{lemma:lifts_geodesics_coverings}. The point $x'=\omega(\tilde\gamma)$ belongs to $B(x,r)$ because of Lemma \ref{lemma:basic_properties_covering} and satisfies $\pi(x')=y$. This shows the first part of the claim. The same proof gives that $\sfd_\Y(\pi(x),y)\le \sfd(x,x') \le \ell(\tilde{\gamma}) = \ell(\gamma) = \sfd_\Y(\pi(x),y)$, forcing the equalities and showing that $\tilde{\gamma}$ is a geodesic. Finally, given $x\in \X$ and $z\in \Sigma$, we consider a geodesic $[z,\pi(x)]$. The construction of lifts in Lemma \ref{lemma:lifts_geodesics_coverings} gives a curve $\gamma$ joining $z$ to $x$. This is possible since in the decomposition of $\gamma$ provided by Lemma \ref{lemma:concatenation_of_geodesics_in_X}, there exists at least a piece of type (iii) in Lemma \ref{lemma:lifts_geodesics_coverings}. By construction, $\sfd(z,x)\le \ell(\gamma) = \sfd_\Y(z,\pi(x))$, implying $\sfd(z,x) = \sfd_\Y(z,\pi(x))$ by Lemma \ref{lemma:basic_properties_covering}.
\end{proof}

\subsection{Locally CAT$(\kappa)$ branched coverings}

If $(\Y,\sfd_\Y)$ is $\textup{CAT}(\kappa)$ and $\Sigma$ is $D_\kappa$-convex, then the total space of a branched covering over $\Sigma$ is $\textup{CAT}(\kappa)$ by \cite[Theorem 2.1]{Allcock2000}. The following local version is a direct consequence. We remark that this generalization is stated, without proof, in \cite[Section 4.4]{Gromov-HypGps}. 

\begin{proposition}
\label{prop:branched_covering_loc_CAT-1}
    Let $(\Y,\sfd_\Y)$ be a compact, locally $\textup{CAT}(\kappa)$, geodesic space and let $\Sigma \subseteq \Y$ be closed and locally convex. If $\pi\colon \X \to \Y$ is a branched covering with singular set $\Sigma \subseteq \Y$ then $(\X,\sfd)$ is locally $\textup{CAT}(\kappa)$ and $\Sigma$ is locally convex in $\X$. Moreover,
    for every $\textup{CAT}(\kappa)$ ball $B(y,r) \subseteq \Y$ with $r<D_\kappa$ and every $x\in \pi^{-1}(y)$, the ball $B(x,r)$ is $\textup{CAT}(\kappa)$. 
\end{proposition}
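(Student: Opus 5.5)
The plan is to reduce everything to Allcock's global theorem \cite[Theorem 2.1]{Allcock2000}, applied to balls. Fix a $\textup{CAT}(\kappa)$ ball $B := B(y,r)\subseteq \Y$ with $r<D_\kappa$ and a point $x\in\pi^{-1}(y)$. Two cases arise. If $B\cap\Sigma=\emptyset$, the component of $\pi^{-1}(B)$ containing $x$ should map homeomorphically onto $B$: $B$ is simply connected because it is $\textup{CAT}(\kappa)$ and hence contractible, and it is locally path connected, so the covering restricts to a trivial covering over $B$. If $B\cap\Sigma\neq\emptyset$, then $\Sigma\cap B$ is closed in $B$ and locally convex, hence convex in the $\textup{CAT}(\kappa)$ space $B$ by \cite[Theorem 1.1]{RamosCuevas2013}. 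Moreover, $\pi^{-1}(B)\to B$ restricts to a branched covering with singular set $\Sigma\cap B$, since being a branched covering is a local condition on the base. I would take $V$ to be the path component of $\pi^{-1}(B)$ containing $x$. This is again a branched covering of $B$, because components of a covering restricted to $B\setminus\Sigma$ are unions of sheets, and $\pi^{-1}(\Sigma)\cap V$ maps bijectively onto $\Sigma\cap B$ since every point of $\Sigma\cap B$ is joined to $x$ by a lifted geodesic (Lemma \ref{lemma:lifts_geodesics_coverings}). Allcock's theorem then says that $V$, with the length metric \eqref{eq:defin_metric_covering} built from $\sfd_\Y\restr{B}$, is $\textup{CAT}(\kappa)$. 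Strictly, Allcock works with complete spaces, so I would apply it to the closed ball $\overline{B}(y,r')$ for $r'<r$ (or to slightly smaller balls) and exhaust.

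The main obstacle is identifying this intrinsic metric on $V$ with the restriction of $\sfd$ to $B(x,r)$, and showing that $B(x,r)\subseteq V$ with $\pi(B(x,r))=B$. By Corollary \ref{cor:covering_is_submetry}, $\pi$ is a submetry and lifts of geodesics are geodesics, so every point of $B(x,r)$ is reached by a lift of a geodesic from $y$ of length $<r$. This lift stays inside $\pi^{-1}(B)$ and is connected to $x$, so $B(x,r)\subseteq V$.

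For the distances, take $x',x''\in B(x,r)$. Any near-optimal curve realizing $\sfd(x',x'')$ has length $<2r$, but it might leave $B(x,r)$. My first attempt is to pass to the ball of radius $r/2$: for $x',x''\in B(x,r/2)$, the competitors in \eqref{eq:defin_metric_covering} of length $<r$ stay within $B(y,r)$ after projection, by the triangle inequality. Hence the global and intrinsic $V$-distances agree on $B(x,r/2)$, and $B(x,r/2)$ is a convex ball in the $\textup{CAT}(\kappa)$ space $V$, hence $\textup{CAT}(\kappa)$. This already proves that $\X$ is locally $\textup{CAT}(\kappa)$. To get the full radius $r$, I would then argue that $V$ embeds isometrically: a geodesic in $V$ between points of $B(x,r)$ projects to a $\Sigma$-broken curve, and its length is at most the sum of the distances to $x$, so it stays in $B$.

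Local convexity of $\Sigma$ in $\X$ then follows. Inside such a $\textup{CAT}(\kappa)$ ball, $\pi\restr{\Sigma}$ is an isometry (Lemma \ref{lemma:basic_properties_covering}), and the geodesic in $\Sigma\cap B$ joining two of its points lifts, by Lemma \ref{lemma:lifts_geodesics_coverings}(i), to a curve in $\Sigma$ of the same length. That curve is a geodesic in $\X$, and by uniqueness of geodesics in the $\textup{CAT}(\kappa)$ ball it is the only one.
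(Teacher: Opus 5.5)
\textbf{The gap: the full-radius step.} This step cannot be closed, because the ``moreover'' assertion is false in this generality.

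What you need is $\sfd_V\le\sfd$ on $B(x,r)$. That is, you need $\sfd$-short curves between points of $B(x,r)$ to stay inside $\pi^{-1}(B(y,r))$. Your sentence about geodesics of $V$ staying in $B$ does not address this, since $V$-geodesics lie over $B$ by definition. In fact such curves can leave $\pi^{-1}(B(y,r))$ through points of $\Sigma$ just outside $B(y,r)$.

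Here is a counterexample.
\begin{itemize}
\item Let $\Y$ be a closed hyperbolic surface and $z_1\in\Y$ a point such that $\overline{B}(z_1,r+\delta)$ is an embedded hyperbolic disc.
\item Let $z_2$ be a point with $\sfd_\Y(z_1,z_2)=r+\delta$, and set $\Sigma=\{z_1,z_2\}$. This set is closed and locally convex.
\item Let $\X$ be the double cover branched over $\Sigma$. It exists: send both peripheral loops to the generator of $\Z/2\Z$.
\item Let $p\in[z_1,z_2]$ with $\sfd_\Y(z_1,p)=r-\delta$, and let $x',x''$ be its two preimages.
\end{itemize}
By Corollary \ref{cor:covering_is_submetry}, $x',x''\in B(z_1,r)\subseteq\X$, and
$\sfd(x',x'')\le\sfd(x',z_2)+\sfd(z_2,x'')=4\delta$.

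On the other hand, $B(z_1,r)=\pi^{-1}(B(z_1,r))$. A curve from $x'$ to $x''$ inside this ball projects to a loop at $p$ in the disc $B(z_1,r)$. Since the local monodromy around $z_1$ is nontrivial, this loop either passes through $z_1$ or winds an odd number of times around $z_1$. In either case it meets the radius opposite to $p$, so it has length at least $2(r-\delta)$.

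For $\delta<r/3$, the ball $(B(z_1,r),\sfd)$ is therefore not even a geodesic space. Meanwhile $V$, with its intrinsic metric, is a hyperbolic cone of angle $4\pi$ and is $\textup{CAT}(-1)$, exactly as Allcock's theorem predicts. So Allcock's conclusion concerns the intrinsic metric on $V$, and it cannot be transferred to $\sfd$ on $B(x,r)$.

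\textbf{What is correct, and what needs repair.} The rest of your argument is sound and follows the paper's strategy: apply \cite[Theorem 2.1]{Allcock2000} over a $\textup{CAT}(\kappa)$ ball. You carry it out more carefully than the paper does. The paper asserts that $\pi\restr{B(x,r)}\colon B(x,r)\to B(y,r)$ is a branched covering. This is false in general when $y\notin\Sigma$, because the number of points of $B(x,r)$ over a point of $B(y,r)\setminus\Sigma$ need not be locally constant. The paper also tacitly identifies Allcock's intrinsic metric with $\sfd$, which is exactly the step the example above breaks.

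Your results, once the two repairs below are made:
\begin{itemize}
\item Your $r/2$ argument correctly shows that $B(x,r/2)$ is $\textup{CAT}(\kappa)$ whenever $\Sigma\cap B(y,r)$ is convex. This yields the first two assertions.
\item Running the same argument over $B(y,2r)$ shows that $B(x,r)$ is $\textup{CAT}(\kappa)$ when $B(y,2r)$ is a $\textup{CAT}(\kappa)$ ball with $2r<D_\kappa$ and $\Sigma\cap B(y,2r)$ is convex.
\end{itemize}

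Two points need repair.
\begin{itemize}
\item It is not true that every point of $B(x,r)$ is reached by a lift from $x$ of a geodesic issuing from $y$. If $2\sfd_\Y(y,\Sigma)<r$, then $B(x,r)$ contains the other preimages of $y$, but the only geodesic from $y$ to $y$ is constant. Use instead a curve from $x$ of $\sfd$-length $<r$. Its projection stays in $B(y,r)$, so the curve stays in $V$.
\item \cite[Theorem 1.1]{RamosCuevas2013} requires connectedness. If $\Sigma\cap B$ is disconnected (for example, two points), it is not convex, Allcock's theorem does not apply, and $V$ need not be $\textup{CAT}(\kappa)$: the double cover of a disc branched at two points is an annulus. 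For the local statement, shrink $B$ so that $\Sigma\cap B$ is either empty or the convex piece supplied by the definition of local convexity.
\end{itemize}
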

\begin{proof}
    Let $B(y,r) \subseteq \Y$ with $r<D_\kappa$ be $\textup{CAT}(\kappa)$, let $x\in \pi^{-1}(y)$ and consider the ball $B(x,r)$. Then $\pi\restr{B(x,r)} \colon B(x,r) \to B(y,r)$ is a branched covering. Hence, $B(x,r)$ is $\textup{CAT}(\kappa)$ by \cite[Theorem 2.1]{Allcock2000}. This shows that $(\X,\sfd)$ is locally $\textup{CAT}(\kappa)$. The fact that $\Sigma$ is locally convex follows by Lemma \ref{lemma:basic_properties_covering}.
\end{proof}

\begin{definition}
    A compact locally $\textup{CAT}(\kappa)$ branched covering $(\X,\Y,\Sigma,\pi)$ is the datum of a compact, locally $\textup{CAT}(\kappa)$, geodesic space $(\Y,\sfd_\Y)$, of a closed, locally convex subset $\Sigma \subseteq \Y$, and a branched covering $\pi\colon \X \to \Y$ with singular set $\Sigma$. We will always equip $\X$ with the metric $\sfd$ of \eqref{eq:defin_metric_covering}.
\end{definition}

The following is an easy consequence of the positivity of the normal injective radius of $\Sigma$.
\begin{lemma}
\label{lemma:gamma^-1_is_discrete}
    Let $(\X,\Y,\Sigma,\pi)$ be a compact, locally $\textup{CAT}(\kappa)$ branched covering. Then $\partial\gamma^{-1}(\Sigma)$ is closed and discrete for every $\gamma \in \LocGeod(\X)$.
\end{lemma}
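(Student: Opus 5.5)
Closedness of $\partial\gamma^{-1}(\Sigma)$ is immediate: $\Sigma$ is closed in $\X$ (it is homeomorphic to the closed set $\Sigma\subseteq \Y$ via $\pi$, and $\pi$ is continuous), so $\gamma^{-1}(\Sigma)$ is closed in $\R$. The boundary of any subset of $\R$ is closed. The real content is discreteness. I would show that any two distinct points of $\partial\gamma^{-1}(\Sigma)$ are at distance at least $\rho:=\rho_\textup{CAT}(\Y)>0$ (Proposition \ref{prop:lower_bound_CAT_radius}), unless they bound a single internal piece. Equivalently, in any bounded interval there are only finitely many boundary points.

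\textbf{Step 1: structure of the pieces.} By Lemma \ref{lemma:concatenation_of_geodesics_in_X}, decompose $\gamma=\star_{j\in J}\gamma_j$ with $J$ an interval of $\Z$. Here $\gamma^{-1}(\X\setminus\Sigma)$ is a countable union of disjoint open intervals, one for each external piece. Each point of $\partial\gamma^{-1}(\Sigma)$ is then an endpoint of some external piece, or an accumulation point of such endpoints. Hence it suffices to show two things: every bounded (compact) external piece has length at least $\rho$, and external pieces cannot accumulate.

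\textbf{Step 2: lower bound on external pieces.} Let $\gamma_j\colon[0,L_j]\to\X$ be a compact external piece. Then $\pi\circ\gamma_j$ is a local geodesic of $\Y$ (Lemma \ref{lemma:concatenation_of_geodesics_in_X}) whose endpoints lie in $\Sigma$ and whose interior avoids $\Sigma$. So $\pi\circ\gamma_j\in\Sigma^\circlearrowleft$, and Proposition \ref{prop:normal_inj_radius_minimizing_curve}(i) gives $L_j=\ell(\pi\circ\gamma_j)\ge\inj\ge\rho$. Alternatively, apply Lemma \ref{lemma:uniform_distance_geodesics_to_singular_set} directly: a piece of length $<\rho$ stays within $\rho/2$ of its endpoints, hence within $\rho/2$ of $\Sigma$, so it would lie in $\Sigma$, a contradiction.

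\textbf{Step 3: conclusion.} Disjoint open intervals of length at least $\rho$ meet any bounded interval only finitely many times. Therefore only finitely many points of $\partial\gamma^{-1}(\Sigma)$ lie in each compact set, which gives closed and discrete. The two possibly unbounded external pieces at the ends contribute at most one boundary point each.

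The main point to check carefully is that every boundary point is genuinely an endpoint of an external piece, i.e. that no boundary point arises as an accumulation of arbitrarily short external intervals. This is exactly what Step 2 excludes, so I expect no real obstacle beyond bookkeeping.
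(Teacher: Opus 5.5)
Your proof is correct and follows the paper's argument: compact external pieces project to elements of $\Sigma^\circlearrowleft$, so they have length at least $\inj\ge\rho_\textup{CAT}(\Y)$ by Proposition~\ref{prop:normal_inj_radius_minimizing_curve}(i), and this forces local finiteness of $\partial\gamma^{-1}(\Sigma)$. Your version is slightly cleaner in two respects: it needs no separate case for $\Sigma^\circlearrowleft=\emptyset$ (where the paper's claim $\gamma\subseteq\Sigma$ overlooks unbounded external pieces, though the conclusion is unaffected), and it makes explicit why every boundary point is an endpoint of an external component.
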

\begin{proof}
    The set is clearly closed. If $\Sigma^\circlearrowleft = \emptyset$ then $\gamma$ cannot have external pieces in the decomposition provided by Lemma \ref{lemma:concatenation_of_geodesics_in_X}, hence $\gamma \subseteq \Sigma$ and $\partial\gamma^{-1}(\Sigma) =\emptyset$. If $\Sigma^\circlearrowleft \neq \emptyset$ then every external piece in the unique decomposition of $\gamma$ has length at least $\inj \ge \rho_\textup{CAT}(\Y)$ by Proposition \ref{prop:normal_inj_radius_minimizing_curve}. Since every internal piece has two adjacent external pieces by Lemma \ref{lemma:concatenation_of_geodesics_in_X}, we get that $\partial\gamma^{-1}(\Sigma)$ is discrete.
\end{proof}

We describe the local behavior of local geodesics in locally $\textup{CAT}(\kappa)$ branched coverings.
\begin{lemma}
\label{lemma:geodesics_cross_Sigma}
    Let $(\X,\Y,\Sigma,\pi)$ be a compact, locally $\textup{CAT}(\kappa)$ branched covering. Let $z\in \Sigma$ and $B=B(z,\min\{\rho_\textup{CAT}(\Y),D_\kappa\}) \subseteq \Y$. 
    Let $y,y'\in B$ and let $x\in \pi^{-1}(y)$. 
    Then one of the following five cases occurs.
    \begin{itemize}
        \item[(1)] $y,y'\in \Sigma$. Then $\pi^{-1}(y) = \{x\}$, $\pi^{-1}(y') = \{x'\}$ and $[x,x']\subseteq \Sigma$.
        \item[(2)] $y\notin \Sigma$, $y'\in \Sigma$. In this case, $\pi^{-1}(y') = \{x'\}$ and the geodesic $[x,x']$ is the unique lift of $[y,y']$ starting at $x$.
        \item[(3)] $y\in \Sigma$, $y'\notin \Sigma$. In this case, for every $x'\in\pi^{-1}(y')$, the geodesic $[x,x']$ is the lift of $[y,y']$ starting at $x$ and ending at $x'$.
        \item[(4)] $y,y'\notin \Sigma$ and there exists $p\in [y,y']\cap \Sigma$. Then $p=\textup{\texttt{mid}}_\Sigma(y,y')$ and for every $x' \in \pi^{-1}(y')$ the geodesic $[x,x']$ is the concatenation of the unique lift $[x,p]$ of $[y,p]$ as in (2) and the unique lift $[p,x']$ of $[p,y']$ as in (3).
        \item[(5)] $y,y'\notin \Sigma$ and $[y,y']\cap \Sigma = \emptyset$.
        Then
        \begin{itemize}
            \item[(i)] there exists a unique $x'_* \in \pi^{-1}(y')$ such that $[x,x'_*]$ is the lift of $[y,y']$;
            \item[(ii)] for every $x'\in \pi^{-1}(y')\setminus\{x'_*\}$, the geodesic $[x,x']$ equals the concatenation of the unique lift $[x,p]$ of $[y,p]$ as in (2) and the unique lift $[p, x']$ of $[p, y']$ as in (3), where $p=\textup{\texttt{mid}}_\Sigma(y,y')$.
        \end{itemize}
    \end{itemize}
    Moreover, for every two distinct $x_1,x_2\in \pi^{-1}(y)$ it holds that $\sfd(x_1,x_2) = 2\sfd_\Y(y,\Sigma)$.
\end{lemma}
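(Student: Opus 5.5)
We work entirely inside the ball $\tilde B := B(x_0, r)$, where $x_0 = \pi^{-1}(z)$ and $r:=\min\{\rho_\textup{CAT}(\Y),D_\kappa\}$. By Proposition \ref{prop:branched_covering_loc_CAT-1}, $\tilde B$ is $\textup{CAT}(\kappa)$ and $\Sigma\cap\tilde B$ is convex, and $\pi\restr{\tilde B}\colon \tilde B\to B$ is a branched covering. In particular geodesics in $\tilde B$ between points at distance $<D_\kappa$ are unique, and by Corollary \ref{cor:covering_is_submetry} every lift of a geodesic of $B$ is a geodesic. Hence, to identify $[x,x']$, it suffices to exhibit a curve from $x$ to $x'$ whose length equals $\sfd(x,x')$. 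For this we use formula \eqref{eq:defin_metric_covering}, applied inside the branched covering of $B$. A lift of a geodesic is itself a geodesic, so uniqueness does the rest.

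\textbf{Cases (1)--(4).} Case (1) follows from convexity of $\Sigma$ in $\X$ together with the fact that $\pi$ is an isometry on $\Sigma$. For (2), lift $-[y,y']$ starting at $x'$ using Lemma \ref{lemma:lifts_geodesics_coverings}(ii)/(iii) so that it ends at $x$. The lift has length $\sfd_\Y(y,y')=\sfd(x,x')$ by the last claim of Corollary \ref{cor:covering_is_submetry}, so it is the geodesic. The lift is unique because $[y,y']$ meets $\Sigma$ only at $y'$; indeed $[y,y']\cap\Sigma$ is a segment by convexity, and we argue on it piecewise. Case (3) is symmetric: for each $x'$ there is exactly one lift of $[y,y']$ ending at $x'$, and it has the correct length. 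For (4), concatenate the unique lift $[x,p]$ with the lift of $[p,y']$ ending at $x'$. The resulting curve has length $\sfd_\Y(y,y')\le\sfd(x,x')$, so it is a geodesic. Moreover $p$ minimizes $\sfd_\Y(y,\cdot)+\sfd_\Y(\cdot,y')$ over $\Sigma$, since the sum equals $\sfd_\Y(y,y')$ there. Uniqueness of this minimizer comes from Lemma \ref{lemma:unique_projection_CAT}, applied to $B$ (possibly after shrinking to obtain the diameter bound $<D_\kappa/2$) once we know $\#[y,y']\cap\Sigma\le 1$. If that intersection is a nontrivial segment, then $\texttt{mid}_\Sigma$ is replaced by any point of it and the concatenation is still the geodesic. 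This is a degenerate subcase we handle by the same argument.

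\textbf{Case (5).} This is the main obstacle. Since $[y,y']\subseteq B\setminus\Sigma$, the standard covering lifting gives a unique lift from $x$, ending at some $x'_*$, and this lift is a geodesic by Corollary \ref{cor:covering_is_submetry}. For $x'\ne x'_*$ we must show $\sfd(x,x')=\min_{z\in\Sigma}(\sfd_\Y(y,z)+\sfd_\Y(z,y'))$, i.e.\ that the second set in \eqref{eq:defin_metric_covering} contributes nothing smaller. Here we use the local triviality of the covering over $B\setminus\Sigma$. The key claim is that any curve in $B\setminus\Sigma$ from $y$ to $y'$ whose lift from $x$ ends at $x'\neq x'_*$ is not homotopic rel endpoints within $B\setminus\Sigma$ to $[y,y']$. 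Its length is then bounded below by the $\Sigma$-route by a CAT$(\kappa)$ argument: we show that the geodesic homotopy (straight-line homotopy to $[y,y']$ in the $\textup{CAT}(\kappa)$ ball) must cross $\Sigma$. Rather than argue this directly, we will try the cleaner route: $\tilde B$ is $\textup{CAT}(\kappa)$, so $[x,x']$ is a geodesic. If $[x,x']$ avoided $\Sigma$, it would be the lift of the geodesic $\pi\circ[x,x']$, which is a local geodesic in $B$ of length $<D_\kappa$ and hence equals $[y,y']$. Its lift from $x$ would then be $[x,x'_*]$, a contradiction. So $[x,x']$ meets $\Sigma$ at some $p$, and $\ell=\sfd_\Y(y,p)+\sfd_\Y(p,y')$ with $p$ a minimizer. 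The minimizer is unique by Lemma \ref{lemma:unique_projection_CAT}, and we conclude as in (4). The same argument also simplifies the proof of (4).

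\textbf{Final claim.} Given $x_1\ne x_2$ over $y$, apply (5) with $y'=y$. Since $[y,y]$ is trivial and its lift is $x_1$ itself, $x_2\neq x'_*$. Hence $\sfd(x_1,x_2)=\min_{p\in\Sigma}2\sfd_\Y(y,p)=2\sfd_\Y(y,\Sigma)$; the nearest point of $\Sigma$ lies in $B$ because $z\in\Sigma\cap B$.
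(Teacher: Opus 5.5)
Your proposal is correct and follows essentially the same route as the paper. Cases (1)--(4) are handled by lifting and comparing lengths against $\sfd_\Y(y,y')\le \sfd(x,x')$. Case (5) uses the same contradiction: if $[x,x']$ avoided $\Sigma$, it would project to $[y,y']$ and hence end at $x'_*$; so it meets $\Sigma$, and the meeting point is forced to be the unique minimizer $\textup{\texttt{mid}}_\Sigma(y,y')$. The homotopy detour you sketch first in Case (5) is not needed. Your remark about the degenerate subcase where $[y,y']\cap\Sigma$ is a nontrivial segment, in which $\textup{\texttt{mid}}_\Sigma$ is not defined by Lemma \ref{lemma:unique_projection_CAT}, is extra care that the paper's proof omits.
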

\begin{proof}
    Geodesics between points of $B$ are unique. It is clear that the geodesic $[y,y']$ satisfies at least one of the five conditions. 

    \textbf{Case (1).} If $y,y'\in \Sigma$ then $[y,y'] \subseteq \Sigma$. In this case, $\pi^{-1}(y) = \{x\}$, $\pi^{-1}(y') = \{x'\}$ by definition of branched covering. Moreover $[x,x']$ is exactly $\pi^{-1}([y,y'])$ because of Corollary \ref{cor:covering_is_submetry}.

    \textbf{Case (2).} If $y\notin \Sigma$ and $y'\in \Sigma$ then $\pi^{-1}(y') = \{x'\}$. The existence of a unique lift of $[y,y']$ to a local geodesic connecting $x$ to $x'$ is provided by Lemma \ref{lemma:lifts_geodesics_coverings}.(ii). This lift is a geodesic by Corollary \ref{cor:covering_is_submetry}.

    \textbf{Case (3).} If $y\in \Sigma$ and $y'\notin \Sigma$, for every $x'\in \pi^{-1}(y')$ there exists a lift of $[y,y']$ to a local geodesic connecting $x$ to $x'$, by Lemma \ref{lemma:lifts_geodesics_coverings}.(iii). This lift is a geodesic by Corollary \ref{cor:covering_is_submetry}. 

    \textbf{Case (4).} If $y,y'\notin \Sigma$ and there exists $p \in [y,y']\cap \Sigma$, then $[y,y']$ is the concatenation of the geodesics $[y,p]$ and $[p,y']$. For every $x'\in \pi^{-1}(y')$ we can find lifts $[x,p]$, $[p,x']$ applying Cases (2) and (3). The concatenation of $[x,p]$ and $[p,x']$ is a curve joining $x$ to $x'$ whose length is at most $\sfd_\Y(y,p) + \sfd_\Y(p,y') = \sfd_\Y(y,y') \le \sfd(x,x')$ because of Lemma \ref{lemma:basic_properties_covering}. Then it is a geodesic. Moreover, $p = \textup{\texttt{mid}}_\Sigma(y,y')$ because $\sfd_\Y(y,p) + \sfd_\Y(p,y') = \sfd_\Y(y,y') \le \sfd_\Y(y,q) + \sfd_\Y(q,y')$ for every $q\in \Sigma$, so Lemma \ref{lemma:unique_projection_CAT} applies.

    \textbf{Case (5).} If $y,y'\notin \Sigma$ and $[y,y']\cap \Sigma = \emptyset$. Lemma \ref{lemma:lifts_geodesics_coverings}.(ii) implies that there exists a unique lift of $[y,y']$ starting at $x$, and Corollary \ref{cor:covering_is_submetry} says that such a lift is a geodesic. Call the endpoint of this lift $x_*' \in \pi^{-1}(y')$. Let now $x' \in \pi^{-1}(y')\setminus\{x'_*\}$ and call $p:=\textup{\texttt{mid}}_\Sigma(y,y')$. The geodesic $[x,x']$ has to intersect $\Sigma$, otherwise it would project to a local geodesic joining $y$ to $y'$ by Lemma \ref{lemma:basic_properties_covering}, hence to $[y,y']$. This would imply $x'=x'_*$. Let $q \in [x,x']\cap \Sigma$ be fixed. Then we have 
    \begin{equation}
        \label{eq:lifting}
        \sfd(x,x') = \sfd(x,q) + \sfd(q,x') \ge \sfd_\Y(y,q) + \sfd_\Y(q,y') \ge \sfd_\Y(y,p) + \sfd_\Y(p,y'),
    \end{equation}
    because of Lemma \ref{lemma:basic_properties_covering} and Lemma \ref{lemma:unique_projection_CAT}. Moreover, we have equalities if and only if $q=p$. Now, consider the geodesic segments $[y,p]$ and $[p,y']$. Because of Cases (2) and (3) there exist unique lifts $[x,p]$ and $[p,x']$. The curve obtained by concatenating these two segments joins $x$ to $x'$ and has length $\sfd_\Y(y,p) + \sfd_\Y(p,y') \ge \sfd(x,x')$. Therefore we have equalities in \eqref{eq:lifting}, $q=p$ and the concatenation of $[x,p]$ and $[p,x']$ is the geodesic $[x,x']$.

    Finally, the last statement is a special case of (5). Indeed, if we take $y=y'$ then either $y \in \Sigma$ and there is nothing to prove or $y\notin \Sigma$ and $[y,y]\cap \Sigma = \emptyset$. For every two distinct $x_1,x_2\in \pi^{-1}(y)$ we apply Case (5) and we get that $\sfd(x_1,x_2) = 2\sfd(y,\texttt{mid}_\Sigma(y,y))$. By definition of the $\Sigma$-midpoint, $\texttt{mid}_\Sigma(y,y)$ is simply the projection of $y$ on $\Sigma$, hence $\sfd(y,\texttt{mid}_\Sigma(y,y)) = \sfd(y,\Sigma)$, which concludes the proof.
\end{proof}

An immediate consequence is the following estimate.
\begin{corollary}
\label{cor:uniform_distance_segments}
    Let $(\X,\Y,\Sigma,\pi)$ be a compact, locally $\textup{CAT}(\kappa)$ branched covering and let $\sigma \colon I \to \Y$ be a local geodesic such that $\sigma \cap \Sigma = \{\alpha(\sigma),\omega(\sigma)\}$. Then there exists $t \in I$ such that for every two distinct lifts $\tilde{\sigma},\tilde{\sigma}'\colon I \to \X$ of $\sigma$ we have that $\sfd(\tilde\sigma(t),\tilde\sigma'(t)) \ge \min\{\rho_\textup{CAT}(\Y),D_\kappa\}$.
\end{corollary}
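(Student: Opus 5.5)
The idea is to pick $t$ as a point of $I$ where $\sigma(t)$ is far from $\Sigma$, and then apply the last statement of Lemma \ref{lemma:geodesics_cross_Sigma}. Set $\rho:=\min\{\rho_\textup{CAT}(\Y),D_\kappa\}$.

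\textbf{Choice of $t$.} Since $\sigma$ is a local geodesic with endpoints in $\Sigma$ and no interior point in $\Sigma$, it is not contained in $\Sigma$. By Lemma \ref{lemma:uniform_distance_geodesics_to_singular_set} we therefore have $\sup_{s\in I}\sfd_\Y(\sigma(s),\Sigma) > \rho_\textup{CAT}(\Y)/2$. The interval $I$ is compact and $s\mapsto \sfd_\Y(\sigma(s),\Sigma)$ is continuous, so the supremum is attained at some $t\in \textup{Int}(I)$, and $\sfd_\Y(\sigma(t),\Sigma)\ge \rho_\textup{CAT}(\Y)/2$. This $t$ does not depend on the lifts, as the statement requires.

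\textbf{Distance between two lifts at $t$.} Let $\tilde\sigma\neq\tilde\sigma'$ be two lifts. Both lifts start at the unique preimage of $\alpha(\sigma)\in\Sigma$. On $(\min I,\max I)$ the curve $\sigma$ avoids $\Sigma$, so by the uniqueness in Lemma \ref{lemma:lifts_geodesics_coverings}.(ii) two lifts that agree at one interior time agree on the whole interior, and hence everywhere by continuity. Consequently $\tilde\sigma(t)\neq\tilde\sigma'(t)$, and these are two distinct points of $\pi^{-1}(\sigma(t))$. We want to conclude that $\sfd(\tilde\sigma(t),\tilde\sigma'(t)) = 2\sfd_\Y(\sigma(t),\Sigma)\ge \rho_\textup{CAT}(\Y)$, which in particular is $\ge\rho$.

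\textbf{Main obstacle.} The last statement of Lemma \ref{lemma:geodesics_cross_Sigma} is only proved for $y\in B(z,\rho)$ with $z\in\Sigma$, that is, when $\sfd_\Y(y,\Sigma)<\rho$. If $\sfd_\Y(\sigma(t),\Sigma)\ge \rho$, the lemma does not apply directly, so we need a lower bound on $\sfd(x_1,x_2)$ for distinct preimages $x_1,x_2$ of a point $y$ far from $\Sigma$. I would argue as follows. Take a geodesic $\gamma$ in $\X$ from $x_1$ to $x_2$. Its projection $\pi\circ\gamma$ is a loop at $y$ of length $\sfd(x_1,x_2)$. If $\gamma$ avoided $\Sigma$ and had length $<\rho\le\rho_\textup{CAT}(\Y)$, then $\pi\circ\gamma$ would be a loop inside a CAT$(\kappa)$ ball $B(y,\rho_\textup{CAT}(\Y))$ that misses $\Sigma$. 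Such a ball is contractible, and the complement of the convex set $\Sigma$ inside it contracts radially away from $\Sigma$, so lifting this contraction would force $x_1=x_2$. A cleaner alternative is to first reduce to the case $\sfd_\Y(y,\Sigma)<\rho$ by moving $t$ toward an endpoint. Then we only need some $t$ with $\rho/2\le\sfd_\Y(\sigma(t),\Sigma)<\rho$, and such a $t$ exists by the intermediate value theorem, because the distance function vanishes at the endpoints and its supremum exceeds $\rho_\textup{CAT}(\Y)/2\ge\rho/2$. This choice gives $\sfd(\tilde\sigma(t),\tilde\sigma'(t))=2\sfd_\Y(\sigma(t),\Sigma)\ge\rho$ directly from Lemma \ref{lemma:geodesics_cross_Sigma} (applied with $z$ the projection of $\sigma(t)$ to $\Sigma$), with no further work. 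I would use this second route.
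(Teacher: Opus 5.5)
Your second route is the paper's argument. The paper also combines Lemma \ref{lemma:uniform_distance_geodesics_to_singular_set} with the intermediate value theorem to pick $t$ with $\sfd_\Y(\sigma(t),\Sigma)=\rho/2$, so that $\sigma(t)\in B(z,\rho)$ for some $z\in\Sigma$. It then applies the last statement of Lemma \ref{lemma:geodesics_cross_Sigma} and uses that distinct lifts of $\sigma$ pass through distinct preimages of $\sigma(t)$. You were right to drop your first choice of $t$ (where the distance to $\Sigma$ is maximal), since that lemma only covers points at distance $<\rho$ from $\Sigma$.
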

\begin{proof}
    Call $\rho := \min\{\rho_\textup{CAT}(\Y),D_\kappa\}$. By Lemma \ref{lemma:uniform_distance_geodesics_to_singular_set} we may find $t\in I$ such that $\sfd_\Y(\sigma(t),\Sigma) = \rho/2$. Let $z\in \Sigma$ be a point such that $\sigma(t) \in B(z,\rho)$. Every two distinct lifts $x_1,x_2$ of $\sigma(t)$ satisfy $\sfd(x_1,x_2) = 2\sfd_\Y(\sigma(t),\Sigma) = \rho$ by Lemma \ref{lemma:geodesics_cross_Sigma}. Finally, Lemma \ref{lemma:lifts_geodesics_coverings}, and its proof, shows that there are exactly $\#\pi^{-1}(\sigma(t))$ lifts of $\sigma$, each of them passing through one of the lifts of $\sigma(t)$ at time $t$. This concludes the proof.
\end{proof}

While lifts of local geodesic lines are local geodesic lines, in general the projection of local geodesic lines are not local geodesic lines, but only $\Sigma$-broken local geodesic lines. 

\begin{proposition}
\label{prop:projection_geodesics_to_broken_geodesics}
    Let $(\X,\Y,\Sigma,\pi)$ be a compact, locally $\textup{CAT}(\kappa)$ branched covering. Then the map 
    $$\Pi \colon \LocGeod(\X) \to \textup{Broken}(\Y,\Sigma), \quad \gamma \mapsto \pi\circ \gamma$$ is continuous and satisfies
    $$\Phi_t \circ \Pi = \Pi \circ \Phi_t$$
    for every $t\in \R$. The image of $\Pi$ is denoted by $\textup{Broken}(\Y,\Sigma;\X) \subseteq \textup{Broken}(\Y,\Sigma)$. It is compact and $\Phi_t$-invariant. Moreover, $\textup{Broken}(\Y,\Sigma;\X) \supseteq \LocGeod(\Y)$.
\end{proposition}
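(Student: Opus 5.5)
We verify in turn that $\Pi$ is well defined, that it is continuous, that it commutes with the flow, that its image is compact and invariant, and finally that $\LocGeod(\Y)$ lies in the image.

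\textbf{Well-definedness.} Let $\gamma\in\LocGeod(\X)$. By Lemma \ref{lemma:concatenation_of_geodesics_in_X}, $\gamma$ decomposes uniquely as $\gamma=\underset{j\in J}{\star}\gamma_j$ into internal and external pieces. Each projection $\pi\circ\gamma_j$ is a local geodesic of $\Y$, by the same lemma. Since $\pi^{-1}(\Sigma)$ is identified with $\Sigma$, a piece $\pi\circ\gamma_j$ lies in $\Sigma$ exactly when $\gamma_j$ does, and $(\pi\circ\gamma_j)^{-1}(\Sigma)=\gamma_j^{-1}(\Sigma)$. Hence $\pi\circ\gamma=\underset{j\in J}{\star}(\pi\circ\gamma_j)$ satisfies Definition \ref{defin:broken_geodesics}. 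By Lemma \ref{lemma:gamma^-1_is_discrete}, the breakpoints form a closed discrete set and the domain is all of $\R$, so the concatenation is a genuine line. Therefore $\Pi(\gamma)\in\textup{Broken}(\Y,\Sigma)$.

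\textbf{Continuity and equivariance.} By Remark \ref{rem:metric_on_broken_geodesics}, both spaces carry the metric ${\sf D}$ of \eqref{eq:defin_D_a}. Since $\pi$ is $1$-Lipschitz (Lemma \ref{lemma:basic_properties_covering}),
$$\sup_{s}\sfd_\Y(\pi\gamma(s),\pi\gamma'(s))e^{-\vert s\vert}\le \sup_s\sfd(\gamma(s),\gamma'(s))e^{-\vert s\vert},$$
so ${\sf D}(\Pi\gamma,\Pi\gamma')\le{\sf D}(\gamma,\gamma')$ and $\Pi$ is $1$-Lipschitz. Equivariance is immediate: $\pi\circ(\gamma(\cdot+t))=(\pi\circ\gamma)(\cdot+t)$.

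\textbf{Compactness and invariance of the image.} The space $\LocGeod(\X)$ is compact by Proposition \ref{prop:loc_geod_is_compact}, since $\X$ is compact and locally $\textup{CAT}(\kappa)$ by Proposition \ref{prop:branched_covering_loc_CAT-1} and Lemma \ref{lemma:basic_properties_covering}. The continuous image $\textup{Broken}(\Y,\Sigma;\X)$ is therefore compact. It is $\Phi_t$-invariant because $\Phi_t\Pi(\gamma)=\Pi(\Phi_t\gamma)$ and $\LocGeod(\X)$ is $\Phi_t$-invariant.

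\textbf{The inclusion $\LocGeod(\Y)\subseteq\textup{Broken}(\Y,\Sigma;\X)$.} Let $c\in\LocGeod(\Y)$. Applying Lemma \ref{lemma:concatenation_of_geodesics_in_X} to the trivial covering $\mathrm{id}\colon\Y\to\Y$ shows that $c\in\textup{Broken}(\Y,\Sigma)$; it remains to find a local geodesic line of $\X$ projecting to $c$. For each $n$, pick $x_n\in\pi^{-1}(c(-n))$ and lift $c|_{[-n,n]}$ starting at $x_n$ using Lemma \ref{lemma:lifts_geodesics_coverings}. The resulting lift $\tilde c_n$ is a local geodesic, and we extend it to all of $\R$ by lifting the remaining rays in the same way. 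Only the restriction to $[-n,n]$ matters below. Lifts are $1$-Lipschitz curves in the compact space $\X$, so Ascoli--Arzelà plus a diagonal argument gives a subsequence of the $\tilde c_n|_{[-n,n]}$ converging uniformly on compacta to a curve $\tilde c\colon\R\to\X$ with $\pi\circ\tilde c=c$.

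We must check that $\tilde c$ is a local geodesic. Since $\pi$ is $1$-Lipschitz and $c$ is a local geodesic, for $t,s$ close we have $\sfd(\tilde c(t),\tilde c(s))\ge\sfd_\Y(c(t),c(s))=\vert t-s\vert$, while $\tilde c$ is $1$-Lipschitz as a limit of $1$-Lipschitz curves. This is exactly the argument at the end of the proof of Lemma \ref{lemma:lifts_geodesics_coverings}. Hence $\tilde c\in\LocGeod(\X)$ and $\Pi(\tilde c)=c$.

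The one delicate point is the passage to the limit over unbounded domains, and the diagonal extraction above handles it.
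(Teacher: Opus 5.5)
Your proof is correct and follows the paper's argument step by step:
\begin{itemize}
\item well-definedness from Lemma \ref{lemma:concatenation_of_geodesics_in_X};
\item continuity and equivariance from $\pi$ being $1$-Lipschitz and commuting with time shifts;
\item compactness as a continuous image of the compact space $\LocGeod(\X)$;
\item the inclusion $\LocGeod(\Y)\subseteq\textup{Broken}(\Y,\Sigma;\X)$ from the lifting Lemma \ref{lemma:lifts_geodesics_coverings}.
\end{itemize}
The only deviation is that you obtain a lift of the whole line as an Ascoli--Arzel\`a diagonal limit of lifts of $c|_{[-n,n]}$. This is valid, and it fills in the fact that the lemma is stated only for intervals with a starting point. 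Lifting $c|_{[0,+\infty)}$ and $t\mapsto c(-t)$, $t\ge 0$, from a common point over $c(0)$ and concatenating them would be more direct.
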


\begin{proof}
    The map is well defined because of Lemma \ref{lemma:concatenation_of_geodesics_in_X} and it is clearly continuous. The conjugation of the flows is trivial. Finally, $\textup{Broken}(\Y,\Sigma;\X)$ is compact because it is a continuous image of a compact space, and it is clearly $\Phi_t$-invariant. The last containment is Lemma \ref{lemma:lifts_geodesics_coverings}.
\end{proof}

We are ready to prove the following more precise version of Theorem \ref{theo_intro:broken_does_not_depend}.

\begin{theorem}
\label{theo:characterization_broken_geodesics}
    Let $(\X,\Y,\Sigma,\pi)$ be a compact, locally $\textup{CAT}(\kappa)$ branched covering. A broken geodesic $\eta = \underset{j\in J}{\star} \eta_j \in \textup{Broken}(\Y,\Sigma)$ belongs to $\textup{Broken}(\Y,\Sigma;\X)$ if and only if for every $j\in J$ one of the following two mutually exclusive conditions holds.
    \begin{itemize}
        \item[(a)] $\eta_j \star \eta_{j+1}$ is a local geodesic in $\Y$,
        \item[(b)] $\eta_j,\eta_{j+1}$ are external, $\eta_j\star \eta_{j+1}$ is not a local geodesic in $\Y$, they concatenate at time $t\in \R$ and 
        \begin{equation}
            \label{eq:condition_broken_is_liftable_to_a_geodesic}
            \alpha(\eta_{j+1}) = \omega(\eta_j) = \textup{\texttt{mid}}_\Sigma(\eta(t-s),\eta(t+s))
        \end{equation}
        for every $s < \min\{\rho_\textup{CAT}(\Y),D_\kappa\}$.
    \end{itemize}
    In particular, if $(\X,\Y,\Sigma,\pi)$, $(\X',\Y,\Sigma,\pi')$ are two compact, locally $\textup{CAT}(\kappa)$ branched coverings over the same couple $(\Y,\Sigma)$ then $\textup{Broken}(\Y,\Sigma;\X) = \textup{Broken}(\Y,\Sigma;\X').$
\end{theorem}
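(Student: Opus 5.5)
The plan is to reduce everything to the local description of geodesics in Lemma \ref{lemma:geodesics_cross_Sigma}. Being a local geodesic is a local property. Moreover, by Lemma \ref{lemma:gamma^-1_is_discrete} and Proposition \ref{prop:normal_inj_radius_minimizing_curve}(i), the concatenation times of $\eta$ form a discrete set. Each external piece has length at least $\inj\ge\rho_\textup{CAT}(\Y)$. So it suffices to analyze $\eta$ on a window $[t-s,t+s]$ around each concatenation time $t$, with $s<\rho:=\min\{\rho_\textup{CAT}(\Y),D_\kappa\}$ small enough that the window lies in a ball $B(z,\rho)$, $z=\eta(t)\in\Sigma$, where geodesics are unique. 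Shrinking $s$ further if needed, the window contains no other concatenation time, except possibly an internal piece adjacent to $t$, which I handle by using the decomposition with maximal internal pieces.

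\emph{Necessity.} Let $\eta=\pi\circ\gamma$ with $\gamma\in\LocGeod(\X)$. Fix a concatenation time $t$ and suppose $\eta_j\star\eta_{j+1}$ is not a local geodesic in $\Y$. For $s$ small, $\gamma\restr{[t-s,t+s]}$ is the geodesic $[x,x']$ in the CAT$(\kappa)$ ball $B(\gamma(t),\rho)$, which is CAT$(\kappa)$ by Proposition \ref{prop:branched_covering_loc_CAT-1}.

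If one of $\eta_j,\eta_{j+1}$ is internal, say $\eta_{j+1}$, then $y=\eta(t-s)$ and $y'=\eta(t+s)$ fall into case (2) or (1) of Lemma \ref{lemma:geodesics_cross_Sigma}. In either case $[x,x']$ projects to the geodesic $[y,y']$, so the concatenation is locally geodesic, which is a contradiction. Hence both pieces are external, and $y,y'\notin\Sigma$. In case (4) the projection is again $[y,y']$, a contradiction. So we are in case (5)(ii), which gives $\gamma(t)=\texttt{mid}_\Sigma(y,y')$. Doing this for every small $s$ yields \eqref{eq:condition_broken_is_liftable_to_a_geodesic}. Mutual exclusivity of (a) and (b) is built into the definitions.

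\emph{Sufficiency.} Assume (a) or (b) holds at every $j$. I construct a lift piece by piece, starting from some piece $\eta_{j_0}$ and extending forward and backward by induction, using Lemma \ref{lemma:lifts_geodesics_coverings} to lift each $\eta_j$ once the lift of the previous endpoint is fixed. For an external piece starting on $\Sigma$, choose any of the lifts given by (iii) of that lemma.

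At a junction of type (a), the concatenation is a geodesic of $\Y$ near $t$. Its lift from $\tilde\eta(t-s)$ is a geodesic by Corollary \ref{cor:covering_is_submetry}. When $\eta(t)\in\Sigma$ and the pieces are external, the continuation lift is not unique, so I must choose the lift of $\eta_{j+1}$ that agrees with this geodesic lift near $t$. Such a choice exists because the lift of $\eta_j\star\eta_{j+1}$ restricted to $[t,t+s]$ is a lift of $\eta_{j+1}$ near $t$, which then extends uniquely along $\eta_{j+1}$ by Lemma \ref{lemma:lifts_geodesics_coverings}(ii) and (iii).

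At a junction of type (b), Lemma \ref{lemma:geodesics_cross_Sigma}(5)(ii) says that for every $x'\in\pi^{-1}(y')$ other than $x'_*$, the geodesic $[x,x']$ is the concatenation of the lifts through $p=\texttt{mid}_\Sigma(y,y')=\eta(t)$. I choose the lift of $\eta_{j+1}$ ending at such an $x'$, which exists since $\#\pi^{-1}(y')\ge2$. Because the midpoint condition holds for every small $s$, this choice is consistent for all small $s$ at once. Therefore the concatenated lift coincides with a geodesic on $[t-s,t+s]$, so it is a local geodesic near $t$. Away from junctions it is locally geodesic by Lemma \ref{lemma:lifts_geodesics_coverings}.

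The \emph{main obstacle} I expect is making this infinite inductive choice rigorous. The choice at a junction depends only on the already constructed lift on the left, so forward induction works; backward induction is symmetric using $-\eta$. Two further points need care: the window must be small enough to meet no other concatenation time, and condition (b) must be stated uniformly in $s$, which the hypothesis provides.

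The final assertion follows because conditions (a) and (b) involve only $(\Y,\Sigma)$, so the characterization is the same for $\X$ and $\X'$.
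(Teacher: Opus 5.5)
Your proposal is correct and follows essentially the paper's proof. Necessity runs through the cases of Lemma \ref{lemma:geodesics_cross_Sigma} on a window around each concatenation time, and sufficiency lifts piece by piece, choosing at each non-geodesic junction a lift of $\eta_{j+1}$ through a preimage of $\eta(t+s)$ other than $x'_*$ so that case (5)(ii) applies.

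Two minor remarks. At type (a) junctions no particular choice is needed, since every lift of a local geodesic of $\Y$ is a local geodesic by Lemma \ref{lemma:lifts_geodesics_coverings}. To get the midpoint identity for every $s<\min\{\rho_\textup{CAT}(\Y),D_\kappa\}$, as the statement requires, rather than only for small $s$: in case (b) both adjacent pieces are external, hence of length at least $\inj\ge\rho_\textup{CAT}(\Y)$, so the window never needs shrinking.
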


\begin{proof}
    Let $\gamma \in \LocGeod(\X)$ and let $\gamma = \underset{j\in J}{\star} \gamma_j$ be the unique decomposition as $\Sigma$-broken local geodesic line provided by Lemma \ref{lemma:concatenation_of_geodesics_in_X}. Then $\Pi(\gamma) = \eta = \underset{j\in J}{\star} \eta_j$, where each $\eta_j$ is a local geodesic of $\Y$ by Lemma \ref{lemma:concatenation_of_geodesics_in_X}. Let $t$ be the concatenation time between $\eta_j$ and $\eta_{j+1}$, call $z := \eta(t)$, fix $s < \rho := \min\{\rho_\textup{CAT}(\Y),D_\kappa\}$ and call $y_j := \eta(t-s)$, $y_{j+1} := \eta(t+s)$. Then we are in the position to apply Lemma \ref{lemma:geodesics_cross_Sigma}. If we are in cases (1), (2), (3), (4) of Lemma \ref{lemma:geodesics_cross_Sigma} then $\gamma\restr{[t-s,t+s]}$ is the unique lift of the geodesic $[y_j,y_{j+1}]$ joining $\gamma(t-s)$ to $\gamma(t+s)$. But $\gamma\restr{[t-s,t+s]}$ is also the lift of $\eta\restr{[t-s,t+s]}$, then $\eta\restr{[t-s,t+s]} = [y_j,y_{j+1}]$ and $\eta_{j}\star \eta_{j+1}$ is a local geodesic in $\Y$. In Case (5) we notice that $y_{j+1}$ cannot be $x'_*$ from (i). Indeed, the geodesic $\gamma\restr{[t-s,t+s]}$ intersects $\Sigma$, so does its projection. By the description in (ii) we get that $\eta\restr{[t-s,t]} = [y_j,\textup{\texttt{mid}}_\Sigma(y_j,y_{j+1})]$ and $\eta\restr{[t,t+s]} = [\textup{\texttt{mid}}_\Sigma(y_j,y_{j+1}),y_{j+1}]$. We have shown that every $\eta \in \textup{Broken}(\Y,\Sigma;\X)$ satisfies conditions (a) or (b) for every $j\in J$.

    Vice versa, let $\eta = \underset{j\in J}{\star} \eta_j \in \textup{Broken}(\Y,\Sigma)$ be a $\Sigma$-broken geodesic line satisfying conditions (a) and (b) for every $j\in J$. We fix $j_0 \in J$ and we define $\gamma_{j_0}$ as one of the lifts of $\eta_{j_0}$ provided by Lemma \ref{lemma:lifts_geodesics_coverings}. We now consider $\eta_{j_0+1}$. If $\eta_{j_0}\star \eta_{j_0+1}$ satisfies (a), i.e. it is a local geodesic in $\Y$, then $\gamma_{j_0}\star \gamma_{j_0+1}$ is a local geodesic of $\X$ for any choice of the lift $\gamma_{j_0+1}$. This follows by Corollary \ref{cor:covering_is_submetry}. If $\eta_{j_0}\star \eta_{j_0+1}$ satisfies (b) then we call $t$ the concatenation time, $z:=\eta(t)$, $s<\rho := \min\{\rho_\textup{CAT}(\Y),D_\kappa\}$, $y_{j_0} = \eta(t-s)$, $y_{j_0+1} = \eta(t+s)$ and $x_{j_0}$ the point on $\gamma_{j_0}$ that lifts $y_{j_0}$. We are in position to apply Lemma \ref{lemma:geodesics_cross_Sigma}. If the geodesic $[y_{j_0},y_{j_0 + 1}]$ intersects $\Sigma$, then we are in Case (4) of Lemma \ref{lemma:geodesics_cross_Sigma} and $[y_{j_0},y_{j_0+1}] = \eta \restr{[t-s,t+s]}$ violating the fact that $\eta_{j_0}\star \eta_{j_0+1}$ is not a local geodesic. Therefore, $[y_{j_0},y_{j_0 + 1}] \cap \Sigma = \emptyset$. Therefore Case (5) occurs. Since $\eta_{j_0+1}$ is external then $\#\pi^{-1}(y_{j_0}+1) \ge 2$. Therefore we can choose a lift $x_{j_0+1}$ of $y_{j_0 +1}$ for which Case (5).(ii) holds. Lemma \ref{lemma:lifts_geodesics_coverings} gives the existence of a unique lift $\gamma_{j_0+1}$ of $\eta_{j_0+1}$ starting at $\omega(\gamma_{j_0})$ and passing through $x_{j_0+1}$. The condition satisfied by $\eta_{j_0}$ and $\eta_{j_0+1}$ together with Lemma \ref{lemma:geodesics_cross_Sigma} imply that the concatenation $\gamma_{j_0}\star \gamma_{j_0+1}$ is a local geodesic in $\X$. We can continue in the same way for all $j\in J$ in order to find a local geodesic $\gamma \in \LocGeod(\X)$ such that $\Pi(\gamma) = \eta$, i.e. $\eta \in \textup{Broken}(\Y,\Sigma;\X)$.

    Conditions (a) and (b) are independent of the branched covering. This gives the last part of the thesis. 
\end{proof}

The subset of $\Sigma$-broken local geodesic lines satisfying the conditions of Theorem \ref{theo:characterization_broken_geodesics} will be denoted by $\Br$. They are the $\Sigma$-broken local geodesics of $\Y$ that admit a lift to a local geodesic in any non-trivial branched covering over $\Y$ with singular set $\Sigma$. 
Theorem \ref{theo:characterization_broken_geodesics} justifies the next definition.

\begin{definition}
Let $(\Y,\sfd_\Y)$ be a compact, locally $\textup{CAT}(\kappa)$ space and let $\Sigma \subseteq \Y$ be closed and locally convex.
Given $\eta \in \Br$ and $t\in \partial \eta^{-1}(\Sigma)$ we say that $\eta(t)$ is a concatenation point (between $\eta_j$ and $\eta_{j+1}$) of:
\begin{itemize}
    \item[a)] \emph{non-branching type} if $\eta_{j+1}$ is internal;
    \item[b)] \emph{extremal type} if $\eta_j \star \eta_{j+1}$ is a local geodesic and $\eta_{j+1}$ is external;
    \item[c)] \emph{non-extremal type} if $\eta_j \star \eta_{j+1}$ is not a local geodesic.
\end{itemize}
The \emph{concatenation angle} at $t$ is the quantity
$$\angle(\eta,t) := \angle_{\eta(t)}(\eta(t-s),\eta(t+s))$$
where $0<s<\min\{\rho_\textup{CAT}(\Y), D_\kappa\}$.
\end{definition}

\begin{corollary}
\label{cor:non-extremal_iff_angle<pi}
    Let $(\Y,\sfd_\Y)$ be a compact, locally $\textup{CAT}(\kappa)$ space and let $\Sigma \subseteq \Y$ be closed and locally convex.
    Let $\eta \in \Br$ and $t\in \partial\eta^{-1}(\Sigma)$ be the concatenation time between $\eta_j$ and $\eta_{j+1}$. Then: 
    \begin{itemize}
        \item[(i)] $\eta(t)$ is of non-branching type if and only if $\eta_j \star \eta_{j+1}$ satisfies Condition (a) of Theorem \ref{theo:characterization_broken_geodesics} and $\eta_{j+1}$ is internal;
        \item[(ii)] $\eta(t)$ is of extremal type if and only if $\eta_j \star \eta_{j+1}$ satisfies Condition (a) of Theorem \ref{theo:characterization_broken_geodesics} and $\eta_{j+1}$ is external;
        \item[(iii)] $\eta(t)$ is of non-extremal type if and only if $\eta_j \star \eta_{j+1}$ satisfies Condition (b) of Theorem \ref{theo:characterization_broken_geodesics}.
    \end{itemize}
    Moreover, $\angle(\eta,t)$ is well-defined and $\eta(t)$ is non-extremal if and only if $\angle(\eta,t)<\pi$.
\end{corollary}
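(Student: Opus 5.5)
The first three items follow by comparing the definitions with Theorem \ref{theo:characterization_broken_geodesics}. Since $\eta\in\Br$, at every concatenation time exactly one of the mutually exclusive conditions (a), (b) holds. Non-extremal type means, by definition, that $\eta_j\star\eta_{j+1}$ is not a local geodesic. This excludes (a), so (b) holds. Conversely, (b) explicitly says the concatenation is not a local geodesic. This gives (iii).

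If (a) holds, the type is determined by whether $\eta_{j+1}$ is internal or external, giving (i) and (ii). For (i) we still need that non-branching type, defined only by $\eta_{j+1}$ internal, forces (a). This holds because (b) requires $\eta_{j+1}$ external.

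For the angle statement, set $\rho:=\min\{\rho_\textup{CAT}(\Y),D_\kappa\}$ and fix $0<s<\rho$. The points $\eta(t\pm s)$ lie in the $\textup{CAT}(\kappa)$ ball $B(\eta(t),\rho)$. Since each piece is a local geodesic, the curves $\eta\restr{[t-s,t]}$ and $\eta\restr{[t,t+s]}$ are the unique geodesics from $\eta(t)$ to $\eta(t\pm s)$ in that ball. Note that pieces have length at least $\rho_\textup{CAT}(\Y)$ by Lemma \ref{lemma:gamma^-1_is_discrete} and Proposition \ref{prop:normal_inj_radius_minimizing_curve}, or they are internal and we stay within one piece near $t$. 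The Alexandrov angle between two geodesics issuing from $\eta(t)$ does not depend on which points along them we choose. Hence $\angle(\eta,t)$ is independent of $s$ and well defined.

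Now use the standard $\textup{CAT}(\kappa)$ fact (\cite{BridsonHaefliger}, Chapter II.1): in a $\textup{CAT}(\kappa)$ ball, the concatenation of two geodesics of total length $<D_\kappa$ meeting at $p$ is a geodesic if and only if the Alexandrov angle at $p$ equals $\pi$. The forward direction follows from the triangle inequality for angles. The reverse direction follows from the comparison triangle: when the angle is $\pi$, it forces $\sfd(\eta(t-s),\eta(t+s))=2s$.

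Therefore $\angle(\eta,t)=\pi$ if and only if $\eta_j\star\eta_{j+1}$ is a local geodesic near $t$, which is condition (a). Combined with (iii), this shows that $\eta(t)$ is non-extremal if and only if $\angle(\eta,t)<\pi$.

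The main obstacle is the reverse implication, angle $\pi$ implies geodesic. If citing \cite{BridsonHaefliger} directly is not enough, I would argue as follows. By the first variation formula, the distance from $\eta(t-s)$ to $\eta(t+s)$ is at least $2s-o(s)$. Then I would upgrade this to exact equality using convexity of the distance function in $\textup{CAT}(\kappa)$ balls of radius $<D_\kappa/2$, shrinking $s$ if necessary.
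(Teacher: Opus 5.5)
Your proposal is correct and is essentially the paper's argument: items (i)--(iii) are read off from Theorem~\ref{theo:characterization_broken_geodesics} and the definitions. The angle statement is the fact that, in the $\textup{CAT}(\kappa)$ ball $B(\eta(t),\min\{\rho_\textup{CAT}(\Y),D_\kappa\})$, the geodesics $-\eta_j$ and $\eta_{j+1}$ issuing from $\eta(t)$ concatenate to a local geodesic exactly when their Alexandrov angle is $\pi$.

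You give more detail than the paper; only three small remarks. The forward implication is immediate from degenerate comparison triangles, not from the triangle inequality for angles. Your comparison-angle argument (\cite[Proposition II.1.7(4)]{BridsonHaefliger}) already settles the converse, so the first-variation fallback is unnecessary. For a short internal $\eta_{j+1}$, the segment $\eta\restr{[t,t+s]}$ may cross a further junction; this is harmless because that junction cannot satisfy (b), hence satisfies (a).
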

\begin{proof}
    The proof of (i), (ii) and (iii) is a direct consequence of Theorem \ref{theo:characterization_broken_geodesics} and the definitions. The angle is well-defined since it corresponds to the angle at $\eta(t)$ between the geodesics $\eta_{j+1}$ and $-\eta_j$ in the $\textup{CAT}(\kappa)$ space $B(\eta(t),\min\{\rho_\textup{CAT}(\Y),D_\kappa\})$. This angle is $\pi$ if and only if $\eta_j\star \eta_{j+1}$ is a local geodesic.
\end{proof}

We introduce another relevant quantity, depending on the branched covering.

\begin{definition}
Let $(\X,\Y,\Sigma,\pi)$ be a compact, locally $\textup{CAT}(\kappa)$ branched covering, let $\eta \in \Br$ and let $t\in \partial \eta^{-1}(\Sigma)$. The \emph{branching degree} of the concatenation point $\eta(t)$ is $\brdeg^\X(\eta,t):=\#\pi^{-1}(\eta(t+s))$ for $s>0$ small enough. 
\end{definition}

\begin{corollary}
    Let $(\X,\Y,\Sigma,\pi)$ be a compact, locally $\textup{CAT}(\kappa)$ branched covering, let $\eta \in \Br$ and let $t\in \partial \eta^{-1}(\Sigma)$. Then $\brdeg^\X(\eta,t)$ is well-defined and $\brdeg^\X(\eta,t) = 1$ if and only if $\eta(t)$ is of non-branching type.
\end{corollary}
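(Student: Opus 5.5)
The plan is to prove well-definedness and the characterization separately, using the local structure of branched coverings near $\Sigma$ together with the decomposition of $\eta$ into internal and external pieces.

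\textbf{Well-definedness.} Let $t\in\partial\eta^{-1}(\Sigma)$ be the concatenation time between $\eta_j$ and $\eta_{j+1}$. By Lemma \ref{lemma:gamma^-1_is_discrete}, applied to a lift of $\eta$ (this uses $\eta\in\Br$), there is $\varepsilon>0$ such that $\eta\restr{(t,t+\varepsilon)}$ is contained in $\eta_{j+1}$. We then split into two cases.
\begin{itemize}
\item[(i)] If $\eta_{j+1}$ is internal, then $\eta(t+s)\in\Sigma$ for all small $s>0$. Hence $\#\pi^{-1}(\eta(t+s))=1$ by the definition of branched covering.
\item[(ii)] If $\eta_{j+1}$ is external, then $\eta\restr{(t,t+\varepsilon)}$ is a connected curve in $\Y\setminus\Sigma$. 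The function $y\mapsto\#\pi^{-1}(y)$ is locally constant on $\Y\setminus\Sigma$, because $\pi$ restricted there is a covering and $\Y$ is locally connected. So this function is constant along the curve.
\end{itemize}
In both cases $\#\pi^{-1}(\eta(t+s))$ does not depend on $s$ small enough, so $\brdeg^\X(\eta,t)$ is well-defined.

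\textbf{Characterization.} By definition, $\eta(t)$ is of non-branching type exactly when $\eta_{j+1}$ is internal. If $\eta_{j+1}$ is internal, case (i) gives $\brdeg^\X(\eta,t)=1$. If $\eta_{j+1}$ is external, then $\eta(t+s)\notin\Sigma$ for small $s>0$. The paper notes that $\#\pi^{-1}(y)\ge2$ for every $y\in\Y\setminus\Sigma$, since $\Sigma$ is exactly the set of points with a single preimage. Therefore $\brdeg^\X(\eta,t)\ge2$, which proves the equivalence.

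\textbf{Main point of care.} The only delicate step is the existence of a right neighbourhood of $t$ lying entirely inside the single piece $\eta_{j+1}$. This holds because pieces are indexed by an interval $J\subseteq\Z$ and concatenation times are discrete: each external piece has length at least $\inj>0$, and each internal piece is followed by an external one. So there is nothing problematic, only bookkeeping.
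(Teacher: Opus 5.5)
Your proof is correct and follows the same route as the paper: split on whether $\eta_{j+1}$ is internal, where the fiber has cardinality $1$, or external, where connectedness of $\eta\restr{(t,t+\varepsilon)}\subseteq \Y\setminus\Sigma$ makes $\#\pi^{-1}$ constant and at least $2$. Your extra bookkeeping, that a right neighbourhood of $t$ lies inside $\eta_{j+1}$, is left implicit in the paper's argument.
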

\begin{proof}
    Call $\eta_j$, $\eta_{j+1}$ the local geodesic concatenating at time $t$. If $\eta_{j+1}$ is internal, i.e. $\eta(t)$ is of non-branching type, then $\#\pi^{-1}(\eta(t+s)) = 1$ for every $s$ small enough. In the other case, for every $s$ sufficiently small $\eta(t+s) \in \Y\setminus \Sigma$. By connectedness,  $\#\pi^{-1}(\eta(t+s))$ is constant and it is at least $2$.
\end{proof}

\begin{remark}
    The quantity $\brdeg^\X(\eta,t)$ depends on the branched covering $(\X,\Y,\Sigma,\pi)$. 
\end{remark}

The computation of the entropy of the geodesic flow of a branched covering will reduce to the computation of the number of different local geodesic lines that project to the same $\Sigma$-broken geodesic line of $\Y$. With this in mind we introduce the following definition.

\begin{definition}
\label{defin:gcn}
    Let $(\X,\Y,\Sigma,\pi)$ be a compact, locally $\textup{CAT}(\kappa)$ branched covering. Let $\gamma \in \LocGeod(\X)$ and let $\gamma = \underset{j\in J}{\star}\gamma_j$ be the unique decomposition of Lemma \ref{lemma:concatenation_of_geodesics_in_X}. Let $t\in \partial \gamma^{-1}(\Sigma)$ be a concatenation time, say between $\gamma_{j}$ and $\gamma_{j+1}$. The \emph{geodesic continuation number} at $t$ is the quantity
    $$\textup{gcn}(\gamma,t) := \#\{\tilde{\gamma}_{j+1}\text{ lift of }\pi\circ\gamma_{j+1} \,:\, \gamma_{j}\star\tilde{\gamma}_{j+1} \text{ is a local geodesic}\}.$$
\end{definition}

This quantity can be computed directly on $\Br$. The proof is the same as the proof of Theorem \ref{theo:characterization_broken_geodesics}.

\begin{proposition}
\label{prop:gcn_depends_only_on_projection}
    Let $(\X,\Y,\Sigma,\pi)$ be a compact, locally $\textup{CAT}(\kappa)$ branched covering, let $\gamma = \underset{j\in J}{\star}\gamma_j$ be the unique decomposition of Lemma \ref{lemma:concatenation_of_geodesics_in_X} and let $t\in \partial \gamma^{-1}(\Sigma)$. Let $\eta := \Pi(\gamma) \in \Br$, where $\Pi$ is the map of Proposition \ref{prop:projection_geodesics_to_broken_geodesics}. Then $\textup{gcn}(\gamma,t)$ equals
    \begin{itemize}
        \item[(a)] $\brdeg^\X(\eta,t)$ if and only if $\eta(t)$ is non-branching or extremal;
        \item[(b)] $\brdeg^\X(\eta,t)-1$ if and only if $\eta(t)$ is non-extremal.
    \end{itemize}
    In particular, if $\Pi(\gamma)=\Pi(\gamma')$ then $\textup{gcn}(\gamma,t) = \textup{gcn}(\gamma',t)$, hence $\textup{gcn}^\X\colon \Br \to \N$ is well-defined.
\end{proposition}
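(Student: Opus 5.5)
The plan is to compute $\textup{gcn}(\gamma,t)$ by a local analysis near the concatenation point $z := \gamma(t) = \eta(t) \in \Sigma$, using Lemma \ref{lemma:geodesics_cross_Sigma} in the $\textup{CAT}(\kappa)$ ball $B(z,\rho)$ with $\rho := \min\{\rho_\textup{CAT}(\Y),D_\kappa\}$, exactly as in the proof of Theorem \ref{theo:characterization_broken_geodesics}.

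Fix $0<s<\rho$ small enough that $\eta\restr{(t,t+s]}$ lies in $\eta_{j+1}$ and, when $\eta_{j+1}$ is external, avoids $\Sigma$. Set $y := \eta(t-s)$, $y' := \eta(t+s)$ and $x := \gamma(t-s)$. Since $\gamma_j$ ends at the unique preimage of $z$, a lift $\tilde\gamma_{j+1}$ of $\eta_{j+1}$ starting at $\omega(\gamma_j)$ is determined by its value at time $t+s$. When $\eta_{j+1}$ is internal this follows from Lemma \ref{lemma:lifts_geodesics_coverings}.(i). When $\eta_{j+1}$ is external, it follows from the case (iii) construction in that lemma combined with (ii). So the lifts are in bijection with $\pi^{-1}(y')$, a set of cardinality $\brdeg^\X(\eta,t)$. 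Moreover, for small $s$, $\gamma_j\star\tilde\gamma_{j+1}$ is a local geodesic near $t$ if and only if its restriction to $[t-s,t+s]$ is the geodesic $[x,x']$, where $x'$ is the endpoint of the lift. Away from $t$ the concatenation is automatically locally geodesic, since each piece is.

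The count then splits into cases.
\begin{itemize}
\item \textbf{Non-branching.} $\eta_{j+1}$ is internal, so $\pi^{-1}(y')$ is a single point. That unique lift must work because $\gamma$ itself exists, giving $\textup{gcn}=1=\brdeg^\X$.
\item \textbf{Extremal.} $\eta\restr{[t-s,t+s]} = [y,y']$ and it meets $\Sigma$ at $z$, with $y\notin\Sigma$ or $y \in \Sigma$. Lemma \ref{lemma:geodesics_cross_Sigma} cases (3)/(4) (or (1)–(3) after adjusting $s$) say that for every $x' \in \pi^{-1}(y')$ the geodesic $[x,x']$ is the concatenation of lifts through $z$. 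Hence every lift works and $\textup{gcn}=\brdeg^\X$.
\item \textbf{Non-extremal.} Condition (b) of Theorem \ref{theo:characterization_broken_geodesics} holds, and $[y,y']\cap\Sigma=\emptyset$ because otherwise case (4) would make $\eta$ geodesic there. So case (5) applies. For $x'\neq x'_*$, the geodesic $[x,x']$ passes through $\texttt{mid}_\Sigma(y,y')=z$ and is the concatenation of the lifts, so it works. For $x'=x'_*$, $[x,x'_*]$ is the lift of $[y,y']$, which avoids $\Sigma$, while our concatenation passes through $z$; hence it is not geodesic. This gives $\textup{gcn}=\brdeg^\X-1$.
\end{itemize}
The three types are mutually exclusive and exhaustive by Corollary \ref{cor:non-extremal_iff_angle<pi}, which yields the ``if and only if''. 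The latter is meaningful because $\brdeg^\X\ge 2$ in the branching cases, so the two values differ; in the non-branching case one checks directly that the value is $1$.

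For the final claim, note that the types of the concatenation point and $\brdeg^\X(\eta,t)$ are defined purely in terms of $\eta$. Hence $\textup{gcn}(\gamma,t)$ depends only on $\Pi(\gamma)$.

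The main obstacle is the bookkeeping near $t$. One has to make sure that the relevant lift is determined by its value at $t+s$, and treat the degenerate situation where $y\in\Sigma$ (the preceding piece $\eta_j$ is internal). In that situation I would invoke case (3) directly: every lift works, and the point is extremal or non-branching, consistent with (a).
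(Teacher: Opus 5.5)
Your proposal is correct and follows essentially the paper's route. The paper only says the proof is the same as that of Theorem \ref{theo:characterization_broken_geodesics}, namely a local case analysis with Lemma \ref{lemma:geodesics_cross_Sigma} in a small $\textup{CAT}(\kappa)$ ball around $\eta(t)$; you carry this out by identifying the lifts of $\eta_{j+1}$ with $\pi^{-1}(\eta(t+s))$ and, in the non-extremal case, excluding exactly the point $x'_*$ of case (5)(i). (In the extremal case you could skip cases (3)/(4): any lift of the local geodesic $\eta_j\star\eta_{j+1}$ is already a local geodesic by Lemma \ref{lemma:lifts_geodesics_coverings}.)
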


We finally introduce the geodesic continuation frequency functions.

\begin{definition}
\label{defin:frequency_functions}
    Let $(\X,\Y,\Sigma,\pi)$ be a compact, locally $\textup{CAT}(\kappa)$ branched covering and let $m\in \N$. The \emph{$m$-geodesic continuation frequency} function is
    $$\freq_m^\X\colon \Br \to \N,\quad \eta \mapsto \#\{t\in (-1,0] \cap  \partial\eta^{-1}(\Sigma)\,:\,\textup{gcn}^\X(\eta,t)=m\}.$$
    Notice that $\textup{gcn}^\X(\eta,t)$ is well-defined because of Proposition \ref{prop:gcn_depends_only_on_projection}.
\end{definition}

\begin{remark}
    The choice of the interval $(-1,0]$ in the definition of $\freq_m^\X$ is somehow arbitrary and it has been made for simplifying the technical computations. Later, we will have to consider the Birkhoff averages $A_n(\freq_m^\X)$ computed with respect to the flow $\Phi_{-1}$. With our choice, $$A_n(\freq_m^\X)(\eta)=\frac{\#\{t\in (-n,0] \cap  \partial\eta^{-1}(\Sigma)\,:\,\textup{gcn}^\X(\eta,t)=m\}}{n}.$$ 
\end{remark}

The frequency functions are Borel.

\begin{lemma}
\label{lemma:frequency_function_Borel}
    Let $(\X,\Y,\Sigma,\pi)$ be a compact, locally $\textup{CAT}(\kappa)$ branched covering and let $m \in \N$. Then the function $\freq_m^\X$ is well-defined and Borel. Moreover, $0\le \freq_m^\X \le \lceil1/\inj\rceil$.
\end{lemma}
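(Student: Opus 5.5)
Well-definedness is already given by Proposition \ref{prop:gcn_depends_only_on_projection}. Every $\eta\in\Br$ is $\Pi(\gamma)$ for some $\gamma\in\LocGeod(\X)$, and $\textup{gcn}^\X(\eta,t)$ does not depend on the choice of $\gamma$. Also, $\partial\eta^{-1}(\Sigma)=\partial\gamma^{-1}(\Sigma)$ is closed and discrete by Lemma \ref{lemma:gamma^-1_is_discrete}, since $\Sigma$ is identified with $\pi^{-1}(\Sigma)$. So the set counted in the definition of $\freq_m^\X$ is finite and the function takes values in $\N$.

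\textbf{Upper bound.} If $\Sigma^\circlearrowleft=\emptyset$, then no $\eta$ has external pieces, $\partial\eta^{-1}(\Sigma)=\emptyset$, and $\freq_m^\X\equiv 0$. Otherwise I argue as in Lemma \ref{lemma:gamma^-1_is_discrete}: every external piece has length at least $\inj$, and internal pieces are flanked by external ones. Each concatenation time in $(-1,0]$ is either the start of an external piece or the end of one. Distinct starting times of external pieces differ by at least $\inj$, since the piece occupies that much time; the same holds for ending times. Hence $(-1,0]$ contains at most $\lceil 1/\inj\rceil$ starting times and at most that many ending times. To reach the sharper bound $\lceil 1/\inj\rceil$ overall, I pair each boundary time with a disjoint chunk of length $\inj$ of an adjacent external piece, choosing the piece on the right or on the left consistently. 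If this is delicate, I will at least obtain boundedness, which is all that Borel integrability needs, and then tighten the pairing. This counting is the step I expect to need the most care, in particular at the endpoints of $(-1,0]$.

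\textbf{Borel measurability.} I will write $\freq_m^\X$ as a countable combination of Borel indicator functions. For rational intervals $(a,b]\subseteq(-1,0]$ of length less than $\rho_\textup{CAT}(\Y)/2$, each contains at most one concatenation time. Then
\[
\freq_m^\X=\lim_{N}\sum_{\text{dyadic }(a,b]}\mathbbm{1}_{E_{a,b,m}},
\]
where $E_{a,b,m}$ is the set of $\eta$ that have a concatenation time $t\in(a,b]$ with $\textup{gcn}=m$.

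To show $E_{a,b,m}$ is Borel, I use Proposition \ref{prop:gcn_depends_only_on_projection}. The value $\textup{gcn}$ is determined by two pieces of data: the type of the concatenation point, read off via Corollary \ref{cor:non-extremal_iff_angle<pi} from whether $\eta$ is locally geodesic near $t$ and whether $\eta(t+s)\in\Sigma$; and $\brdeg^\X=\#\pi^{-1}(\eta(t+s))$, which is locally constant on $\Y\setminus\Sigma$.

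Each of the following conditions is expressible through countably many evaluations $\eta(q)$, $q\in\mathbb{Q}$, which are continuous in $\eta$:
\begin{itemize}
\item[(1)] $\eta(q)\in\Sigma$, which is closed;
\item[(2)] $\#\pi^{-1}(\eta(q))=d$, which defines an open-closed subset of $\Y\setminus\Sigma$;
\item[(3)] $\sfd_\Y(\eta(q),\eta(q'))=|q-q'|$, which is closed.
\end{itemize}
The existence of $t\in(a,b]$ can be expressed through rational approximations, because $\eta^{-1}(\Sigma)$ is closed and the boundary times are isolated.
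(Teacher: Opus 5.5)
Your route to the bound $\freq_m^\X\le\lceil 1/\inj\rceil$ fails. You try to bound the \emph{total} number of points of $\partial\eta^{-1}(\Sigma)\cap(-1,0]$, and that number can exceed $\lceil 1/\inj\rceil$. An internal piece contributes two boundary times, its entry and its exit, and their distance is just the length of the piece, which is not bounded below by $\inj$. For instance, take Example (3) of Section \ref{subsec:Examples} with $\Sigma=E$, both loops of length $10$ and $E$ of length $1/2$. Then $\inj=10$, so $\lceil 1/\inj\rceil=1$. Now consider the local geodesic line of $\Y$ that runs around the loop at $x$, then along $E$ during the time interval $[-0.8,-0.3]$, then around the loop at $y$. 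It has two boundary times in $(-1,0]$. So no pairing can give the sharp bound for the total count, and your fallback only yields $2\lceil 1/\inj\rceil$.

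What is missing is that only times with $\textup{gcn}^\X=m$ are counted. If two consecutive boundary times are closer than $\inj$, the piece between them is internal. At its entry the type is non-branching, so $\textup{gcn}^\X=\brdeg^\X=1$. At its exit, condition (b) of Theorem \ref{theo:characterization_broken_geodesics} is impossible because the previous piece is internal; hence the point is extremal and $\textup{gcn}^\X=\brdeg^\X\ge 2$. Internal pieces are flanked by external ones, so any two boundary times with the same value of $\textup{gcn}^\X$ are at distance at least $\inj$. Then $N$ such times in the half-open interval $(-1,0]$ satisfy $(N-1)\inj<1$, i.e.\ $N\le\lceil 1/\inj\rceil$. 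This is the paper's argument.

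The same phenomenon makes your claim false that every interval of length $<\rho_\textup{CAT}(\Y)/2$ contains at most one concatenation time; it is true only for times with a fixed value of $\textup{gcn}^\X$. Your limit over dyadic levels still gives the correct pointwise formula, because each $\eta$ has only finitely many boundary times in $(-1,0]$. The real gap in the measurability part is your last sentence. $E_{a,b,m}$ is the projection to $\Br$ of a Borel subset of $\Br\times\R$, and projections of Borel sets are in general only analytic. So ``expressed through rational approximations'' is exactly the point that needs an argument. The paper first shows that $S_m=\{(\eta,t)\,:\,t\in\partial\eta^{-1}(\Sigma),\ \textup{gcn}^\X(\eta,t)=m\}$ is Borel. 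It then applies the Lusin--Novikov theorem, which is available because the sections of $S_m$ are finite. To close this step you need that theorem, or an explicit replacement of the existential quantifier. One option is to use the lower semicontinuous first hitting times $\eta\mapsto\min(\eta^{-1}(\Sigma)\cap[p,q])$ together with the upper semicontinuous last hitting times, for rational $p<q$.
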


\begin{proof}
    The function $\freq_m^\X$ is well defined because $\partial \eta^{-1}(\Sigma)$ is discrete and closed for every $\eta \in \Br$ by Lemma \ref{lemma:gamma^-1_is_discrete}, hence $\partial \eta^{-1}(\Sigma) \cap (-1,0]$ is finite. Moreover, the distance between two consecutive times in $\partial \eta^{-1}(\Sigma)$ with same geodesic continuation number is at least $\inj$, giving the bound $\freq_m^\X \le \lceil 1/\inj \rceil$.

    To show that $\freq_m^\X$ is Borel measurable, we consider the product space $\Br \times \mathbb{R}$. The set of all boundary concatenation times is 
    $$S := \{(\eta, t) \in \Br \times \mathbb{R} \,:\, t \in \partial\eta^{-1}(\Sigma)\}.$$ 
    It can be rewritten as
    \begin{equation*}
        S = \{(\eta, t) \,:\, \eta(t)\in \Sigma\} \cap \bigcap_{k=1}^\infty \bigcup_{q \in \mathbb{Q}} \{(\eta, t) \,:\, |t - q| < 1/k \text{ and } \sfd(\eta(q),\Sigma) > 0\}.
    \end{equation*}
    Hence, the set $S \subseteq \Br  \times \R$ is Borel. By Proposition \ref{prop:gcn_depends_only_on_projection}, $\textup{gcn}_\X(\eta, t)$ is completely determined by the branching degree $\text{br-deg}_\X(\eta, t)$ and the concatenation angle $\angle(\eta, t)$. By definition, $\text{br-deg}_\X(\eta, t) = \#\pi^{-1}(\eta(t+s))$ for sufficiently small $s > 0$, hence it is Borel measurable. By Corollary \ref{cor:non-extremal_iff_angle<pi}, an intersection is non-extremal if and only if $\angle(\eta, t) < \pi$. By the upper semicontinuity of the Alexandrov angle, the condition $\angle(\eta, t) < \pi$ defines a Borel set. Therefore, the set $S_m = \{(\eta, t) \in S \,:\, \textup{gcn}_\X(\eta, t) = m\}$ is a Borel subset of $\Br \times \mathbb{R}$.
    
    Finally, we evaluate $\freq_m^\X(\eta) = \#\{t \in (-1, 0] \,:\, (\eta, t) \in S_m\}$. The distance between any two consecutive intersection times is at least $\inj$. We partition the interval $(-1, 0]$ into $K = \lceil 1/\inj \rceil$ half-open intervals $I_j$, each of length strictly less than $\inj$. Because the length of each $I_j$ is strictly less than the minimal distance between intersections, the fiber $S_m \cap (\{\eta\} \times I_j)$ contains at most one point for every $\eta$.
    This allows us to rewrite the frequency function as:
    \begin{equation*}
        \freq_m^\X(\eta) = \sum_{j=1}^K \mathbbm{1}_{\pi_{\Br}(S_m \cap (\Br \times I_j))}(\eta),
    \end{equation*}
    where $\pi_{\Br}$ is the projection onto $\Br$. By the Lusin-Novikov theorem on Borel sets with countable sections, the projection $\pi_{\Br}(S_m \cap (\Br \times I_j))$ is a Borel set in $\Br$. Thus, $\freq_m^\X$ is Borel.
\end{proof}

We can prove Theorem \ref{theo_intro:freq_does_not_depend}.

\begin{T1}
    Let $(\X,\Y,\Sigma,\pi)$, $(\X',\Y,\Sigma,\pi')$ be two compact, pure-degree $k$, locally $\textup{CAT}(\kappa)$ branched coverings over the same couple $(\Y,\Sigma)$. Then $\freq_m^\X = \freq_m^{\X'}$ for every $m\in \N$. Moreover, $\freq_m^\X \equiv 0$ for $m\in \{2,\ldots, k-2\}$.
\end{T1}

\begin{proof}
    Proposition \ref{prop:gcn_depends_only_on_projection} implies that for every $\eta \in \Br$ and every $t\in \partial\eta^{-1}(\Sigma)$ we have $\textup{gcn}^\X(\eta,t) =  1$ if $\eta(t)$ is non-branching, $\textup{gcn}^\X(\eta,t) =  k$ if $\eta(t)$ is extremal and $\textup{gcn}^\X(\eta,t) =  k-1$ if $\eta(t)$ is non-extremal. The same for $\textup{gcn}^{\X'}(\eta,t)$. Being internal, extremal or non-extremal is a property of the broken geodesic $\eta$ which does not depend on the specific branched covering. This shows that $\freq_m^\X (\eta) = \freq_m^{\X'}(\eta)$ and that $\freq_m^\X\equiv 0$ for $m\in \{2,\ldots,k-2\}$.
\end{proof}

For technical reasons we need the following additional definition.
\begin{definition}
\label{defin:g_m^T,R}
    Let $(\X,\Y,\Sigma,\pi)$ be a compact, locally $\textup{CAT}(\kappa)$ branched covering. Let $m\in \N$ and $T,R\ge 0$. We define
    $$\freq_m^{\X,T,R}\colon \Br \to [0,+\infty), \quad \eta \mapsto \#\{t\in (-T,R] \cap  \partial\eta^{-1}(\Sigma)\,:\,\textup{gcn}^\X(\eta,t)=m\}.$$
\end{definition}

The next proposition links the geodesic continuation frequency functions to the computation of the topological entropy of the geodesic flow on a branched covering. 

\begin{proposition}
\label{prop:counting_preimages_of_broken_geodesics}
    Let $(\X,\Y,\Sigma,\pi)$ be a compact, locally $\textup{CAT}(\kappa)$ branched covering, let $\Pi\colon \LocGeod(\X) \to \Br$ be the map of Proposition \ref{prop:projection_geodesics_to_broken_geodesics}, let $m\in \N$, let $\eta \in \Br$ and let $T,R\ge 0$. Then
    \begin{equation}
        \label{eq:number_of_preimages}
        \prod_{m=1}^\infty m^{\freq_m^{\X,T,R}(\eta)} \le \#\left\{ \gamma\restr{(-T,R]}\,:\, \gamma \in \Pi^{-1}(\eta)\right\} \le  \textup{deg}(\pi)\prod_{m=1}^\infty m^{\freq_m^{\X,T,R}(\eta)}.
    \end{equation}
\end{proposition}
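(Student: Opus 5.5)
We count restrictions $\gamma\restr{(-T,R]}$ by building them piece by piece along $\eta$, using the decomposition $\eta=\underset{j\in J}{\star}\eta_j$ and the lifting results. Let $t_1<\dots<t_N$ be the points of $(-T,R]\cap\partial\eta^{-1}(\Sigma)$; there are finitely many by Lemma \ref{lemma:gamma^-1_is_discrete}. A lift of $\eta$ on $(-T,R]$ is determined by two pieces of data: its value on the initial piece $(-T,t_1]$ (or on all of $(-T,R]$ if $N=0$), and, at each $t_i$, the choice of the lift of the next piece $\eta_{j+1}$ among those that continue geodesically. By Lemma \ref{lemma:lifts_geodesics_coverings}, once the starting point of a piece is fixed, its lift is determined by its value at a single interior time. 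Moreover, by Proposition \ref{prop:gcn_depends_only_on_projection} the number of admissible continuations at $t_i$ is exactly $\textup{gcn}^\X(\eta,t_i)$, and this number does not depend on the lift chosen so far.

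\textbf{Upper bound.} The restriction of $\gamma$ to $(-T,t_1]$ is a lift of $\eta\restr{(-T,t_1]}$. That curve meets $\Sigma$ at most at its endpoint $t_1$, or it lies entirely in $\Sigma$. It is therefore determined by $\gamma(s)$ for a single $s$ in the interval, for which there are at most $\#\pi^{-1}(\eta(s))\le\deg(\pi)$ choices; here we use uniqueness of lifts in Lemma \ref{lemma:lifts_geodesics_coverings}(i),(ii) applied backwards from $s$. Inductively, given $\gamma\restr{(-T,t_i]}$, the piece of $\gamma$ on $[t_i,t_{i+1}]\cap(-T,R]$ is a lift of $\eta_{j+1}$ starting at $\gamma(t_i)$ whose concatenation with the previous piece is a local geodesic. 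By Definition \ref{defin:gcn} and Proposition \ref{prop:gcn_depends_only_on_projection} there are at most $\textup{gcn}^\X(\eta,t_i)$ such lifts. The only extra point to check is the final piece, which may end at $R<t_{i+1}$: a lift of the full $\eta_{j+1}$ restricts to a lift of its initial part, and distinct lifts of an external piece already differ immediately after $t_i$. Multiplying these counts gives $\deg(\pi)\prod_m m^{\freq_m^{\X,T,R}(\eta)}$.

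\textbf{Lower bound.} Fix one $\gamma_0\in\Pi^{-1}(\eta)$, which exists by definition of $\Br$. Walking forward through $t_1,\dots,t_N$, at each $t_i$ we choose any of the $\textup{gcn}^\X(\eta,t_i)$ admissible lifts of the next piece. We then extend the resulting curve to a full local geodesic line lifting $\eta$ by the forward construction in the proof of Theorem \ref{theo:characterization_broken_geodesics}, and we keep $\gamma_0$ on the past. Distinct choice sequences give distinct restrictions to $(-T,R]$. Indeed, if they first differ at $t_i$, the two chosen lifts of $\eta_{j+1}$ are distinct lifts of the same curve from the same point. For an external piece they differ at every interior time, since they coincide only where they meet $\Sigma$, by uniqueness in Lemma \ref{lemma:lifts_geodesics_coverings}(ii) applied to the reversed curve. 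In particular they differ on $(t_i,\min(t_{i+1},R)]$, and this interval is nonempty because $t_i\le R$ and $t_i\in(-T,R]$. For an internal piece, $\textup{gcn}=1$, so no choice is made there. This yields at least $\prod_m m^{\freq_m^{\X,T,R}(\eta)}$ restrictions.

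\textbf{Main obstacle.} The delicate case is a concatenation time $t_N=R$ exactly, where $(-T,R]$ contains $t_N$ but no time after it. Continuations at $t_N$ then produce identical restrictions, so the lower bound as stated would fail unless the convention handles it. I expect the resolution is that such endpoint choices must be absorbed into the counting, or that $\freq^{\X,T,R}$ should be read on the half-open interval $[-T,R)$ instead. I would first check the convention against the intended use, namely Birkhoff sums for $\Phi_{-1}$, and adjust the interval endpoints accordingly. The product bounds are otherwise straightforward from the lemmas above.
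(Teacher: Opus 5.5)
Your argument is the paper's own: its proof is a short sketch that chooses $\textup{gcn}^\X(\eta,t_j)$ admissible lifts at each concatenation time in $(-T,R]$. For the lower bound it ignores the lifts of the piece ending at $t_1$, and for the upper bound it allows at most $\textup{deg}(\pi)$ of them. Your upper bound is complete. Your lower-bound argument is correct for every $t_i<R$; the phrase ``nonempty because $t_i\le R$'' actually needs $t_i<R$.

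The obstacle you flag is genuine, and the paper's proof passes over it: as literally stated, the lower bound is false. Lifts of the initial piece can sometimes make up the lost factor $\textup{gcn}^\X(\eta,R)$, but not when that piece is internal. Here is a counterexample.
\begin{itemize}
\item[(1)] Take Example (3) of Section~\ref{subsec:Examples} with $\Sigma=E=[x,y]$ of length $L$, and let $\X=\vee^k(\Y,\Sigma)$ with $k\ge 2$.
\item[(2)] Let $\eta\in\LocGeod(\Y)\subseteq\Br$ wind around the loop at $x$ for $t\le -L$, run along $E$ from $x$ to $y$ for $t\in[-L,0]$, and wind around the loop at $y$ for $t\ge 0$.
\item[(3)] Then $\eta(0)$ is an extremal concatenation point with $\textup{gcn}^\X(\eta,0)=k$.
\item[(4)] For $R=0$ and $0<T<L$ we have $(-T,0]\cap\partial\eta^{-1}(\Sigma)=\{0\}$, so the left-hand side equals $k$.
\item[(5)] But $\eta\restr{(-T,0]}\subseteq\Sigma$ has a unique lift, so the middle term equals $1$.
\end{itemize}

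What your argument does prove is
$$\prod_{t\in(-T,R)\cap\partial\eta^{-1}(\Sigma)}\textup{gcn}^\X(\eta,t)\le\#\{\gamma\restr{(-T,R]}\,:\,\gamma\in\Pi^{-1}(\eta)\}.$$
Since $\textup{gcn}^\X(\eta,R)\le\brdeg^\X(\eta,R)\le\textup{deg}(\pi)$, this is the stated lower bound with an extra factor $\textup{deg}(\pi)^{-1}$ on the left. That corrected form is all that is used later. In the proof of Theorem~\ref{theo:intro_entropy_formula_branched_coverings} the lower bound enters only through $\frac{1}{n}\log$, where a bounded factor disappears. There the paper counts restrictions to $[t_1,t_M]$, which has the same blind spot at $t_M$ and is repaired the same way. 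Your alternative of using $[-T,R)$ for both the restrictions and the count also gives a true statement, but no change of convention is needed.
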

\begin{proof}
    Let us enumerate the elements of $(-T,R] \cap \partial\eta^{-1}(\Sigma)$ as $\{t_1,\ldots, t_M\}$ with $t_j<t_{j+1}$. By definition, the local geodesic segment $\eta_j = \eta \restr{[t_j,t_{j+1}]}$ has exactly $m_j = \textup{gcn}^\X(\eta,t_j)$ possible lifts that form a local geodesic when concatenated with a lift of the previous local geodesic segment, for every $j$. If we do not consider the lifts of the segment ending in $t_1$ we get the lower bound in \eqref{eq:number_of_preimages}. If we consider also them, that are at most $\textup{deg}(\pi)$, we obtain the upper bound.
\end{proof}

\section{Examples}
\label{sec:examples}

In this section, we introduce the two main sources of examples of negatively branched coverings that we will study in the sequel: wedges and Gromov-Thurston manifolds. 

\subsection{Wedges}
\label{subsec:wedges}
 
We start with the prototypical example of a branched covering: wedges. Let $(\Y,\sfd_\Y)$ be a compact, locally $\textup{CAT}(\kappa)$ space, let $\Sigma \subseteq \Y$ be a closed, locally convex subset and let $k\ge 1$. Let $\vee^k(\Y,\Sigma)$ be the space obtained by gluing $k$ isometric copies of $\Y$ through the identification of $\Sigma$, with the map $\pi_k\colon \vee^k(\Y,\Sigma) \to \Y$ which is the identity on each copy. 
It is called the $k$-wedge of $\Y$ along $\Sigma$. Then $\vee^k(\Y,\Sigma)$ is an example of a branched covering of pure degree $k$ as introduced in Section \ref{sec:top_branched_coverings}. This construction implies that for every compact, locally $\textup{CAT}(\kappa)$ space and every closed and locally convex $\Sigma \subseteq \Y$, the set $\Br$ is not empty.
The reader may test the wedge and the corresponding space $\Br$ for the couples $(\Y,\Sigma)$ of Section \ref{subsec:Examples}.

\subsection{Gromov-Thurston manifolds}
\label{subsec:GT-manifolds}

Gromov-Thurston manifolds are interesting, non-trivial examples of negatively curved branched coverings. As for wedges, they come in sequences of spaces over a fixed pair $(\Y,\Sigma)$.
First of all we recall the definition of cyclic branched covering. Let $\Y$ be a compact, connected, oriented, $n$-dimensional, smooth manifold and let $\Sigma \subseteq \Y$ be an oriented, embedded, codimension $2$ submanifold such that $\Sigma = \partial N$, where $N\subseteq \Y$ is an oriented, embedded, codimension $1$ oriented submanifold. Given $k \ge 1$, the $k$-cyclic covering of $\Y$ with singularity $\Sigma$ is the space obtained as follows. We cut $\Y$ along the interior of $N$: the resulting space has two boundaries $N^+, N^-$ glued along the common boundary $\Sigma$. We prepare $k$ copies $\X_1,\ldots,\X_k$ of such cut manifolds and we glue them identifying $N_i^+$ with $N_{i+1}^-$, cyclically modulo $k$. The resulting space is denoted by $\X$: it is a topological $n$-manifold. There is a natural copy of $\Sigma$ in $\X$ and a natural map $\pi\colon \X \to \Y$ which sends an element of $\X_i$ in the corresponding point in $\Y$. Outside of $\Sigma$, the map is a degree $k$ covering, hence $\pi$ is a branched covering of pure-degree $k$.

Gromov-Thurston manifolds are particular examples of cyclic branched coverings. The setting is the following: $\Y$ is taken to be a closed hyperbolic $n$-manifold and $\Sigma$ is taken to be a totally geodesic, codimension $2$ submanifold satisfying the conditions above. A pair $(\Y,\Sigma)$ satisfying these conditions is called a \emph{Gromov-Thurston pair}. Explicit examples of such pairs $(\Y,\Sigma)$ have been constructed in \cite{GromovThurston87}. The associated $k$-cyclic branched covering is denoted by $\textup{GT}^k(\Y,\Sigma)$. Under these assumptions, $\Y$ is obviously a locally $\CATminus$ space and $\Sigma$ is locally convex, hence $\textup{GT}^k(\Y,\Sigma)$ is one of the examples of pure-degree $k$ branched coverings of Section \ref{subsec:branched_coverings_definition}. A local representation of a Gromov-Thurston manifold with $k=8$ in dimension $3$ is given in Figure \ref{fig:model.space}.

\begin{figure}[h!]
	\centering
    \def\svgwidth{5in}
    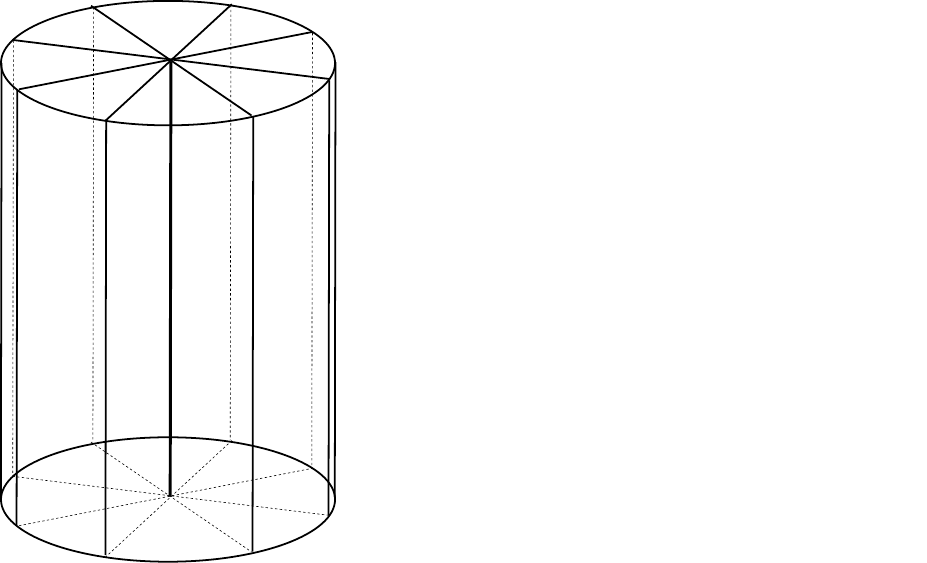
    \caption{Left: a picture of the local geometry of $\textup{GT}^8(\Y,\Sigma)$. The two dashed curves are each an extension of the local geodesic $\gamma$. Right: the region around the singularity set from a top-view. Any two local geodesic extensions of $\gamma$ can be distinguished in this top-view by the direction in which they leave the singularity set. We emphasize that every `slice' has an angle of $2\pi$.}
    \label{fig:model.space}
\end{figure}

\begin{remark}
    There are many other examples of branched coverings of smooth manifolds. First of all, the class of cyclic coverings contains other examples besides Gromov-Thurston manifolds. It is well known that for every $k\in \N$, if $\Y$ is a smooth, oriented, closed manifold of dimension $n\ge 3$ and $\Sigma \subseteq \Y$ is a closed, oriented, embedded $n-2$-submanifold, then the $k$-cycling branched cover of $\Y$ along $\Sigma$ exists if and only if $H_{n-2}([\Sigma]) = 0 \in H_{n-2}(\Y,\Z/k\Z)$, see for instance \cite[Proposition 2.3]{Hamenstadt2025}. As soon as $\Y$ as above carries a locally $\textup{CAT}(\kappa)$ metric such that $\Sigma$ is locally convex, we obtain branched coverings as in Section \ref{subsec:branched_coverings_definition}. Many interesting examples, like the ones in \cite{MostowSiu1980, Deraux2005},  are not of this type, strictly speaking. However, some results about the locally $\textup{CAT}(\kappa)$ properties of branched coverings over multiple, orthogonal, submanifolds are deduced in \cite{CharneyDavis1993, Allcock2000}.

In literature, a map $\pi\colon \X \to \Y$ between compact topological $n$-manifolds, possibly with boundary, such that $\pi^{-1}(\partial\Y)\subseteq \partial\X$, is called a branched covering if it is open and it has discrete fibers. Then, the set $B := \{x\in \X\,:\,\pi \text{ is not a local homeomorphism at }x\}$ and $\Sigma:=\pi(B)$ are sets of codimension at least $2$, see \cite{Chernavskii63, Vaisala67}. Moreover, the restriction of $\pi$ to $\X\setminus B$ is a covering map. If, additionally, $\#\pi^{-1}(y)=1$ for every $y\in \Sigma$ and $\Y$ is equipped with a locally $\textup{CAT}(\kappa)$ metric with respect to which $\Sigma$ is locally convex, then $\pi$ is a branched covering in the sense of Section \ref{subsec:branched_coverings_definition}. For more general results about branched coverings between topological manifolds we refer to \cite{Alexander1920, Fox1957}. For examples involving homology manifolds, see \cite{HeinonenRickman2002}. For applications in low-dimensional topology we suggest to read \cite{Hilden1974, Montesinos1974, BersteinEdmonds1979, Piergallini1995}.
\end{remark}

\section{Entropy of a branched covering}
\label{sec:entropy_branched_covering}

In this section we will relate the entropy of a locally $\textup{CAT}(\kappa)$ branched covering to the geometric properties of the base space, proving Theorem \ref{theo:intro_entropy_formula_branched_coverings}. The functions $\freq_m^\X$ are the one of Definition \ref{defin:frequency_functions}.

\begin{T2}
    Let $(\X,\Y,\Sigma,\pi)$ be a compact, locally $\textup{CAT}(\kappa)$ branched covering. Then
    \begin{equation}
        \label{eq:h_top=pressure}
        h_\textup{top}(\LocGeod(\X),\Phi_1) = \mathscr{P}\left(\Br,\Phi_1,\sum_{m\ge 1}\log(m)\freq_m^\X\right).
    \end{equation}
\end{T2}

\begin{proof}
    Let $\Pi\colon \LocGeod(\X) \to \Br$ be the map of Proposition \ref{prop:projection_geodesics_to_broken_geodesics}. We start by proving the next claim.

    \textbf{Claim.} For every $\eta \in \Br$ it holds that
    \begin{equation}
        \label{eq:h_top_<=_iota}
        h_\textup{top}(\Pi^{-1}(\eta),\Phi_1) = \sum_{m\ge 1}\log(m)\overline{\freq_m^\X}(\eta),
    \end{equation}
    where $\overline{\freq_m^\X} = \lims_{n\to +\infty}\frac{1}{n}\sum_{i=0}^{n-1} \freq_m^\X \circ \Phi_{-i}$.

    Recalling the definition in \eqref{eq:defin_top_entropy_fibers} we have
    \begin{equation}
        h_\textup{top}(\Pi^{-1}(\eta),\Phi_1) = \lim_{r\to 0} \lims_{n\to +\infty} \frac{1}{n} \log \textup{Cov}_{{\sf D}^n}(\Pi^{-1}(\eta),r),
    \end{equation}
    where ${\sf D}$ is the distance on $\LocGeod(\X)$ defined in \eqref{eq:defin_D_a}. We fix $r>0$ and we choose $R_r > 0$ such that $\sup_{\vert s \vert\ge R_r} 2\vert s \vert e^{-\vert s \vert} \le r/2$. For every $n\in \N$ we find a set $\mathcal{A}_r^n :=\{\gamma_1,\ldots,\gamma_N\} \in \Pi^{-1}(\eta)$ such that $\gamma_i \restr{(-n-R_r,R_r]} \neq \gamma_j \restr{(-n-R_r,R_r]}$ for $i\neq j$ and such that for every $\gamma \in \Pi^{-1}(\eta)$ there exists $i$ such that $\gamma \restr{(-n-R_r,R_r]} = \gamma_i \restr{(-n-R_r,R_r]}$. Such a set exists and has cardinality at most $\textup{deg}(\pi)\prod_{m \ge 1}^\infty m^{\freq_m^{\X,n + R_r,R_r}(\eta)} < \infty$ by Proposition \ref{prop:counting_preimages_of_broken_geodesics}. We show that $\mathcal{A}_r^n$ is $r$-dense in $\Pi^{-1}(\eta)$ with respect to ${\sf D}^n$. For every $\gamma \in \Pi^{-1}(\eta)$ let $\gamma_i \in \mathcal{A}_r^n$ be such that $\gamma \restr{(-n-R_r,R_r]} = \gamma_i \restr{(-n-R_r,R_r]}$. This means that the local geodesic lines $\Phi_h(\gamma)$ and $\Phi_h(\gamma_i)$ coincide on the interval $(-R_r,R_r]$ for every $h\in \{0,\ldots,n\}$. Therefore,
    $${\sf D}^n(\gamma,\gamma_i) = \sup_{0\le h \le n} {\sf D}(\Phi_h(\gamma), \Phi_h(\gamma_i)) \le 2\sup_{\vert s \vert \ge R_r} 2\vert s \vert e^{-\vert s \vert} \le r,$$
    where for the first inequality we used the triangular inequality. Therefore, we have that
    \begin{equation}
        \begin{aligned}
            h_\textup{top}(\Pi^{-1}(\eta),\Phi_1) \le \lim_{r\to 0}\lims_{n\to +\infty}\frac{1}{n}\log\#\mathcal{A}_r^n &\le \lim_{r\to 0}\lims_{n\to +\infty}\frac{1}{n}\log \left(\textup{deg}(\pi)\prod_{m \ge 1} m^{\freq_m^{\X,n+R_r,R_r}(\eta)}\right) \\
            &= \lim_{r\to 0}\lims_{n\to +\infty}\sum_{m\ge 1}\log(m) \frac{\freq_m^{\X,n+R_r,R_r}(\eta)}{n} \\
            &=\lim_{r\to 0}\sum_{m\ge 1}\log(m) \lims_{n\to +\infty}\frac{\freq_m^{\X,n+R_r,R_r}(\eta)}{n}.
        \end{aligned}
    \end{equation}
    Here, recall that all infinite products and sums are finite because the branched covering has finite degree. The very Definition \ref{defin:g_m^T,R} of $\freq_m^{\X,n+R_r,R_r}$ gives that $$\lims_{n\to +\infty}\frac{\freq_m^{\X,n+R_r,R_r}(\eta)}{n} = \lims_{n\to +\infty}\frac{\freq_m^{\X,n,0}(\eta)}{n} = \lims_{n\to +\infty}\frac{1}{n}\sum_{i=0}^{n-1}(\freq_m^\X\circ\Phi_{-i})(\eta) = \overline{\freq_m^\X}(\eta).$$
    Taking the limit for $r$ going to zero we obtain the first inequality.    
    
    We now show the other inequality. Let $\{t_1<t_2<\ldots < t_M\} := (-n,0]\cap \partial \eta^{-1}(\Sigma)$. Consider two distinct lifts $\gamma_j,\gamma_j'$ of the geodesic segment $\eta_j\colon [t_j,t_{j+1}]\to \Y$ for $1\le j< M$. Corollary \ref{cor:uniform_distance_segments} implies that there exists $t\in [t_j,t_{j+1}]$ such that $\sfd(\gamma_j(t),\gamma_j'(t)) \ge \rho_\textup{CAT}(\Y)$. This shows that if $\gamma, \gamma' \in \Pi^{-1}(\eta)$ are such that $\sfd(\gamma(s),\gamma'(s)) > 0$ for some $s\in [t_1,t_M]$ then there exists $t\in [t_1,t_M]$ such that $\sfd(\gamma(t),\gamma'(t)) \ge \rho_\textup{CAT}(\Y)$. The set $\{\gamma\restr{[t_1,t_M]}\,:\,\gamma \in \Pi^{-1}(\eta)\}$ has cardinality at least $\prod_{m\ge 1} m^{\freq_m^{\X,n,0}(\eta)}$ by Proposition \ref{prop:counting_preimages_of_broken_geodesics}. Let $\mathcal{A}^n = \{\gamma_1,\ldots,\gamma_N\}$ be a maximal set such that $\gamma_i \restr{[t_1,t_M]} \neq \gamma_j \restr{[t_1,t_M]}$. The discussion above says that $\mathcal{A}^n$ is $\rho_\textup{CAT}(\Y)$-separated with respect to ${\sf D}^n$. Therefore,
    
    \begin{equation}
        \begin{aligned}
            h_\textup{top}(\Pi^{-1}(\eta),\Phi_1) \ge \lims_{n\to +\infty}\frac{1}{n}\log\#\mathcal{A}^n &\ge \lims_{n\to +\infty}\frac{1}{n}\log \left(\prod_{m \ge 1} m^{\freq_m^{\X,n,0}(\eta)}\right) \\
            &= \lims_{n\to +\infty}\sum_{m\ge 1}\log(m) \frac{\freq_m^{\X,n,0}(\eta)}{n} \\
            &= \sum_{m\ge 1}\log(m) \overline{\freq_m^\X}(\eta),
        \end{aligned}
    \end{equation}
    where in the last equality we argued as before. This concludes the proof of the claim. 

    Recall that $\int \overline{\freq_m^\X}\,\d\mu = \int \freq_m^\X\,\d\mu$ for every $\mu \in\mathcal{M}_1(\Br,\Phi_1)$ because of Theorem \ref{theo:Birkhoff} and the fact that every measure which is $\Phi_1$-invariant is also $\Phi_{-1}$-invariant.
    Let now $\nu \in \mathcal{M}_1(\LocGeod(\X),\Phi_1)$ and consider $\Pi_\#\nu =: \mu  \in \mathcal{M}_1(\Br,\Phi_1)$ because of Lemma \ref{lemma:pushforward_is_invariant} and Proposition \ref{prop:projection_geodesics_to_broken_geodesics}. Proposition \ref{prop:Ledrappier_entropy_fibers} implies that
    \begin{equation*}
        \begin{aligned}
            h_\nu \le h_\mu + \int h_\textup{top}(\Pi^{-1}(\eta),\Phi_1)\,\d\mu = h_\mu + \int \sum_{m\ge 1}\log(m)\freq_m^\X\,\d\mu
        \end{aligned}
    \end{equation*}
    By the arbitrariness of $\nu$ and because of Proposition \ref{prop:variational_principle} we deduce that 
    $$h_\textup{top}(\LocGeod(\X),\Phi_1) \le \mathscr{P}\left(\Br,\Phi_1,\sum_{m\ge 1}\log(m)\freq_m^\X\right).$$

    For the other inequality we fix $\mu \in \mathcal{M}_1(\Br,\Phi_1)$ and $\varepsilon > 0$. By Propositions \ref{prop:Ledrappier_entropy_fibers} and \ref{prop:projection_geodesics_to_broken_geodesics} we can find $\nu_\varepsilon \in \mathcal{M}_1(\LocGeod(\X),\Phi_1)$ such that $\Pi_\#\nu_\varepsilon = \mu$ and 
    $$h_{\nu_\varepsilon}\ge h_\mu + \int h_\textup{top}(\Pi^{-1}(\eta),\Phi_1)\,\d\mu - \varepsilon.$$
    Rewriting the right hand side using \eqref{eq:h_top_<=_iota} we have
    \begin{equation}
    \label{eq:h_mu_nu}
        h_\mu + \int \sum_{m \ge 1} \log(m)\freq_m^\X\,\d\mu \le h_{\nu_\varepsilon} + \varepsilon.
    \end{equation}
    By Proposition \ref{prop:variational_principle} we obtain that 
    $$h_\mu + \int \sum_{m\ge 1} \log(m)\freq_m^\X\,\d\mu \le h_\textup{top}(\LocGeod(\X),\Phi_1) + \varepsilon.$$
    The arbitrariness of $\varepsilon$ and $\mu$ gives that
    $$\mathscr{P}\left(\Br,\Phi_1,\sum_{m\ge 1}\log(m)\freq_m^\X\right) \le h_\textup{top}(\LocGeod(\X),\Phi_1).$$
\end{proof}

We specialize the result to the locally $\CATminus$ case. Here, the map $\Pi$ is the one of Proposition \ref{prop:projection_geodesics_to_broken_geodesics}, the measure $\mm_{\textup{BM}}$ is the Bowen-Margulis measure of Proposition \ref{prop:top_ent_geod_flow_BM_measure} and the numbers $\overline{\freq_m^\X}(\mm_\textup{BM})$ are the ones of Theorem \ref{theo:Birkhoff} applied to the functions $\freq_m^\X$ and the dynamical system $(\LocGeod(\X),\Phi_{-1})$.

\begin{corollary}
\label{cor:m_BM_unique_measure_max_pressure}
    Let $(\X,\Y,\Sigma,\pi)$ be a compact, locally $\textup{CAT}(-1)$ branched covering. Then
    $$h_\textup{top}(\LocGeod(\X),\Phi_1) = h_{\Pi_\#\mm_{\textup{BM}}} + \sum_{m\ge 1}\log(m)\overline{\freq_m^\X}(\mm_\textup{BM})$$
    and for every other measure $\mu \in \mathcal{M}_1(\Br,\Phi_1)$ it holds that $$h_{\mu} + \int\sum_{m\ge 1}\log(m)\freq_m^\X\,\d\mu < h_{\Pi_\#\mm_{\textup{BM}}} + \sum_{m\ge 1}\log(m)\overline{\freq_m^\X}(\mm_\textup{BM}).$$
\end{corollary}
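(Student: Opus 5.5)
The plan is to combine the proof of Theorem \ref{theo:intro_entropy_formula_branched_coverings} with the uniqueness statement of Proposition \ref{prop:top_ent_geod_flow_BM_measure}.

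\textbf{Step 1: the equality for $\Pi_\#\mm_\textup{BM}$.} Set $\mu_0 := \Pi_\#\mm_\textup{BM} \in \mathcal{M}_1(\Br,\Phi_1)$, which is invariant by Lemma \ref{lemma:pushforward_is_invariant}. The upper bound in the proof of Theorem \ref{theo:intro_entropy_formula_branched_coverings}, applied to $\nu = \mm_\textup{BM}$, gives
$$h_\textup{top}(\LocGeod(\X),\Phi_1) = h_{\mm_\textup{BM}} \le h_{\mu_0} + \int \sum_{m\ge 1}\log(m)\freq_m^\X\,\d\mu_0 .$$
Here we use Proposition \ref{prop:Ledrappier_entropy_fibers} together with the Claim \eqref{eq:h_top_<=_iota} and Birkhoff's theorem. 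Conversely, the right-hand side is at most the pressure, which equals $h_\textup{top}$ by Theorem \ref{theo:intro_entropy_formula_branched_coverings}. Hence equality holds.

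To rewrite the integral as $\sum_m \log(m)\overline{\freq_m^\X}(\mm_\textup{BM})$, note that $\freq_m^\X$ pulled back by $\Pi$ is a Borel, bounded function on $\LocGeod(\X)$ (Lemma \ref{lemma:frequency_function_Borel}), and that $\int \freq_m^\X\,\d\mu_0 = \int \freq_m^\X\circ\Pi\,\d\mm_\textup{BM}$. Since $\mm_\textup{BM}$ is ergodic, the Birkhoff average of $\freq_m^\X\circ \Pi$ under $\Phi_{-1}$ is a.e. constant and equal to this integral. Ergodicity of $\mm_\textup{BM}$ under $\Phi_1$ implies ergodicity under $\Phi_{-1}$, since invariant sets coincide. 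This gives the first formula.

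\textbf{Step 2: strict inequality.} Let $\mu \ne \mu_0$ in $\mathcal{M}_1(\Br,\Phi_1)$, and suppose by contradiction that $h_\mu + \int\sum_m \log(m)\freq_m^\X\,\d\mu$ equals the pressure. The second half of the proof of Theorem \ref{theo:intro_entropy_formula_branched_coverings} only produces lifts $\nu_\varepsilon$ with $h_{\nu_\varepsilon} \ge h_\textup{top} - \varepsilon$ and $\Pi_\#\nu_\varepsilon = \mu$. To get an exact maximizer, we take a weak sublimit $\nu$ of $\nu_{1/n}$. The fiber $\Pi^{-1}$ of a weakly closed set is closed and $\Pi$ is continuous, so $\Pi_\#\nu = \mu$. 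By $h$-expansivity (Proposition \ref{prop:top_ent_geod_flow_BM_measure}) and Proposition \ref{prop:measure_entropy_upper_semicontinuous}, we get $h_\nu \ge \limsup h_{\nu_{1/n}} = h_\textup{top}$. Uniqueness in Proposition \ref{prop:top_ent_geod_flow_BM_measure} then forces $\nu = \mm_\textup{BM}$, hence $\mu = \mu_0$, a contradiction.

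\textbf{Main obstacle.} The delicate point is the existence of an exact lift realizing the sup in Ledrappier–Walters. It is handled by upper semicontinuity of entropy via $h$-expansivity as above. It is also worth checking that the set of invariant measures projecting to $\mu$ is weak-$*$ compact, which holds since $\LocGeod(\X)$ is compact by Proposition \ref{prop:loc_geod_is_compact}.
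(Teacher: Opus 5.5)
Your proposal is correct and follows the paper's proof essentially step for step. The equality comes from the Ledrappier--Walters bound applied to $\nu=\mm_\textup{BM}$, combined with Theorem~\ref{theo:intro_entropy_formula_branched_coverings} and Birkhoff's theorem for the ergodic measure $\mm_\textup{BM}$. The strict inequality comes from taking a weak limit of the near-optimal lifts $\nu_\varepsilon$, using $h$-expansivity for upper semicontinuity of entropy and the uniqueness of the Bowen--Margulis measure; the fact $\Pi_\#\nu=\mu$ needs only that $\Pi_\#$ is weak-$*$ continuous for continuous $\Pi$.
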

\begin{proof}
    By Proposition \ref{prop:top_ent_geod_flow_BM_measure}, it holds that $h_{\mm_\textup{BM}} = h_\textup{top}(\LocGeod(\X),\Phi_1)$. The proof of Theorem \ref{theo:intro_entropy_formula_branched_coverings} implies that
    \begin{equation}
        \begin{aligned}
            h_\textup{top}(\LocGeod(\X),\Phi_1) &= h_{\Pi_\#\mm_{\textup{BM}}} + \int \sum_{m \ge 1} \log(m)\freq_m^\X\,\d\Pi_\#\mm_{\textup{BM}} \\
            &= h_{\Pi_\#\mm_{\textup{BM}}} + \sum_{m \ge 1} \log(m)\overline{\freq_m^\X}(\mm_\textup{BM}),
        \end{aligned}
    \end{equation}
    where we used that $\mm_\textup{BM}$ is ergodic, by Proposition \ref{prop:top_ent_geod_flow_BM_measure}, and Theorem \ref{theo:Birkhoff}.

    We consider a measure $\mu\in \mathcal{M}_1(\Br,\Phi_1)$ such that
    $$h_{\mu} + \int\sum_{m\ge 1}\log(m)\freq_m^\X\,\d\mu < h_{\Pi_\#\mm_{\textup{BM}}} + \sum_{m\ge 1}\log(m)\overline{\freq_m^\X}(\mm_\textup{BM}),$$
    i.e. realizing the supremum in the topological pressure on the right-hand side of Theorem \ref{theo:intro_entropy_formula_branched_coverings}. For every $\varepsilon > 0$ let $\nu_\varepsilon$ be measures as in \eqref{eq:h_mu_nu}. Up to a subsequence, we can assume that $\nu_\varepsilon$ converges weakly to $\nu \in \mathcal{M}_1(\LocGeod(\X), \Phi_1)$, so $\Pi_\#\nu = \mu$ by continuity of $\Pi$. Moreover, the geodesic flow $(\LocGeod(\X), \Phi_1)$ is $h$-expansive by Proposition \ref{prop:top_ent_geod_flow_BM_measure}, so 
    $$h_\nu \ge \lims_{\varepsilon \to 0}  \,h_{\nu_\varepsilon} \ge h_\textup{top}(\LocGeod(\X), \Phi_1)$$
    by Proposition \ref{prop:measure_entropy_upper_semicontinuous}.
    Proposition \ref{prop:top_ent_geod_flow_BM_measure} implies that $\nu = \mm_\textup{BM}$ and so that $\mu = \Pi_\#\mm_\textup{BM}$.
\end{proof}

We are ready to show the following more precise version of Theorem \ref{theo:intro_entropy_does_not_depend}.

\begin{theorem}
\label{theo:wedges_maximal_entropy}
    Let $(\X,\Y,\Sigma,\pi)$ be a compact, locally $\textup{CAT}(\kappa)$ branched covering. Then
    $$h_\textup{top}(\LocGeod(\Y),\Phi_1) \le h_\textup{top}(\LocGeod(\X),\Phi_1).$$
    If $(\X',\Y,\Sigma,\pi')$ is another compact, locally $\textup{CAT}(\kappa)$, pure-degree $k$ branched covering, with $k=\textup{deg}(\pi)$, then
    $$h_\textup{top}(\LocGeod(\X),\Phi_1) \le h_\textup{top}(\LocGeod(\X'),\Phi_1).$$
    In particular, if both branched coverings have the same pure degree then their geodesic flow have same topological entropy. Moreover, if  $\kappa = -1$ then $\Pi_\#\mm_{\textup{BM}}^\X = \Pi_\#'\mm_{\textup{BM}}^{\X'}$, where $\Pi,\Pi'$ are the maps of Proposition \ref{prop:projection_geodesics_to_broken_geodesics} relative to $\X$ and $\X'$ and where $\mm_{\textup{BM}}^\X$, $\mm_{\textup{BM}}^{\X'}$ are the Bowen-Margulis measures relative to $\X$ and $\X'$.
\end{theorem}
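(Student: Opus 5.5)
The plan is to prove all three assertions from the pressure formula of Theorem \ref{theo:intro_entropy_formula_branched_coverings}. Write the potential as $F^\X := \sum_{m\ge 1}\log(m)\freq_m^\X$. Unwinding Definition \ref{defin:frequency_functions}, for every $\eta\in\Br$ we have
$$F^\X(\eta) = \sum_{t\in(-1,0]\cap\partial\eta^{-1}(\Sigma)} \log \textup{gcn}^\X(\eta,t).$$
This is a finite sum by Lemma \ref{lemma:gamma^-1_is_discrete}, it is nonnegative, and it is bounded and Borel by Lemma \ref{lemma:frequency_function_Borel}. Everything below is a pointwise comparison of such potentials on the common space $\Br$, which does not depend on the covering by Theorem \ref{theo:characterization_broken_geodesics}.

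\emph{First inequality.} By Proposition \ref{prop:projection_geodesics_to_broken_geodesics}, $\LocGeod(\Y)\subseteq\Br$, and $\LocGeod(\Y)$ is a compact $\Phi_1$-invariant subset. Every $\mu\in\mathcal{M}_1(\LocGeod(\Y),\Phi_1)$ is therefore also an element of $\mathcal{M}_1(\Br,\Phi_1)$, and its Kolmogorov--Sinai entropy is the same in both systems. Since $F^\X\ge 0$, we get $h_\mu \le h_\mu+\int F^\X\,\d\mu \le \mathscr{P}(\Br,\Phi_1,F^\X)$. Taking the supremum over $\mu$ and applying the variational principle (Proposition \ref{prop:variational_principle}) to $(\LocGeod(\Y),\Phi_1)$, which is compact by Proposition \ref{prop:loc_geod_is_compact}, gives $h_\textup{top}(\LocGeod(\Y),\Phi_1)\le h_\textup{top}(\LocGeod(\X),\Phi_1)$ by Theorem \ref{theo:intro_entropy_formula_branched_coverings}.

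\emph{Second inequality.} The key step is the pointwise bound $\textup{gcn}^\X(\eta,t)\le\textup{gcn}^{\X'}(\eta,t)$ for every $\eta\in\Br$ and every $t\in\partial\eta^{-1}(\Sigma)$. We check it with Proposition \ref{prop:gcn_depends_only_on_projection}, using that the type of a concatenation point depends only on $\eta$.
\begin{itemize}
\item[(1)] Non-branching points: both numbers equal $1$.
\item[(2)] Extremal points: $\textup{gcn}^\X=\brdeg^\X(\eta,t)\le\deg(\pi)=k=\textup{gcn}^{\X'}$.
\item[(3)] Non-extremal points: $\textup{gcn}^\X=\brdeg^\X(\eta,t)-1\le k-1=\textup{gcn}^{\X'}$.
\end{itemize}
Summing logarithms over $(-1,0]$ gives $F^\X\le F^{\X'}$ pointwise on $\Br$. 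The pressure is monotone in the potential, so Theorem \ref{theo:intro_entropy_formula_branched_coverings} yields $h_\textup{top}(\LocGeod(\X),\Phi_1)\le h_\textup{top}(\LocGeod(\X'),\Phi_1)$. If $\X$ also has pure degree $k$, we exchange the roles of $\X$ and $\X'$ to obtain equality. Alternatively, Theorem \ref{theo_intro:freq_does_not_depend} gives $F^\X=F^{\X'}$ directly.

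\emph{Bowen--Margulis statement.} Here we assume both coverings have pure degree $k$ and $\kappa=-1$. Then $F^\X=F^{\X'}$, so the functional $\mu\mapsto h_\mu+\int F\,\d\mu$ on $\mathcal{M}_1(\Br,\Phi_1)$ is the same for both coverings. By Corollary \ref{cor:m_BM_unique_measure_max_pressure}, $\Pi_\#\mm_\textup{BM}^\X$ is its unique maximizer, and for the same reason so is $\Pi'_\#\mm_\textup{BM}^{\X'}$. Uniqueness of the maximizer forces the two measures to coincide.

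I expect the main subtlety to lie in the second inequality, since $\X$ need not have pure degree. One must use that $\brdeg^\X(\eta,t)\le\deg(\pi)$ and note that the non-extremal case with $\brdeg^\X=2$ gives $\textup{gcn}^\X=1$, which is harmless. Beyond that, the proof is only the monotonicity of the pressure and the uniqueness statement in Corollary \ref{cor:m_BM_unique_measure_max_pressure}.
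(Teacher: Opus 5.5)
Your proposal is correct and follows the paper's proof. The second inequality uses the pointwise comparison $F^\X\le F^{\X'}$ from Proposition \ref{prop:gcn_depends_only_on_projection} together with monotonicity of the pressure in Theorem \ref{theo:intro_entropy_formula_branched_coverings}. The Bowen--Margulis part uses the uniqueness of the maximizer from Corollary \ref{cor:m_BM_unique_measure_max_pressure}. The only cosmetic difference is in the first inequality: the paper gets $h_\textup{top}(\Br,\Phi_1)\le h_\textup{top}(\LocGeod(\X),\Phi_1)$ directly from Proposition \ref{prop:Ledrappier_entropy_fibers}, whereas you use the pressure formula with the nonnegative potential $F^\X$, which amounts to the same thing.
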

\begin{proof}
    Let $\Pi\colon \LocGeod(\X) \to \Br$ be the map of Proposition \ref{prop:projection_geodesics_to_broken_geodesics}. Proposition \ref{prop:Ledrappier_entropy_fibers} implies that
    \begin{equation}
        \label{eq:h_broken<h_branched}
        h_\textup{top}(\Br,\Phi_1) \le h_\textup{top}(\LocGeod(\X),\Phi_1).
    \end{equation}
    Moreover, $\LocGeod(\Y)\subseteq \Br$ by Proposition \ref{prop:projection_geodesics_to_broken_geodesics} and it is $\Phi_1$-invariant. Therefore, $h_\textup{top}(\LocGeod(\Y),\Phi_1)\le h_\textup{top}(\Br,\Phi_1)$. Combining the two inequalities, we obtain that
    $$h_\textup{top}(\LocGeod(\Y),\Phi_1) \le h_\textup{top}(\LocGeod(\X),\Phi_1).$$
    
    Let $(\X',\Y,\Sigma,\pi')$ be a compact, locally $\textup{CAT}(\kappa)$, pure-degree $k$ branched covering, with $k=\textup{deg}(\pi)$. Proposition \ref{prop:gcn_depends_only_on_projection} implies that $\sum_{m\ge 1} \log(m)\freq_m^\X \le \sum_{m\ge 1} \log(m)\freq_m^{\X'}$. Theorem \ref{theo:intro_entropy_formula_branched_coverings} gives that
    $$h_\textup{top}(\LocGeod(\X), \Phi_1) \le h_\textup{top}(\LocGeod(\X'),\Phi_1).$$
    In particular, if both $(\X,\Y,\Sigma,\pi)$ and $(\X',\Y,\Sigma,\pi')$ have pure degree $k$, then 
    $$h_\textup{top}(\LocGeod(\X),\Phi_1) = h_\textup{top}(\LocGeod(\X'),\Phi_1).$$

    Finally, assume that $\kappa = -1$ and that $(\X,\Y,\Sigma,\pi)$ and $(\X',\Y,\Sigma,\pi')$ have pure degree $k$. The equality $h_\textup{top}(\LocGeod(\X),\Phi_1) = h_\textup{top}(\LocGeod(\X'),\Phi_1)$, together with Corollary \ref{cor:m_BM_unique_measure_max_pressure}, gives that
    $$h_{\Pi_\#\mm_{\textup{BM}}^\X} + \sum_{m\ge 1}\log(m)\overline{\freq_m^\X}(\mm_\textup{BM}^\X) = h_{\Pi_\#\mm_{\textup{BM}}^{\X'}} + \sum_{m\ge 1}\log(m)\overline{\freq_m^\X}(\mm_\textup{BM}^{\X'}).$$
    The uniqueness part of Corollary \ref{cor:m_BM_unique_measure_max_pressure} implies that $\Pi_\#\mm_{\textup{BM}}^{\X} = \Pi_\#\mm_{\textup{BM}}^{\X'}$.
    
\end{proof}

\begin{remark}
\label{rmk:entropy_if_Sigma_trivial}

    If $\Sigma^\circlearrowleft = \emptyset$ then every $\gamma \in \LocGeod(\X)$ intersects $\Sigma$ in at most one point. So, $A_n(\freq_m^\X)(\eta) \to 0$ as $n\to \infty$ for every $\eta \in \Br$, and Proposition \ref{prop:ergodic_optimization_bounds} gives $\textup{Erg-Opt}(\freq_m^\X) = 0$. This means that $\int \freq_m^\X\,\d\nu = 0$ for every $\nu \in \mathcal{M}_1(\Br,\Phi_1)$. Theorem \ref{theo:intro_entropy_formula_branched_coverings} gives
    $$h_\textup{top}(\LocGeod(\X),\Phi_1) = h_\textup{top}(\LocGeod(\Y),\Phi_1).$$   
    Example (2) of Section \ref{subsec:Examples} shows that it may happen that $\Sigma^\circlearrowleft \neq \emptyset$ but $h_\textup{top}(\LocGeod(\X),\Phi_1) = h_\textup{top}(\LocGeod(\Y),\Phi_1)$: the $2$-wedge $\vee^2(\Y,\Sigma)$ is simply $\mathbb{S}^1$ whose geodesic flow has topological entropy zero. We expect that this phenomenon does not happen for $k\ge 3$. We conjecture that if $\Sigma^\circlearrowleft \neq \emptyset$ and $k\ge 3$, then $h_\textup{top}(\LocGeod(\vee^k(\Y,\Sigma),\Phi_1) > h_\textup{top}(\LocGeod(\Y),\Phi_1)$. We will not investigate this matter further in this paper.
\end{remark}

\section{Asymptotic behavior of the entropy}
\label{sec:asymptotic}

We consider a compact, locally $\textup{CAT}(\kappa)$ space $(\Y,\sfd_\Y)$ and a closed, locally convex subset $\Sigma \subseteq \Y$. For every $k\in \N$, we denote by $\freq_m^k\colon \Br \to \N$ the functions of Theorem \ref{theo_intro:freq_does_not_depend}, that do not depend on the particular choice of the pure-degree $k$ branched covering $(\X,\Y,\Sigma,\pi)$. Actually, they do not even depend on $k$ in the sense we now explain.

\begin{definition}
    Let $(\Y,\sfd_\Y)$ be a compact, locally $\textup{CAT}(\kappa)$ space and $\Sigma \subseteq \Y$ be closed and locally convex. We define the functions
    \begin{equation*}
        \begin{aligned}
            \freq_{\textup{non-br}}\colon \Br \to \N, \quad &\eta \mapsto \#\{t\in (-1,0]\cap \partial\eta^{-1}(\Sigma)\,:\, \eta(t) \textup{ is non-branching}\},\\
            \freq_{\textup{non-ext}}\colon \Br \to \N, \quad &\eta \mapsto \#\{t\in (-1,0]\cap \partial\eta^{-1}(\Sigma)\,:\, \eta(t) \textup{ is non-extremal}\},\\
            \freq_{\textup{ext}}\colon \Br \to \N, \quad &\eta \mapsto \#\{t\in (-1,0]\cap \partial\eta^{-1}(\Sigma)\,:\, \eta(t) \textup{ is extremal}\}.
        \end{aligned}
    \end{equation*}
\end{definition}

The proof of Theorem \ref{theo_intro:freq_does_not_depend} shows the next property.

\begin{corollary}
\label{cor:freq_k=freq_ext}
    Let $(\X,\Y,\Sigma,\pi)$ be a compact, locally $\textup{CAT}(\kappa)$ branched covering of pure degree $k$. Then $\freq_1^\X = \freq_{\textup{non-br}}$, $\freq_{k-1}^\X = \freq_\textup{non-ext}$, $\freq_k^\X = \freq_\external$.
\end{corollary}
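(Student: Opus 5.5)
The plan is to compare the two functions pointwise, concatenation time by concatenation time. Fix $\eta \in \Br$ and $t \in (-1,0]\cap \partial\eta^{-1}(\Sigma)$, the concatenation time between $\eta_j$ and $\eta_{j+1}$. By Definition \ref{defin:frequency_functions}, $t$ contributes to $\freq_m^\X(\eta)$ exactly when $\textup{gcn}^\X(\eta,t)=m$. By the definitions preceding Corollary \ref{cor:non-extremal_iff_angle<pi}, $t$ contributes to exactly one of $\freq_{\textup{non-br}}(\eta)$, $\freq_{\textup{non-ext}}(\eta)$, $\freq_\external(\eta)$. This uses that the three types are mutually exclusive and exhaustive, which follows from Corollary \ref{cor:non-extremal_iff_angle<pi}. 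That corollary splits every concatenation point according to whether condition (a) or (b) of Theorem \ref{theo:characterization_broken_geodesics} holds, and in case (a) according to whether $\eta_{j+1}$ is internal or external. It therefore suffices to show that $\textup{gcn}^\X(\eta,t)$ equals $1$, $k-1$ or $k$ according to whether $\eta(t)$ is non-branching, non-extremal or extremal.

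For this I would invoke Proposition \ref{prop:gcn_depends_only_on_projection}. It gives $\textup{gcn}^\X(\eta,t)=\brdeg^\X(\eta,t)$ in the non-branching and extremal cases, and $\textup{gcn}^\X(\eta,t)=\brdeg^\X(\eta,t)-1$ in the non-extremal case. The remaining step is to compute $\brdeg^\X(\eta,t)$, which is done in two cases.

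If $\eta(t)$ is non-branching, then $\eta_{j+1}$ is internal, so $\eta(t+s)\in\Sigma$ for small $s>0$. The branched covering property then gives $\brdeg^\X(\eta,t)=\#\pi^{-1}(\eta(t+s))=1$, as in the corollary following the definition of branching degree.

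If $\eta(t)$ is extremal or non-extremal, then $\eta_{j+1}$ is external. For extremal points this holds by definition. For non-extremal points it is part of condition (b) of Theorem \ref{theo:characterization_broken_geodesics}, via Corollary \ref{cor:non-extremal_iff_angle<pi}(iii). Hence $\eta(t+s)\in \Y\setminus\Sigma$ for small $s>0$, and pure degree $k$ gives $\brdeg^\X(\eta,t)=k$.

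Combining these gives $\textup{gcn}^\X = 1$, $k$, $k-1$ respectively, and summing over $t$ yields the three identities. A minor point to record is the degenerate case $k=2$, where $k-1=1$. In that case the claimed identity $\freq_1^\X=\freq_{\textup{non-br}}$ must be read with care, since non-extremal points also contribute to $\freq_1^\X$. I would note that the statement implicitly takes $k\ge 3$, or interprets it as $\freq_1^\X=\freq_{\textup{non-br}}+\freq_{\textup{non-ext}}$ when $k=2$. This bookkeeping of overlapping indices is the only real obstacle; the rest is a direct reading of Proposition \ref{prop:gcn_depends_only_on_projection}, exactly as in the proof of Theorem \ref{theo_intro:freq_does_not_depend}.
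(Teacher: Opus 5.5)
Your proof is correct and follows the paper's route: the paper obtains this corollary from the proof of Theorem \ref{theo_intro:freq_does_not_depend}, which reads off from Proposition \ref{prop:gcn_depends_only_on_projection} that $\textup{gcn}^\X(\eta,t)$ equals $1$, $k$, $k-1$ at non-branching, extremal and non-extremal points respectively. Your caveat about $k=2$ is also correct: then $\freq_1^\X=\freq_{k-1}^\X=\freq_{\textup{non-br}}+\freq_{\textup{non-ext}}$, so the identities hold as literally stated only for $k\ge 3$ (this is harmless for the paper's later use, where the non-extremal term carries the weight $\log(k-1)=0$ when $k=2$).
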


The last frequency function we define is the \emph{branching frequency function} 
$$\freq \colon \Br \to \N,\quad \freq = \freq_\textup{non-ext} + \freq_\textup{ext}.$$

\begin{proposition}
\label{prop:erg_opt_h}
    Let $(\Y,\sfd_\Y)$ be a compact, locally $\textup{CAT}(\kappa)$ space and $\Sigma \subseteq \Y$ be a closed, locally convex subset. Then $\Sigma^\circlearrowleft \neq \emptyset$ if and only if $\textup{Erg-Opt}(\freq) > 0$. Moreover, 
    \begin{equation}
        \label{eq:erg_opt_wedges}
        \textup{Erg-Opt}(\freq) = \frac{1}{\inj}.
    \end{equation}
\end{proposition}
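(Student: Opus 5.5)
I would prove \eqref{eq:erg_opt_wedges} by sandwiching $\textup{Erg-Opt}(\freq)$ between the two bounds of Proposition \ref{prop:ergodic_optimization_bounds}. The equivalence with $\Sigma^\circlearrowleft\neq\emptyset$ then follows at once. If $\Sigma^\circlearrowleft=\emptyset$, Remark \ref{rmk:entropy_if_Sigma_trivial} gives $\textup{Erg-Opt}(\freq)=0$, consistent with $\inj=\infty$. If $\Sigma^\circlearrowleft\neq\emptyset$, then $\inj<\infty$ by Proposition \ref{prop:normal_inj_radius_minimizing_curve}(i), so the formula gives a positive value. Throughout I use that $\freq$ is bounded and Borel (Lemma \ref{lemma:frequency_function_Borel}), and that the Birkhoff averages are taken for $\Phi_{-1}$. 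Hence $A_n(\freq)(\eta)$ is $\frac1n$ times the number of branching concatenation times of $\eta$ in $(-n,0]$.

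\emph{Upper bound.} Fix $\eta\in\Br$ and list its branching concatenation times (non-extremal or extremal) $s_1<s_2<\dots$ in $(-n,0]$. At each such time $\eta_{j+1}$ is external. Since every internal piece is followed by an external one, the next concatenation time after $s_i$ comes at least the length of an external piece later, hence at least $\inj$ later. Indeed, each external piece lies in $\Sigma^\circlearrowleft$, so its length is at least $\inj$. Therefore consecutive branching times are at distance $\ge\inj$. This gives $\#\{s_i\}\le n/\inj+1$ and $\limsup_n A_n(\freq)(\eta)\le 1/\inj$. The right inequality of Proposition \ref{prop:ergodic_optimization_bounds} then yields $\textup{Erg-Opt}(\freq)\le 1/\inj$.

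\emph{Lower bound.} Take a minimizer $\sigma\colon[0,\ell]\to\Y$ with $\ell=\inj$, which exists by Proposition \ref{prop:normal_inj_radius_minimizing_curve}(i). Define $\eta$ as the bi-infinite concatenation $\cdots\star\sigma\star(-\sigma)\star\sigma\star(-\sigma)\star\cdots$, which bounces back and forth along $\sigma$. This is a $\Sigma$-broken line whose pieces are all external, and it is periodic for the flow with period $2\ell$. I must check that $\eta\in\Br$, i.e. that each concatenation satisfies condition (b) of Theorem \ref{theo:characterization_broken_geodesics}. The concatenation $\sigma\star(-\sigma)$ is clearly not a local geodesic: the angle at the junction is $0$. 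The required identity is that $\sigma(\ell)=\texttt{mid}_\Sigma(\sigma(\ell-s),\sigma(\ell-s))$, which says that $\sigma(\ell)$ is the nearest-point projection of $\sigma(\ell-s)$ onto $\Sigma$ inside the $\textup{CAT}(\kappa)$ ball. This follows from the angle condition $\angle_{\sigma(\ell)}(\sigma(\ell-s),z)\ge\pi/2$ of Proposition \ref{prop:normal_inj_radius_minimizing_curve}(ii), via the standard characterization of projections onto convex sets \cite[II.2.4]{BridsonHaefliger}. The same argument applies at $\sigma(0)$.

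Every concatenation point of this $\eta$ is non-extremal. So, up to an integrality issue, $\eta$ has exactly one branching time per interval of length $\ell$, and $\lim_n A_n(\freq)(\eta)=1/\inj$. The main technical obstacle is that $\Phi_1$-periodicity requires the period $2\ell$ to be an integer, which generally fails. I would circumvent this by not using Proposition \ref{prop:ergodic_optimization_bounds} literally. Instead I take the $\Phi_t$-invariant measure equidistributed on the closed orbit, $\mu=\frac1{2\ell}\int_0^{2\ell}\delta_{\Phi_t\eta}\,\d t$, which is $\Phi_1$-invariant. Then I compute $\int\freq\,\d\mu$ directly by Fubini: each concatenation time lands in a window $(-1,0]$ for a set of shifts of total measure $1$, so $\int\freq\,\d\mu=2\cdot\frac{1}{2\ell}=1/\inj$. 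Combining this with the upper bound gives \eqref{eq:erg_opt_wedges}.
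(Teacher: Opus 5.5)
Your proposal is correct and follows essentially the paper's proof. The upper bound comes from the $\inj$-spacing of branching concatenation times combined with the right-hand inequality of Proposition \ref{prop:ergodic_optimization_bounds}. The lower bound comes from the broken geodesic $\cdots\star\sigma\star(-\sigma)\star\cdots$ bouncing along a minimizer, which lies in $\Br$ by the angle condition and Theorem \ref{theo:characterization_broken_geodesics}. The only difference is that the paper applies the periodic-point bound directly to this $\eta$, which is valid only when $2\inj$ is rational (the exact condition for $\eta$ to be $\Phi_1$-periodic, slightly weaker than the integrality you mention); your flow-averaged measure on the closed orbit covers every case and so repairs this small gap in the paper's lower bound.
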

The equality in \eqref{eq:erg_opt_wedges} makes sense and is true also in the case $\Sigma^\circlearrowleft = \emptyset$.

\begin{proof}
    If $\Sigma^\circlearrowleft = \emptyset$ then every $\eta \in \Br$ intersects $\Sigma$ in at most one point. As noticed in Remark \ref{rmk:entropy_if_Sigma_trivial}, this implies that $\int \freq\,\d\nu = 0$ for every $\nu \in \mathcal{M}_1(\Br,\Phi_1)$, hence $\textup{Erg-Opt}(\freq)=0$. On the other hand, if $\Sigma^\circlearrowleft \neq \emptyset$ we take $\sigma \colon [0,\ell] \to \Y$ to be a non-trivial local geodesic realizing the infimum in the definition \eqref{eq:defin_normal_inj_radius} of $\inj$, see Proposition \ref{prop:normal_inj_radius_minimizing_curve}. We define $\eta \in \textup{Broken}(\Y,\Sigma)$ as the bi-infinite alternate concatenation of $\sigma$ and $-\sigma$. The angle condition of Proposition \ref{prop:normal_inj_radius_minimizing_curve} satisfied by $\sigma$, together with the characterization of Theorem \ref{theo:characterization_broken_geodesics}, gives that $\eta \in \Br$. We apply Proposition \ref{prop:ergodic_optimization_bounds} to the periodic broken geodesic $\eta$ to get
    $$\textup{Erg-Opt}(\freq) \ge \lims_{n\to +\infty} A_n(\freq)(\eta) = \frac{1}{\inj} > 0.$$
    Let $\eta = \underset{j\in J}{\star}\eta_j \in \Br$ be arbitrary. If $\eta_j$ is external then $\ell(\eta_j) \ge \inj$, by definition of normal injectivity radius. Hence, the distance between two concatenation times $t,s$ counted by $\freq$ is at least $\inj$. Therefore, $\freq(\eta) \le \lceil1/\inj\rceil$ for every $\eta \in \Br$. In the same way, and using Proposition \ref{prop:ergodic_optimization_bounds}, we deduce that
    $$\textup{Erg-Opt}(\freq) \le \lims_{n\to +\infty}\frac{1}{n}\left\lceil\frac{n}{\inj}\right\rceil = \frac{1}{\inj},$$
    concluding the proof of \eqref{eq:erg_opt_wedges}.
\end{proof}

We denote by $h_\textup{top}(\Y,\Sigma,k) := h_\textup{top}(\LocGeod(\X),\Phi_1)$, where $(\X,\Y,\Sigma,\pi)$ is any branched covering of pure-degree $k$: it does not depend on the choice of the branched covering by Theorem \ref{theo:intro_entropy_does_not_depend}. We prove Theorem \ref{theo_intro:asymptotic_formula}.

\begin{T3}
    Let $(\Y,\sfd_\Y)$ be a compact, locally $\textup{CAT}(\kappa)$ space and let $\Sigma \subseteq \Y$ be closed and locally convex. Assume that $h_\textup{top}(\Br,\Phi_1) < \infty$ and that $\Sigma^\circlearrowleft \neq \emptyset$. Then for every $k\ge 2$ it holds that
    $$\frac{\log(k-1)}{\inj}\le h_\textup{top}(\Y,\Sigma,k) \le \frac{\log(k)}{\inj} + h_\textup{top}(\Br,\Phi_1),$$
    hence
    $$\lim_{k\to +\infty} \frac{h_\textup{top}(\Y,\Sigma,k)}{\log(k)} = \frac{1}{\inj}.$$
    If $\kappa=-1$, then $h_\textup{top}(\Br,\Phi_1) \le h(\tilde{\Y};\pi_1(\Y)) + \log(2)/\inj$.
\end{T3}

\begin{proof}
    Let $k\ge 2$ be fixed. We know that $\textup{Erg-Opt}(\freq) = 1/\inj > 0$ by Proposition \ref{prop:erg_opt_h}. For every $\varepsilon > 0$ let $\mu_\varepsilon \in \mathcal{M}_1(\Br,\Phi_1)$ be such that $\int \freq\,\d\mu_\varepsilon \ge \frac{1}{\inj} - \varepsilon$. By using $\mu_\varepsilon$ as one of the candidates for the computation of the topological pressure 
    $$\mathscr{P}(\Br, \Phi_1, \log(k-1)\freq_{\textup{non-ext}} + \log(k)\freq_\external)$$
    and by recalling Theorems \ref{theo:intro_entropy_formula_branched_coverings} and \ref{theo_intro:freq_does_not_depend} and Corollary \ref{cor:freq_k=freq_ext}, we obtain
    $$\log(k-1)\left(\frac{1}{\inj} - \varepsilon\right)\le h_\textup{top}(\Y,\Sigma,k).$$
    By taking the limit for $\varepsilon$ going to zero we have that 
    $$\frac{\log(k-1)}{\inj}\le h_\textup{top}(\Y,\Sigma,k).$$
    
    On the other hand, for every $\mu \in \mathcal{M}_1(\Br,\Phi_1)$ we have
    \begin{equation*}
        h_\mu + \int \log(k-1)\freq_{\textup{non-ext}} + \log(k)\freq_\external\,\d\mu \le h_\mu + \int \log(k)\freq\,\d\mu \le h_\mu + \frac{\log(k)}{\inj},
    \end{equation*}
    by Proposition \ref{prop:erg_opt_h}.
    The supremum over $\mu \in \mathcal{M}_1(\Br,\Phi_1)$ of the left-hand side equals $h_\textup{top}(\Y,\Sigma,k)$, by Theorems \ref{theo:intro_entropy_formula_branched_coverings} and \ref{theo_intro:freq_does_not_depend}. Therefore, 
    $$h_\textup{top}(\Y,\Sigma,k) \le \sup_{\mu \in \mathcal{M}_1(\Br,\Phi_1)} h_\mu + \frac{\log(k)}{\inj} = h_\textup{top}(\Br,\Phi_1) + \frac{\log(k)}{\inj},$$
    where the last equality is again Proposition \ref{prop:variational_principle}. This gives the first part of the thesis. 
    Since we are assuming $h_\textup{top}(\Br,\Phi_1) < \infty$ we obtain that
    $$\lim_{k\to +\infty} \frac{h_\textup{top}(\Y,\Sigma,k)}{\log(k)} = \frac{1}{\inj}.$$

    Let us suppose now that $\kappa = -1$ and let us consider the space $\X := \vee^2(\Y,\Sigma)$. Propositions \ref{prop:projection_geodesics_to_broken_geodesics} and \ref{prop:Ledrappier_entropy_fibers} imply that 
    $$h_\textup{top}(\Br,\Phi_1) \le h_\textup{top}(\LocGeod(\X), \Phi_1) = h(\tilde\X;\pi_1(\X)).$$
    The last equality follows by \eqref{eq:h_top=critical_exponent}. The proof will be completed by showing that $h(\tilde\X;\pi_1(\X)) \le h(\tilde\Y;\pi_1(\Y)) + \log(2)/\inj$.
    For that, we need a description of the universal cover of $\X$. Let $p\colon \tilde\Y \to \Y$ be the universal cover of $\Y$. Let us write $\Sigma = \bigsqcup_{i=1}^M \Sigma_i$, where each $\Sigma_i$ is a connected component of $\Sigma$. There are finitely many of them since $\sfd_\Y(\Sigma_i,\Sigma_j)\ge \rho_\textup{CAT}(\Y)$. We denote by $\mathcal{C}$ the set of all connected components of $p^{-1}(\Sigma)$. We define a sequence of spaces $\{\tilde \X_i\}$ as follows. $\tilde \X_0 = \tilde \Y$ and we say that this copy of $\tilde{Y}$ is at level $0$. The space $\tilde \X_1$ is obtained in the following way: for every $C\in\mathcal{C}$ we choose a copy $\tilde\Y_C$ of $\tilde{\Y}$ and we glue it to $\tilde{\X}_0$ along $C$. The new copies of $\tilde{\Y}$ are said to be at level $1$. The space $\tilde{\X}_1$ is $\CATminus$ because of \cite[Theorem II.11.3]{BridsonHaefliger}. We now repeat the construction: to every connected component $C$ in one of the level $1$ copies of $\tilde{\Y}$ we glue another copy of $\tilde{\Y}$ by identifying the spaces along the connected component. The new copies of $\tilde{\Y}$ are said of level $2$. Fix a basepoint $x_0 \in \tilde\X_0$ and $R>0$. We notice that if $i$ is big enough then the balls of radius $R$ around $x_0$ in the spaces $\tilde \X_i$ are isometric. Therefore, the sequence $\tilde \X_i$ converges in the pointed Gromov-Hausdorff sense to a limit space $\tilde \X_\infty$, which is $\CATminus$ by \cite[Corollary II.3.10]{BridsonHaefliger}. Moreover, by construction it is locally isometric to $\X$, therefore it is the universal cover of $\X$. By \cite[Proposition 5.7]{Cavallucci23},
    $$h(\tilde{\X}_\infty;\pi_1(\X)) = \lim_{R\to +\infty} \frac{1}{R}\log \textup{Pack}(B(x_0,R),r),$$
    for $r = \inj /4$. The construction gives the explicit bound 
    $$\textup{Pack}(B(x_0,R),r) \le 2^\frac{R}{\inj}\cdot \textup{Pack}(B_{\tilde\Y}(x_0,R+D),r),$$
    where $B_{\tilde{\Y}}(x_0,R+D)$ is the ball in $\tilde \Y$ and $D$ is the diameter of $\Y$. Then
    $$h(\tilde{\X}_\infty;\pi_1(\X)) \le  \lim_{R\to +\infty} \frac{1}{R}\log 2^\frac{R}{\inj}\cdot \textup{Pack}(B_{\tilde\Y}(x_0,R+D),r) = \frac{\log(2)}{\inj} + h(\tilde\Y; \pi_1(\Y)),$$
    where we used again \cite[Proposition 5.7]{Cavallucci23}.
\end{proof}

\begin{remark}
    The authors do not know any example where $h_\textup{top}(\Br,\Phi_1) = \infty$. The bound $h_\textup{top}(\Br,\Phi_1) \le h(\tilde{\Y};\pi_1(\Y)) + \log(2)/\inj$ is still valid, with the same proof, when $\kappa = 0$ and $\Y$ has Gromov hyperbolic fundamental group or when $\kappa = 0$ and $\Y$ is geodesically complete, by Remark \ref{rem:h_top=h_crit}. In general, it is plausible that $h_\textup{top}(\Br,\Phi_1) < \infty$ as soon as $h_\textup{top}(\LocGeod(\Y),\Phi_1) < \infty$.
\end{remark}

As a consequence we prove the Corollary \ref{cor:GT_manifolds_estimate}.

\begin{C1}
    Let $n\in \N$ and $D\ge 0$. Then there exists $E(n,D)\ge 0$ such that for every Gromov-Thurston pair $(\Y,\Sigma)$ with $\textup{dim}(\Y) = n$ and $\textup{Diam}(\Y) \le D$ it holds that
    $$\left\vert h_\textup{top}(\LocGeod(\textup{GT}^k(\Y,\Sigma)), \Phi_1) - \frac{\log(k)}{\inj}\right\vert \le  E(n,D).$$
\end{C1}

\begin{proof}
    By Theorem \ref{theo_intro:asymptotic_formula}, the thesis is true with $E = h(\tilde{\Y};\pi_1(\Y)) + \log(2)/\inj$. Since $\tilde{\Y}$ is isometric to the hyperbolic space $\mathbb{H}^n$ we have that $h(\tilde{\Y};\pi_1(\Y)) = n-1$. Therefore the thesis is true if we find a uniform bound $s_0>0$, depending only on $n$ and $D$, such that $\inj \ge s_0$. Proposition \ref{prop:normal_inj_radius_minimizing_curve} implies that $\inj \ge \rho_\textup{CAT}(\Y)$. The uniform bound depending only on $n$ and $D$ on the $\textup{CAT}$-radius of $\Y$ is provided by Proposition \ref{prop:lower_bound_CAT_radius}.
\end{proof}

\begin{remark}
    Given a compact, locally $\CATminus$ space $(\Y,\sfd_\Y)$ and a closed, locally convex subset $\Sigma$, the proof of Corollary \ref{cor:GT_manifolds_estimate} says that one can bound uniformly in $k$ the quantity 
    $$\left\vert h_\textup{top}(\Y,\Sigma,k), \Phi_1) - \frac{\log(k)}{\inj}\right\vert$$
    by knowing an upper bound on the critical exponent of $\pi_1(\Y)$ and a lower bound on the \textup{CAT}-radius of $\Y$. For instance, a uniform packing property on the universal cover $\tilde{\Y}$ gives a uniform upper bound on the critical exponent of $\pi_1(\Y)$ by \cite[Proposition 5.7 and Lemma 5.6]{Cavallucci23} and a uniform diameter bound gives the estimate on the \textup{CAT}-radius by Proposition \ref{prop:lower_bound_CAT_radius}.
\end{remark}

\section{Dynamical and asymptotic properties of the Bowen-Margulis measures}
\label{sec:asymptotic_BM}

Let $(\Y,\sfd_\Y)$ be a compact, locally $\CATminus$ space and let $\Sigma \subseteq\Y$ be a closed, locally convex subset. For every $k\in \N$, we define $\mm_k := \Pi_\#\mm_{\textup{BM}}^{\X}$, where $\mm_{\textup{BM}}^{\X}$ is the Bowen-Margulis measure of any branched covering $(\X,\Y,\Sigma,\pi)$ of pure-degree $k$ and $\Pi$ is the map of Proposition \ref{prop:projection_geodesics_to_broken_geodesics}. Theorem \ref{theo:wedges_maximal_entropy} implies that $\mm_k$ does not depend on the choice of the branched covering. In this last section we will study the properties of the measures $\mm_k$ associated to the couple $(\Y,\Sigma)$.

\subsection{Dynamical properties}
\label{subsec:dynamical_properties}
Our first finding is that $\mm_k$-a.e. $\eta$ crosses $\Sigma$ with a small concatenation angle infinitely many times. 

\begin{proposition}
\label{prop:BM_supported_infinite_intersections}
    Let $(\Y,\sfd_\Y)$ be a compact, locally $\CATminus$ space and let $\Sigma \subseteq\Y$ be a closed, locally convex subset such that $\Sigma^\circlearrowleft \neq\emptyset$. For every $\vartheta > 0$ consider the set
    $$\textup{NE}_\vartheta:=\{\eta \in \Br\,:\, \#\{t\in\partial\eta^{-1}(\Sigma)\,:\, \angle(\eta,t)<\vartheta\} = \infty\}.$$
    Then $\mm_k(\textup{NE}_\vartheta) = 1$ for every $k \ge 2$. In particular, $\mm_k$-a.e. $\eta \in \Br$ has infinitely many non-extremal intersections.
\end{proposition}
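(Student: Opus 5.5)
Fix a pure-degree $k$ branched covering $(\X,\Y,\Sigma,\pi)$, say the wedge $\vee^k(\Y,\Sigma)$, so that $\mm_k = \Pi_\#\mm_{\textup{BM}}^{\X}$. The plan is to pull the statement back to $\LocGeod(\X)$ and use two properties of $\mm_{\textup{BM}}^\X$ from Proposition \ref{prop:top_ent_geod_flow_BM_measure}: it is ergodic and it has full support. The set $\Pi^{-1}(\textup{NE}_\vartheta)$ consists of the local geodesic lines of $\X$ whose projection makes infinitely many concatenations with angle $<\vartheta$. It is enough to find a nonempty open set $U\subseteq \LocGeod(\X)$ such that every $\gamma\in U$ has, at some time in a fixed window $[-L,L]$, a concatenation point whose projected angle is $<\vartheta$.

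Once such a $U$ is available, the argument is routine. Full support gives $\mm_{\textup{BM}}^\X(U)>0$. Ergodicity of $\Phi_1$ and the Birkhoff theorem (Theorem \ref{theo:Birkhoff}) applied to $\mathbbm{1}_U$ show that $\mm_{\textup{BM}}^\X$-a.e.\ $\gamma$ satisfies $\Phi_n\gamma\in U$ for infinitely many $n\ge 0$, and also for infinitely many $n\le 0$. Each visit produces a small-angle concatenation time in $[n-L,n+L]$. Hence there are infinitely many distinct such times, so $\Pi(\gamma)\in\textup{NE}_\vartheta$ and $\mm_k(\textup{NE}_\vartheta)=1$. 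For the last assertion, take $\vartheta<\pi$: by Corollary \ref{cor:non-extremal_iff_angle<pi} every concatenation with angle $<\pi$ is non-extremal. Measurability of $\textup{NE}_\vartheta$ follows as in Lemma \ref{lemma:frequency_function_Borel}; alternatively one can simply observe that the preimage contains a full-measure set.

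The main work is constructing $U$. I would start from a minimizing segment $\sigma\colon[0,\ell]\to\Y$ for $\inj$ (Proposition \ref{prop:normal_inj_radius_minimizing_curve}) and the periodic element $\eta_0\in\Br$ obtained by alternating $\sigma$ and $-\sigma$. Every concatenation of $\eta_0$ has angle $0$. Lift $\eta_0$ to some $\gamma_0\in\LocGeod(\X)$ using Theorem \ref{theo:characterization_broken_geodesics}, with consecutive lifts in different sheets; this is possible since $k\ge2$ and Case (5)(ii) of Lemma \ref{lemma:geodesics_cross_Sigma} applies. Then I would show that for $\gamma$ close to $\gamma_0$ in ${\sf D}$, the geodesic $\gamma$ also passes near $\sigma(\ell)$ and bounces with a small angle. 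Set $s=\rho/2$ with $\rho:=\min\{\rho_\textup{CAT}(\Y),D_\kappa\}$. The points $\gamma_0(\ell-s)$ and $\gamma_0(\ell+s)$ lie in distinct sheets at distance $s$ from $\Sigma$, and the geodesic joining them in $\X$ goes through $\Sigma$ at the $\Sigma$-midpoint of their projections. For nearby $\gamma$, the points $\gamma(\ell\pm s)$ stay in the same two sheets, since distinct preimages are $2\sfd_\Y(\cdot,\Sigma)$ apart. By Lemma \ref{lemma:geodesics_cross_Sigma}, $\gamma\restr{[\ell-s,\ell+s]}$ must then cross $\Sigma$ at $p=\textup{\texttt{mid}}_\Sigma(y,y')$, where $y,y'$ are the projections of $\gamma(\ell\pm s)$, and these projections are close to the single point $\sigma(\ell-s)$.

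The hardest step is turning this into an angle estimate: I need $\angle_p(y,y')<\vartheta$ whenever $y,y'$ are $\delta$-close to a common point $q=\sigma(\ell-s)\notin\Sigma$. I would prove it by comparison in the $\textup{CAT}(\kappa)$ ball. Convexity gives $\sfd_\Y(y,p)+\sfd_\Y(p,y')\ge 2\sfd_\Y(\cdot,\Sigma)-O(\delta)\approx 2s$, while $\sfd_\Y(y,y')\le 2\delta$. A triangle with two sides $\approx s$ and third side $\le 2\delta$ has comparison angle $O(\delta/s)$ at $p$, and the Alexandrov angle is bounded by the comparison angle. Choosing $\delta$ small relative to $\vartheta$ and $s$, and then $U$ as a small ${\sf D}$-ball around $\gamma_0$, completes the construction. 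If the sheet-separation step causes trouble, I would instead argue directly: nearby $\gamma$ must meet $\Sigma$ near time $\ell$, because a projection avoiding $\Sigma$ there would be a local geodesic staying uniformly close to the $\Sigma$-tangent bounce of $\eta_0$, and that is impossible by the uniform separation of lifts (Corollary \ref{cor:uniform_distance_segments}).
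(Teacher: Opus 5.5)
Your proof is correct, but its key step differs from the paper's. The paper never singles out a particular geodesic. It works with the $\Phi_1$-invariant sets $\textup{NE}_\vartheta(m)$ of broken geodesics having more than $m$ concatenations of angle $<\vartheta$ on all of $\R$, observes that they contain the bouncing $\eta_0$, and proves they are open in $\Br$ by a soft limiting argument. If $\eta_j\to\eta_\infty$ and $\eta_\infty$ breaks at $t$, then $\eta_j$ must break non-extremally near $t$, because limits of local geodesics are local geodesics; upper semicontinuity of Alexandrov angles \cite[Proposition II.3.3]{BridsonHaefliger} then keeps that angle below $\vartheta$. Full support and ergodicity of $\mm_{\textup{BM}}$ give full measure to each such invariant open set directly, with no Birkhoff theorem.

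You instead establish openness only around one explicit lift $\gamma_0$ of $\eta_0$, and you do it quantitatively. Case (5)(ii) of Lemma \ref{lemma:geodesics_cross_Sigma} forces every nearby $\gamma$ to bounce at $p=\textup{\texttt{mid}}_\Sigma(y,y')$, and the comparison triangle bounds $\angle_p(y,y')$ by $O(\delta/s)$. Recurrence then supplies infinitely many bounces.

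What your route buys: you never have to locate breaks of $\eta_j$ at arbitrary limit points, and you get a stronger conclusion. Birkhoff yields a positive lower density of $\vartheta$-small bounces, of order $\mm_{\textup{BM}}(U)/(2L+1)$ per unit time, for a.e.\ geodesic, not merely infinitely many.

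What the paper's route buys: it is more general, and Borel measurability comes for free, since $\textup{NE}_\vartheta=\bigcap_m\textup{NE}_\vartheta(m)$ is a $G_\delta$. Your fallback remark, that the preimage contains a full-measure set, does not by itself give $\mm_k(\textup{NE}_\vartheta)=1$ without measurability. So rely on the Borel argument, or on this $G_\delta$ description.

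Finally, the wedge is not needed. Instead of using sheets, compare $\sfd(\gamma(\ell-s),\gamma(\ell+s))\ge 2s-O(\delta)$ with $\sfd(\gamma(\ell-s),x'_*)\le \sfd_\Y(y,y')\le 2\delta$. This rules out Case (5)(i) in any pure-degree $k$ covering.
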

\begin{proof}
    For every $m\in \N$ we consider the set
    $$\textup{NE}_\vartheta(m) :=\{\eta \in \Br\,:\, \#\{t\in\partial\eta^{-1}(\Sigma)\,:\, \angle(\eta,t)<\vartheta\} > m\}.$$
    It is $\Phi_1$-invariant. For every $\sigma \in \Sigma^\circlearrowleft$ realizing the infimum in \eqref{eq:defin_normal_inj_radius} we can consider the $\Sigma$-broken local geodesic $\eta$ defined by the alternated concatenation of $\sigma$ and $-\sigma$, which belongs to $\Br$ as shown in the proof of Proposition \ref{prop:erg_opt_h}. Moreover $\eta \in \textup{NE}_\vartheta(m)$ by construction, for every $\vartheta > 0$. Hence $\textup{NE}_\vartheta(m) \ne \emptyset$. We show that $\textup{NE}_\vartheta(m)$ is open by proving that the set
    $$\textup{NE}_\vartheta(m)^c = \{\eta \in \Br\,:\, \#\{t\in\partial\eta^{-1}(\Sigma)\,:\, \angle(\eta,t)<\vartheta\} \le m\}$$
    is closed. Let $\{\eta_j\} \to \{\eta_\infty\}$ in $\Br$, where each $\eta_j$ has at most $m$ concatenation times with angle smaller than $\vartheta$. Suppose that $\eta_\infty$ has $m+1$ concatenation times, say $t_\infty^1,\ldots,t_\infty^{m+1} \in \R$, with angle smaller than $\vartheta$. Set $z^i := \eta_\infty(t_\infty^i)$. Take  $0<\varepsilon < \textup{min}_{i \ne j} \sfd_\Y(z^i,z^j)$. For every $i$ let us take a $\CATminus$ ball $B(z^i,2r)$ for some small $0<r<\varepsilon/2$. By taking $j$ big enough we can assume that $\eta_j([t_\infty^i - r, t_\infty^i+ r])\subseteq B(z^i,2r)$.     
    If they are local geodesics of $\Y$, hence geodesic segments in $B(z^i,2r)$, then this sequence of geodesic segments converges to $\eta_\infty([t_\infty^i -r, t_\infty^i +r])$, which is therefore a geodesic segment, contradicting the fact that $\eta_\infty$ has a non-extremal intersection at $t_\infty^i$. 
    This means that we can find times $t_j^i$, close to $t_\infty^i$, such that $\eta_j(t^i_j) \in \Sigma$ and the intersection is non-extremal, hence $t^i_j \in \partial\eta_j^{-1}(\Sigma)$. Moreover, by the upper semicontinuity of the angles, see \cite[Proposition II.3.3]{BridsonHaefliger}, $\angle(\eta_j,t^i_j) < \vartheta$ for every $j$ big enough and every $i\in \{1,\ldots,m+1\}$.
    By the choice of $r$ and $\varepsilon$, the points $t_j^i$ are all distinct, contradicting the fact that $\eta_j \in \textup{NE}_\vartheta(m)^c$. Hence $\textup{NE}_\vartheta(m)$ is open. 
    
    Let $(\X,\Y,\Sigma,\pi)$ be any branched covering and let $\Pi$ be the map of Proposition \ref{prop:projection_geodesics_to_broken_geodesics}. Then $\Pi^{-1}(\textup{NE}_\vartheta(m))$ is open in $\LocGeod(\X)$. Proposition \ref{prop:top_ent_geod_flow_BM_measure} implies that $\mm_{\textup{BM}}^k(\Pi^{-1}(\textup{NE}_\vartheta(m)))=1$, i.e. $\mm_k(\textup{NE}_\vartheta(m)) = 1$.    
    Therefore, $\mm_k(\bigcap_{N\in \N} \textup{NE}_\vartheta(m)) = 1$, which is the thesis.
\end{proof}

There are examples where every element of $\Br$ has only non-extremal intersections, e.g. Example (2) in Section \ref{subsec:Examples}. In other cases, there are elements of $\Br$ with extremal intersections. 

Given $\eta \in \Br$ we denote by $\{t_j(\eta)\}$ the elements of the set $\partial\eta^{-1}(\Sigma) \cap [0,+\infty)$ in increasing order. The next result estimates the average number of intersections with $\Sigma$.
\begin{proposition}
    Let $(\Y,\sfd_\Y)$ be a compact, locally $\CATminus$ space, let $\Sigma \subseteq\Y$ be a closed, locally convex subset and let $k\ge 2$. Then for $\mm_k$-a.e. $\eta \in \Br$ it holds that 
    $$\limi_{n\to +\infty}\frac{\#\partial\eta^{-1}(\Sigma)\cap[0,n-1]}{n} \ge \int\freq\,\d\mm_k$$
    and that
    $$\lims_{n\to +\infty} \frac{t_n(\eta)}{n} \le \frac{1}{\int \freq \,\d\mm_k}.$$
\end{proposition}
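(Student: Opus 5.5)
The plan is to apply Birkhoff's ergodic theorem (Theorem \ref{theo:Birkhoff}) to the bounded Borel function $\freq$ (Lemma \ref{lemma:frequency_function_Borel}) with respect to the ergodic measure $\mm_k$. First we check that $\mm_k$ is ergodic. Indeed, $\mm_{\textup{BM}}^\X$ is ergodic and $\Phi_t$-invariant for every $t$ by Proposition \ref{prop:top_ent_geod_flow_BM_measure}. If $A\subseteq\Br$ satisfies $\Phi_1^{-1}A\subseteq A$, then $\Pi^{-1}(A)$ has the same property in $\LocGeod(\X)$. Hence $\mm_k$ is ergodic and invariant under both $\Phi_1$ and $\Phi_{-1}$. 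We will use the flow $\Phi_1$ rather than $\Phi_{-1}$, since we want to count intersections in $[0,n-1]$ rather than in $(-n,0]$.

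Next we relate the Birkhoff sums to the counting function. By definition,
\[
\sum_{i=1}^{n-1}\freq(\Phi_i\eta)=\#\{t\in(0,n-1]\cap\partial\eta^{-1}(\Sigma)\,:\,\eta(t)\text{ branching}\},
\]
since $\freq\circ\Phi_i$ counts the branching concatenation times in $(i-1,i]$. This is at most $\#\partial\eta^{-1}(\Sigma)\cap[0,n-1]$, because non-branching points are simply discarded. Dividing by $n$ and applying Birkhoff's theorem for $\Phi_1$ with the ergodic measure $\mm_k$, the left side converges to $\int\freq\,\d\mm_k$ for $\mm_k$-a.e. $\eta$. The missing $i=0$ term is bounded by $\lceil 1/\inj\rceil$ and disappears after dividing by $n$. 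Taking $\liminf$ gives the first inequality.

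For the second inequality, set $c:=\int\freq\,\d\mm_k$ and $N(s):=\#\partial\eta^{-1}(\Sigma)\cap[0,s]$. If $c=0$ there is nothing to prove. Otherwise fix a generic $\eta$ from the previous step, so that $\liminf_{s\to\infty}N(s)/s\ge c$; passing from integer $s$ to real $s$ costs nothing, because $N$ is monotone. Since $\partial\eta^{-1}(\Sigma)$ is discrete (Lemma \ref{lemma:gamma^-1_is_discrete}), the count $N(s)$ is finite. Moreover $N(t_n(\eta))\le n+1$, where the $+1$ accounts for the indexing convention, whether it starts at $t_0$ or $t_1$. So for every $\varepsilon>0$ and all large $n$ we get $(c-\varepsilon)t_n\le N(t_n)\le n+1$. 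This requires $t_n\to\infty$, which holds because $N(s)\to\infty$ when $c>0$, so there are infinitely many intersection times. Dividing by $n$ and letting $\varepsilon\to0$ gives $\limsup t_n/n\le 1/c$.

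The only delicate point is ergodicity of $\mm_k$ with respect to the correct time direction and the bookkeeping of endpoints. The paper defines $\freq$ on $(-1,0]$ and states Birkhoff averages along $\Phi_{-1}$, so we must check that $\freq\circ\Phi_i$ for positive $i$ counts the times in $(i-1,i]$. Everything else is routine.
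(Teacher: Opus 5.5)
Your argument is correct and follows the paper's proof. Both apply Birkhoff's theorem to $\freq$ under the ergodic measure $\mm_k$, bound the branching count above by the total count of intersection times, and then invert this density bound to control $t_n(\eta)/n$. Your bookkeeping is tidier in two places. You work directly with the forward map $\Phi_1$, whereas the paper writes $\freq\circ\Phi_i(-\eta)$, which relies on an unstated reversal convention. You also justify the inversion via $(c-\varepsilon)t_n\le N(t_n)\le n+1$, where the paper simply asserts $\lims_n t_n(\eta)/n=\lims_n n/\#(\{t_j(\eta)\}\cap[0,n-1])$.
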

\begin{proof}
    By definition, $\#\partial\eta^{-1}(\Sigma)\cap[0,n-1] \ge \sum_{i=0}^{n-1} \freq \circ \Phi_i(-\eta)$. Therefore,
    $$\limi_{n\to +\infty}\frac{\#\partial\eta^{-1}(\Sigma)\cap[0,n-1]}{n} \ge \lim_{n\to +\infty}\frac{1}{n}\sum_{i=0}^{n-1} \freq \circ \Phi_i(-\eta) = \int \freq\,\d\mm_k$$
    for $\mm_k$-a.e. $\eta$. The equality follows by Theorem \ref{theo:Birkhoff} and Proposition \ref{prop:top_ent_geod_flow_BM_measure}.
    This implies that 
    $$\lims_{n\to +\infty} \frac{t_n(\eta)}{n} = \lims_{n\to +\infty} \frac{n}{\#\{t_j(\eta)\} \cap [0,n-1]} \le \frac{1}{\int\freq \,\d\mm_k}.$$
\end{proof}

However, the distance between two consecutive intersection times is arbitrarily large for $\mm_k$-a.e. $\eta \in \Br$, under mild assumptions.

\begin{proposition}
    Let $(\Y,\sfd_\Y)$ be a compact, locally $\CATminus$ space, let $\Sigma \subseteq\Y$ be a closed, locally convex subset such that $\Sigma^\circlearrowleft \neq \emptyset$ and let $k\ge 2$. Assume that 
    $\sup\{\ell(\gamma)\,:\, \gamma \in \Sigma^\circlearrowleft\} = \infty.$
    Then 
    $$\lims_{j\to +\infty} t_{j+1}(\eta) - t_j(\eta) = \infty$$
    for $\mm_k$-a.e. $\eta \in \Br$.
\end{proposition}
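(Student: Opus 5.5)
The plan is to copy the strategy of Proposition \ref{prop:BM_supported_infinite_intersections}. We exhibit a nonempty open $\Phi_1$-invariant set and then use full support and ergodicity of the Bowen-Margulis measure (Proposition \ref{prop:top_ent_geod_flow_BM_measure}). For $L>0$ and $m\in\N$ define
$$\textup{Long}_L(m):=\{\eta\in\Br\,:\,\#\{j\,:\, \eta_j \text{ external},\ \ell(\eta_j)>L,\ \eta_j\subseteq\eta((m,\infty))\}\ge 1\},$$
or more simply the set $\textup{Long}_L$ of those $\eta$ having infinitely many external pieces of length $>L$ in the future. The conclusion $\lims_j t_{j+1}(\eta)-t_j(\eta)=\infty$ holds exactly when $\eta\in\bigcap_{L\in\N}\textup{Long}_L$. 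It therefore suffices to show $\mm_k(\textup{Long}_L)=1$ for every $L$. Since $\textup{Long}_L$ depends only on the forward tail, it is $\Phi_1$-invariant.

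\textbf{Step 1: an open set.} Let $U_L$ be the set of $\eta\in\Br$ that contain an external piece of length $>L$ whose domain lies inside a fixed window, say it contains $[-1,L+1]$ in its interior. The hypothesis $\sup\{\ell(\gamma)\,:\,\gamma\in\Sigma^\circlearrowleft\}=\infty$ gives $\sigma\in\Sigma^\circlearrowleft$ with $\ell(\sigma)>L+10$. We extend $\sigma$ to a bi-infinite element of $\Br$, for instance by alternately concatenating $\sigma$ and $-\sigma$ as in Proposition \ref{prop:erg_opt_h}. Such a concatenation may fail condition (b) of Theorem \ref{theo:characterization_broken_geodesics} at the endpoints. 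If so, we instead extend $\sigma$ beyond its endpoints as local geodesics of $\Y$ (extremal intersections, condition (a)). Any extension works here, since $\LocGeod(\Y)\subseteq\Br$, and liftability is checked locally. This requires extendability of local geodesics through $\Sigma$, which is the first delicate point. If it fails, we lift $\sigma$ to $\X$ and use that the lift extends to a local geodesic line of $\X$, projecting back via $\Pi$; compactness of $\LocGeod(\X)$ together with geodesic completeness near $\Sigma$ is needed there. After a time shift, this gives that $U_L\neq\emptyset$.

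\textbf{Step 2: openness of $U_L$.} We argue as in Proposition \ref{prop:BM_supported_infinite_intersections}. Suppose $\eta_n\to\eta$ uniformly on compacts, where $\eta$ has an external piece $\eta|_{[a,b]}$ with $[-1,L+1]\subset(a,b)$. On $[-1,L+1]$ the curve $\eta$ stays at positive distance from $\Sigma$, because $\Sigma$ is closed. Hence for large $n$ the curve $\eta_n$ avoids $\Sigma$ on $[-1,L+1]$, so that interval lies inside a single external piece of $\eta_n$ of length $>L$. Thus $U_L$ is open, and so is $\Pi^{-1}(U_L)\subseteq\LocGeod(\X)$.

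\textbf{Step 3: conclusion.} Since $\mm^\X_{\textup{BM}}$ has full support, $\mm_k(U_L)=\mm_{\textup{BM}}^\X(\Pi^{-1}(U_L))>0$. The measure $\mm_k$ is $\Phi_1$-ergodic as the pushforward of an ergodic measure. By Poincaré recurrence, or Birkhoff applied to $\mathbbm 1_{U_L}$, $\mm_k$-a.e. $\eta$ satisfies $\Phi_n\eta\in U_L$ for infinitely many $n\ge0$. Each such visit produces an external piece of length $>L$ containing $[n-1,n+L+1]$. Infinitely many such $n$ cannot all come from one piece, since every external piece of an element of $\Br$ is compact except possibly a final ray. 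The ray case would mean $\eta$ eventually never meets $\Sigma$, and this has measure zero by Proposition \ref{prop:BM_supported_infinite_intersections}. Hence there are infinitely many long external pieces, each of which gives consecutive intersection times with $t_{j+1}-t_j>L$. Intersecting over $L\in\N$ finishes the proof. The main obstacle is Step 1, namely producing a genuine element of $\Br$ that contains the long segment $\sigma$.
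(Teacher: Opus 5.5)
Your skeleton is the paper's: find a nonempty open subset of $\Br$ that forces a gap $>L$, use full support of $\mm^\X_{\textup{BM}}$ to give it positive $\mm_k$-measure, and conclude by ergodicity. The paper argues by contradiction. It uses the invariant sets $A_M$ of broken geodesics whose future gaps are all $\le M$, and exhibits a neighbourhood of a special $\eta$ disjoint from $A_{M_0}$. Your direct version, with the window set $U_L=\{\eta:\eta([-1,L+1])\cap\Sigma=\emptyset\}$ and Birkhoff, is equivalent, and its openness argument is simpler.

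Steps 2 and 3 are fine up to two details.
\begin{itemize}
\item You only need the inclusion $\bigcap_L \textup{Long}_L\subseteq\{\lims_j (t_{j+1}-t_j)=\infty\}$. Your ``exactly when'' is false, since long internal pieces also create long gaps, but this is harmless.
\item Excluding a final external ray requires infinitely many \emph{future} intersections. Proposition \ref{prop:BM_supported_infinite_intersections} does not give this directly. It follows from Birkhoff applied to $\freq$ together with $\int\freq\,\d\mm_k>0$, and the latter follows from that proposition and invariance.
\end{itemize}

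The genuine gap is Step 1, which you leave open: nothing in the proposal produces an element of $\Br$ with an external piece of length $>L$.

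Your first option fails in general. At a bounce $\sigma\star(-\sigma)$ one has $\eta(t-s)=\eta(t+s)=\sigma(\ell-s)$. So condition (b) of Theorem \ref{theo:characterization_broken_geodesics} says that $\omega(\sigma)$ is the nearest-point projection of $\sigma(\ell-s)$ on $\Sigma$, i.e.\ that $\sigma$ meets $\Sigma$ at angle at least $\pi/2$. This holds for minimizers of $\inj$ by Proposition \ref{prop:normal_inj_radius_minimizing_curve}(ii), which is why it works in Proposition \ref{prop:erg_opt_h}. It fails, for instance, for an arc leaving and returning to a closed geodesic $\Sigma$ of a closed hyperbolic surface at an angle $<\pi/2$.

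Your other two options presuppose that local geodesics of $\Y$ or of $\X$ extend, which is not a hypothesis. Without it, a local geodesic reaching $\Sigma$ may admit no continuation satisfying (a) or (b). An example is the vertex of a boundary corner of angle $<\pi/2$ of a hyperbolic surface with piecewise geodesic boundary, with $\Sigma$ one of the two edges there.

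The paper's proof uses exactly your first option. It asserts that the alternating concatenation of a long $\sigma\in\Sigma^\circlearrowleft$ with $-\sigma$ lies in $\Br$ by Theorem \ref{theo:characterization_broken_geodesics}. By the computation above, this is only justified when $\sigma$ is perpendicular to $\Sigma$ at both ends. So your suspicion is well founded, and the paper does not supply the missing idea either.

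When local geodesics of $\Y$ extend (geodesically complete $\Y$, in particular Gromov--Thurston pairs), your option (ii) closes the gap at once. A line of $\LocGeod(\Y)\subseteq\Br$ containing $\sigma$ has $\sigma$ itself as an external piece. In the stated generality an additional argument is needed, e.g.\ arbitrarily long elements of $\Sigma^\circlearrowleft$ that are perpendicular to $\Sigma$ at both endpoints.
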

\begin{proof}
    For every $M \in \N$ consider the set
    $$A_M := \left\{\eta \in \Br\,:\, \sup_{j\ge 0}\vert t_{j+1}(\eta) - t_j(\eta) \vert \le M\right\}.$$
    Each $A_M$ is ${\Phi_1}$-invariant. Indeed, $\{t_j(\Phi_1^{-1}(\eta))\} \subseteq \{t_j(\eta)\}$ for every $\eta \in \Br$. More precisely, there exists $m \in \N$ such that $t_j(\Phi_1^{-1}(\eta)) = t_{j+m}(\eta)$ for every $j$. Therefore, $\Phi_1^{-1}(\eta) \in A_M$ if $\eta \in A_M$, hence $\Phi_1^{-1}(A_M)\subseteq A_M$. If the thesis is not true, then $\mm_k(\bigcup_{M\in \N} A_M) = 1$. Since $\mm_k$ is ergodic,
    there exists $M_0 \in \N$ such that $\mm_k(A_{M_0}) = 1$. Let us fix a local geodesic segment $\sigma \in \Sigma^\circlearrowleft$ of $\Y$ such that $\ell(\sigma) \ge M_0+1$. Consider the $\Sigma$-broken geodesic line $\eta$ obtained by concatenating $\sigma$ with $-\sigma$ infinitely many times. Then $\eta \in \Br$ because of Theorem \ref{theo:characterization_broken_geodesics}. We claim that there exists a neighbourhood of $\eta$ such that every of its elements does not belong to $A_{M_0}$. If not, we could find a sequence $\eta_i$ of elements of $\Br$ converging to $\eta$ and such that $t_1(\eta_i) - t_0(\eta_i) \le M_0$. Moreover, the sequence of times $\{t_0(\eta_i)\}, \{t_1(\eta_i)\}$ are bounded. We may also suppose that $t_0(\eta_i)$ is a non-branching intersection, so that $t_1(\eta_i) - t_0(\eta_i) \ge \inj$. Up to a subsequence, we can suppose they converge to $t_0^*, t_1^*$, respectively. Hence, $\inj \le t_1^* - t_0^* \le M_0$. The points $\eta(t_0^*), \eta(t_1^*)$ belong to $\Sigma$. The points $t_0^*,t_1^*$ may not belong to $\partial \eta^{-1}(\Sigma)$. However, for every $i$ there exists $s_i \in [t_0(\eta_j) + \rho_\textup{CAT}(\Y)/2, t_1(\eta_j) - \rho_\textup{CAT}(\Y)/2]$ such that $\sfd_\Y(\eta_i(s_i), \Sigma) \ge \rho_\textup{CAT}(\Y)/2$, by Lemma \ref{lemma:uniform_distance_geodesics_to_singular_set}. This gives the existence of times $t_0^* \le s_0^* <s_1^* \le t_1^*$ with $s_0^*,s_1^* \in \partial \eta^{-1}(\Sigma)$. The distance between any two consecutive times in $\partial \eta^{-1}(\Sigma)$ is at least $M_0+1$, giving a contradiction. Then, we can find an open set  $U\subseteq \Br \setminus A_{M_0}$. By Proposition \ref{prop:top_ent_geod_flow_BM_measure}, $\mm_k(U) > 0$, which is impossible since we are assuming that $\mm_k(A_{M_0}) = 1$. This concludes the proof.
\end{proof}

We remark that the condition $\sup\{\ell(\gamma)\,:\, \gamma \in \Sigma^\circlearrowleft\} = \infty$ is also necessary for the validity of the thesis.

\subsection{Asymptotic properties}
\label{subsec:asymptotic_BM}
We discuss the asymptotic properties of the sequence of measures $\{\mm_k\}$. The first one is a consequence of Proposition \ref{prop:erg_opt_h}.
\begin{proposition}
\label{prop:asymptotic_integral_freq_mk}
    Let $(\Y,\sfd_\Y)$ be a compact, locally $\CATminus$ space and let $\Sigma \subseteq\Y$ be a closed, locally convex subset. Then
    $$\lim_{k\to +\infty} \int \freq\,\d\mm_k = \frac{1}{\inj}.$$
\end{proposition}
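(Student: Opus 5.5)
The plan is a squeeze argument: the upper bound comes from ergodic optimization, and the lower bound comes from the fact that $\mm_k$ realizes the topological pressure of Theorem \ref{theo:intro_entropy_formula_branched_coverings}. First I dispose of the degenerate case. If $\Sigma^\circlearrowleft = \emptyset$, then Proposition \ref{prop:erg_opt_h} (and Remark \ref{rmk:entropy_if_Sigma_trivial}) gives $\int \freq\,\d\nu = 0$ for every invariant $\nu$. With the convention $1/\inj = 0$ the statement is then trivial, so from now on I assume $\Sigma^\circlearrowleft \neq \emptyset$.

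\emph{Upper bound.} Each $\mm_k$ lies in $\mathcal{M}_1(\Br,\Phi_1)$. Hence Proposition \ref{prop:erg_opt_h} gives $\int \freq\,\d\mm_k \le \textup{Erg-Opt}(\freq) = 1/\inj$ for every $k$, and therefore $\lims_k \int\freq\,\d\mm_k \le 1/\inj$.

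\emph{Lower bound.} Fix a pure-degree $k$ branched covering $\X$ (for instance $\vee^k(\Y,\Sigma)$). By Corollary \ref{cor:m_BM_unique_measure_max_pressure}, together with Birkhoff's theorem to replace $\overline{\freq_m^\X}(\mm_{\textup{BM}})$ by $\int \freq_m^\X\,\d\mm_k$, and by Corollary \ref{cor:freq_k=freq_ext}, we have
$$h_\textup{top}(\Y,\Sigma,k) = h_{\mm_k} + \int \big(\log(k-1)\freq_\textup{non-ext} + \log(k)\freq_\external\big)\,\d\mm_k \le h_{\mm_k} + \log(k)\int\freq\,\d\mm_k.$$
Theorem \ref{theo_intro:asymptotic_formula} gives $h_\textup{top}(\Y,\Sigma,k)\ge \log(k-1)/\inj$. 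The variational principle gives $h_{\mm_k}\le h_\textup{top}(\Br,\Phi_1)$. Since $\kappa=-1$, the last part of Theorem \ref{theo_intro:asymptotic_formula} bounds this by $H:=h(\tilde\Y;\pi_1(\Y))+\log(2)/\inj<\infty$. Combining these,
$$\int\freq\,\d\mm_k \ge \frac{\log(k-1)}{\log(k)\,\inj} - \frac{H}{\log k},$$
and the right-hand side tends to $1/\inj$ as $k\to\infty$. Together with the upper bound, this proves the limit.

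The only delicate point is making sure that $\mm_k$ genuinely realizes the pressure with exactly the potential $\log(k-1)\freq_\textup{non-ext}+\log(k)\freq_\external$. This requires the identification $\overline{\freq_m^\X}(\mm_\textup{BM}) = \int\freq_m^\X\,\d\mm_k$, which follows from ergodicity of $\mm_\textup{BM}$ and the fact that $\freq_m^\X\circ\Pi$ is exactly the frequency function on $\LocGeod(\X)$. It also requires the uniform entropy bound $H$, which is independent of $k$. Both are already available from earlier results, so I expect no real obstacle.
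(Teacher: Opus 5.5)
Your proof is correct and follows essentially the same route as the paper. The upper bound comes from $\textup{Erg-Opt}(\freq)=1/\inj$, and the lower bound comes from the fact that $\mm_k$ realizes the pressure (Corollary \ref{cor:m_BM_unique_measure_max_pressure}), compared against measures that almost achieve $\textup{Erg-Opt}(\freq)$. You route this comparison through the already-proved bound $h_\textup{top}(\Y,\Sigma,k)\ge\log(k-1)/\inj$. Your version is slightly more careful than the paper's, because you state explicitly the uniform bound $h_{\mm_k}\le h_\textup{top}(\Br,\Phi_1)\le h(\tilde\Y;\pi_1(\Y))+\log(2)/\inj$; the paper uses this bound only implicitly when it divides by $\log(k)$ and lets $k\to\infty$.
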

\begin{proof}
    If $\Sigma^\circlearrowleft = \emptyset$ then $\inj = \infty$ and $\int \freq \,\d\mm_k = 0$, by Proposition \ref{prop:erg_opt_h}. Otherwise, again by Proposition \ref{prop:erg_opt_h}, for every $\varepsilon > 0$ we can find measures $\mu_\varepsilon \in \mathcal{M}_1(\Br,\Phi_1)$ such that $\int \freq \,\d\mu_\varepsilon > \frac{1}{\inj}-\varepsilon$. 
    Corollary \ref{cor:m_BM_unique_measure_max_pressure} gives that
    $$h_{\mu_\varepsilon} + \log(k-1)\left(\frac{1}{\inj} - \varepsilon\right) \le h_{\mm_k} + \log(k)\int \freq\,\d\mm_k.$$
    By dividing by $\log(k)$ and taking the limit for $k$ going to infinity we obtain that $\int \freq \,\d\mm_k \ge \frac{1}{\inj}-\varepsilon$. By the arbitrariness of $\varepsilon>0$ we deduce that 
    $$\limi_{k\to +\infty}\int \freq \,\d\mm_k \ge \frac{1}{\inj}.$$
    On the other hand, $\lims_{k\to +\infty}\int \freq \,\d\mm_k \le \frac{1}{\inj}$ by Proposition \ref{prop:erg_opt_h}.
\end{proof}

The remainder of the paper is devoted to the description of the weak limit points of the sequence $\{\mm_k\}$. From now on, we denote by $\mm_\infty$ any measure on $\Br$ arising as weak limit of a subsequence of $\{\mm_k\}$. Notice that $\mm_\infty \in \mathcal{M}_1(\Br,\Phi_1)$. Actually, $\mm_\infty$ is $\Phi_t$-invariant for every $t\in \R$ because of Proposition \ref{prop:top_ent_geod_flow_BM_measure}. 

We introduce the following natural definitions.
\begin{definition}
Let $\Y$ be a compact, locally $\textup{CAT}(\kappa)$ space and let $\Sigma \subseteq \Y$ be closed and locally convex. We define
\begin{equation}
    \begin{aligned}
        &\next\colon \textup{Broken}(\Y,\Sigma) \to [0,+\infty], \quad \eta \mapsto \begin{cases}\sup \{t\ge 0\,:\, [0,t]\subseteq \eta^{-1}(\Y\setminus\Sigma)\}, &\textup{if } \eta(0)\notin \Sigma,\\
        \sup \{t\ge 0\,:\, [0,t]\subseteq \eta^{-1}(\Sigma)\} &\textup{if } \eta(0)\in \Sigma;
        \end{cases}\\
        &\prev\colon \textup{Broken}(\Y,\Sigma) \to [0,+\infty], \quad \eta \mapsto \next(-\eta),\\
        &\ell_{\internal}\colon \textup{Broken}(\Y,\Sigma) \to [0,+\infty],\quad \eta \mapsto \begin{cases}
            \next(\eta) + \prev(\eta) &\textup{if } \eta(0)\in \Sigma, \\
            0 &\textup{if } \eta(0)\notin \Sigma;
        \end{cases}\\
        &\ell_{\external}\colon \textup{Broken}(\Y,\Sigma) \to [0,+\infty],\quad \eta \mapsto \begin{cases}
            \next(\eta) + \prev(\eta) &\textup{if } \eta(0)\notin \Sigma, \\
            \inj &\textup{if } \eta(0)\in \Sigma.
        \end{cases}
    \end{aligned}
\end{equation}
\end{definition}

In other words, $\ell_\internal(\eta)$ is the length of the maximal internal local geodesic containing $\eta(0)$ when $\eta(0) \in \Sigma$, while 
$\ell_\external(\eta)$ is the length of the maximal external local geodesic containing $\eta(0)$ when $\eta(0) \notin \Sigma$.

\begin{lemma}
\label{lemma:next_prev_Borel}
    The functions $\next$ and $\prev$ are Borel.
    The function $\ell_\external$ is lower semicontinuous 
    and satisfies $\ell_\external \ge \inj$. The function $\ell_\internal$ is upper semicontinuous.
\end{lemma}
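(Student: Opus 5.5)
We treat the four claims separately, working on $\textup{Broken}(\Y,\Sigma)$ with the compact-open topology (uniform convergence on compacts, metrized by ${\sf D}$). Throughout we use that $\Sigma$ is closed, so $\{\eta\,:\,\eta(t)\in\Sigma\}$ is closed and $\{\eta\,:\,\eta(t)\notin\Sigma\}$ is open for every fixed $t$.

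\textbf{Borel measurability of $\next$ and $\prev$.} Since $\prev(\eta)=\next(-\eta)$ and $\eta\mapsto-\eta$ is a homeomorphism, it suffices to treat $\next$. Split into the closed set $F=\{\eta(0)\in\Sigma\}$ and its complement. On $F$, for every $T\ge 0$,
$$\{\next\ge T\}\cap F=\bigcap_{q\in\mathbb{Q}\cap[0,T]}\{\eta(q)\in\Sigma\},$$
which is closed; this uses that $\eta^{-1}(\Sigma)$ is closed. On $F^c$ the set $\eta^{-1}(\Y\setminus\Sigma)$ is open, so $\next(\eta)>T$ holds if and only if $\eta([0,T])\cap\Sigma=\emptyset$. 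By compactness of $[0,T]$ and closedness of $\Sigma$ this is an open condition in $\eta$. Hence $\next$ is upper semicontinuous on $F$ and lower semicontinuous on $F^c$, so it is Borel.

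\textbf{Lower semicontinuity of $\ell_\external$.} The bound $\ell_\external\ge\inj$ is immediate. If $\eta(0)\in\Sigma$ the value is $\inj$ by definition. If $\eta(0)\notin\Sigma$, the maximal external piece through $0$ has endpoints on $\Sigma$, so it lies in $\Sigma^\circlearrowleft$, and its length is at least $\inj$ by the definition of $\inj$. The unbounded cases $\prev=\infty$ or $\next=\infty$ are trivial.

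For lower semicontinuity, fix $\eta_n\to\eta$ and a number $L<\ell_\external(\eta)$.
\begin{itemize}
\item If $\eta(0)\in\Sigma$, then $\ell_\external(\eta)=\inj\le\ell_\external(\eta_n)$ for every $n$, so there is nothing to prove.
\item If $\eta(0)\notin\Sigma$, choose $a<\prev(\eta)$ and $b<\next(\eta)$ with $a+b>L$. Then $\eta([-a,b])$ is a compact set disjoint from $\Sigma$, so it has positive distance from $\Sigma$. By uniform convergence on $[-a,b]$, for large $n$ we get $\eta_n(0)\notin\Sigma$ and $\eta_n([-a,b])\cap\Sigma=\emptyset$. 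Hence $\ell_\external(\eta_n)\ge a+b>L$.
\end{itemize}

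\textbf{Upper semicontinuity of $\ell_\internal$.} This is the main obstacle. Fix $\eta_n\to\eta$ and suppose $\ell_\internal(\eta_n)\ge L$ along a subsequence; we want $\ell_\internal(\eta)\ge L$. If $L\le0$ there is nothing to show, since $\ell_\internal\ge0$, so assume $L>0$. Then $\eta_n(0)\in\Sigma$, and there are intervals $[-a_n,b_n]\ni0$ with $a_n+b_n\ge L$ on which $\eta_n$ lies in $\Sigma$. Pass to a subsequence with $a_n\to a$ and $b_n\to b$, allowing these limits to be $+\infty$ and truncating if necessary. Since $\Sigma$ is closed, uniform convergence gives $\eta([-a,b])\subseteq\Sigma$ (with open ends if infinite), and $a+b\ge L$.

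The remaining subtlety is that $\ell_\internal(\eta)$ is computed from the maximal segment through $0$ with image in $\Sigma$, namely from $\prev$ and $\next$ on $F$. On $F$ we have the equality $\next(\eta)=\sup\{t\,:\,\eta([0,t])\subseteq\Sigma\}$, so containment of $\eta([-a,b])$ in $\Sigma$ already gives $\next(\eta)+\prev(\eta)\ge a+b\ge L$. No geodesic rigidity is needed here, only closedness of $\Sigma$. The one point to check is that $\eta(0)\in\Sigma$, so that the internal branch of the definition applies; this follows from $\eta_n(0)\in\Sigma$ and closedness of $\Sigma$.
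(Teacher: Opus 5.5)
Your proof is correct and follows the same route as the paper. For lower semicontinuity of $\ell_\external$, the paper uses exactly your argument: a compact subinterval of the open external component around $0$ stays off $\Sigma$ under uniform convergence. The paper only asserts that Borel measurability follows from the definitions and that upper semicontinuity of $\ell_\internal$ is proved ``in a similar way''; your splitting along the closed set $\{\eta(0)\in\Sigma\}$, and your use of closedness of $\Sigma$ to pass internal intervals to the limit, fill in those omitted details correctly.
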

\begin{proof}
    The definitions give that $\next$ and $\prev$ are Borel and that $\ell_\external \ge \inj$. Let $\{\eta_j\}\subseteq \textup{Broken}(\Y,\Sigma)$ be a sequence converging to $\eta_\infty$. If $\eta_\infty(0) \in \Sigma$ then $\ell_\external(\eta_\infty) = \inj$ and there is nothing to prove. Otherwise, $\eta_\infty(0) \notin \Sigma$. By definition, $0\in (-\prev(\eta_\infty),\next(\eta_\infty)) \subseteq \eta_\infty^{-1}(\Y\setminus \Sigma)$. Then for every $\varepsilon > 0$, $[-\prev(\eta_\infty)+\varepsilon,\next(\eta_\infty)-\varepsilon] \subseteq \eta_j^{-1}(\Y\setminus\Sigma)$ for every $j$ big enough. This implies that $\ell_\external(\eta_j) \ge \ell_\external(\eta_\infty)-2\varepsilon$. By the arbitrariness of $\varepsilon$ we deduce that $\ell_\external(\eta_\infty) \le \limi_{j\to +\infty} \ell_\external(\eta_j)$. The upper semicontinuity of $\ell_\internal$ can be proved in a similar way.
\end{proof}

For every $\varepsilon > 0$ we define the sets
\begin{equation*}
    \begin{aligned}
        U_\varepsilon &:= \{\eta \in \Br\,:\, \ell_\external(\eta) > (1+\varepsilon)\inj\},\\
        V_\varepsilon &:= \{\eta \in \Br\,:\, \ell_\internal(\eta) > \varepsilon\},
    \end{aligned}
\end{equation*}
Notice that $U_\varepsilon$ is open in $\Br$ since $\ell_\external$ is lower semicontinuous, by Lemma \ref{lemma:next_prev_Borel}.
The next one is the main technical result of this section.

\begin{proposition}
\label{prop:limit_volume_U_epsilon}
    Let $(\Y,\sfd_\Y)$ be a compact, locally $\CATminus$ space and let $\Sigma \subseteq\Y$ be a closed, locally convex subset such that $\Sigma^\circlearrowleft \neq \emptyset$. Then $\lim_{k\to +\infty} \mm_k(U_\varepsilon) = \lim_{k\to +\infty} \mm_k(V_\varepsilon) = 0$ for every $\varepsilon > 0$.
\end{proposition}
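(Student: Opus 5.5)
The strategy is to turn a positive $\mm_k$-mass of $U_\varepsilon$ or $V_\varepsilon$ into a loss in the branching frequency $\int \freq\,\d\mm_k$. That loss is incompatible with Proposition \ref{prop:asymptotic_integral_freq_mk}, which gives $\int\freq\,\d\mm_k \to 1/\inj$.

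The bridge between measure and geometry is a Kac/flow-invariance type identity. It expresses $\int \freq\,\d\mu$ for a $\Phi_t$-invariant measure $\mu$ in terms of how the time of a typical $\eta$ is split into external pieces, internal pieces, and branching intersections. By the flow invariance of $\mm_k$ (Proposition \ref{prop:top_ent_geod_flow_BM_measure}) and Birkhoff's theorem, for $\mm_k$-a.e. $\eta$ (using ergodicity),
$$\int \freq\,\d\mm_k = \lim_{n\to\infty}\frac{1}{n}\#\{t\in(-n,0]\cap\partial\eta^{-1}(\Sigma) \text{ branching}\},$$
and
$$\mm_k(U_\varepsilon) = \lim_{n}\frac{1}{n}\,\textup{Leb}\{t\in(-n,0]:\Phi_t\eta\in U_\varepsilon\},$$
with the analogous formula for $V_\varepsilon$. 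Here I use that $\mm_k$ is $\Phi_t$-invariant for all $t$, so the continuous-time ergodic averages apply.

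The next step is a pathwise counting along a single $\eta$. On $(-n,0]$, every branching intersection is followed by an external piece of length $\ge\inj$. The total time spent in external pieces that lie in $U_\varepsilon$, i.e. pieces of length $>(1+\varepsilon)\inj$, is $L_U$. The total time in internal pieces of length $>\varepsilon$ is at least $L_V$. Let $N_b$ be the number of branching intersections. Each external piece either has length in $[\inj,(1+\varepsilon)\inj]$ or length $\ell>(1+\varepsilon)\inj$; in the latter case its contribution to $L_U$ is $\ell$, and it "costs" at least $\ell\cdot\frac{\varepsilon}{1+\varepsilon}$ more time than a piece of length $\inj$ would per branching point. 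This gives an estimate of the form
$$N_b\,\inj \le n - \tfrac{\varepsilon}{1+\varepsilon}L_U - L_V + O(1).$$
External pieces not preceded by a branching point (after non-branching exits) do not hurt, since they only add time. Dividing by $n$ and letting $n\to\infty$ yields
$$\inj\int\freq\,\d\mm_k \le 1 - \tfrac{\varepsilon}{1+\varepsilon}\,\mm_k(U_\varepsilon) - c\,\mm_k(V_\varepsilon).$$
Here $c$ is either $1$ or controlled by the fact that an internal piece containing time $0$ of length $>\varepsilon$ occupies the time set counted in $V_\varepsilon$. Care is needed because $V_\varepsilon$ is defined via the whole maximal internal piece, so the time spent in $V_\varepsilon$ is exactly the total length of internal pieces of length $>\varepsilon$.

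Finally, I combine this with Proposition \ref{prop:asymptotic_integral_freq_mk}: the left side tends to $1$, forcing $\mm_k(U_\varepsilon)\to0$ and $\mm_k(V_\varepsilon)\to0$.

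The main obstacle is making the pathwise bookkeeping rigorous. This means matching each branching intersection to a distinct subsequent external piece, handling boundary effects at the endpoints of $(-n,0]$ as $O(1)$ terms, and checking measurability of the time sets; for the last point, Lemmas \ref{lemma:next_prev_Borel} and \ref{lemma:frequency_function_Borel} should suffice. Note that the difference $\freq\ne\#\partial\eta^{-1}(\Sigma)$ is harmless, because non-branching intersections only start internal pieces.
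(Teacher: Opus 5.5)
Your proposal is correct and follows essentially the paper's argument: both show pathwise that every branching intersection opens an external piece of length at least $\inj$, so time spent in long external pieces (or in $\Sigma$) pushes the frequency of branching intersections below $1/\inj$, and this is played against $\int\freq\,\d\mm_k\to 1/\inj$ from Proposition \ref{prop:asymptotic_integral_freq_mk}. Your bookkeeping is a bit cleaner than the paper's contradiction argument, which uses integer sample times and the estimate $\vert I\vert+1<\alpha\vert I\vert$: measuring Lebesgue time via flow invariance gives directly $\inj\int\freq\,\d\mm_k\le 1-\frac{\varepsilon}{1+\varepsilon}\mm_k(U_\varepsilon)-\mm_k(V_\varepsilon)$, which treats $V_\varepsilon$ at the same time, and you count only branching intersections, which is exactly what the Birkhoff sums of $\freq$ measure (as you note at the end, these are precisely the starting points of external pieces, so your caveat about ``non-branching exits'' is vacuous but harmless).
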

\begin{proof}
    We fix $\varepsilon > 0$. We prove the following claim: for every $c >0$ there exists $\bar{k}\in \N$ such that $\mm_k(U_\varepsilon) < c$ for every $k\ge \bar{k}$. This will prove that $\lims_{k\to +\infty} \mm_k(U_\varepsilon) = 0$, hence that $\lim_{k\to +\infty} \mm_k(U_\varepsilon) = 0$. The proof for $V_\varepsilon$ is similar and will be omitted.
    
    Let us suppose by contradiction that $\mm_k(U_\varepsilon) \ge c$ for every $k$ big enough. By Theorem \ref{theo:Birkhoff} applied to $\mathbbm{1}_{U_\varepsilon}$ we get that
    $$\lim_{n\to +\infty} \frac{1}{n}\sum_{i=0}^{n-1} \mathbbm{1}_{U_\varepsilon}(\Phi_{-i}(\eta)) = \int \mathbbm{1}_{U_\varepsilon}\,\d\mm_k = \mm_k(U_\varepsilon) \ge c$$
    for $\mm_k$-a.e. $\eta \in \Br$ and every $k$ big enough. Let $\delta>0$ be very small, whose exact value will be found later. For $k$ big enough we have that
    $$\lim_{n\to +\infty} \frac{1}{n}\sum_{i=0}^{n-1} \freq(\Phi_{-i}(\eta)) = \lim_{n\to +\infty} \frac{\#\partial\eta^{-1}(\Sigma)\cap (-n,0]}{n}= \int\freq \,\d\mm_k >\frac{1}{(1+\delta)\inj},$$
    because of Proposition \ref{prop:asymptotic_integral_freq_mk}. Let $\eta \in \Br$ satisfying the two conditions above. Let also $n$ be so large that 
    \begin{equation}
        \label{eq:conditions_large_n}
        \frac{1}{n}\sum_{i=0}^{n-1} \mathbbm{1}_{U_\varepsilon}(\Phi_i(\eta)) \ge c/2,\qquad  \#\partial\eta^{-1}(\Sigma)\cap (-n,0] > \frac{n}{(1+2\delta)\inj}.
    \end{equation}
    We say that two times $t\le t' \in (-n,0]$ are in the same external component if $\eta\restr{[t,t']} \subseteq \Y\setminus\Sigma$. We partition the set of external components of $\eta$ into two sets: $S_{\mathrm{long}}$ are the external components with length $> (1+\varepsilon)\inj$ and $S_{\mathrm{short}}$ are the remaining ones. Notice that $\inj \le \ell(I) \le (1+\varepsilon)\inj$ for every $I\in S_\text{short}$.     
    The total time spent in the external components is at most $n$, giving:
    $$n \ge \sum_{I \in S_{\mathrm{long}}} \vert{}I\vert{} + \sum_{I \in S_{\mathrm{short}}} \vert{}I\vert{} \ge \sum_{I \in S_{\mathrm{long}}} \vert{}I\vert{} + |S_\textup{short}|\cdot\inj.$$
    Let us denote by $\{s_j\}$ the ordered elements of the set $\partial \eta^{-1}(\Sigma) \cap (-n,0]$, so that 
    $$\#\{s_j\} \le |S_\text{long}| + |S_\text{short}| + 2.$$
    Putting together these two facts we obtain that
    \begin{equation}
        \label{eq:estimate_on_s_j}
        \#\{s_j\} \le |S_\text{long}| + \frac{n - \sum_{I \in S_{\mathrm{long}}} \vert{}I\vert{}}{\inj} + 2 = \frac{n}{\inj} - \sum_{I \in S_{\mathrm{long}}} \left( \frac{\vert{}I\vert{}}{\mathrm{inj}^\perp(\Sigma)} - 1 \right) + 2.
    \end{equation}
    The integers $i\in (-n,0]$ for which $\Phi^i(\eta) \in U_\varepsilon$ are precisely those contained within one of the external components in $S_{\mathrm{long}}$, and by \eqref{eq:conditions_large_n} they are at least $cn/2$. Since any interval $I$ contains at most $\vert{}I\vert{} + 1$ integers we get
    $$\frac{cn}{2} \le \sum_{I \in S_{\mathrm{long}}} (\vert{}I\vert{} + 1).$$
    For every $I \in S_{\mathrm{long}}$, we have $\vert{}I\vert{} > (1+\varepsilon)\inj$, which implies $\vert{}I\vert{} + 1 < \alpha \vert{}I\vert{}$ for $\alpha = 1 + \frac{1}{(1+\varepsilon)\inj} > 1$. 
    Thus,
    $$\sum_{I \in S_{\mathrm{long}}} \vert{}I\vert{} \ge \frac{cn}{2\alpha}.$$
    Since $\vert{}I\vert{} > (1+\varepsilon)\inj$, we get $\frac{\vert{}I\vert{}}{\inj} - 1 > \frac{\varepsilon}{1+\varepsilon} \frac{\vert{}I\vert{}}{\inj}$. 
    Therefore,
    $$\sum_{I \in S_{\mathrm{long}}} \left( \frac{\vert{}I\vert{}}{\inj} - 1 \right) > \frac{\varepsilon}{(1+\varepsilon)\inj} \sum_{I \in S_{\mathrm{long}}} \vert{}I\vert{} \ge \beta n,$$
    where $\beta := \frac{\varepsilon}{(1+\varepsilon)\inj}\cdot\frac{c}{2\alpha} > 0$, which does not depend on $n$. Combining this with \eqref{eq:estimate_on_s_j} we obtain that
    $$\#\{s_j\} \le \frac{n}{\inj} - \beta n + 2.$$
    Dividing by $n$ and taking the limit as $n \to \infty$, we get
    $$\limsup_{n \to \infty} \frac{\#\{s_j\}}{n} \le \frac{1}{\mathrm{inj}^\perp(\Sigma)} - \beta.$$
    On the other hand, by \eqref{eq:conditions_large_n} we have that 
    \begin{equation}
    \label{eq:lower_bound_on_s_j}
        \frac{\#\{s_j\}}{n} > \frac{1}{(1+2\delta)\inj}.
    \end{equation}
     Choosing $\delta$ small enough so that $\frac{1}{(1+2\delta)\mathrm{inj}^\perp(\Sigma)} > \frac{1}{\mathrm{inj}^\perp(\Sigma)} - \beta$ we find the desired contradiction.
\end{proof}

We now show Theorem \ref{theo_intro:asymptotic_BM_measures}, i.e. that $\mm_\infty$ is concentrated on $$\textup{Broken}_\textup{bc}^\perp(\Y,\Sigma) := \{ \eta \in \Br\,:\, \ell_\external(\Phi_t(\eta)) = \inj,\, \ell_\internal(\Phi_t(\eta))=0\, \forall t \in \R\}.$$
This set corresponds to the set of elements of $\Br$ in which there are no internal segments and every external segment has the minimal possible length.

\begin{T4}
    Let $(\Y,\sfd_\Y)$ be a compact, locally $\textup{CAT}(-1)$ space and let $\Sigma \subseteq \Y$ be a closed, locally convex subset such that $\Sigma^\circlearrowleft \neq \emptyset$. Then $\mm_\infty$ is supported on $\textup{Broken}_{\textup{bc}}^\perp(\Y,\Sigma)$. 
\end{T4}

\begin{proof}
    We divide the proof in steps.
    
    \textbf{Step 1:} The set $C_\infty := \{\eta \in \Br\,:\, \ell_\external(\Phi_t(\eta)) = \inj\, \textup{ for all } t \in \R\}$
    satisfies $\mm_\infty(C_\infty) = 1$.
    
    Let $m\in \N$. The set $U_{1/m}$ is open. Since $\mm_\infty$ is the weak limit of a subsequence of $\{\mm_k\}$ we have that
    $$\mm_\infty(U_{1/m}) \le \limi_{k\to +\infty} \mm_k(U_{1/m}) = 0$$
    by Proposition \ref{prop:limit_volume_U_epsilon}. Therefore, $\mm_\infty(\bigcup_{m\in \N} U_{1/m}) = 0$. This means that $\ell_\external(\eta) = \inj$ for $\mm_\infty$-a.e. $\eta \in \Br$, since $\ell_\external \ge \inj$. Let us denote
    $$C := \{ \eta \in \Br\,:\, \ell_\external(\eta) = \inj\},$$
    that satisfies $\mm_\infty(C)=1$ as we have just shown. By $\Phi_t$-invariance of $\mm_\infty$ for every $t\in \R$, we get that 
    $$\mm_\infty\left(\bigcap_{m\in \Z}\Phi_{m\cdot\inj /2}(C)\right) = 1.$$
    The proof of the claim is concluded if we show that
    $$\bigcap_{m\in \Z}\Phi_{m\cdot\inj /2}(C) = C_\infty.$$
    Clearly $C_\infty \subseteq \bigcap_{m\in \Z}\Phi_{m\cdot\inj /2}(C)$. On the other hand let $\eta \in \bigcap_{m\in \Z}\Phi_{m\cdot\inj /2}(C)$ and fix $t\in \R$. Let $m\in \Z$ be such that $\vert t-m\cdot\inj/2\vert \le \inj/2$. If $\eta(t) \in \Sigma$ then $\ell_\external(\Phi_t(\eta)) = \inj$ by definition. If $\eta(t)\notin \Sigma$ then $\eta(t)$ belongs to an external segment of length at least $\inj$. This means that $\eta(m'\cdot\inj/2)$ belongs to the same external segment of $\eta(t)$ for some $m'\in \{m-1,m,m+1\}$. In particular, $\ell_\external(\Phi_t(\eta)) = \ell_\external(\Phi_{m'\cdot\inj/2}(\eta)) = \inj$ by assumption. This proves that $\bigcap_{m\in \Z}\Phi_{m\cdot\inj /2}(C) \subseteq C_\infty$.
    
    \textbf{Step 2:} $C_\infty$ is closed and $\ell_\internal$ is continuous on $C_\infty$.

    The set $C$ is closed by lower semicontinuity of $\ell_\external$, see Lemma \ref{lemma:next_prev_Borel}. The set $C_\infty$ is the intersection of images of $C$ via homeomorphisms, hence it is closed. Now, every external local geodesic segment of every $\eta\in C_\infty$ realizes the infimum defining $\inj$. By Proposition \ref{prop:normal_inj_radius_minimizing_curve}, each of the external segments of $\eta$ meets $\Sigma$ with an angle that is at least $\pi/2$. Let now $\{\eta_j\}\subseteq C_\infty$ be a sequence converging to $\eta_\infty$. Lemma \ref{lemma:next_prev_Borel} gives that $\ell_\internal(\eta_\infty) \ge \lims_{j\to+\infty} \ell_\internal(\eta_j)$.    
    If $\eta_\infty(0) \notin \Sigma$ then $\eta_j(0) \notin \Sigma$ for $j$ big enough and so $\ell_\internal(\eta_\infty) = 0 = \lim_{j\to +\infty} \ell_\internal(\eta_j)$ by definition. Otherwise, let $0\in [a_\infty,b_\infty] \subseteq \R$ be such that $\eta_\infty \restr{[a_\infty,b_\infty]}$ is the maximal internal geodesic containing $\eta_\infty(0)$. Let us suppose that $b_\infty - a_\infty = \ell_\internal(\eta_\infty) > \lims_{j\to+\infty} \ell_\internal(\eta_j)$. Then we can find $\varepsilon > 0$ and a sequence $t_j \in [a_\infty + \varepsilon, b_\infty - \varepsilon]$ such that $t_j \in \partial\eta_j^{-1}(\Sigma)$. We can choose $t_j$ to be the closest to $0$. We suppose that $t_j \ge 0$, the other case being analogous. By minimality of $t_j$, the segment $\eta_j\restr{[0,t_j]}$ is internal and therefore $\eta_j\restr{[t_j,t_j+\inj]}$ is external.  Proposition \ref{prop:normal_inj_radius_minimizing_curve} implies that $\sfd_\Y(\eta_j(t_j+s),\Sigma) = s$ for every $s<\rho_\textup{CAT}(\Y)$. Up to a subsequence we may suppose that $t_j$ converges to $t_\infty \in [a_\infty-\varepsilon,a_\infty+\varepsilon]$. This implies that $\sfd_\Y(\eta_\infty(t_\infty + s), \Sigma) = s$ for $s < \rho_\textup{CAT}(\Y)$. For $s<\varepsilon$ we get a contradiction because $\eta_\infty(t_\infty +s) \in \Sigma$. Therefore, $\ell_\internal(\eta_\infty) \le \lims_{j\to+\infty} \ell_\internal(\eta_j)$, concluding the proof of this step.

    \textbf{Step 3:} $\mm_\infty(\textup{Broken}_\textup{bc}^\perp(\Y,\Sigma)) = 1$.
    
    Now, for every $\varepsilon > 0$ we have that the set $V_\varepsilon \cap C_\infty$ is open in $C_\infty$ because $\ell_\internal$ is continuous on $C_\infty$.  Arguing as in Step 1 we deduce that $\mm_\infty(\textup{Broken}_\textup{bc}^\perp(\Y,\Sigma)) = 1$.
\end{proof}

The dynamical system $(\textup{Broken}_\textup{bc}^\perp(\Y,\Sigma), \Phi_1)$, up to reparametrization, has a simple description. Let $\mathcal{A}:=\{\sigma \in \Sigma^\circlearrowleft\,:\, \ell(\sigma) = \inj\}$ and for $\sigma,\sigma'\in \mathcal{A}$ define $\mathfrak{S}(\sigma,\sigma') = 1$ if and only if $\omega(\sigma) = \alpha(\sigma')$. The associated subshift of finite type is the dynamical system $(\textup{\texttt{SubShift}}(\mathcal{A},\mathfrak{S}), \textup{\texttt{shift}})$ where
$$\textup{\texttt{SubShift}}(\mathcal{A},\mathfrak{S}) := \{w = (\sigma_i)\in \mathcal{A}^\Z\,:\, \mathfrak{S}(\sigma_i,\sigma_{i+1}) = 1 \text{ for every } i \in \Z\}$$
and $\texttt{shift}((\sigma_i)_{i\in \Z}) = (\sigma_{i+1})_{i\in \Z}$.  It is equipped with the topology induced by the product topology on $\mathcal{A}^\Z$, which is metrizable via the metric $\sfd((\sigma_i),(\sigma_i')) = 2^{-\inf\{\vert i \vert \,:\, i\in \Z,\, \sigma_i\neq \sigma_i'\}}.$

\begin{proposition}
\label{prop:Broken_perp_subshift}
    Let $(\Y,\sfd_\Y)$ be a compact, locally $\CATminus$ space and let $\Sigma \subseteq\Y$ be a closed, locally convex subset such that $\Sigma^\circlearrowleft \neq \emptyset$. Then $\mathcal{A}$ is finite and the map
    $$\Psi\colon \textup{\texttt{SubShift}}(\mathcal{A},\mathfrak{S}) \times [0,\inj] \to \textup{Broken}_\textup{bc}^\perp(\Y,\Sigma), \quad (w=(\sigma_i)_{i\in \Z},s) \mapsto \Phi_s\left(\underset{i\in\Z}{\star}\sigma_i\right),$$
    is continuous, surjective and satisfies $\Psi\circ (\textup{\texttt{shift}}, \textup{id}) = \Phi_{\inj}\circ\Psi$.
    It induces  a homeomorphism $$\Psi\colon\textup{\texttt{SubShift}}(\mathcal{A},\mathfrak{S}) \times [0,\inj] /_\sim \to \textup{Broken}_\textup{bc}^\perp(\Y,\Sigma),$$ 
    where $(w,\inj) \sim (\textup{\texttt{shift}}(w),0)$ for every $w \in \textup{\texttt{SubShift}}(\mathcal{A},\mathfrak{S})$.
    Moreover,
    $$h_\textup{top}(\textup{Broken}_\textup{bc}^{\perp}(\Y,\Sigma), \Phi_1) \le \limi_{k\to +\infty} h_{\mm_k}(\Br,\Phi_1).$$
\end{proposition}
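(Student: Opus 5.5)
We will prove the three parts in order: finiteness of $\mathcal{A}$, the properties of $\Psi$, and the entropy inequality.

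\textbf{Finiteness of $\mathcal{A}$.} Suppose $\mathcal{A}$ is infinite. By Ascoli--Arzel\`a and compactness of $\Y$, we can extract a sequence of distinct $\sigma_n\in\mathcal{A}$ converging uniformly to a local geodesic $\sigma_\infty$ of length $\inj$ with endpoints in $\Sigma$. The argument of Proposition \ref{prop:normal_inj_radius_minimizing_curve}(i) shows that $\sigma_\infty\in\mathcal{A}$. By Proposition \ref{prop:normal_inj_radius_minimizing_curve}(ii), each $\sigma_n$ leaves $\Sigma$ at angle $\ge\pi/2$; in the $\CATminus$ ball $B(\sigma_\infty(0),\rho_\textup{CAT}(\Y))$ this gives $\sfd_\Y(\sigma_n(s),\Sigma)=s$ for small $s$. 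We then use the negative curvature. Lift to the universal cover $\tilde\Y$, where $\sigma_n$ and $\sigma_\infty$ become common perpendiculars between lifts of components of $\Sigma$. For $n$ large, the lifts $\tilde\sigma_n$ start and end on the same two convex components $\tilde C,\tilde C'$ as $\tilde\sigma_\infty$, because the components of $p^{-1}(\Sigma)$ are uniformly $\rho_\textup{CAT}(\Y)$-separated. In a $\CATminus$ space, two convex sets at distance $\inj>0$ have a unique pair of closest points: if two perpendiculars of minimal length existed, the quadrilateral they span would be flat, which is impossible. Hence $\tilde\sigma_n=\tilde\sigma_\infty$, a contradiction. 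This uniqueness of common perpendiculars is the main obstacle. We plan to handle it via strict convexity of $\sfd(\cdot,\tilde C')$ restricted to $\tilde C$, which fails to be strict only along flat strips.

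\textbf{Properties of $\Psi$.} We first check that $\Psi$ is well defined. For $w\in\texttt{SubShift}(\mathcal{A},\mathfrak S)$, the concatenation $\star_i\sigma_i$ lies in $\Br$. At each junction, either the concatenation is geodesic, or we must verify condition (b) of Theorem \ref{theo:characterization_broken_geodesics}. Both outgoing directions are perpendicular to $\Sigma$ by Proposition \ref{prop:normal_inj_radius_minimizing_curve}(ii), so the junction point is the $\Sigma$-midpoint by the first-variation argument of Lemma \ref{lemma:unique_projection_CAT}, exactly as in the proof of Proposition \ref{prop:erg_opt_h}. All external pieces have length $\inj$ and there are no internal pieces, so the image lies in $\textup{Broken}^\perp_\textup{bc}(\Y,\Sigma)$.

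Surjectivity: any $\eta$ in the target, after shifting time so that $\eta(-s)\in\Sigma$ with $s\in[0,\inj)$, decomposes as such a concatenation. Continuity holds because agreement of $w,w'$ on $|i|\le N$ forces $\star\sigma_i$ and $\star\sigma'_i$ to agree on $[-N\inj,N\inj]$, which gives ${\sf D}$-closeness. The conjugation $\Psi\circ(\texttt{shift},\textup{id})=\Phi_{\inj}\circ\Psi$ is immediate from the definitions. Injectivity modulo $\sim$ holds because the times in $\eta^{-1}(\Sigma)$ and the pieces between them are determined by $\eta$. A continuous bijection from a compact space to a Hausdorff space is a homeomorphism.

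\textbf{Entropy inequality.} By the suspension structure, $h_\textup{top}(\textup{Broken}^\perp_\textup{bc},\Phi_1)=h_\textup{top}(\texttt{shift})/\inj$, and this equals $h_\nu$ for a measure $\nu$ of maximal entropy on the compact invariant set $\textup{Broken}^\perp_\textup{bc}$. Such a $\nu$ exists because subshifts of finite type are expansive, and $\nu$ is also an element of $\mathcal{M}_1(\Br,\Phi_1)$. Every $\eta$ in this set has all its intersections branching and evenly spaced by $\inj$, so $\int\freq\,\d\nu=1/\inj$. By Corollary \ref{cor:m_BM_unique_measure_max_pressure},
$$h_\nu+\log(k-1)\tfrac1{\inj}\le h_{\nu}+\int(\log(k-1)\freq_\textup{non-ext}+\log(k)\freq_\external)\d\nu\le h_{\mm_k}+\log(k)\int\freq\,\d\mm_k .$$
Combining this with $\int\freq\,\d\mm_k\le1/\inj$ from Proposition \ref{prop:erg_opt_h} gives
$$h_{\mm_k}\ge h_\nu-\frac{\log(k/(k-1))}{\inj}.$$
Taking the $\liminf$ as $k\to\infty$ yields the claimed inequality.
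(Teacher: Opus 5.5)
Your proposal is correct and follows essentially the paper's proof. The steps match: finiteness of $\mathcal{A}$ via uniqueness of the common perpendicular between two lifts of $\Sigma$ in the $\CATminus$ universal cover; well-definedness of $\Psi$ via the perpendicularity of elements of $\mathcal{A}$ from Proposition \ref{prop:normal_inj_radius_minimizing_curve}(ii); the compact-to-Hausdorff argument for the homeomorphism; and the pressure comparison of Corollary \ref{cor:m_BM_unique_measure_max_pressure} together with $\int\freq\,\d\nu=1/\inj$. Your rearrangement of that comparison is in fact slightly cleaner than the paper's.

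The one loose point is the measure of maximal entropy. Expansiveness of the subshift does not by itself justify its existence for the time-one map $\Phi_1$ of the suspension, which is not expansive. You do not need it, however: your bound $h_{\mm_k}\ge h_\nu-\log(k/(k-1))/\inj$ holds for every $\nu\in\mathcal{M}_1(\textup{Broken}_\textup{bc}^\perp(\Y,\Sigma),\Phi_1)$, and the variational principle then finishes, as in the paper, which works with arbitrary ergodic $\mu$.
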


\begin{proof}
    Arguing as in the proof of Proposition \ref{prop:normal_inj_radius_minimizing_curve} we get that $\mathcal{A}$ is a non-empty, closed subset of the set of geodesic segments of $\Y$, hence compact. We claim that $\mathcal{A}$ is discrete, hence finite. Let $\tilde{\Y}$ be the universal cover of $\Y$ and let $\tilde{\Sigma}$ be a lift of $\Sigma$. Every element of $\mathcal{A}$ lifts to a geodesic segment on $\tilde{\Y}$ which realizes the distance between $\tilde{\Sigma}$ and another lift $\tilde{\Sigma}'$ of $\Sigma$. For every fixed $\tilde{\Sigma}'$ there is exactly one such curve because of the $\CATminus$ condition: if there were two then we could find a geodesic quadrilateral with all angles at least $\pi/2$, hence flat by \cite[II.2.11]{BridsonHaefliger}. But a $\CATminus$ space does not contain flat rectangles. Let $\sigma \in \mathcal{A}$ and $\tilde{\sigma}$ be a lift realizing the distance between $\tilde{\Sigma}$ and $\tilde{\Sigma}'$. Let $\{\sigma_j\}\subseteq \mathcal{A}$ be a sequence converging to $\sigma$. For every $j$ let $\tilde{\sigma}_j$ be a lift of $\sigma_j$ joining $\tilde{\Sigma}$ with another lift $\tilde{\Sigma}_j$. Notice that $\tilde{\sigma}_j(0)$ converges to $\tilde{\sigma}(0)$. This means that for $j$ big enough each $\tilde{\Sigma}_j$ intersects $B(\tilde{\sigma}(0),\inj + 1)$. Then the set of $\tilde{\Sigma}_j$'s is finite, hence constantly equal to $\tilde{\Sigma}'$. By the uniqueness of the minimal geodesic joining $\tilde{\Sigma}$ to $\tilde{\Sigma}'$ we have that $\tilde{\sigma}_j = \tilde{\sigma}$ for every $j$ big enough, hence $\sigma_j = \sigma$.    

    We now study the map $\Psi$. First of all, it is well-defined. Indeed, given $w=(\sigma_i)_{i\in \Z} \in \textup{\texttt{SubShift}}(\mathcal{A},\mathfrak{S})$, the curve $\eta = \underset{i\in\Z}{\star} \sigma_i$ is a $\Sigma$-broken local geodesic line, due to the fact that $\omega(\sigma_i) = \alpha(\sigma_{i+1})$ for every $i\in \Z$. Moreover it satisfies $\ell_\external(\Phi_t(\eta)) = \inj$ and $\ell_\internal(\Phi_t(\eta)) = 0$ for every $t\in \R$, by construction. In order to check that $\eta \in \Br$, the condition of Theorem \ref{theo:characterization_broken_geodesics} must be satisfied. Now, if $\sigma_i \star \sigma_{i+1}$ is a local geodesic in $\Y$ then condition (i) of Theorem \ref{theo:characterization_broken_geodesics} is satisfied. Otherwise we call $t$ the concatenation time and we consider the points $\eta(t-s), \eta(t+s)$ for $s < \rho_\textup{CAT}(\Y)$. By Proposition \ref{prop:normal_inj_radius_minimizing_curve}, $\eta(t)$ is the projection of both $\eta(t-s)$ and $\eta(t+s)$ on $\Sigma \cap B(\eta(t),r)$. Hence, for every other point $z\in \Sigma$, 
    $$\sfd(\eta(t-s),z) + \sfd(\eta(t+s),z) \ge \sfd(\eta(t-s),\eta(t)) + \sfd(\eta(t+s),\eta(t)).$$
    This means that $\eta(t) = \textup{\texttt{mid}}_\Sigma(\eta(t-s),\eta(t+s))$, which is condition (ii) of Theorem \ref{theo:characterization_broken_geodesics}. Hence $\eta$ is indeed an element of $\Br$, so of $\textup{Broken}_\textup{bc}^\perp(\Y,\Sigma)$. Moreover, $\Psi$ is trivially surjective and continuous. 
    For every $((\sigma_i)_{i\in \Z}, s) \in \textup{\texttt{SubShift}}(\mathcal{A},\mathfrak{S}) \times [0,\inj]$ we have that
    \begin{equation*}
        \begin{aligned}
            \Psi\circ (\textup{\texttt{shift}}, \textup{id})(((\sigma_i)_{i\in \Z}, s)) = \Phi_s\left(\underset{i\in \Z}{\star} \sigma_{i+1}\right) &= \Phi_s\left(\Phi_{\inj}\left(\underset{i\in \Z}{\star} \sigma_{i}\right)\right) \\
            &= \Phi_{\inj}\left(\Phi_{s}\left(\underset{i\in \Z}{\star} \sigma_{i}\right)\right)\\
            &= \Phi_{\inj}\circ \Psi(((\sigma_i)_{i\in \Z}, s)).
        \end{aligned}
    \end{equation*}
    This property implies that $\Psi$ induces a map on $\textup{\texttt{SubShift}}(\mathcal{A},\mathfrak{S}) \times [0,\inj] /_\sim$, which is clearly injective. Being a continuous, bijective map from a compact space to an Hausdorff space, it is a homeomorphism.

    Finally, let $\mu \in \mathcal{E}_1(\textup{Broken}_\textup{bc}^\perp(\Y,\Sigma), \Phi_1)$ and observe that
    $$\int\freq\,\d\mu = \lim_{n\to +\infty} \frac{1}{n} \sum_{i=0}^{n-1} \freq(\Phi_{-i}(\eta)) = \lim_{n\to +\infty} \frac{\#\partial\eta^{-1}(\Sigma)\cap (-n,0]}{n} = \frac{1}{\inj}$$
    for $\mu$-a.e. $\eta$, by Theorem \ref{theo:Birkhoff}. 
    Corollary \ref{cor:m_BM_unique_measure_max_pressure} implies that
    $$h_\mu + \log(k-1)\int \freq_{\textup{non-ext}}\,\d\mu + \log(k)\int \freq_{\external}\,\d\mu$$
    is less than or equal to
    $$h_{\mm_k} + \log(k-1)\int \freq_{\textup{non-ext}} \,\d\mm_k + \log(k)\int \freq_{\external} \,\d\mm_k.$$
    for every $k\ge 2$. Rearranging the terms and using that $\int \freq \,\d\mu = 1/\inj$ and that $\int \freq \,\d\mm_k \le 1/\inj$ we have that
    \begin{equation}
        \label{eq:h_mu_broken_perp}
        h_\mu \le h_{\mm_k} + \log\left(\frac{k-1}{k}\right)\left(\int \freq_{\textup{non-ext}}\,\d\mm_k - \int \freq_{\textup{non-ext}}\,\d\mu\right).
    \end{equation}
    Taking the limit of \eqref{eq:h_mu_broken_perp} for $k$ going to infinity and using that the absolute value of the difference of the integrals is at most $1/\inj$ for every $k$, we get that $h_\mu \le \limi_{k\to + \infty} h_{\mm_k}$. Clearly, $h_\mu(\textup{Broken}_\textup{bc}^\perp(\Y,\Sigma),\Phi_1) = h_\mu(\Br,\Phi_1)$. By Proposition \ref{prop:variational_principle}, we get that 
    $$h_\textup{top}(\textup{Broken}_\textup{bc}^\perp(\Y,\Sigma), \Phi_1) \le \limi_{k\to + \infty} h_{\mm_k}.$$
\end{proof}

The dynamical system $(\textup{Broken}_\textup{bc}^{\perp}(\Y,\Sigma), \Phi_1)$ may have or not a unique measure of maximal entropy, depending on the geometry of the couple $(\Y,\Sigma)$. In Example (1) of Section \ref{subsec:Examples} there is a unique measure of maximal entropy, while in Example (3) of Section \ref{subsec:Examples} there are more measures of maximal entropy.

\begin{remark}
    We do not know if
    $$h_{\mm_\infty}(\Br,\Phi_1) = \limi_{k\to +\infty} h_{\mm_k}(\Br,\Phi_1).$$
    If it were the case, then $\mm_\infty$ would be a measure of maximal entropy for the dynamical system $(\textup{Broken}_\textup{bc}^{\perp}(\Y,\Sigma), \Phi_1)$. Typical assumptions that guarantee such an (upper) semicontinuity of the entropy functional are expansiveness or at least entropy expansiveness. These properties do not hold in general, but suitable uniform versions associated to the sequence $\{\mm_k\}$ may be true. This would not give that the sequence $\{\mm_k\}$ has a unique limit point, a fact that we expect to be true anyway. 
\end{remark}

As we have seen, the dynamical system $(\Br,\Phi_1)$ is an important object attached to the couple $(\Y,\Sigma)$, and it allows us to define many invariants associated to the couple. However, its full dynamical properties are not completely understood, even in the locally $\CATminus$ case. It is straightforward to check that it satisfies the weak specification property and the weak periodic orbit closing properties as defined in \cite{ConstantineLafontThompson2020}. Indeed, these properties come directly from the corresponding properties holding on $(\LocGeod(\X), \Phi_1)$ for every branched covering $(\X,\Y,\Sigma,\pi)$, see \cite[Theorem A]{ConstantineLafontThompson2020}. On the other side, it is not known if it has a unique measure of maximal entropy. In order to apply \cite[Theorem 5.1]{ConstantineLafontThompson2020} one should prove that $(\Br,\Phi_1)$ is expansive, a property that does not seem to hold in general. A weaker notion of expansiveness is introduced in \cite{climenhagaThompson2016}, where it is shown that it is sufficient to obtain the uniqueness of the measure of maximal entropy, see \cite[Theorem A]{climenhagaThompson2016}. However, also this property seems difficult to be checked in general.

\begin{remark}
    All the statements of this section, except Proposition \ref{prop:Broken_perp_subshift}, are valid in the locally $\textup{CAT}(\kappa)$ setting, with arbitrary $\kappa$. It is enough to replace $\mm_k$ with the projection of a measure of maximal entropy of $(\LocGeod(\X),\Phi)$, that may be not unique, and to assume that $h_\textup{top}(\Br,\Phi_1) < \infty$.
\end{remark}

\bibliography{mybibmini}
\bibliographystyle{alpha}

\end{document}

%% file: Model_space.pdf_tex
\begingroup%
  \makeatletter%
  \providecommand\color[2][]{%
    \errmessage{(Inkscape) Color is used for the text in Inkscape, but the package 'color.sty' is not loaded}%
    \renewcommand\color[2][]{}%
  }%
  \providecommand\transparent[1]{%
    \errmessage{(Inkscape) Transparency is used (non-zero) for the text in Inkscape, but the package 'transparent.sty' is not loaded}%
    \renewcommand\transparent[1]{}%
  }%
  \providecommand\rotatebox[2]{#2}%
  \newcommand*\fsize{\dimexpr\f@size pt\relax}%
  \newcommand*\lineheight[1]{\fontsize{\fsize}{#1\fsize}\selectfont}%
  \ifx\svgwidth\undefined%
    \setlength{\unitlength}{444.89357606bp}%
    \ifx\svgscale\undefined%
      \relax%
    \else%
      \setlength{\unitlength}{\unitlength * \real{\svgscale}}%
    \fi%
  \else%
    \setlength{\unitlength}{\svgwidth}%
  \fi%
  \global\let\svgwidth\undefined%
  \global\let\svgscale\undefined%
  \makeatother%
  \begin{picture}(1,0.60700662)%
    \lineheight{1}%
    \setlength\tabcolsep{0pt}%
    \put(0,0){\includegraphics[width=\unitlength,page=1]{Model_space.pdf}}%
    \put(0.192286,0.15575734){\color[rgb]{0,0,0}\makebox(0,0)[lt]{\lineheight{1.25}\smash{\begin{tabular}[t]{l}$\Sigma$\end{tabular}}}}%
    \put(0,0){\includegraphics[width=\unitlength,page=2]{Model_space.pdf}}%
    \put(0.05039321,0.39489082){\color[rgb]{0,0,0}\makebox(0,0)[lt]{\lineheight{1.25}\smash{\begin{tabular}[t]{l}$\gamma$\end{tabular}}}}%
    \put(0.80513579,0.40896474){\color[rgb]{0,0,0}\makebox(0,0)[lt]{\lineheight{1.25}\smash{\begin{tabular}[t]{l}$\Sigma$\end{tabular}}}}%
    \put(0.68723478,0.44778413){\color[rgb]{0,0,0}\makebox(0,0)[lt]{\lineheight{1.25}\smash{\begin{tabular}[t]{l}$\gamma$\end{tabular}}}}%
  \end{picture}%
\endgroup%

%% file: mybibmini.bib
@article{climenhagaThompson2016,
  title={Unique equilibrium states for flows and homeomorphisms with non-uniform structure},
  author={Climenhaga, Vaughn and Thompson, Daniel J},
  journal={Advances in Mathematics},
  volume={303},
  pages={745--799},
  year={2016},
  publisher={Elsevier}
}

@article{ConstantineLafontThompson2020,
  title={The weak specification property for geodesic flows on {CAT}(-1) spaces},
  author={Constantine, David and Lafont, Jean-Fran{\c{c}}ois and Thompson, Daniel J},
  journal={Groups, Geometry, and Dynamics},
  volume={14},
  number={1},
  pages={297--336},
  year={2020}
}

@article{CapraceMono2009,
  title={Isometry groups of non-positively curved spaces: structure theory},
  author={Caprace, Pierre-Emmanuel and Monod, Nicolas},
  journal={Journal of topology},
  volume={2},
  number={4},
  pages={661--700},
  year={2009},
  publisher={Wiley Online Library}
}

@article{Cavallucci23,
 author = {Cavallucci, Nicola},
 title = {Continuity of critical exponent of quasiconvex-cocompact groups under {Gromov}-{Hausdorff} convergence},
 fjournal = {Ergodic Theory and Dynamical Systems},
 journal = {Ergodic Theory Dyn. Syst.},
 issn = {0143-3857},
 volume = {43},
 number = {4},
 pages = {1189--1221},
 year = {2023},
 language = {English},
 doi = {10.1017/etds.2022.9},
 zbMATH = {7680120},
 Zbl = {1525.53053}
}

@incollection {Gromov-HypGps,
    AUTHOR = {Gromov, Michail},
     TITLE = {Hyperbolic groups},
 BOOKTITLE = {Essays in group theory},
    SERIES = {Math. Sci. Res. Inst. Publ.},
    VOLUME = {8},
     PAGES = {75--263},
 PUBLISHER = {Springer, New York},
      YEAR = {1987},
   MRCLASS = {20F32 (20F06 20F10 22E40 53C20 57R75 58F17)},
  MRNUMBER = {919829},
MRREVIEWER = {Christopher W. Stark},
       DOI = {10.1007/978-1-4613-9586-7_3},
       URL = {https://doi.org/10.1007/978-1-4613-9586-7_3},
}

@article {GromovThurston87,
    AUTHOR = {Gromov, Michail and Thurston, William},
     TITLE = {Pinching constants for hyperbolic manifolds},
   JOURNAL = {Invent. Math.},
  FJOURNAL = {Inventiones Mathematicae},
    VOLUME = {89},
      YEAR = {1987},
    NUMBER = {1},
     PAGES = {1--12},
      ISSN = {0020-9910},
   MRCLASS = {53C20},
  MRNUMBER = {892185},
MRREVIEWER = {Karsten Grove},
       DOI = {10.1007/BF01404671},
}

@misc{ Stocker19,
    author = {Stocker, Arnaud},
    title = {G\'eom\'etrie de certains espaces de courbure n\'egative},
    howpublished = {PhD Thesis, Aix Marseilles Universit\'e, France},
    year = {2019}
}

@book {BridsonHaefliger,
    AUTHOR = {Bridson, Martin R. and Haefliger, Andr\'e},
     TITLE = {Metric spaces of non-positive curvature},
    SERIES = {Grundlehren der mathematischen Wissenschaften [Fundamental
              Principles of Mathematical Sciences]},
    VOLUME = {319},
 PUBLISHER = {Springer-Verlag, Berlin},
      YEAR = {1999},
     PAGES = {xxii+643},
      ISBN = {3-540-64324-9},
   MRCLASS = {53C23 (20F65 53C70 57M07)},
  MRNUMBER = {1744486},
MRREVIEWER = {Athanase\ Papadopoulos},
       DOI = {10.1007/978-3-662-12494-9},
       URL = {https://doi.org/10.1007/978-3-662-12494-9},
}

@article {Roblin2002,
    AUTHOR = {Roblin, Thomas},
     TITLE = {Sur la fonction orbitale des groupes discrets en courbure
              n\'egative},
   JOURNAL = {Ann. Inst. Fourier (Grenoble)},
  FJOURNAL = {Universit\'e{} de Grenoble. Annales de l'Institut Fourier},
    VOLUME = {52},
      YEAR = {2002},
    NUMBER = {1},
     PAGES = {145--151},
      ISSN = {0373-0956,1777-5310},
   MRCLASS = {37C85 (20H10 37C35 37F35)},
  MRNUMBER = {1881574},
MRREVIEWER = {Petra\ Bonfert-Taylor},
       DOI = {10.5802/aif.1880},
       URL = {https://doi.org/10.5802/aif.1880},
}

@article {Cavallucci:ErgodicLimitSet,
    AUTHOR = {Cavallucci, Nicola},
     TITLE = {Bishop-{J}ones' theorem and the ergodic limit set},
   JOURNAL = {Ergodic Theory Dynam. Systems},
  FJOURNAL = {Ergodic Theory and Dynamical Systems},
    VOLUME = {45},
      YEAR = {2025},
    NUMBER = {3},
     PAGES = {704--718},
      ISSN = {0143-3857,1469-4417},
   MRCLASS = {37D40 (37C10 53C23)},
  MRNUMBER = {4860664},
       DOI = {10.1017/etds.2024.49},
       URL = {https://doi.org/10.1017/etds.2024.49},
}

@article {RoblinThesis,
    AUTHOR = {Roblin, Thomas},
     TITLE = {Ergodicit\'e{} et \'equidistribution en courbure n\'egative},
   JOURNAL = {M\'em. Soc. Math. Fr. (N.S.)},
  FJOURNAL = {M\'emoires de la Soci\'et\'e{} Math\'ematique de France.
              Nouvelle S\'erie},
    NUMBER = {95},
      YEAR = {2003},
     PAGES = {vi+96},
      ISSN = {0249-633X,2275-3230},
   MRCLASS = {37D40 (28D10 30F40 37A25 37C40 53C22 53D25 57R30)},
  MRNUMBER = {2057305},
MRREVIEWER = {Marc\ Peign\'e},
       DOI = {10.24033/msmf.408},
       URL = {https://doi.org/10.24033/msmf.408},
}

@article{Cavallucci2024OtalPeigne,
  title={Otal-{P}eigné's {T}heorem for {G}romov-hyperbolic spaces},
  author={Cavallucci, Nicola},
  journal={arXiv:2412.10801},
  year={2024}
}

@article{Bow73,
	title={Topological entropy for noncompact sets},
	author={Bowen, Rufus},
	journal={Transactions of the American Mathematical Society},
	volume={184},
	pages={125--136},
	year={1973}
}

@article{LedrappierWalters1977,
  title={A relativised variational principle for continuous transformations},
  author={Ledrappier, Fran{\c{c}}ois and Walters, Peter},
  journal={Journal of the London Mathematical Society},
  volume={2},
  number={3},
  pages={568--576},
  year={1977},
  publisher={Oxford University Press}
}

@article{Ricks2021,
title = {The unique measure of maximal entropy for a compact rank one locally {CAT}(0) space},
journal = {Discrete and Continuous Dynamical Systems},
volume = {41},
number = {2},
pages = {507-523},
year = {2021},
issn = {1078-0947},
doi = {10.3934/dcds.2020266},
url = {https://www.aimsciences.org/article/id/b5ac9fcf-39b0-4078-a7c8-11cb68db2bd4},
author = {Russell Ricks}
}

@article{Allcock2000,
  title={Asphericity of moduli spaces via curvature},
  author={Allcock, Daniel},
  journal={Journal of Differential Geometry},
  volume={55},
  number={3},
  pages={441--451},
  year={2000},
  publisher={Lehigh University}
}

@book{Walters2000,
  title={An introduction to ergodic theory},
  author={Walters, Peter},
  volume={79},
  year={2000},
  publisher={Springer Science \& Business Media}
}

@inproceedings{Chernavskii63,
  title={Finitely multiple open mappings of manifolds},
  author={Chernavskii, Aleksei V.},
  booktitle={Doklady Akademii Nauk},
  volume={151},
  number={1},
  pages={69--72},
  year={1963},
  organization={Russian Academy of Sciences}
}

@article{Vaisala67,
  title={Discrete open mappings on manifolds},
  author={V{\"a}is{\"a}l{\"a}, Jussi},
  journal={Annales Fennici Mathematici},
  number={392},
  year={1967}
}

@article{Alexander1920,
author = {James W. Alexander},
title = {{Note on Riemann spaces}},
volume = {26},
journal = {Bulletin of the American Mathematical Society},
number = {8},
publisher = {American Mathematical Society},
pages = {370 -- 372},
year = {1920},
}

@inproceedings{Fox1957,
  title={Covering spaces with singularities},
  author={Fox, Ralph H.},
  booktitle={Algebraic Geometry and Topology: A Symposium in Honor of S. Lefschetz},
  pages={243--257},
  year={1957}
}

@article{HeinonenRickman2002,
author = {Juha Heinonen and Seppo Rickman},
title = {{Geometric branched covers between generalized manifolds}},
volume = {113},
journal = {Duke Mathematical Journal},
number = {3},
publisher = {Duke University Press},
pages = {465 -- 529},
year = {2002},
doi = {10.1215/S0012-7094-02-11333-7},
URL = {https://doi.org/10.1215/S0012-7094-02-11333-7}
}

@article{BersteinEdmonds1979,
  title={On the construction of branched coverings of low-dimensional manifolds},
  author={Berstein, Israel and Edmonds, Allan L.},
  journal={Transactions of the American Mathematical Society},
  volume={247},
  pages={87--124},
  year={1979}
}

@article{Piergallini1995,
  title={Four-manifolds as 4-fold branched covers of $\mathbb{S}^4$},
  author={Piergallini, Riccardo},
  journal={Topology},
  volume={34},
  number={3},
  pages={497--508},
  year={1995},
  publisher={Oxford; New York: Pergamon Press, 1962-}
}

@article{Hilden1974,
author = {Hilden, Hugh},
year = {1974},
month = {11},
pages = {},
title = {Every closed orientable 3-manifold is a 3-fold branched covering space of $\mathbb{S}^3$},
volume = {80},
journal = {Bulletin of The American Mathematical Society},
doi = {10.1090/S0002-9904-1974-13699-2}
}

@article{Montesinos1974,
author = {Jos{\'e} M. Montesinos},
title = {{A representation of closed, orientable 3-manifolds as 3-fold branched coverings of $\mathbb{S}^3$}},
volume = {80},
journal = {Bulletin of the American Mathematical Society},
number = {5},
publisher = {American Mathematical Society},
pages = {845 -- 846},
year = {1974},
}

@article{Hamenstadt2025,
  title={The geometry of branched covers of hyperbolic manifolds},
  author={Hamenst{\"a}dt, Ursula},
  year={2025}
}

@article{Deraux2005,
  title={A negatively curved {K}{\"a}hler threefold not covered by the ball},
  author={Deraux, Martin},
  journal={Inventiones mathematicae},
  volume={160},
  number={3},
  pages={501--525},
  year={2005},
  publisher={Springer}
}

@article{MostowSiu1980,
  title={A compact {K}{\"a}hler surface of negative curvature not covered by the ball},
  author={Mostow, George D. and Siu, Yum-Tong},
  journal={Annals of Mathematics},
  volume={112},
  number={2},
  pages={321--360},
  year={1980},
  publisher={JSTOR}
}

@article{CharneyDavis1993,
  title={Singular metrics of nonpositive curvature on branched covers of {R}iemannian manifolds},
  author={Charney, Ruth and Davis, Michael},
  journal={American Journal of Mathematics},
  volume={115},
  number={5},
  pages={929--1009},
  year={1993},
  publisher={JSTOR}
}

@article{Bowen1972,
  title={Entropy-expansive maps},
  author={Bowen, Rufus},
  journal={Transactions of the American Mathematical Society},
  volume={164},
  pages={323--331},
  year={1972}
}

@article{Jenkinson2006,
  title={Ergodic optimization},
  author={Jenkinson, Oliver},
  journal={Discrete and Continuous Dynamical Systems},
  volume={15},
  number={1},
  pages={197},
  year={2006},
  publisher={AIMS PRESS}
}

@article{LedrappierLim2010,
title = {Volume entropy of hyperbolic buildings},
journal = {Journal of Modern Dynamics},
volume = {4},
number = {1},
pages = {139-165},
year = {2010},
issn = {1930-5311},
doi = {10.3934/jmd.2010.4.139},
url = {https://www.aimsciences.org/article/id/ca505325-a20c-4d1e-9a07-538c2c975277},
author = {François Ledrappier and Seonhee Lim}
}

@article{Lim2008,
  title={Minimal volume entropy for graphs},
  author={Lim, Seonhee},
  journal={Transactions of the American Mathematical Society},
  volume={360},
  number={10},
  pages={5089--5100},
  year={2008}
}

@article{FinePremoselli2020,
  title={Examples of compact {E}instein four-manifolds with negative curvature},
  author={Fine, Joel and Premoselli, Bruno},
  journal={Journal of the American Mathematical Society},
  volume={33},
  number={4},
  pages={991--1038},
  year={2020}
}

@article{HamenstadtJackel2024,
  title={Negatively curved {E}instein metrics on {G}romov-{T}hurston manifolds},
  author={Hamenst{\"a}dt, Ursula and J{\"a}ckel, Frieder},
  journal={arXiv:2411.12956},
  year={2024}
}

@article{BCGS2017,
  title={Curvature-free Margulis lemma for Gromov-hyperbolic spaces},
  author={Besson, G{\'e}rard and Courtois, Gilles and Gallot, Sylvestre and Sambusetti, Andrea},
  journal={arXiv preprint arXiv:1712.08386},
  year={2017}
}

@article{CavallucciSambusetti2024,
  title={Discrete groups of packed, non-positively curved, {G}romov hyperbolic metric spaces},
  author={Cavallucci, Nicola and Sambusetti, Andrea},
  journal={Geometriae Dedicata},
  volume={218},
  number={2},
  pages={36},
  year={2024},
  publisher={Springer}
}

@article{RamosCuevas2013,
  title={Convexity is a local property in {CAT}$(\kappa)$ spaces},
  author={Ramos-Cuevas, Carlos},
  journal={arXiv preprint arXiv:1304.4147},
  year={2013}
}

@article{Engelking1995,
  title={Theory of dimensions, finite and infinite},
  author={Engelking, Ryszard},
  journal={Sigma Series in Pure Mathematics},
  volume={10},
  year={1995},
  publisher={Heldermann Verlag}
}
